\documentclass{article}

\usepackage[utf8]{inputenc}
\usepackage{color}
\usepackage{mathrsfs}
\usepackage{dsfont}
\usepackage[all]{xy}
\usepackage[top=2cm, bottom=2cm, left=1.5cm, right=1.5cm]{geometry}

\usepackage{amsmath,amssymb,amsthm}
\usepackage{mathtools}

\usepackage[pdfusetitle,hidelinks]{hyperref}
\newcommand{\iref}[2]{\hyperref[#1]{#2}}
\newcommand{\secref}[1]{\hyperref[#1]{§\ref*{#1}}}
\newcommand{\thmref}[1]{\hyperref[#1]{Theorem~\ref*{#1}}}
\newcommand{\propref}[1]{\hyperref[#1]{Proposition~\ref*{#1}}}
\newcommand{\lemref}[1]{\hyperref[#1]{Lemma~\ref*{#1}}}
\newcommand{\corref}[1]{\hyperref[#1]{Corollary~\ref*{#1}}}

\newtheorem{theorem}{Theorem}[section]
\newtheorem{lemma}[theorem]{Lemma}
\newtheorem{proposition}[theorem]{Proposition}

\newtheorem{remark}[theorem]{Remark}

\numberwithin{equation}{section}

\title{Squarefree values of random polynomials are Gaussian mixtures}
\author{Omer Simhi}
\date{}
\begin{document}
\maketitle

\begin{abstract}
Fix $d\ge1$ and choose a polynomial $P$ uniformly at random from the set of integer polynomials of degree $d$ whose coefficients have absolute value at most $H$. We study $E_{P}$, the error in the count of squarefree values $P\left(n\right)$ for $n\leq x$. In the range $x=H^{o\left(1\right)}$, we prove that $E_{P}\left(x\right)/x^{\frac{1}{4}}$ converges to a Gaussian mixture that is not itself Gaussian, $\sqrt{C\left(G\right)}Z$, where $G$ is the weak limit, as $H\rightarrow\infty$, of the prime-indexed vector $\left(P\bmod p^{2}\right)_{p}$, $C$ is a nonnegative function and $Z$ is an independent standard Gaussian.  This answers a question of Sofos. Finally, in the appendix, we establish that
$\log \mathbb{E}[C(G)^m]\asymp_d m^2/(\log m)^2$
as $m\to\infty$.
\end{abstract}

\section{Introduction}\label{sec:introduction}

Fix an integer polynomial $P(t)=\sum_{j=0}^{d}a_jt^j\in\mathbb{Z}[t]$ and take an integer $x\ge1$. Recall that $\mu^{2}$ is the indicator of squarefree numbers and set 
\[ E_{P}(x):=\sum_{n\leq x}\mu^{2}(P(n))-\mathfrak{S}_{P}x, \qquad \mathfrak{S}_{P}:=\prod_{p}\left(1-\frac{\rho_{P}\left(p^{2}\right)}{p^{2}}\right),\]
where $\rho_{P}\left(q\right):=\#\left\{ n\bmod q:P\left(n\right)\equiv 0\bmod q\right\}$. 
It is a deep open problem to prove that $E_{P}(x)=o(x)$ holds for any `admissible' $P$ as $x\rightarrow\infty$; this is known for all $\deg P \le 3$ by Hooley \cite{key-21} and, under the assumption of the \textit{abc} conjecture, was proved for general degrees by Granville \cite{key-3}. 

Several authors have studied $E_P(x)$ on average over the coefficient box
\[ \mathscr{P}_{d}(H) := \left\{ P\in\mathbb{Z}[t]:\deg P=d,\ \vert P\vert\leq H\right\} ,\qquad\vert P\vert:=\max_{0\leq j\leq d}\lvert a_{j}\rvert, \]
see Filaseta \cite{key-7}, and more recently Shparlinski \cite{key-8}, Browning--Shparlinski \cite{key-9} and Jelinek \cite{key-10}. Specifically, they endow the finite set $\mathscr{P}_{d}(H)$ with a uniform probability measure, and study $E_P$ as a random variable, for $P$ distributed according to this measure.

Browning--Shparlinski \cite[Theorem 1.1]{key-9} obtained a power-saving first-moment estimate in a power range $x\le H^{\alpha_d}$, and Jelinek proved a corresponding second-moment bound. Recently, Sofos obtained the first asymptotic formula for the variance \cite[Theorem 1.1]{key-1-1}. With 
\[ \gamma_{0} := -4\frac{\zeta\left(-\frac{1}{2}\right)}{\zeta(2)^{2}}\prod_{p}\left(1-\frac{2-p^{-\frac{1}{2}}+p^{-1}}{\left(p+1\right)^{2}}\right)=0.192\ldots, \]
he proved, for fixed $d\ge3$ and a suitable power range $x\leq H^{\beta_d}$ that there exists $\delta>0$ such that
\[
\frac{1}{(2H)^{d+1}}\sum_{P\in\mathscr{P}_{d}(H)}E_{P}(x)^{2}=\gamma_{0}x^{\frac{1}{2}}+O\left(x^{\frac{1}{2}-\delta}\right),
\] suggesting that $x^{\frac{1}{4}}$ is the natural scale of $E_{P}\left(x\right)$. In the paper, he posed the question of studying the distribution of $E_P(x)/x^{1/4}$ in the large-$x$ limit. We fully answer this question in this paper.

\subsection{The main result}\label{sec:main-result}
For an integer $q\ge1$, put 
\[ V_{q} := \left\{ g\in(\mathbb{Z}/q\mathbb{Z})[t] : \deg g\leq d\right\}, \qquad V_{\square} := \prod_{p}V_{p^{2}}. \] 
The set $V_q$ is a finite abelian group; we endow it with the uniform probability measure (which agrees with the Haar measure) and with the discrete topology. It follows that $V_{\square}$ is compact and can be equipped with a probability Haar measure $dg$. Let $Z\sim N(0,1)$ be a standard normal random variable. Throughout the paper, we use $\overset{d}{\longrightarrow}$ to denote convergence in distribution (weak convergence). Our main result is the following. Let $P$ be uniformly distributed on $\mathscr{P}_{d}(H)$ and set 
\[
G_{H}:=(P\bmod p^{2})_{p},\qquad X_{x,H}:=\frac{E_{P}(x)}{x^{\frac{1}{4}}}. \]
\begin{theorem}\label{thm:gaussian-mixture-limit}
Fix $d \ge 1$. If $x=H^{o(1)}$, then, as $x,H\rightarrow\infty$,\[
\left(G_{H},X_{x,H}\right)\overset{d}{\longrightarrow}\left(G,\sqrt{C\left(G\right)}Z\right),
\] where $G$ is distributed according to $dg$, $Z$ is independent of $G$ and $C\colon V_{\square}\rightarrow[0,\infty)$ is the nonnegative measurable function constructed in \secref{sec:limiting-variance-construction}. Moreover, $C(g)$ has positive variance. Consequently, the limiting distribution is not Gaussian.
\end{theorem}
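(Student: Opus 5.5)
The plan is to reduce $X_{x,H}$ to a sum of many weakly dependent local contributions, prove a central limit theorem for that sum conditionally on finitely many coordinates of $G_H$, and identify the conditional variance as $C(G)$. Start from $\mu^2(k)=\sum_{m^2\mid k}\mu(m)$. Put $N_P(q):=\#\{n\le x: q\mid P(n)\}$. Then
\[
E_P(x)=\sum_{m}\mu(m)\Bigl(N_P(m^2)-\frac{x\rho_P(m^2)}{m^2}\Bigr)
\]
up to the tail of $\mathfrak S_P$, where $m$ runs up to about $(Hx^d)^{1/2}$.

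\textbf{Truncation.} Split the range of $m$ into three parts.
\begin{itemize}
\item $m\le x^{1/2-\varepsilon}$. Here the total error is deterministic, $O\bigl(\sum_{m\le x^{1/2-\varepsilon}}\rho_P(m^2)\bigr)$, and this is $o(x^{1/4})$ on average since $\rho_P(m^2)\ll_\varepsilon m^{\varepsilon}$ for typical $P$.
\item $x^{1/2-\varepsilon}<m\le x^{1/2+\varepsilon}$. This is the critical window.
\item $m>x^{1/2+\varepsilon}$. Each summand is essentially $N_P(m^2)$, and I would show these are negligible in $L^2(\mathscr P_d(H))$. For this I would use the second-moment machinery behind Sofos's variance asymptotic and Jelinek's bound (counting pairs $n_1,n_2\le x$ with $m_i^2\mid P(n_i)$, averaged over the box). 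The hypothesis $x=H^{o(1)}$ is what makes $P\bmod m^2$ equidistributed for all $m\le x^{O(1)}$, with error $O(m^{2(d+1)}/H)$.
\end{itemize}
I expect this large-$m$ truncation to be the main obstacle. Beyond $m^2\approx H$ one loses equidistribution, so I would first try a divisor-switching/geometry-of-numbers bound for $\#\{P: m^2\mid P(n)\}$ summed over $m$, valid uniformly in $n\le x$.

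\textbf{Factorization and conditioning.} Fix $w$ large and write each $m$ in the window as $m=ab$, where $a$ is $w$-smooth and $b$ is $w$-rough. The $w$-smooth parts with $a\le A$ are determined by $G_H^{(w)}:=(P\bmod p^2)_{p\le w}$. The rough parts $b\asymp x^{1/2}$ are products of primes $>w$, and for these the residues $P\bmod b^2$ are, by CRT and equidistribution, asymptotically independent uniform elements of $V_{b^2}$, independent of $G_H^{(w)}$. Conditionally on $G_H^{(w)}=g_w$, the main term is
\[
S=\sum_{a}\mu(a)\sum_{b}\mu(b)\,\Xi_{a,b},
\]
where $\Xi_{a,b}$ is the centred count of $n\le x$ in the residue classes mod $a^2b^2$ cut out by $g_w$ and $P\bmod b^2$.

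\textbf{Method of moments.} For each bounded $f$ of $g_w$, I would compute
\[
\mathbb E\bigl[f(G_H^{(w)})\,X_{x,H}^k\bigr]\longrightarrow \mathbb E\bigl[f(G^{(w)})\,C_w(G^{(w)})^{k/2}\bigr]\,\mathbb E[Z^k].
\]
The computation goes as follows.
\begin{enumerate}
\item Expand $S^k$.
\item Replace the average over $P$ by independent uniform local residues; this is legitimate because $x=H^{o(1)}$.
\item Note that only diagonal pairings of the rough moduli survive. Off-diagonal configurations with a rough prime appearing once vanish by centring.
\item Bound higher coincidences by $x^{-\delta}$ using $\rho(p^2)\ll 1$.
\end{enumerate}
The pairing structure gives Gaussian moments. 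Each pair contributes a conditional variance $C_w(g_w)$, an explicit Euler-product-type expression in the densities $\rho_{g}(a^2)$, analogous to Sofos's constant $\gamma_0$. Letting $w\to\infty$, the $C_w$ form a martingale in $g$ that converges in $L^1$ to $C(g)$, the function of \secref{sec:limiting-variance-construction}; this identifies the limit. Joint convergence with all of $G_H$ then follows because finite-dimensional marginals determine weak limits on $V_\square$.

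\textbf{Positive variance of $C(G)$.} I would exhibit two events of positive Haar measure on which $C$ takes different conditional means. For instance, take $g\bmod 4$ with no roots modulo $4$, so that every $m$ with $2\mid m$ drops out, versus $g\equiv 0\bmod 4$. The resulting Euler factor at $2$ of $C$ differs, so $C(G)$ is nonconstant. A mixture $\sqrt{C(G)}Z$ with nondegenerate $C(G)$ has kurtosis $3\,\mathbb E[C^2]/\mathbb E[C]^2>3$, hence is not Gaussian.
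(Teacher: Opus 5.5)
\textbf{The main gap.} Your central claim is that, conditionally on $G_H^{(w)}$ for a \emph{fixed} $w$, the limit is $N(0,C_w(g_w))$, with $C_w$ an $\mathcal F_w$-measurable martingale converging to $C$. This claim is false, and in fact inconsistent. Take $f\equiv1$ and $k=4$. The left side of your weighted-moment formula then does not depend on $w$. But $\mathbb E[C_w^2]$ is nondecreasing in $w$ by conditional Jensen, and it is constant only if $C=C_w$, i.e.\ only if $C$ is $\mathcal F_w$-measurable. That is impossible: an $\mathcal F_w$-measurable function takes finitely many values, so its $k$th moments are $e^{O(k)}$, contradicting the lower bound of \thmref{thm:C-two-sided-moment-growth}. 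Concretely, $C$ depends on every $g_p$, e.g.\ through $X_{p,0}(g_p)$ and the singular-root events $\mathcal E_p$.

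The error sits in your steps 3--4. First, in your physical-space expansion a rough prime dividing only one modulus does not kill the term. Averaging over $g_p$ sends $\Xi_{a,b}$ to about $p^{-2}\,\Xi_{a,b/p}$, not to $0$. A clean vanishing and pairing structure only appears after passing to reduced Fourier frequencies, and there paired frequencies need not have equal M\"obius moduli. Second, coincidences of rough primes between \emph{different} pairs are not $O(x^{-\delta})$. For example, two pairs whose denominators are both divisible by $p^2$ occur with relative density $\asymp p^{-2}$, independently of $x$, and their local factors $X_{p,2}$ are correlated. Such cross-pair correlations are exactly why $\int S(g,x)^k\,dg\sim x^{k/2}\int C^k\,dg$ rather than $x^{k/2}(\int C\,dg)^k$. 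The correct statement is \propref{prop:finite-coordinate-weighted-moments}: the limit is $\mu_k\int f\,C^{k/2}\,dg$, which involves $\mathbb E[C^{k/2}\mid\mathcal F_w]$, not $C_w^{k/2}$. Even recovering this from $C_w$ as $w\to\infty$ would need $L^{k/2}$, not $L^1$, martingale convergence.

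\textbf{What is then missing.} With the correct formula, the law of $X$ given finitely many coordinates is still a mixture of $N(0,C(G))$ over the remaining coordinates, not a Gaussian. So the obstacle the paper emphasizes (moments of size $\exp(c_dk^2/(\log k)^2)$, Carleman failing) is not removed by working at a fixed level $w$. The remark that finite-dimensional marginals determine weak limits on $V_\square$ does not address it. The missing idea is to condition on \emph{all} of $G$:
\begin{enumerate}
\item Get tightness from the second moment and compactness of $V_\square$.
\item For any subsequential limit $(G,X)$, transfer the weighted identities by uniform integrability.
\item Use that $\bigcup_w G^{-1}(\mathcal F_w)$ is a $\pi$-system generating $\sigma(G)$ to obtain $\mathbb E[X^m\mid G]=\mu_mC(G)^{m/2}$.
\item Only at this level is the conditional law exactly $N(0,C(G))$, which is moment-determinate; uniqueness of subsequential limits then finishes the proof.
\end{enumerate}

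\textbf{Two further points.} Your small-$m$ truncation fails as stated. Since $\mathbb E\rho_P(m^2)\asymp1$ for squarefree $m$, the trivial bound $\sum_{m\le x^{1/2-\varepsilon}}\rho_P(m^2)$ is $\asymp x^{1/2-\varepsilon}\gg x^{1/4}$ on average. The $x^{1/4}$ scale comes from square-root cancellation over all $m\lesssim x^{1/2}$, so this range can only be removed by an $L^2$ orthogonality argument. The paper's Fourier expansion avoids the issue: it truncates only at $[k_1,\ldots,k_m]\le z$ with $z$ a large power of $x$, where absolute bounds suffice. Your non-Gaussianity argument ($C=0$ when $g_2\equiv0\bmod 4$, plus kurtosis $>3$) is fine in substance, but it needs $\int C\,dg>0$, which the paper obtains from $\int C\,dg=\gamma_0$.
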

We could have simply stated $X_{x,H}\overset{d}{\longrightarrow}\sqrt{C\left(G\right)}Z$, but the more general formulation above makes it clear that the random variance $C(G)$ arises from $G_H$.

\begin{remark}
Gaussian mixture limits have also appeared in the study of random
multiplicative functions, notably in recent works of Gorodetsky and Wong \cite{key-26}
and of Hardy \cite{key-27}.
\end{remark}

\subsection{Squarefrees in short intervals}
In recent work, Gorodetsky--Mangerel--Rodgers \cite{key-14} (hereafter GMR) studied the distribution of $\sum_{x\le n < x+H}\mu^2(n) - H/\zeta(2)$ through the method of moments, where $x$ is an integer chosen uniformly at random from $\{1,2,\ldots,X\}$. In the range $H=X^{o(1)}$ they obtained a standard Gaussian distribution after normalizing by $H^{1/4}$ times a certain arithmetic constant (see \cite{key-17,key-14} for earlier works on the variance, and \cite{key-23} for earlier work on moments).

We observe that replacing $X$ and $H$ in the problem studied by GMR, is similar to studying the distribution of $E_P$, when $P$ is now a random \textit{monic} linear polynomial (instead of a random polynomial from $\mathscr{P}_{d}\left(H\right)$): indeed, 
\[ \sum_{x\le n < x+H}\mu^2(n) - \frac{H}{\zeta(2)} = \sum_{0\le m<H} \mu^2(P(m)) - \mathfrak{S}_{P} H\]
where $P(t):=t+x$. This suggested to us that the methods of \cite{key-14} could play a useful role in establishing \thmref{thm:gaussian-mixture-limit}.

\subsection{Structure of the proof}\label{sec:main-objects-proof-structure}
As in GMR, it is natural to approach the distribution of $E_P(x)$ via the method of moments. Writing $\mathbb{E}_{H}$ for expectation with respect to the uniform measure on $\mathscr{P}_{d}\left(H\right)$, for each fixed integer $m\geq1$ we consider the $m$th moment\[
\mathrm{MP}_{m}\left(x,H\right):=\mathbb{E}_{H}\left[E_{P}\left(x\right)^{m}\right]=\frac{1}{\left|\mathscr{P}_{d}\left(H\right)\right|}\sum_{P\in\mathscr{P}_{d}\left(H\right)}E_{P}\left(x\right)^{m}.
\]
However, in contrast with the setting of GMR \cite{key-14}, it turns out that we cannot deduce the distribution directly from the moments $\mathrm{MP}_{m}$ alone. The reason is that, unlike the Gaussian limit in their setting, our Gaussian mixture does not satisfy standard sufficient conditions for moment determinacy, such as Carleman's criterion \cite[Chapter III]{key-20}: see \secref{app:C-moment-growth} for discussion. Instead, we need to establish asymptotics for weighted moments, where we weigh by functions of finitely many local reductions of $P$: $\Phi\left(\left(P\mod p^{2}\right)_{p\le w}\right)$, for arbitrary $w\ge2$ and arbitrary bounded $\Phi$. This constitutes the main arithmetic step of the argument and occupies the greater part of the paper. For $w\ge2$, write for $g=\left(g_{p}\right)_{p}\in V_{\square}$\begin{equation}
\mathcal{F}_{w}:=\sigma\left(g_{p}:p\leq w\right),\label{eq:sigmaAlgbra}
\end{equation} where $\sigma\left(\cdots\right)$ means $\sigma$-algebra. Also let $\mu_{m}:=\mathbb{E}\left(N\left(0,1\right)^{m}\right)$ which is $\left(m-1\right)!!$ if $m$ is even and $0$ otherwise. Our main technical result is the following.

\begin{proposition}\label{prop:finite-coordinate-weighted-moments}
Fix integers $m\geq1$ and $w\geq2$, and let $\Phi$ be a bounded $\mathcal{F}_{w}$-measurable function on $V_{\square}$. If $H\ge x^{C_{d}m^{2}}$ for some constant $C_{d}>0$, then \[
\lim_{x\rightarrow\infty}\mathbb{E}_H\left[\Phi\left(G_{H}\right)X_{x,H}^{m}\right]=\mu_{m}\int_{V_{\square}}\Phi\left(g\right)C\left(g\right)^{\frac{m}{2}}dg.
\]
\end{proposition}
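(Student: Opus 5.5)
The plan is to follow the GMR moment method, with the weight $\Phi$ carried through as an extra congruence condition modulo $W:=\prod_{p\le w}p^{2}$.

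\textbf{Step 1 (local reduction).} Start from $\mu^{2}(n)=\sum_{a^{2}\mid n}\mu(a)$ and split $E_P(x)=E_P^{\le y}(x)+E_P^{>y}(x)$ according to $a\le y$ or $a>y$, with $y=x^{1/2}$, or slightly smaller with a negligible tail. For the large-$a$ part, use the divisor-type bounds available for random $P$ in the box; this is the Browning--Shparlinski/Jelinek style input. For the small-$a$ part,
\[
E_P^{\le y}(x)=\sum_{a\le y}\mu(a)\Bigl(\#\{n\le x: a^{2}\mid P(n)\}-\tfrac{\rho_P(a^{2})}{a^{2}}x\Bigr)
\]
up to the tail of $\mathfrak S_P$. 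Each summand depends only on $P\bmod a^{2}$, and $x$ is compared with $a^{2}\le x$. Expanding $X_{x,H}^{m}$ gives sums over $a_1,\dots,a_m\le y$ of functions of $P\bmod \operatorname{lcm}(a_i^{2})$, multiplied by $\Phi$, which depends on $P\bmod W$. The total modulus is at most $W x^{m}$. Since $H\ge x^{C_dm^{2}}$, the coefficients of $P$ equidistribute modulo $Q$ for any $Q\le x^{O(m)}$, with error $O(Q/H)$ per coefficient. Summed over all tuples, this error is $x^{O(m)}/H=o(1)$. So $\mathbb{E}_H[\Phi(G_H)X^m]$ equals $\int\Phi(g)\,\tilde X_x(g)^m\,dg+o(1)$, where $\tilde X_x(g)$ is the analogous truncated error for a Haar-random $g\in V_\square$. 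The large-$a$ contribution must be shown to be $o(1)$ in $L^{2m}$ via H\"older, and I expect this to need the mod-$p^2$ structure with $\rho_P$ bounded on average.

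\textbf{Step 2 (conditioning on $\mathcal F_w$).} Fix the residues $g_p$ for $p\le w$. Conditionally, the coordinates $g_p$ for $p>w$ remain independent and Haar-distributed. Write $\tilde X_x(g)$ as a sum over $n\le x$ of $\prod_p(\mathbf 1_{p^{2}\nmid g(n)})-\mathfrak S_g$ and separate the primes $p\le w$. The contribution of the fixed residues is then a deterministic periodic weight $\psi_g(n)$ of period $W$. This reduces the problem to the GMR setting: squarefree-type counts over $p>w$, weighted by a periodic function, with independent local data at each prime. The task is to show that the conditional $m$th moment tends to $\mu_m C(g)^{m/2}$, where $C(g)$ is the conditional variance constructed in \secref{sec:limiting-variance-construction}. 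I will assume that section gives $C(g)$ as the limit of conditional second moments, i.e.\ the weighted analogue of Sofos' $\gamma_0$, with local factors at $p\le w$ depending on $g_p$. Integrating against $\Phi$ then gives the claim. Because $\Phi$ is bounded and everything is $\mathcal F_w$-measurable, dominated convergence over the finitely many values of $(g_p)_{p\le w}$ suffices, provided the convergence is uniform or simply pointwise, which holds since there are finitely many values.

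\textbf{Step 3 (main obstacle: the Gaussian combinatorics).} Following GMR, expand the $m$th conditional moment as a sum over $(n_1,\dots,n_m)$ and over square moduli, and compute the local correlation factors prime by prime. The diagonal structure should show that only perfect pairings of the indices contribute at the scale $x^{m/4}$. Each pair contributes the conditional covariance, which gives $(m-1)!!\,C(g)^{m/2}$, while clusters of size at least $3$ and cross terms are $o(x^{m/4})$. The hard part is controlling the non-pairing configurations: one needs the decay of the local correlation functions of $\mathbf 1_{p^{2}\nmid g(n)}$ for random $g$ of degree $d$. Here $\rho_g(p^2)$ is not constant, so a single $g_p$ couples many $n_i$ at once, unlike the linear case. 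I would first try to bound these configurations by adapting GMR's ``connected components'' estimate, showing that each non-paired component costs a power $x^{-\delta}$, using $\mathbb E[\rho_g(p^{2})^k]\ll_{k,d}1$ together with Fourier expansion on $V_{p^2}$.
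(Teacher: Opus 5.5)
\textbf{The main gap is in Steps 2--3.} You take $C$ to be the limit of the conditional second moment given $\mathcal F_w$, a function of $(g_p)_{p\le w}$ alone. You then claim that, after averaging over the coordinates $g_p$ with $p>w$, each perfect matching contributes the $(m/2)$-th power of this conditional covariance. That factorization is false, because all pairs of a matching see the \emph{same} high-prime coordinates.

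Up to negligible terms, a matching contributes $x^{-m/4}\,\mathbb E[S(g,x)^{m/2}\mid\mathcal F_w]$. Here $S(g,x)=\sum_{r>1}\mu^2(\operatorname{rad} r)\sum_{\sigma\in\mathcal R(r)}|E_x(\sigma)|^2|\Lambda(g,\sigma,r)|^2$ is the one-pair sum for the full random $g$. This is not $\bigl(x^{-1/2}\,\mathbb E[S(g,x)\mid\mathcal F_w]\bigr)^{m/2}$. For instance, suppose a prime $p>w$ exactly divides two of the pair denominators, and write $X_{p,1}$ for the corresponding local factor of $r^2|\Lambda|^2$. Then $\mathbb E[X_{p,1}(g_p;\xi)X_{p,1}(g_p;\xi')]\asymp p^{-2}$, while $(\mathbb E X_{p,1})^2\asymp p^{-4}$, because both factors are driven by the same singular-root event $\{Z_p\ge1\}$.

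Indeed, the $C$ in the statement is not $\mathcal F_w$-measurable for any $w$. Such a function takes finitely many values, whereas $\int C^k\,dg\ge\exp(c_dk^2/(\log k)^2)$. The target $\int\Phi\,C^{m/2}\,dg=\int\Phi\,\mathbb E[C^{m/2}\mid\mathcal F_w]\,dg$ therefore differs, by strict Jensen (take $\Phi\equiv1$ and even $m\ge4$), from $\int\Phi\,(\mathbb E[C\mid\mathcal F_w])^{m/2}\,dg$. So your Step 3, carried out as described, would lead you to a conditionally Gaussian limit given $\mathcal F_w$, which is false.

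The missing ingredient is the core of \secref{sec:limiting-variance-construction}: you must show $x^{-1/2}S(g,x)\to C(g)$ in every $L^k(dg)$ for the full $g$. The paper does this in three steps:
\begin{itemize}
\item it rewrites $S$ through the coefficients $c_{g,\lambda}(r)$;
\item it finds the limit, for each fixed $z$, of the martingale $\mathbb E[A_{g,\lambda}(Y)\mid\mathcal F_z]$;
\item it uses a Burkholder bound to show that the primes $p>z$ are negligible uniformly in $Y$.
\end{itemize}
The weight then enters only through $x^{-k/2}\int\chi_w S(g,x)^k\,dg\to\int\chi_w C^k\,dg$ for cylinder indicators $\chi_w$. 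This uses nothing beyond $0\le\chi_w\le1$ and the $L^k$ convergence.

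\textbf{Two further inputs also fail.}

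First, $\mathbb E[\rho_g(p^2)^k]\ll_{k,d}1$ is false for $k\ge3$. With probability $\asymp p^{-2}$ the reduction $g_p$ has a singular root modulo $p$, which lifts to $p$ roots modulo $p^2$, so $\mathbb E\rho_g(p^2)^k\asymp p^{k-2}$. Correspondingly, the arithmetic weight $\mathfrak A_{\mathbf k}(\boldsymbol\ell)$ of a frequency tuple is only bounded by $q^{\varepsilon}\prod_{p\mid K}p^{\max(0,u_p-2)}$. So GMR's off-diagonal bounds cannot simply be imported. The paper splits into good and bad tuples, and controls the bad ones through a $(1+\vartheta)$-th moment of $\mathfrak A_{\mathbf k}$ combined with Rankin's trick.

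Second, cutting the M\"obius variable at $a\le x^{1/2}$ does not leave a negligible tail.
\begin{itemize}
\item Frequencies whose reduced denominator $r=ab^2$ exceeds $x$ carry a positive proportion of the variance. The weight $f(t)$ in the one-pair decomposition is $\asymp t^{-2}$ for $t=r/(x\lambda)\ge2$, against partial sums of size $Y^{1/2}$.
\item Such a denominator arises only from M\"obius moduli divisible by $ab\ge\sqrt r>x^{1/2}$.
\end{itemize}
Hence $\|E_P^{>y}(x)\|_{L^2}$ is itself of order $x^{1/4}$, not $o(x^{1/4})$. You must truncate at a large power of $x$ instead. The paper truncates the lcm at $z=x^{(3m+1)/4}H^{\varepsilon'}$, with a tail of size roughly $x^m/z$. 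It is the equidistribution of $P$ modulo such large $K^2$ that forces $H\ge x^{C_dm^2}$.
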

Roughly speaking, the proof of \propref{prop:finite-coordinate-weighted-moments} follows from the general structure in GMR's argument, which in particular relied on the work of Montgomery and Vaughan \cite{key-16}, Montgomery and Soundararajan \cite{key-25} and an observation of Nunes \cite{key-24}.

In \secref{sec:harmonic-local-moment-expressions}--\secref{sec:paired-main-term-limiting-variance} we establish the $\Phi\equiv 1$ case of \propref{prop:finite-coordinate-weighted-moments}, thereby obtaining the asymptotics of ordinary moments. Then, in \secref{sec:conditional-moments-convergence} we explain the modifications required to obtain \propref{prop:finite-coordinate-weighted-moments} in its full generality. Moreover, in \secref{sec:conditional-moments-convergence} we spell out in detail the deduction of \thmref{thm:gaussian-mixture-limit} from \propref{prop:finite-coordinate-weighted-moments}, which, while being standard to probabilists, will be useful to number theorists as earlier works in the area did not involve a Gaussian mixture.

\subsubsection{The starting point: expansion in Fourier space and truncation}
Sofos studied the variance of $E_{P}$ in physical space, by passing from the identity $\mu^{2}(n)=\sum_{k^{2}\mid n}\mu(k)$ to the truncated model $\sum_{k\le z,\,k^{2}\mid n}\mu(k)$ with suitable $z$. To obtain asymptotics for moments beyond the variance, we introduce a different, Fourier-side representation of $E_{P}$, inspired by the one in GMR \cite[Proposition 2.1]{key-14}. Put $e(t):=e^{2\pi it}$ and, for $q \ge 1$, define 
\[
\mathcal{R}_{P}(q):=\left\{ \nu\bmod q:P(\nu)\equiv0\bmod q\right\} ,\qquad E_{x}(t):=\sum_{n\leq x}e(nt),\qquad R_{P}(\alpha,q):=\sum_{\nu\in\mathcal{R}_{P}(q)}e\left(\frac{\alpha\nu}{q}\right). \]
In \secref{sec:harmonic-local-moment-expressions}, we show that, upon removing an elementary exceptional set of polynomials, the equality $\mu^{2}(n)=\sum_{k^{2}\mid n}\mu(k)$ and orthogonality of additive characters yield the identity \begin{equation}
E_{P}(x) = \sum_{k\geq1}\frac{\mu(k)}{k^{2}}\sum_{\ell=1}^{k^{2}-1}E_{x}\left(\frac{\ell}{k^{2}}\right)R_{P}\left(-\ell,k^{2}\right)\label{eq:E_P_Harmonic}
\end{equation}
which is the basis for our work. We focus on explaining the $\Phi\equiv 1$ case of our argument. Raising the Fourier expansion \eqref{eq:E_P_Harmonic} to the $m$th power produces sums over $m$-tuples of rational frequencies $\ell_{i}/k_{i}^{2}$. This sum is further summed over $P$. 
After rearranging the $m$-th moment of \eqref{eq:E_P_Harmonic}, invoking character orthogonality again implies that the contribution vanishes unless the frequencies satisfy the relation\[
\frac{\ell_{1}}{k_{1}^{2}}+\cdots+\frac{\ell_{m}}{k_{m}^{2}}\in\mathbb{Z},
\]
which we refer to as the zero-sum relation (as the sum is zero modulo $1$). \secref{sec:harmonic-local-moment-expressions} justifies these passages, and moreover, shows how one may restrict the attention to rational tuples $(\ell_i/k_i^2)_{i=1}^{m}$ where the lcm $[k_1,\ldots,k_m]$ is smaller than a certain parameter $z$ at the cost of an explicit error; in practice $z$ is chosen as a power of $x$. The main result of \secref{sec:harmonic-local-moment-expressions} is  \propref{prop:truncated-moment-reduction}, where this $z$-truncation is established, and allows us to concentrate on studying an expression of the form
\begin{equation}\label{eq:form}\sum_{\substack{k_{1},\ldots,k_{m}\geq1\\
K\leq z
}
}\prod_{i=1}^{m}\frac{\mu\left(k_{i}\right)}{k_{i}^{2}}\sum_{\substack{0<\ell_{i}<k_{i}^{2}\\
1\leq i\leq m
}
}\prod_{i=1}^{m}E_{x}\left(\frac{\ell_{i}}{k_{i}^{2}}\right)\sum_{\substack{\nu_{i}\bmod k_{i}^{2}\\
1\leq i\leq m
}
}e\left(-\sum_{i=1}^{m}\frac{\ell_{i}\nu_{i}}{k_{i}^{2}}\right)\frac{G\left(\boldsymbol{\nu},\mathbf{k}\right)}{q^{d+1}}
\end{equation}
where, for a tuple $\mathbf{k}=\left(k_{1},\ldots,k_{m}\right)$, we write $K:=\left[k_{1},\ldots,k_{m}\right]$, and $q=\left[k_{1}^{2},\ldots,k_{m}^{2}\right]=K^{2}$. The expression $G\left(\boldsymbol{\nu},\mathbf{k}\right)$, defined in \eqref{eq:G(nu,k)}, is (essentially) a sum over polynomials $g\in V_q$, satisfying the congruences $g(\nu_i)\equiv 0 \bmod k_i^2$ for all $i$. In comparison to GMR, the $\nu_i$-sum is new, and arises from the $P$-average. Note that in \eqref{eq:form}, there is no dependence on the height parameter $H$, and it is only influencing the error between the $m$th moment and \eqref{eq:form}.
\subsubsection{Non-paired contribution}
One expects that the main contribution to \eqref{eq:form} comes from \textit{exactly paired} tuples: tuples where each $\ell_i/k_i^2$ is equal to the negative of $\ell_j/k_j^2$ for some $j\neq i$ (and the $\{j,i\}$ pairs form a partition of $\{1,2,\ldots,m\}$); such contribution of course only exists when $m$ is even. In \secref{sec:nonpaired-frequency-analysis} we prove \propref{prop:nonpaired-frequency-power-saving}, which shows that indeed non-paired tuples contribute negligibly to \eqref{eq:form}. To do so, we recall that GMR dealt with the simpler sum
\begin{equation}\label{eq:form2}\sum_{\substack{k_{1},\ldots,k_{m}\geq1\\
K\leq z
}
}\prod_{i=1}^{m}\frac{\mu\left(k_{i}\right)}{k_{i}^{2}}\sum_{\substack{0<\ell_{i}<k_{i}^{2}\\
1\leq i\leq m
}
}\prod_{i=1}^{m}E_{x}\left(\frac{\ell_{i}}{k_{i}^{2}}\right)\end{equation}
and also bounded the contribution of non-paired tuples to \eqref{eq:form2}. Our argument in \S\ref{sec:nonpaired-frequency-analysis} has two parts. When the $\nu_i$-sum in \eqref{eq:form} is small, a case we call `good', we manage to reduce our desired bound to the bounds already established in GMR. However, when the $\nu_i$-sum is large, a case we call `bad', we exploit the multiplicative structure of $G(\mathbf{\nu},\mathbf{k})$ to identify the corresponding local divisibility constraints on the tuples $\mathbf{k},\boldsymbol{\ell}$, and then use a weighted higher-moment argument to recover the required saving; this is done in \lemref{lem:bad-frequency-weighted-sum-bound} and \lemref{lem:arithmetic-factor-higher-moment-bound} which are combined in \lemref{lem:bad-frequency-power-saving}.

\subsubsection{A further rewrite: the local model}
\label{sec:local-model-subsubsection}
To analyze the exactly paired contribution to \eqref{eq:form}, it is useful to replace the original polynomial model by one in which the reductions modulo distinct prime squares are independent; this model is implicit in \eqref{eq:form}. Let
\[
V_{\square}^{\sharp}:=\prod_p V_{p^2}^{\sharp},
\quad
V_{p^2}^{\sharp}:=V_{p^2}\setminus\left\{0\right\},
\]
equipped with the product probability measure, and write
$g=(g_p)_p\in V_{\square}^{\sharp}$. We define the corresponding local discrepancy by
\begin{equation}
E_g(x)
:=
\sum_{n\leq x}
\prod_p\mathds{1}_{g_p(n)\not\equiv0\bmod p^2}
-
x\prod_p
\left(1-\frac{\rho_{g_p}(p^2)}{p^2}\right).
\label{eq:local-discrepancy}
\end{equation}
Unlike $E_P(x)$, the local model is built from independent polynomial reductions at distinct primes. This independence allows the arithmetic weights to be analyzed prime by prime, which is essential for evaluating the paired contribution and constructing the limiting variance $C(g)$.
In \secref{sec:local-model} we show that the moments of this model agree asymptotically with the original moments: for $H$ sufficiently large in terms of $x$ and any integer $m\ge1$,
\[
\mathrm{MP}_m(x,H)
=
\frac{1}{\zeta(2d+2)}
\int_{V_{\square}^{\sharp}}E_g(x)^m\,dg
+
o_{m,d}\!\left(x^{m/4}\right).
\]
To analyze this local moment in Fourier space, we first truncate \eqref{eq:local-discrepancy} to the prime coordinates $p\le y$. For fixed $y$, the underlying indicator is periodic modulo
\[
Q_y:=\prod_{p\le y}p^2,
\]
and therefore admits a finite Fourier expansion in $\mathbb{Z}/Q_y\mathbb{Z}$. Moreover, as $y\to\infty$, we show
$E_g^{(y)}(x)\to E_g(x)$ in every fixed $L^q=L^q(dg)$.
Writing the Fourier frequencies in lowest terms, the possible reduced denominators belong to the set
\begin{equation}
\mathscr R:=\{ab^2:a,b\text{ squarefree},\ (a,b)=1\}.
\label{eq:R-sf-decomposition}
\end{equation}
Upon expanding $E_g^{(y)}(x)$ in Fourier frequencies and averaging over the independent coordinates $g_p$, translation invariance of the local-model measure forces the same zero-sum relation as in the original harmonic expansion. Combining this representation with the non-paired estimate of the previous subsection shows that, up to a negligible error, only tuples consisting of pairs of exact opposite frequencies $\{\sigma,-\sigma\}$ contribute. The problem is thereby reduced to evaluating this paired contribution using the prime-by-prime structure of the local model.

\subsubsection{Martingale structure of the paired contribution}

For $m=2k$, fix one pairing of the frequency indices.
As shown in \secref{sec:exact-frequency-pairings}, each pair of
opposite frequencies contributes the same non-negative quantity
\[
S(g,x):=
\sum_{\substack{r\in\mathscr R\\r>1}}
\mu^2(\operatorname{rad}(r))
\sum_{\sigma\in\mathcal R(r)}
|E_x(\sigma)|^2|\Lambda(g,\sigma,r)|^2,
\]
where
$\mathcal R(r)=\{\nu/r\bmod1:(\nu,r)=1\}$, and
$\operatorname{rad}(r):=\prod_{p\mid r}p$, is the radical of $r$.
Here $\Lambda(g,\sigma,r)$ is the convergent product of local
Fourier coefficients defined in
\secref{sec:finite-prime-fourier-expansion}, satisfying
$\Lambda(g,-\sigma,r)=\overline{\Lambda(g,\sigma,r)}$.
Thus, apart from negligible contributions from distinct pairs
sharing a denominator, a fixed pairing contributes essentially
$S(g,x)^k$. Summing over all pairings introduces the factor
$\mu_{2k}=(2k-1)!!$. Since the $2k$th moment is normalized by $x^{k/2}$,
the main task is therefore to understand $x^{-1/2}S(g,x)$. An elementary Fourier identity for  $E_x(\sigma)$  (used already in GMR) allows us to rewrite $S(g,x)$ in terms of non-negative coefficients $c_{g,\lambda}(r)$ as
\[
S(g,x)
=
2\sum_{\lambda\geq1}\lambda^{-2}
\sum_{\substack{r\in\mathscr R\\(r,\lambda)=1}}
c_{g,\lambda}(r)
f\!\left(\frac{r}{x\lambda}\right),
\]
for an explicit smooth (and deterministic) weight $f$, see \lemref{lem:one-pair-sum-decomposition}. By partial summation, it is therefore enough for the moment calculation to understand
\[
A_{g,\lambda}(Y)
:=
\sum_{\substack{r\leq Y\\(r,\lambda)=1}}
c_{g,\lambda}(r).
\]
Conveniently, the coefficients $c_{g,\lambda}(r)$ admit a
prime-by-prime factorization, although they are not
multiplicative in $r$.
This factorization, together with the independence of
the local coordinates $(g_p)_p$, makes it natural to
reveal these coordinates successively. With $\mathcal F_z$ as in \eqref{eq:sigmaAlgbra}, put
\[
A_{g,\lambda}^{(z)}(Y)
:=
\mathbb E\!\left[A_{g,\lambda}(Y)\mid\mathcal F_z\right].
\]
As $z$ increases, these conditional expectations form a martingale. For fixed $z$, independence of the remaining prime coordinates reduces $A_{g,\lambda}^{(z)}(Y)$ to finitely many multiplicative sums in arithmetic progressions, from which we obtain a limit for
$Y^{-1/2}A_{g,\lambda}^{(z)}(Y)$. Uniform moment estimates for the local coefficients, together with standard results in the theory of martingales, show that the contribution of the primes $p>z$ tends to zero uniformly in $Y$. Consequently, for each \(\lambda\ge1\) there exists a non-negative random variable \(D_g(\lambda)\) such that
\[
Y^{-1/2}A_{g,\lambda}(Y)
\longrightarrow
D_g(\lambda)
\]
in \(L^q\) for every $q$.
A weighted summation argument then transfers these partial-sum limits back to $S(g,x)$ and gives
\[
x^{-1/2}S(g,x)\longrightarrow C(g)
\qquad\text{in }L^q
\]
for every fixed $q$, where
\[
C(g)
:=
\frac1\pi
\sum_{\lambda\geq1}
\frac{D_g(\lambda)}{\lambda^{3/2}}.
\]
This constructs the random variance appearing in \thmref{thm:gaussian-mixture-limit}. Since there are $\mu_{2k}=(2k-1)!!$ pairings, while repeated pair-denominators are negligible, the paired contribution to the $2k$th moment is
\[
\mu_{2k}x^{k/2}
\int_{V_{\square}}C(g)^k\,dg
+
o\!\left(x^{k/2}\right),
\]
whereas odd moments have no exactly paired contribution.

\subsubsection{Conclusion of the proof}

Thus far, our analysis yields the asymptotics of all fixed ordinary moments. To obtain \propref{prop:finite-coordinate-weighted-moments}, we repeat the argument while fixing finitely many local reductions of the polynomial, or equivalently while inserting a bounded $\mathcal F_w$-measurable weight $\Phi$. The analysis is largely unchanged, and this gives
\begin{equation}
\lim_{x\rightarrow\infty}\mathbb{E}_H\left[\Phi\left(G_{H}\right)X_{x,H}^{m}\right]=\mu_{m}\int_{V_{\square}}\Phi\left(g\right)C\left(g\right)^{\frac{m}{2}}dg;
\label{eq:weighted-moment-identity}
\end{equation}
the few changes are explained in detail in \secref{sec:proof-weighted-moment-asymptotics}.

The condition $x=H^{o(1)}$ in \thmref{thm:gaussian-mixture-limit} ensures that, for every fixed $m$, the required relation $H\geq x^{C_dm^2}$ eventually holds.

We now briefly explain the probabilistic argument in \secref{sec:subsequential-conditional-moments}--\secref{sec:proof-main-theorem}, which identifies the limiting distribution of $X_{x,H}$ given the weighted moments. First, the local reduction vector $G_H$ converges to Haar-distributed
$G\in V_{\square}$. Suppose first $(G_H,X_{x,H})$ has a subsequential limit $(G,X)$. Since functions of finitely many prime coordinates generate $\sigma(G)$, it follows from \eqref{eq:weighted-moment-identity} that
\[
\mathbb E[X^{2k}\mid G]
=
\mu_{2k}C(G)^k,
\qquad
\mathbb E[X^{2k+1}\mid G]
=
0.
\]
Thus, conditionally on $G$, the moments of $X$ are exactly those of
$N(0,C(G))$. Unlike the unconditional Gaussian mixture, this Gaussian law is determined by its moments, so
\[
X\mid G\sim N(0,C(G)),
\]
see \propref{prop:subsequential-conditional-moments}. Moreover, since $V_{\square}$ is compact,
\propref{prop:finite-coordinate-weighted-moments}
with $m=2$ and $\Phi\equiv1$ implies tightness of
$(G_H,X_{x,H})$.
The uniqueness of the conditional law, together with
Prokhorov's theorem, then implies
\[
(G_H,X_{x,H})
\overset{d}{\longrightarrow}
\bigl(G,\sqrt{C(G)}\,Z\bigr),
\]
where $Z\sim N(0,1)$ is independent of $G$, completing the proof of \thmref{thm:gaussian-mixture-limit}.

\section{Harmonic and local expressions of the polynomial moments}\label{sec:harmonic-local-moment-expressions}
We retain the notation introduced earlier. Throughout, we write
$[n_1,\ldots,n_r]$ and $(n_1,\ldots,n_r)$ for the least common
multiple and greatest common divisor, respectively, of the
positive integers $n_1,\ldots,n_r$, and set
$[n]:=\{1,\ldots,n\}$.

\subsection{Discarding bad polynomials}\label{sec:discarding-bad-polynomials}
We commence by discarding a certain subset of $\mathscr{P}_d(H)$ which is negligible but can cause problems.

For $P(t)=\sum_{i=0}^{d}a_{i}t^{i}\in\mathbb{Z}[t]$, we write $\mathscr{C}(P):=\gcd(a_{0},...,a_{d})$ for the content of $P$ and $\mathrm{Disc}(P)$ for its discriminant. We adopt the convention $\mu^{2}(0)=0$ and put\begin{align*}
 & \mathscr{P}_{d}^{\sharp}\left(H\right):=\left\{ P\in\mathscr{P}_{d}\left(H\right):p^{2}\nmid\mathscr{C}\left(P\right)\ \text{for every prime }p\right\} ,\\
 & \mathscr{P}_{d}^{\circ}\left(H;x\right):=\left\{ P\in\mathscr{P}_{d}\left(H\right):P\left(n\right)\neq0\ \text{for every }1\leq n\leq x\right\} ,\\
 & \mathscr{P}_{d}^{\mathrm{sep}}\left(H\right):=\left\{ P\in\mathscr{P}_{d}\left(H\right):\mathrm{Disc}\left(P\right)\neq0\right\} ,\\
 & \mathcal{F}_{d}\left(H,x\right):=\mathscr{P}_{d}^{\sharp}\left(H\right)\cap\mathscr{P}_{d}^{\circ}\left(H;x\right)\cap\mathscr{P}_{d}^{\mathrm{sep}}\left(H\right),
\end{align*}where $\mathrm{sep}$ stands for separable. It is enough to restrict our count to $\mathcal{F}_{d}\left(H,x\right)$, as the following lemma shows. For $\deg P=1$ we use the convention $\mathrm{Disc}\left(P\right)=1$ so that $\mathscr{P}_{1}^{\mathrm{sep}}\left(H\right)=\mathscr{P}_{1}\left(H\right)$. 
\begin{lemma}\label{lem:exceptional-polynomials-negligible}
If $P\notin\mathscr{P}_{d}^{\sharp}\left(H\right)$, then $E_{P}\left(x\right)=0$ for every $x\geq1$. Moreover,\[
\#\left(\mathscr{P}_{d}\left(H\right)\setminus\mathscr{P}_{d}^{\circ}\left(H;x\right)\right)\ll_{d}H^{d}\log\left(2x\right)+xH^{d-1},
\]and\[
\#\left(\mathscr{P}_{d}\left(H\right)\setminus\mathscr{P}_{d}^{\mathrm{sep}}\left(H\right)\right)\ll_{d}H^{d}.
\]Consequently,\[
\frac{1}{\left(2H\right)^{d+1}}\sum_{P\notin\mathscr{P}_{d}^{\circ}\left(H;x\right)\cap\mathscr{P}_{d}^{\mathrm{sep}}\left(H\right)}\left|E_{P}\left(x\right)\right|^{m}\ll_{m,d}x^{m}\left(\frac{\log\left(2x\right)}{H}+\frac{x}{H^{2}}\right).
\]
\end{lemma}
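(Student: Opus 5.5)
We prove the four assertions in turn, each by a direct elementary argument.

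\emph{First claim.} If $P\notin\mathscr{P}_d^\sharp(H)$, pick a prime $p$ with $p^2\mid\mathscr{C}(P)$. Then $p^2\mid P(n)$ for every integer $n$, so $\mu^2(P(n))=0$ for all $n$ (this includes $P(n)=0$, by the convention $\mu^2(0)=0$). Likewise $\rho_P(p^2)=p^2$, so the factor $1-\rho_P(p^2)/p^2$ vanishes and $\mathfrak{S}_P=0$. Hence $E_P(x)=0$ identically.

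\emph{Polynomials with an integer root in $[1,x]$.} Fix $1\le n\le x$ and count the $P\in\mathscr{P}_d(H)$ with $P(n)=0$. Given $a_1,\dots,a_d$ (at most $(2H+1)^d$ choices), the constant term is forced to be $a_0=-\sum_{j\ge1}a_jn^j$. It must also satisfy $|a_0|\le H$, and this is where we save.
\begin{itemize}
\item For $n=1$ we simply get $\ll H^d$.
\item For $n\ge2$ we instead fix $a_0,\dots,a_{d-1}$. Then $a_dn^d=-\sum_{j<d}a_jn^j$, which has absolute value $\le H\cdot\frac{n^d-1}{n-1}$. So $a_d$ is determined, and it exists only if $n^d$ divides this sum.
\end{itemize}
A cleaner route for $n\ge2$: since $n\mid a_0$, the number of admissible $a_0$ is $\ll H/n+1$, and then $a_2,\dots,a_d$ are free while $a_1$ is determined. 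This gives $\ll H^{d-1}(H/n+1)$. Summing over $n\le x$ yields $\ll H^d\log(2x)+xH^{d-1}$, as claimed.

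\emph{Inseparable polynomials.} For $d=1$ there is nothing to prove. For $d\ge2$, $\mathrm{Disc}$ is a nonzero polynomial in $(a_0,\dots,a_d)$ with integer coefficients. Schwartz--Zippel (or the trivial bound: a nonzero polynomial vanishes on $\ll H^{d}$ points of the box $[-H,H]^{d+1}$, by induction on the number of variables) gives $\#\{\mathrm{Disc}=0\}\ll_d H^d$. Concretely, write $\mathrm{Disc}$ as a polynomial in $a_0$ whose leading coefficient is a nonzero polynomial in the remaining variables. When that coefficient is nonzero there are $\le\deg$ roots $a_0$; when it vanishes we apply induction in one fewer variable with a free $a_0$.

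\emph{Moment bound.} Trivially $|E_P(x)|\le x+\mathfrak{S}_Px\le 2x$, so $|E_P(x)|^m\ll_m x^m$. The exceptional set has size $\ll_d H^d\log(2x)+xH^{d-1}+H^d$. Dividing by $(2H)^{d+1}$ gives
\[
x^m\left(\frac{\log(2x)}{H}+\frac{x}{H^2}\right).
\]

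The only mildly delicate step is the root count. There one must use divisibility $n\mid a_0$ rather than just solving for $a_0$, in order to obtain the $\log$ term instead of a factor $x$.
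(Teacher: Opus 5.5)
Your proposal is correct and follows the paper's proof essentially step for step: the content argument for the first claim, the divisibility $n\mid a_0$ (with $a_1$ then determined by $a_0,a_2,\dots,a_d$) giving $\ll_d H^d/n+H^{d-1}$ roots for each $n\le x$, Schwartz--Zippel for the discriminant, and the trivial bound $|E_P(x)|\le 2x$ for the moment estimate. Your first attempt in the root-count paragraph (the case split that fixes $a_0,\dots,a_{d-1}$) is not needed and should be deleted, since only your ``cleaner route'' gives the stated bound.
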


\begin{proof}
If $p^{2}\mid\mathscr{C}\left(P\right)$ for some prime $p$, then every value of $P$ is divisible by $p^{2}$ and the $p$-factor of $\mathfrak{S}_{P}$ is zero. Thus both terms defining $E_{P}\left(x\right)$ vanish, and the first assertion follows. For the second assertion, fix $n\leq x$. We shall bound how many tuples $(a_0,\ldots,a_d)$ satisfy $P(n)=0$. Since $P(n)\equiv a_0 \bmod n$, we have a divisibility relation $n\mid a_0$. Further, given $a_0,a_2,\ldots,a_d$, there is at most one value of $a_1$ for which $P(n)=0$. Hence there are $O_{d}\left(H^{d}/n+H^{d-1}\right)$ such tuples. Summing over $n\le x$ gives the stated count. For the third assertion, observe that the discriminant is a nonzero polynomial in the $d+1$ coefficients, so the conclusion follows by Schwartz–Zippel. Finally, as $\left|E_{P}\left(x\right)\right|\leq2x$ since $0\leq\mathfrak{S}_{P}\leq1$, the last assertion follows from the earlier ones.
\end{proof}

\subsection{Harmonic expansion}\label{sec:harmonic-expansion}
We establish \eqref{eq:E_P_Harmonic} for all polynomials $P\in\mathcal{F}_d\left(H,x\right)$; we refer to this expansion as the harmonic expansion for $E_{P}\left(x\right)$. We note that for $P \in \mathscr{P}_{d}^{\mathrm{sep}}\left(H\right)$, the Euler product $\mathfrak{S}_{P}$ converges absolutely and equals $\sum_{k \ge 1}\mu(k)\rho_{P}(k^{2})k^{-2}$.
\begin{lemma}\label{lem:harmonic-expansion}
Let $P\in\mathcal{F}_{d}\left(H,x\right)$. Then
\[ E_{P}(x) = \sum_{k \ge 1} \frac{\mu(k)}{k^{2}}\sum_{\ell=1}^{k^{2}-1}E_{x}\left(\frac{\ell}{k^{2}}\right)R_{P}(-\ell,k^{2}) .\]
\end{lemma}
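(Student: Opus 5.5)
We write $\mu^2(P(n))=\sum_{k^2\mid P(n)}\mu(k)$, swap the order of summation, and then separate the main term from the oscillating part by additive characters. Since $P\in\mathscr{P}_d^{\circ}(H;x)$, every value $P(n)$ with $n\le x$ is a nonzero integer. So for each such $n$ the divisor sum over $k$ is finite, with $k\le |P(n)|^{1/2}$, and
\[
\sum_{n\le x}\mu^2(P(n))=\sum_{k\ge1}\mu(k)\,\#\{n\le x: k^2\mid P(n)\},
\]
where only finitely many $k$ (at most $\max_{n\le x}|P(n)|^{1/2}$) contribute. We detect the congruence by grouping $n$ by its residue $\nu\in\mathcal{R}_P(k^2)$ and using orthogonality:
\[
\#\{n\le x: k^2\mid P(n)\}=\sum_{\nu\in\mathcal R_P(k^2)}\frac1{k^2}\sum_{\ell=0}^{k^2-1}\sum_{n\le x}e\!\left(\frac{\ell(n-\nu)}{k^2}\right)=\frac{1}{k^2}\sum_{\ell=0}^{k^2-1}E_x\!\left(\frac{\ell}{k^2}\right)R_P(-\ell,k^2).
\]
The $\ell=0$ term equals $x\rho_P(k^2)/k^2$, because $E_x(0)=\lfloor x\rfloor=x$ for integer $x$ and $R_P(0,k^2)=\rho_P(k^2)$.

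Next we handle convergence of the $\ell=0$ terms. Summed over $k$, they give $x\sum_k\mu(k)\rho_P(k^2)k^{-2}$, which equals $\mathfrak S_P x$ by the remark preceding the lemma, valid since $P$ is separable. For this to be legitimate, the series must converge absolutely, and we justify that separately. For $p\nmid \mathrm{Disc}(P)\,\mathscr{C}(P)$, Hensel's lemma gives $\rho_P(p^2)=\rho_P(p)\le d$. For the finitely many remaining primes, $\rho_P(p^2)\le p^2$. Together with $\rho_P$ being multiplicative, this makes $\sum_k \mu^2(k)\rho_P(k^2)/k^2$ converge.

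The delicate point is the rearrangement for the nonzero frequencies. Once the $k$-sum is split into the $\ell=0$ part and the $\ell\ne0$ part, the full $k$-sum is only a finite sum, while the $\ell=0$ series is infinite. The $\ell\neq 0$ series therefore converges: for $k>k_0:=\max_{n\le x}|P(n)|^{1/2}$ the total inner sum over all $\ell$ vanishes identically, since no $n\le x$ has $k^2\mid P(n)$. Hence, for every $k>k_0$, the $\ell\neq0$ part equals $-x\rho_P(k^2)/k^2$ times $\mu(k)$. Summed over $k$, it forms the convergent tail of the $\mathfrak S_P$ series. The series in the lemma, read as $\sum_k$ of the finite inner sums, therefore converges, with partial sums over $k\le K$ for $K\ge k_0$ equal to
\[
\sum_{n\le x}\mu^2(P(n))-x\sum_{k\le K}\mu(k)\rho_P(k^2)/k^2.
\]
Letting $K\to\infty$ gives exactly $E_P(x)$.

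The squarefree-content condition $P\in\mathscr P_d^\sharp$ is not needed beyond ensuring that $\mathfrak S_P$ is the stated Euler product; for other $P$ the first part of \lemref{lem:exceptional-polynomials-negligible} already covers the identity. The main obstacle is stating the order of summation precisely, namely that the outer sum over $k$ is taken with the inner sum over $\ell$ finite. Absolute convergence in $(k,\ell)$ jointly is not available, since $|E_x(\ell/k^2)|$ can be of size $\gg k^2/\ell$.
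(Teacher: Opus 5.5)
Your proof is correct and is the paper's argument: the M\"obius expansion of $\mu^2(P(n))$ using $P(n)\neq0$, additive characters to detect $k^2\mid P(n)$, and cancellation of the $\ell=0$ term against $x\rho_P(k^2)/k^2$. You also make explicit two steps the paper passes over silently, namely the absolute convergence of $\sum_k\mu(k)\rho_P(k^2)k^{-2}$ and the rearrangement of the $k$-sum, and your closing remark about $\mathscr{P}_d^\sharp$ is slightly off but harmless. Your own argument never uses squarefree content, since $\mathscr{C}(P)$ has only finitely many prime factors, so the identity holds directly for every $P\in\mathscr{P}_d^{\circ}(H;x)\cap\mathscr{P}_d^{\mathrm{sep}}(H)$; the cited lemma only gives $E_P(x)=0$, not the vanishing of the right-hand side.
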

\begin{proof}
For $P\in\mathcal{F}_{d}\left(H,x\right)$, the identity\[
\mu^{2}\left(P\left(n\right)\right)=\sum_{k^{2}\mid P\left(n\right)}\mu\left(k\right)
\]holds for every $n\leq x$ since $P(n)\neq 0$. We then have
\begin{align}
E_{P}\left(x\right)&=\sum_{n\le x}\mu^{2}\left(P\left(n\right)\right)-\mathfrak{S}_{P}x=\sum_{n\le x}\sum_{\substack{k\ge1\\
k^{2}\mid P\left(n\right)
}
}\mu\left(k\right)-x\sum_{k\ge1}\frac{\mu\left(k\right)\rho_{P}\left(k^{2}\right)}{k^{2}}\nonumber \\ &=\sum_{k\ge1}\mu\left(k\right)\Bigg(\sum_{\substack{n\le x\\
P\left(n\right)\equiv0\bmod k^{2}
}
}1-x\cdot\frac{\rho_{P}\left(k^{2}\right)}{k^{2}}\Bigg).\label{eq:(1)}
\end{align}Also
\[
\sum_{\substack{n\le x\\
P\left(n\right)\equiv0\bmod k^{2}
}
}1=\sum_{\nu\in\mathcal{R}_{P}\left(k^{2}\right)}\sum_{n\le x}\mathds{1}_{n\equiv\nu\bmod k^{2}}
\]and\[
x\frac{\rho_{P}\left(k^{2}\right)}{k^{2}}=\sum_{n\le x}\sum_{\nu\in\mathcal{R}_{P}\left(k^{2}\right)}\frac{1}{k^{2}}=\sum_{\nu\in\mathcal{R}_{P}\left(k^{2}\right)}\sum_{n\le x}\frac{1}{k^{2}}
\]so by orthogonality of additive characters
\begin{align*}
 & \sum_{\substack{n\le x\\
P\left(n\right)\equiv0\bmod k^{2}
}
}1-x\cdot\frac{\rho_{P}\left(k^{2}\right)}{k^{2}}=\sum_{\nu\in\mathcal{R}_{P}\left(k^{2}\right)}\sum_{n\le x}\left(\mathds{1}_{n\equiv\nu\bmod k^{2}}-\frac{1}{k^{2}}\right)\\
 & \qquad =\frac{1}{k^{2}}\sum_{\nu\in\mathcal{R}_{P}\left(k^{2}\right)}\sum_{n\le x}\sum_{\ell=1}^{k^{2}-1}e\left(\frac{\ell\left(n-\nu\right)}{k^{2}}\right)=\frac{1}{k^{2}}\sum_{\nu\in\mathcal{R}_{P}\left(k^{2}\right)}\sum_{\ell=1}^{k^{2}-1}e\left(-\frac{\ell\nu}{k^{2}}\right)\sum_{n\le x}e\left(\frac{\ell n}{k^{2}}\right)\\
 & \qquad =\frac{1}{k^{2}}\sum_{\nu\in\mathcal{R}_{P}\left(k^{2}\right)}\sum_{\ell=1}^{k^{2}-1}e\left(-\frac{\ell\nu}{k^{2}}\right)E_{x}\left(\frac{\ell}{k^{2}}\right)=\frac{1}{k^{2}}\sum_{\ell=1}^{k^{2}-1}E_{x}\left(\frac{\ell}{k^{2}}\right)\sum_{\nu\in\mathcal{R}_{P}\left(k^{2}\right)}e\left(-\frac{\ell\nu}{k^{2}}\right)\\
 & \qquad =\frac{1}{k^{2}}\sum_{\ell=1}^{k^{2}-1}E_{x}\left(\frac{\ell}{k^{2}}\right)R_{P}\left(-\ell,k^{2}\right).
\end{align*}
Plugging this into \eqref{eq:(1)} gives the identity for $E_{P}(x)$. 
\end{proof}

\subsection{A new expression for the $m$th moment}\label{sec:moment-representations}
In this subsection we reduce the computation of $\mathrm{MP}_m$
to the study of a more tractable expression. We use the
slightly different normalization $(2H)^{-(d+1)}$ and set
\[
\widetilde{\mathrm{MP}}_m(x,H)
:=
\frac{1}{(2H)^{d+1}}
\sum_{P\in\mathcal{F}_d(H,x)}E_P(x)^m.
\]
The change in normalization is negligible, and together with
\lemref{lem:exceptional-polynomials-negligible}, we obtain
\begin{equation}
\mathrm{MP}_m(x,H)-\widetilde{\mathrm{MP}}_m(x,H)
\ll_{m,d}
x^m\left(
\frac{\log(2x)}{H}+\frac{x}{H^2}
\right).
\label{eq:MPMPdiff}
\end{equation}
which is $o\left(x^{\frac{m}{4}}\right)$ for instance when $H$ is a sufficiently large power of $x$. Substituting \eqref{eq:E_P_Harmonic} (proved in \lemref{lem:harmonic-expansion}) into the definition of $\widetilde{\mathrm{MP}_{m}}\left(x,H\right)$ and expanding the $m$-th power, we obtain the following sum over tuples $\mathbf{k}=\left(k_{1},\ldots,k_{m}\right)$:
\begin{equation}\label{eq:Momharmonic}
\widetilde{\mathrm{MP}_{m}}\left(x,H\right)=\frac{1}{\left(2H\right)^{d+1}}\sum_{P\in\mathcal{F}_{d}\left(H,x\right)}\sum_{k_{1},\ldots,k_{m}\geq1}\prod_{i=1}^{m}\frac{\mu\left(k_{i}\right)}{k_{i}^{2}}\sum_{\forall i\in\left[m\right]:0<\ell_{i}<k_{i}^{2}}\prod_{i=1}^{m}E_{x}\left(\frac{\ell_{i}}{k_{i}^{2}}\right)\prod_{i=1}^{m}R_{P}\left(-\ell_{i},k_{i}^{2}\right).
\end{equation} The expression \eqref{eq:Momharmonic} is the basis for a reduction we shall use. We now state a reduction, involving a truncation parameter $z>0$. We need some further notation. For a tuple $\mathbf{k}=\left(k_{1},\ldots,k_{m}\right)$, put $K=\left[k_{1},\ldots,k_{m}\right]$
and as before, $q=\left[k_{1}^{2},\ldots,k_{m}^{2}\right]=K^{2}$. For this modulus we make the definitions \[
\begin{aligned} & V_{q}^{\sharp}:=\left\{ g\in V_{q}:g\not\equiv0\bmod p^{2}\ \text{for every }p\mid q\right\} ,\\
 & \delta_{d}\left(q\right):=\sum_{\substack{a\geq1\\
\left(a,q\right)=1
}
}\frac{\mu\left(a\right)}{a^{2d+2}}=\frac{1}{\zeta\left(2d+2\right)}\prod_{p\mid q}\left(1-p^{-2d-2}\right)^{-1}.
\end{aligned}
\] For $\boldsymbol{\nu}=\left(\nu_{1},\ldots,\nu_{m}\right) \in \prod_{i=1}^{m}\mathbb{Z}/k_{i}^{2}\mathbb{Z}$, put \begin{equation}
G\left(\boldsymbol{\nu},\mathbf{k}\right):=\delta_{d}\left(q\right)\sum_{\substack{g\in V_{q}^{\sharp}\\
g\left(\nu_{i}\right)\equiv0\bmod k_{i}^{2}\ \left(1\leq i\leq m\right)
}
}1.\label{eq:G(nu,k)}
\end{equation} 
Here $\delta_d(q)$ is the normalization factor.
Instead of using the truncation $k_{i}\le z$ for all $i\in\left[m\right]$, used e.g. in \cite{key-9, key-1-1}, we truncate \eqref{eq:Momharmonic} at $K\le z$. We denote this truncated quantity by $\mathrm{MP}_{m,z}\left(x,H\right)$:
\begin{equation}\mathrm{MP}_{m,z}\left(x,H\right):=\frac{1}{\left(2H\right)^{d+1}}\sum_{P\in\mathcal{F}_{d}\left(H,x\right)}\sum_{\substack{k_{1},\ldots,k_{m}\geq1\\
K\leq z
}
}\prod_{i=1}^{m}\frac{\mu\left(k_{i}\right)}{k_{i}^{2}}\sum_{\forall i\in\left[m\right]:0<\ell_{i}<k_{i}^{2}}\prod_{i=1}^{m}E_{x}\left(\frac{\ell_{i}}{k_{i}^{2}}\right)\prod_{i=1}^{m}R_{P}\left(-\ell_{i},k_{i}^{2}\right)\label{eq:MP_m,z}.\end{equation}
We also write
\begin{equation}\mathcal{M}_{m,z}\left(x\right):=\sum_{\substack{k_{1},\ldots,k_{m}\geq1\\
K\leq z
}
}\frac{1}{q^{d+1}}\prod_{i=1}^{m}\frac{\mu\left(k_{i}\right)}{k_{i}^{2}}\sum_{\substack{0<\ell_{i}<k_{i}^{2}\\
1\leq i\leq m
}
}\prod_{i=1}^{m}E_{x}\left(\frac{\ell_{i}}{k_{i}^{2}}\right)\sum_{\substack{\nu_{i}\bmod k_{i}^{2}\\
1\leq i\leq m
}
}e\left(-\sum_{i=1}^{m}\frac{\ell_{i}\nu_{i}}{k_{i}^{2}}\right)G\left(\boldsymbol{\nu},\mathbf{k}\right).
\label{eq:MathcalMP_m,z}\end{equation}
We are ready to state the reduction, which relates $\mathrm{MP}_m\left(x,H\right)$ to $\mathcal{M}_{m,z}(x)$ up to an error that can be made negligible.

\begin{proposition}\label{prop:truncated-moment-reduction}
Fix $m\geq1$ and $\varepsilon>0$. If $0<z\leq\sqrt{2H}$, then\[
\mathrm{MP}_{m}\left(x,H\right)=\mathrm{MP}_{m,z}\left(x,H\right)+O_{m,d,\varepsilon}\left(\frac{x^{m}H^{\varepsilon}x^{\varepsilon}}{z^{1-\varepsilon}}+H^{-\frac{1}{3}+\varepsilon}x^{m+\varepsilon}+H^{-\frac{1}{2}+\varepsilon}x^{m+\frac{d}{2}+\varepsilon}+x^{m}z^{m}\left(\frac{\log\left(2x\right)}{H}+\frac{x}{H^{2}}\right)\right)
\]where\[
\mathrm{MP}_{m,z}\left(x,H\right)=\mathcal{M}_{m,z}\left(x\right)+O_{m,d,\varepsilon}\left(\frac{z^{m+2+\varepsilon}}{H}+\frac{z^{m+2d+2+\varepsilon}}{H^{d+\frac{1}{2}}} + x^{m}z^m\left(\frac{\log\left(2x\right)}{H}+\frac{x}{H^{2}}\right)\right).
\]
\end{proposition}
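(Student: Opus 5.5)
The proof has two halves, matching the two displayed estimates: bound the tail $K>z$ of \eqref{eq:Momharmonic}, then evaluate the $P$-average inside the truncated sum $\mathrm{MP}_{m,z}$.

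\textbf{First estimate.} By \eqref{eq:MPMPdiff} we may replace $\mathrm{MP}_m$ by $\widetilde{\mathrm{MP}}_m$ at cost $x^m(\log(2x)/H+x/H^2)$. We then compare $\widetilde{\mathrm{MP}}_m$ with $\mathrm{MP}_{m,z}$ in physical space rather than through the Fourier tail.

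Fix $P\in\mathcal F_d(H,x)$ and write, as in \eqref{eq:(1)},
\[
E_P(x)=\sum_k\mu(k)\Delta_P(k),\qquad \Delta_P(k):=\#\{n\le x:k^2\mid P(n)\}-x\rho_P(k^2)k^{-2}.
\]
The truncated $m$-fold sum $\sum_{K\le z}\prod_i\mu(k_i)\Delta_P(k_i)$ is exactly the $P$-summand of $\mathrm{MP}_{m,z}$. The difference from $E_P(x)^m$ is a sum over tuples with $K>z$, so some $k_i>z^{1/m}$. It is more efficient to note that $K>z$ forces a prime power structure: since the $k_i$ are squarefree, $K>z$ means $K$ is a squarefree number larger than $z$ built from the $k_i$. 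Thus there is a single index, or a product of a few coprime pieces, whose contribution is governed by $\sum_{n\le x}\#\{k>z^{1/m}\colon k^2\mid P(n)\}$ plus the analogous $\rho$-term.

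For the counting term, the large-square-divisor contribution is controlled on average over $P$ by the Browning--Shparlinski / Sofos type bound
\[
\frac{1}{(2H)^{d+1}}\sum_{P}\sum_{n\le x}\#\{k>y:k^2\mid P(n)\}\ll x^{1+\varepsilon}H^\varepsilon/y .
\]
This holds because for fixed $n$ the value $P(n)$ is equidistributed modulo $k^2$ when $k^2\le 2H$, and for $k^2>2H$ we count lattice points. The other $m-1$ factors are bounded trivially by $x^{1+\varepsilon}$ using the divisor bound on $\rho_P$ and $\#\{k\colon k^2\mid P(n)\}$. This is what produces $x^mH^\varepsilon x^\varepsilon z^{-1+\varepsilon}$, together with the terms $H^{-1/3+\varepsilon}x^{m+\varepsilon}$ and $H^{-1/2+\varepsilon}x^{m+d/2+\varepsilon}$ coming from the range $k^2>\sqrt H$, where one uses the determinant method / Schwartz–Zippel counts of Browning--Shparlinski. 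Since the truncation is on the lcm $K$, we must organize the splitting carefully. I would write the indicator of $K>z$ as a telescoping sum over the first index $j$ for which $[k_1,\dots,k_j]>z$, giving at most $m$ pieces each containing some $k_j$ with $k_j>z/[k_1,\dots,k_{j-1}]$. This seems weak, so instead I would apply Hölder to reduce to moments of the tail $\sum_{k>y}\mu(k)\Delta_P(k)$ with $y=z^{1/m}$ absorbed into $\varepsilon$-losses. The final error term $x^mz^m(\cdots)$ arises from reinserting the polynomials outside $\mathcal F_d$, for which the truncated sum is bounded by $x^mz^m$ trivially.

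\textbf{Second estimate.} In $\mathrm{MP}_{m,z}$ we expand $\prod_iR_P(-\ell_i,k_i^2)$ as a sum over $\boldsymbol\nu$ with $P(\nu_i)\equiv0\bmod k_i^2$. The condition depends only on $P\bmod q$, with $q=K^2\le z^2\le 2H$. We first extend the $P$-sum from $\mathcal F_d$ to all of $\mathscr P_d^\sharp(H)$; by Lemma~\ref{lem:exceptional-polynomials-negligible} and the trivial bound $x^mz^m$ on the truncated sum, this costs the last error term. The condition $P\in\mathscr P^\sharp_d$ is then handled by Möbius inversion over $a$ with $a^2\mid\mathscr C(P)$. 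For $(a,q)=1$, writing $P=a^2P'$ and using $V_q^\sharp$ to account for primes dividing $q$, we get
\[
\frac{1}{(2H)^{d+1}}\#\{P:|P|\le H,\ P\equiv g\bmod q,\ a^2\mid\mathscr C(P)\}=\frac{1}{a^{2d+2}q^{d+1}}\Bigl(1+O\bigl(qa^2/H\bigr)\Bigr)^{d+1}.
\]
We truncate $a\le\sqrt H$ and bound the rest trivially. Summing over $a$ yields $\delta_d(q)q^{-(d+1)}$ times $\#\{g\in V_q^\sharp:g(\nu_i)\equiv0\}$, i.e. $G(\boldsymbol\nu,\mathbf k)/q^{d+1}$. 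The error terms are bounded using $|E_x|\le\min(x,\|\cdot\|^{-1})$ trivially by $x$. More economically, we bound $\sum_{\ell}\,|\!\sum_\nu\cdots|$ by $q^{\varepsilon}$ times $\prod k_i^2$, together with a count of at most $z^{m+\varepsilon}$ tuples with $K\le z$. The boundary error $q/H$ then gives $z^{m+2+\varepsilon}/H$, and the $a$-range near $\sqrt{H/q}$ gives $z^{m+2d+2+\varepsilon}/H^{d+1/2}$.

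\textbf{Main obstacle.} The tail estimate in the first display is the hard step, specifically making the lcm-truncation compatible with an average bound for large square divisors of $P(n)$ uniformly in $n\le x$ and $H$. I would first try bounding $|E_P-E_P^{\le y}|$ pointwise-in-$P$ moments via Hölder and then rely on the cited large-square-divisor estimates.
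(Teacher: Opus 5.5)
Your treatment of the second display follows the paper: you extend from $\mathcal F_d(H,x)$ to $\mathscr P_d^\sharp(H)$, use M\"obius inversion in the content, and use equidistribution modulo $q=K^2\le 2H$. The first display, however, has a genuine gap: nothing in your argument converts the condition $K>z$ into a saving of size $z^{-1+\varepsilon}$.

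You bound one factor by a single-modulus large-square-divisor average and the other $m-1$ factors pointwise. That only uses $\{K>z\}\subseteq\bigcup_i\{k_i>z^{1/m}\}$, and with the majorants you use, any route through that inclusion, with or without H\"older, yields at best a bound of order $x^m(Hx)^{\varepsilon}z^{-1/m}$. The gap between $z^{-1/m}$ and $z^{-1+\varepsilon}$ is a fixed power, not an $\varepsilon$-loss, so it cannot be ``absorbed''. Your telescoping variant gains nothing when $[k_1,\ldots,k_{j-1}]$ is close to $z$. The loss is real. For $m=2$ and coprime $k_1,k_2\asymp\sqrt z$, your scheme charges this block $\asymp z^{-1/2}$. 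Its true $P$-average is $\ll\sum_{k_1,k_2\asymp\sqrt z}(k_1k_2)^{-2}\asymp z^{-1}$.

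The missing idea is a joint count over $P$ that sees the lcm. Write each $\rho_P(k_j^2)/k_j^2$ as the average over $y_j\bmod k_j^2$ of $\mathds{1}_{P(y_j)\equiv0\bmod k_j^2}$. Then every prime $p\mid K$ occurs in at least one congruence, so once $a_1,\ldots,a_d$ are fixed the constant coefficient is pinned modulo $K^2$. Hence $\sum_P\prod_iA_P(k_i;n_i)\ll H^{d+1}/K^2+H^d$. Since $\#\{\mathbf k:[k_1,\ldots,k_m]=K\}\ll K^{\varepsilon}$, the first term sums over $K>z$ to $\ll H^{d+1}z^{-1+\varepsilon}$.

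The $H^d$ term is acceptable only when $K\le H^{2/3}$, and the paper arranges this by induction on $m$. If some $K_j=[k_i:i\neq j]$ exceeds $H^{1/3}$, sum out $k_j$ with the pointwise bound $\sum_kA_P(k;n)\ll(Hx^d)^{\varepsilon}$ and apply the $(m-1)$-case with cutoff $H^{1/3}$. Otherwise $K\le H^{2/3}$ by comparing prime exponents.

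This is where $H^{-1/3+\varepsilon}x^{m+\varepsilon}$ comes from. The term $H^{-1/2+\varepsilon}x^{m+d/2+\varepsilon}$ comes only from the trivial count $H^d(H/k^2+1)$ together with $k^2\le|P(n)|\ll Hx^d$ in the base case; no determinant method is involved (see \lemref{lem:large-lcm-tail-polynomial-average}).

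In the second half, make explicit your bound $\sum_{\boldsymbol\ell}\prod_i|E_x(\ell_i/k_i^2)R_g(-\ell_i,k_i^2)|\ll q^{\varepsilon}\prod_ik_i^2$, uniformly in $g\in V_q^\sharp$. It does not follow from the trivial bound $|R_g|\le\rho_g$, which loses a factor $\prod_ik_i$. It does hold if you use three ingredients:
\begin{itemize}
\item $|T_g(\lambda/p^2;p^2)|\le d$ for $p\nmid\lambda$;
\item $\rho_{g_p}(p^2)\le dp$;
\item the divisibility-restricted $L^1$ bound for $F_x$.
\end{itemize}
This gives a pointwise substitute for the paper's averaged \lemref{lem:root-density-product-bound}.
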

One benefit of working with \(\mathcal{M}_{m,z}(x)\) is that it does not depend on \(H\) anymore. The reduction allows us to replace the original averaging over the space $\mathscr{P}_{d}\left(H\right)$ with the following triple average:
\begin{itemize}
\item An average over tuples of fractions $\left(\ell_{i}/k_{i}^2\right)_{1\le i\le m}$,
\item Tuples $\left(\nu_{i}\right)_{i\le m}$ of residues,
\item Polynomial $g\in V_{q}^{\sharp}$ (so, with modular coefficients) vanishing at $\nu_{i}$ for each $i\in\left[m\right]$.
\end{itemize}
The reader familiar with moment computations will already expect that the main contribution to $\mathcal{M}_{m,z}\left(x\right)$ comes from tuples in which the frequencies are exactly paired, with each \(\ell_i/k_i^2\) appearing alongside its negative \(-\ell_i/k_i^2\). However, the residue variables $\left(\nu_{i}\right)_{i}$ and the polynomial residue class $g\in V_{q}^{\sharp}$ also contribute to the resulting weight. This complicates our analysis and will be addressed in later sections.

\subsection{Proof of the first part of \propref{prop:truncated-moment-reduction}}\label{sec:proof-moment-truncation}
Let $z>0$ be a parameter. We aim to restrict the sum in \eqref{eq:Momharmonic} to tuples with $K\le z$, equivalently $q\le z^{2}$. To justify this truncation, we need the following bound that arises when truncating $\widetilde{\mathrm{MP}_{m}}\left(x,H\right)$; this is the polynomial analogue of Lemma 2.5 of \cite{key-14}. For an integer $n\ge1$, we write $n \mid P$ to mean that $n$ divides all coefficients of the polynomial $P$.

\begin{lemma}\label{lem:large-lcm-tail-polynomial-average}
Let $\mathbf{n}:=\left(n_{1},\ldots,n_{m}\right)\in\left[x\right]^{m}$ and let $A_{P}\left(k;n\right):=\mu^{2}\left(k\right)\left(\mathds{1}_{P\left(n\right)\equiv0\bmod k^{2}}+\rho_{P}\left(k^{2}\right)/k^{2}\right).$ Uniformly in $\mathbf{n}$ and $z>0$, we have for every $\varepsilon>0$
\[
\sum_{\substack{k_{1},...,k_{m}\ge1\\
K>z
}
}\sum_{P\in\mathcal{F}_{d}\left(H,x\right)}
\prod_{i=1}^{m}A_{P}\left(k_{i};n_{i}\right)
\ll_{m,d,\varepsilon}
\frac{H^{d+1+\varepsilon}x^{\varepsilon}}{z^{1-\varepsilon}}
+H^{d+\frac{2}{3}+\varepsilon}x^{\varepsilon}
+H^{d+\frac{1}{2}+\varepsilon}x^{\frac{d}{2}+\varepsilon}.
\]
\end{lemma}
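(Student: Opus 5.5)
We need to bound $\sum_{K>z}\sum_P \prod_i A_P(k_i;n_i)$. The plan is to follow the structure of \cite[Lemma 2.5]{key-14}. First expand the product over $i$: each $A_P(k_i;n_i)$ is a sum of two terms, so $\prod_i A_P(k_i;n_i)$ splits into $2^m$ terms indexed by subsets $S\subseteq[m]$. In each term the indices $i\in S$ carry the indicator $\mathds{1}_{k_i^2\mid P(n_i)}$ and the indices $i\notin S$ carry $\rho_P(k_i^2)/k_i^2$. We use $\rho_P(k^2)\ll_\varepsilon k^\varepsilon\,(k,\mathrm{Disc}(P))^{O(1)}$-type bounds only crudely; for $P\in\mathcal F_d(H,x)$, Hensel and the content condition give $\rho_P(p^2)\le d\,p$ at worst, and $\rho_P(p^2)\le d$ for $p\nmid \mathrm{Disc}(P)$. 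Since $K>z$, some $k_i>z^{1/m}$ is too weak; instead write $K=\prod_{p\mid K}p$ (each $k_i$ squarefree) and attach every prime $p\mid K$ to one index $i$ with $p\mid k_i$. This decomposes $K=\prod_i d_i$ with $d_i\mid k_i$ pairwise coprime. For fixed $P$, the sum over tuples $\mathbf k$ with given $K$ is bounded by $\tau(K)^{m}$ times the worst assignment. It therefore suffices to bound, for each $i$, sums of the shape $\sum_{d>Z}\mathds{1}_{d^2\mid P(n)}$ or $\sum_{d>Z}\rho_P(d^2)/d^2$, with $\prod Z_i\ge z$, so that some $Z_i\ge z^{1/m}$. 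This is the route I would take, although it risks losing the exponent of $z$. A cleaner route is the following.

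Expand $K>z$ directly as a sum over squarefree $K>z$, with the $k_i$ running over divisors of $K$ whose lcm is $K$. For fixed $P$ and $\mathbf n$, the summand is bounded by
\[
\prod_{p\mid K}\Bigl(\sum_{\emptyset\ne T\subseteq[m]}\prod_{i\in T}\bigl(\mathds{1}_{p^2\mid P(n_i)}+\rho_P(p^2)p^{-2}\bigr)\Bigr).
\]
This is a multiplicative function $h_P(K)$. The quantity to bound is then $\sum_P\sum_{K>z}\mu^2(K)h_P(K)$, and $h_P(p)\ll_m \mathds{1}_{p^2\mid \prod_i P(n_i)}+\rho_P(p^2)/p^2$. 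The second part contributes a convergent tail: using $\sum_{K>z}\mu^2(K)\prod_{p\mid K}(\rho_P(p^2)/p^2)\ll K^{-1+\varepsilon}$-type decay on average over $P$ (the average of $\rho_P(p^2)$ over $P$ is $\ll p$), we get the term $H^{d+1+\varepsilon}x^\varepsilon z^{-1+\varepsilon}$. Mixed terms, where $K=K_1K_2$ with $K_1^2\mid\prod_i P(n_i)$, are handled by noting that $K_1$ divides a product of at most $m$ nonzero integers of size $\ll H x^d$. Divisor bounds then give $O((Hx)^\varepsilon)$ choices of $K_1$ for each $P$. When $K_2$ is large we have decay, so the remaining case is $K_1>z/K_2$ large.

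The main obstacle is the pure case: counting pairs $(P,K_1)$ with $K_1$ large and $K_1^2\mid\prod_i P(n_i)$. Splitting $K_1$ among the $n_i$, we need a $k>Y$ with $k^2\mid P(n)$ for fixed $n\le x$. This is exactly the large-square-divisor problem treated by Browning--Shparlinski \cite{key-9} and Sofos \cite{key-1-1}. For $k\le \sqrt{H}$-ish, count via the lattice of $P$ with $P(n)\equiv0\bmod k^2$, which has $\ll H^{d+1}/k^2+H^d$ points. For $k$ large, write $P(n)=k^2 s$ with $|s|\ll Hx^d/k^2$, and count the pairs $(s,k)$ and then the coefficient vectors with $a_0$ determined. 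Balancing the two counts at the appropriate thresholds should produce the terms $H^{d+2/3+\varepsilon}x^\varepsilon$ and $H^{d+1/2+\varepsilon}x^{d/2+\varepsilon}$. I would import this balancing essentially from the first-moment arguments in \cite{key-9}, checking that the exponents match the stated ones.
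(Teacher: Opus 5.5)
\textbf{There is a genuine gap, in the pure case.} Your multiplicative majorant $h_P(K)$ and the split $K=K_1K_2$ are fine. The trouble is the pure case, which is where the content of the lemma lies.

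The primes of $K_1$ can be spread over all $m$ points: $K_1=d_1\cdots d_m$ with $d_i^2\mid P(n_i)$ and every $d_i\approx K_1^{1/m}$. Splitting $K_1$ among the $n_i$ therefore only produces one $k>(z/K_2)^{1/m}$ with $k^2\mid P(n)$. The single-point count then gives $H^{d+1}z^{-1/m}$, which is exactly the loss you flagged for your first route. For example, with $m=3$ and $z=H^{1/3}$ this is $H^{d+8/9}$, against the claimed $H^{d+2/3+\varepsilon}$.

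Counting jointly by the Chinese remainder theorem gives $\ll H^{d+1}/K_1^2+H^d$ for each $K_1$. But the $H^d$ term cannot be summed over the possible range $K_1\ll (Hx^d)^{m/2}$.

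Importing single-point balancing from Browning--Shparlinski does not help either. For one $n$, the count $\#\{P:k^2\mid P(n)\}\ll H^d(H/k^2+1)$ with $k\ll (Hx^d)^{1/2}$ already gives $H^{d+1}/z+H^{d+1/2}x^{d/2}$. The term $H^{d+2/3}$ comes instead from the threshold of a multi-index dichotomy, which your proposal lacks.

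\textbf{How the paper handles it.} It inducts on $m$, with $K_j:=[k_1,\ldots,k_{j-1},k_{j+1},\ldots,k_m]$ and $Z=H^{1/3}$.
\begin{itemize}
\item If some $K_j>Z$, it sums out $k_j$ pointwise in $P$ via $\sum_k A_P(k;n)\ll (Hx^d)^{\varepsilon}$, drops the condition $K>z$, and applies the case $m-1$ with cutoff $Z$. This gives $H^{d+1}Z^{-1+\varepsilon}$.
\item If all $K_j\le Z$, then $K\le\prod_j K_j^{1/(m-1)}\le Z^{m/(m-1)}\le H^{2/3}$. The CRT count $H^{d+1}/K^2+H^d$ is then summable over $z<K\le H^{2/3}$.
\end{itemize}

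\textbf{Repair within your framework.} Put $s_i(P):=\prod_{p^2\mid P(n_i)}p$ and $L_P:=[s_1(P),\ldots,s_m(P)]$. Your majorant reduces everything to $(Hx^d)^{\varepsilon}\sum_P\min(1,L_P/z)^{1-\varepsilon}$. Then split into three cases.
\begin{itemize}
\item If some $s_i(P)>H^{1/3}$, use the single-point count.
\item If all $s_i(P)\le H^{1/3}$ but $L_P>H^{1/3}$, the first partial lcm $[s_1,\ldots,s_j]$ exceeding $H^{1/3}$ is a divisor $D\mid L_P$ with $H^{1/3}<D\le H^{2/3}$. Summing $\#\{P:D\mid L_P\}\ll m^{\omega(D)}(H^{d+1}/D^2+H^d)$ over this range gives $\ll H^{d+2/3+\varepsilon}$.
\item If $L_P\le H^{1/3}$, write $L_P=\sum_{D\mid L_P}\varphi(D)$ and use the same count. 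This gives $z^{-1+\varepsilon}(H^{d+1+\varepsilon}+H^{d+2/3+\varepsilon})$.
\end{itemize}

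\textbf{Two smaller corrections.}
\begin{itemize}
\item The local indicator must be $\mathds{1}_{p^2\mid P(n_i)\text{ for some }i}$, not $\mathds{1}_{p^2\mid\prod_iP(n_i)}$. The latter does not split into conditions of the form $k^2\mid P(n)$.
\item The $\rho$-tail cannot come from the average of $\rho_P(p^2)$ being $\ll p$, since that leads to the divergent sum $\sum_{K>z}C^{\omega(K)}/K$. What is needed is the pointwise bound $\sum_{K>z}\mu^2(K)C^{\omega(K)}\rho_P(K^2)/K^2\ll H^{\varepsilon}z^{-1+\varepsilon}$. The paper takes it from Sofos's Lemma 2.2. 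It also follows from your crude bounds ($\rho_P(p^2)\le d$ for $p\nmid\mathrm{Disc}(P)$ and $\rho_P(p^2)\le dp$ otherwise, using the leading coefficient in place of the discriminant when $d=1$) by splitting off $(K,\mathrm{Disc}(P))$.
\end{itemize}
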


\begin{proof}
Understand every $P$-sum in this proof to be over $\mathcal{F}_{d}\left(H,x\right)$ unless another family is displayed explicitly. We first record a tail estimate needed in the base case $m=1$. For every fixed $d\geq1$ and every $\varepsilon>0$, uniformly for $P\in\mathscr{P}_{d}^{\sharp}\left(H\right)\cap\mathscr{P}_{d}^{\mathrm{sep}}\left(H\right)$ and $z>0$, we now show
\begin{equation}\label{eq:rootdensity}
\sum_{k>z}\frac{\mu^{2}\left(k\right)\rho_{P}\left(k^{2}\right)}{k^{2}}\ll_{d,\varepsilon}\frac{H^{\varepsilon}}{z^{1-\varepsilon}}.
\end{equation}
Let
\[
B_{P}\left(Y\right):=\sum_{k\leq Y}\mu^{2}\left(k\right)\rho_{P}\left(k^{2}\right).
\]
Lemma 2.2 of \cite{key-1-1} gives, for every fixed $0<\eta<1$ and $d\geq 1$,
\[
B_{P}\left(Y\right)\ll_{d,\eta}\left(3d\right)^{2\omega\left(\mathrm{Disc}\left(P\right)\right)}Y^{1+\eta}\sum_{r^{2}\mid P}r
\]
where $\omega\left(n\right):=\sum_{p\mid n}1$. Since $P\in\mathscr{P}_{d}^{\sharp}\left(H\right)$, only $r=1$ contributes. Since also $\mathrm{Disc}\left(P\right)\neq0$, the standard estimate $\left|\mathrm{Disc}\left(P\right)\right|\ll_{d}H^{2d-2}$ holds, and the divisor bound $A^{\omega\left(N\right)}\ll_{A,\varepsilon}N^{\varepsilon}$ yields $B_{P}\left(Y\right)\ll_{d,\varepsilon}H^{\varepsilon}Y^{1+\varepsilon}$ for every $\varepsilon>0$. Partial summation therefore gives \eqref{eq:rootdensity}. Summing \eqref{eq:rootdensity} over $P$, we obtain for every $d\geq1$,
\[
\sum_{P}\sum_{k>z}\frac{\mu^{2}\left(k\right)\rho_{P}\left(k^{2}\right)}{k^{2}}\ll_{d,\varepsilon}\frac{H^{d+1+\varepsilon}}{z^{1-\varepsilon}}.
\]
We now prove the lemma by induction on $m$. For $m=1$, the above treats the contribution of $\rho_P(k^2)/k^2$ to the sum. For the indicator part,
\begin{equation}\label{eq:kbnd}
k^{2}\leq\left|P\left(n\right)\right|\ll_{d}Hx^{d},
\end{equation}
holds when $P(n)\equiv 0 \bmod k^2$, while, after fixing all the coefficients except the constant coefficient,
\begin{equation}\label{eq:Pfix}
\sum_{P}\mathds{1}_{P\left(n\right)\equiv0\bmod k^{2}}\ll_{d} H^d \left( \frac{H}{k^2}+1\right).
\end{equation}
Consequently, from \eqref{eq:kbnd} and \eqref{eq:Pfix},
\[
\sum_{k>z}\sum_{P}\mu^{2}\left(k\right)\mathds{1}_{P\left(n\right)\equiv0\bmod k^{2}}\ll_{d}\frac{H^{d+1}}{z}+H^{d+\frac{1}{2}}x^{\frac{d}{2}},
\]
which is bounded by the asserted right-hand side. We proceed to assume the lemma holds for $m-1$ and will demonstrate it for $m$. Put
\[
K_{j}:=\left[k_{1},\ldots,k_{j-1},k_{j+1},\ldots,k_{m}\right],\qquad Z:=H^{\frac{1}{3}}.
\]
Applying \eqref{eq:rootdensity} with
$z=1/2$, together with \eqref{eq:kbnd} and the divisor bound,
we obtain the uniform estimate
\[
\sum_{k\geq1}A_P(k;n)
\leq
\sum_{k^2\mid P(n)}\mu^2(k)
+
\sum_{k\geq1}
\frac{\mu^2(k)\rho_P(k^2)}{k^2}
\ll_{d,\varepsilon}(Hx^d)^\varepsilon
\]
for all $P$ and $n\leq x$.
Now, if $K_j>Z$ for some $j$, we first sum over $k_j$
using this bound. We then discard the condition $K>z$
and apply the induction hypothesis to the remaining
$m-1$ variables with cutoff $Z$. Summing over the possible choices of $j$, this contribution is
\[
\begin{aligned}
&\ll_{m,d,\varepsilon}
(Hx^d)^\varepsilon
\left(
\frac{H^{d+1+\varepsilon}x^\varepsilon}{Z^{1-\varepsilon}}
+H^{d+\frac23+\varepsilon}x^\varepsilon
+H^{d+\frac12+\varepsilon}x^{\frac d2+\varepsilon}
\right)\\
&\ll_{m,d,\varepsilon}
H^{d+\frac23+\varepsilon}x^\varepsilon
+H^{d+\frac12+\varepsilon}x^{\frac d2+\varepsilon}.
\end{aligned}
\]
It remains to consider tuples for which $K_{j}\leq Z$ for every $j$. Comparing prime exponents implies
\[
K\leq\prod_{j=1}^{m}K_{j}^{\frac{1}{m-1}}\leq Z^{\frac{m}{m-1}}.
\]
We expand the $i$-product defining the $A_{P}$'s, obtaining a sum over subsets $I\subseteq\left[m\right]$:
\[
\sum_{I \subseteq [m]}\sum_{P}\prod_{i\in I}\mathds{1}_{P\left(n_{i}\right)\equiv0\bmod k_{i}^{2}}\prod_{j\notin I}\frac{\rho_{P}\left(k_{j}^{2}\right)}{k_{j}^{2}} = \sum_{I \subseteq [m]}\sum_{P}\prod_{i\in I}\mathds{1}_{P\left(n_{i}\right)\equiv0\bmod k_{i}^{2}}\prod_{j\notin I}
\frac{\sum_{y_{j}\bmod k_{j}^{2}}\mathds{1}_{P\left(y_{j}\right)\equiv0\bmod k_{j}^{2}}}{k_{j}^{2}}.
\]
For fixed residue variables $(y_j)_{j\not\in I}$, every prime $p\mid K$ occurs in at least one of the displayed congruences. Select one such congruence for each $p$. After the coefficients $a_{1},\ldots,a_{d}$ have been fixed, these selected conditions determine the constant coefficient $a_0$ modulo $p^{2}$ for every $p\mid K$, and hence modulo $K^{2}$ by the Chinese remainder theorem. Additional congruences can only reduce the count. Averaging over the residue variables therefore gives the bound
\[
\sum_{I \subseteq [m]}\sum_{P}\prod_{i\in I}\mathds{1}_{P\left(n_{i}\right)\equiv0\bmod k_{i}^{2}}\prod_{j\notin I}\frac{\rho_{P}\left(k_{j}^{2}\right)}{k_{j}^{2}}\ll_{m,d}\frac{H^{d+1}}{K^{2}}+H^{d}.
\]
For squarefree $K$, the number of squarefree $m$-tuples $(k_1,\ldots,k_m)$ with lcm $K$ is $\ll_{m,\varepsilon}K^{\varepsilon}$ by the divisor bound. Hence the remaining contribution, in which $K>z$ and $K\leq Z^{\frac{m}{m-1}}$, is
\[
\begin{aligned}
&\ll_{m,d,\varepsilon}H^{d+1}\sum_{K>z}\frac{K^{\varepsilon}}{K^{2}}+H^{d}\sum_{K\leq Z^{\frac{m}{m-1}}}K^{\varepsilon}\\
&\ll_{m,d,\varepsilon}\frac{H^{d+1+\varepsilon}}{z^{1-\varepsilon}}+H^{d+\varepsilon}Z^{\frac{m}{m-1}+\varepsilon}\\
&\ll_{m,d,\varepsilon}\frac{H^{d+1+\varepsilon}}{z^{1-\varepsilon}}+H^{d+\frac{2}{3}+\varepsilon},
\end{aligned}
\]
where we used $m\ge 2$, so that $\frac{m}{m-1}\leq2$. This is absorbed by the asserted bound and completes the induction.
\end{proof}

Combining \eqref{eq:MPMPdiff} with \lemref{lem:exceptional-polynomials-negligible}, \lemref{lem:large-lcm-tail-polynomial-average} implies the first part of \propref{prop:truncated-moment-reduction}.

\subsection{Proof of the second part of \propref{prop:truncated-moment-reduction}}\label{sec:proof-residue-class-reduction}
We first pass, in our definition of $\mathrm{MP}_{m,z}$, from summation over $\mathcal{F}_{d}\left(H,x\right)$ to the larger set $\mathscr{P}_{d}^{\sharp}\left(H\right)$, at a negligible cost: indeed, by \lemref{lem:exceptional-polynomials-negligible},
\begin{align}
\notag \mathrm{MP}_{m,z}\left(x,H\right)=\frac{1}{\left(2H\right)^{d+1}}\sum_{P\in\mathscr{P}_{d}^{\sharp}\left(H\right)}\sum_{\substack{k_{1},\ldots,k_{m}\geq1\\
K\leq z
}
}\prod_{i=1}^{m}\frac{\mu\left(k_{i}\right)}{k_{i}^{2}}\sum_{\forall i\in\left[m\right]:0<\ell_{i}<k_{i}^{2}}\prod_{i=1}^{m}E_{x}\left(\frac{\ell_{i}}{k_{i}^{2}}\right)\prod_{i=1}^{m}R_{P}\left(-\ell_{i},k_{i}^{2}\right)  \\
+O_{m,d}\left(x^{m}z^{m}\left(\frac{\log\left(2x\right)}{H}+\frac{x}{H^{2}}\right)\right)\label{eq:MP_m,z2}.
\end{align}
The main ingredient in the proof is a counting lemma for polynomials in $\mathscr{P}_{d}^{\sharp}\left(H\right)$ subject to a fixed congruence class modulo $q$. This allows us to replace the average over $\mathscr{P}_{d}^{\sharp}\left(H\right)$ by an average over polynomial residue classes modulo $q$. More precisely, we decompose\[
\sum_{P\in\mathscr{P}_{d}^{\sharp}\left(H\right)}1=\sum_{g\in V_{q}}\sum_{\substack{P\in\mathscr{P}_{d}^{\sharp}\left(H\right)\\
P\equiv g\bmod q
}
}1
\]and then \lemref{lem:polynomial-residue-equidistribution} below shows that $P\in\mathscr{P}_{d}^{\sharp}\left(H\right)$ is equidistributed modulo $q$ and so the inner $P$-sum in the right-hand side of \eqref{eq:MP_m,z2} can be estimated.
\begin{lemma}\label{lem:polynomial-residue-equidistribution}
Let $q\le2H$. Uniformly for $g\in V_{q}$,\[
\#\left\{ P\in\mathscr{P}_{d}^{\sharp}\left(H\right):P\equiv g\bmod q\right\} =\mathds{1}_{g\in V_{q}^{\sharp}}\cdot\left(\frac{\left(2H\right)^{d+1}}{q^{d+1}}\delta_{d}\left(q\right)+O_{d}\left(\frac{H^{d}}{q^{d}}+H^{\frac{1}{2}}\right)\right).
\]
\end{lemma}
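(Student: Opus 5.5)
The plan is to count the polynomials in the congruence class first, and only then impose the condition that no $p^2$ divides the content, using Möbius inversion over square divisors of the content. Write $N_g$ for the left-hand side. First, if $g\notin V_q^{\sharp}$, then $g\equiv 0\bmod p^2$ for some $p\mid q$. Since $p^2\mid q$, every $P\equiv g\bmod q$ has all coefficients divisible by $p^2$, so $P\notin\mathscr{P}_d^{\sharp}(H)$ and $N_g=0$. Now assume $g\in V_q^{\sharp}$. The indicator of $\mathscr{P}_d^{\sharp}$ is $\sum_{a^2\mid \mathscr{C}(P)}\mu(a)$, so
\[
N_g=\sum_{a\ge1}\mu(a)\,\#\{P\in\mathscr{P}_d(H):P\equiv g\bmod q,\ a^2\mid P\},
\]
where only $a\le\sqrt H$ occur. (The zero polynomial is excluded from $\mathscr{P}_d(H)$, and the degree condition $a_d\neq0$ changes the counts by $O(H^d/q^d+1)$, which is absorbed into the error term.)

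Second, I split on whether $(a,q)=1$. If a prime $p$ divides both $a$ and $q$, then $p^2\mid q$ and $p^2\mid P$ together force $g\equiv0\bmod p^2$, contradicting $g\in V_q^\sharp$. Hence only $(a,q)=1$ contributes. For such $a$ the two conditions combine by CRT into one congruence class for each coefficient modulo $a^2q$. The number of integers $|a_j|\le H$ in a fixed class modulo $a^2q$ is $2H/(a^2q)+O(1)$. Taking the product over the $d+1$ coefficients gives
\[
\frac{(2H)^{d+1}}{a^{2d+2}q^{d+1}}+O_d\Big(\frac{H^d}{a^{2d}q^d}+1\Big),
\]
using $(2H/(a^2q)+O(1))^{d+1}$ and keeping only the extreme terms.

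Third, I sum over $a$. The main term gives $(2H)^{d+1}q^{-d-1}\delta_d(q)$ up to the tail $\sum_{a>A}a^{-2d-2}$. The difficulty is the $O(1)$ error per $a$: summed over $a\le\sqrt H$ it only gives $O(H^{1/2})$, which is exactly the claimed term. The error $H^d/(a^{2d}q^d)$ sums to $O(H^d/q^d)$.

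To make the $O(1)$ error per $a$ legitimate, I must note that when $a^2q>2H$ the class contains at most one value of each coefficient. So the count is at most $1$ for each $a$ beyond $\sqrt{2H/q}$, and it is $0$ once $a>\sqrt H$, since $g\not\equiv0$ somewhere. Hence the precise truncation is $a\le\sqrt H$. The tail of the main term beyond $\sqrt H$ is $\ll H^{d+1}q^{-d-1}H^{-d-1/2}\le H^{1/2}$. The hypothesis $q\le2H$ ensures $2H/q\ge1$, so the terms $(2H/(a^2q))^j$ for intermediate $j$ are dominated by the extremes.

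I expect the only delicate point to be the bookkeeping of the $O(1)$ per Möbius term giving $H^{1/2}$. This requires restricting $a\le\sqrt H$ via the fact that $a^2\mid\mathscr{C}(P)$ with $P\neq0$ forces $a^2\le H$. The remaining steps are routine.
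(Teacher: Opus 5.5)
Your proposal is correct and is essentially the paper's proof: M\"obius inversion over $a^2\mid\mathscr{C}(P)$, the observation that only $(a,q)=1$ survives when $g\in V_q^{\sharp}$, a per-$a$ count with main term $(2H)^{d+1}/(a^{2d+2}q^{d+1})$ and error $O_d(H^d/(a^{2d}q^d)+1)$ summed over $a\le H^{1/2}$, and completion of the $a$-sum to $\delta_d(q)$ at cost $O_d(H^{1/2}/q^{d+1})$. The paper phrases the per-$a$ count as counting $Q=P/a^2$ with $|Q|\le H/a^2$ in the class $a^{-2}g\bmod q$, which is equivalent to your CRT step. Two small slips do not affect the argument: the count vanishes for $a>\sqrt H$ because $a_d\neq0$ (as you note at the end), not because $g\not\equiv0$; and the bound $X^j\le 1+X^d$ on the intermediate terms holds for every $X\ge0$, so it does not need $q\le 2H$.
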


\begin{proof}
M\"obius inversion in the content gives\[
\mathds{1}_{P\in\mathscr{P}_{d}^{\sharp}\left(H\right)}=\sum_{a^{2}\mid\mathscr{C}\left(P\right)}\mu\left(a\right).
\]If $g\in V_{q}^{\sharp}$, only $\left(a,q\right)=1$ can occur, so writing $P=a^{2}Q\equiv g\bmod q$ is equivalent to $Q\equiv a^{-2}g\bmod q$. For a fixed coefficient residue class $q$ and $Y\geq1$, we have the uniform estimate\[
\#\left\{ Q\in\mathbb{Z}\left[t\right]:\deg Q=d,\ \left|Q\right|\leq Y,\ Q\equiv h\bmod q\right\} =\frac{\left(2Y\right)^{d+1}}{q^{d+1}}+O_{d}\left(\frac{Y^{d}}{q^{d}}+1\right).
\]The possible loss from the condition that the leading coefficient is nonzero is contained in the first error term. Summing the error with $Y=H/a^{2}$ over $a\leq \sqrt{\left\lvert\mathscr{C}\left(P\right)\right\rvert} \leq H^{\frac{1}{2}}$ gives\[
O_{d}\bigg(\frac{H^{d}}{q^{d}}\sum_{a\geq1}a^{-2d}+\sum_{a\leq H^{\frac{1}{2}}}1\bigg)=O_{d}\left(\frac{H^{d}}{q^{d}}+H^{\frac{1}{2}}\right).
\]Extending the main sum from $a\leq H^{\frac{1}{2}}$ to infinity costs $O_{d}\left(H^{\frac{1}{2}}/q^{d+1}\right)$ and is absorbed. This proves the formula for $g\in V_{q}^{\sharp}$. If $g\notin V_{q}^{\sharp}$, then every polynomial in that residue class has content divisible by $p^{2}$ for some $p\mid q$, and the count is 0.
\end{proof}
We interchange the order of summation in \eqref{eq:MP_m,z2} and write \begin{equation}
\begin{split}
\mathrm{MP}_{m,z}\left(x,H\right) &= \frac{1}{\left(2H\right)^{d+1}}\sum_{\substack{k_{1},\ldots,k_{m}\geq1\\
K\leq z
}
}\prod_{i=1}^{m}\frac{\mu\left(k_{i}\right)}{k_{i}^{2}}\sum_{\forall i\in\left[m\right]:0<\ell_{i}<k_{i}^{2}}\prod_{i=1}^{m}E_{x}\left(\frac{\ell_{i}}{k_{i}^{2}}\right)\sum_{P\in\mathscr{P}_{d}^{\sharp}\left(H\right)}\prod_{i=1}^{m}R_{P}\left(-\ell_{i},k_{i}^{2}\right) \\
&\quad +O_{d,m}\left(x^{m}z^{m}\left(\frac{\log\left(2x\right)}{H}+\frac{x}{H^{2}}\right)\right).
\end{split}
\label{eq:MP_m,zSahrp}
\end{equation} We make use of the estimation in \lemref{lem:polynomial-residue-equidistribution} and the definition of $G\left(\boldsymbol{\nu},\mathbf{k}\right)$ in \eqref{eq:G(nu,k)} to write the inner expression above as \begin{align}
 & \sum_{P\in\mathscr{P}_{d}^{\sharp}\left(H\right)}\prod_{i=1}^{m}R_{P}\left(-\ell_{i},k_{i}^{2}\right)=\sum_{P\in\mathscr{P}_{d}^{\sharp}\left(H\right)}\prod_{i=1}^{m}\sum_{\nu_{i}\in\mathcal{R}_{P}\left(k_{i}^{2}\right)}e\left(-\frac{\ell_{i}\nu_{i}}{k_{i}^{2}}\right)\nonumber \\
 & =\prod_{i=1}^{m}\sum_{\nu_{i}\bmod k_{i}^{2}}e\left(-\frac{\ell_{i}\nu_{i}}{k_{i}^{2}}\right)\sum_{\substack{g\in V_{q}\\
g\left(\nu_{i}\right)\equiv0\bmod k_{i}^{2}\ \left(1\leq i\leq m\right)
}
}\sum_{\substack{P\in\mathscr{P}_{d}^{\sharp}\left(H\right)\\
P\equiv g\bmod q
}
}1\nonumber \\
 & =\prod_{i=1}^{m}\sum_{\nu_{i}\bmod k_{i}^{2}}e\left(-\frac{\ell_{i}\nu_{i}}{k_{i}^{2}}\right)\sum_{\substack{g\in V_{q}^{\sharp}\\
g\left(\nu_{i}\right)\equiv0\bmod k_{i}^{2}\ \left(1\leq i\leq m\right)
}
}\left(\frac{\left(2H\right)^{d+1}}{q^{d+1}}\delta_{d}\left(q\right)+O_{d}\left(\frac{H^{d}}{q^{d}}+H^{\frac{1}{2}}\right)\right)\nonumber \\
 & =\frac{\left(2H\right)^{d+1}}{q^{d+1}}\prod_{i=1}^{m}\sum_{\nu_{i}\bmod k_{i}^{2}}e\left(-\frac{\ell_{i}\nu_{i}}{k_{i}^{2}}\right)G\left(\boldsymbol{\nu},\mathbf{k}\right)+O\left(\left(\frac{H^{d}}{q^{d}}+H^{\frac{1}{2}}\right)\frac{1}{\delta_{d}\left(q\right)}\prod_{i=1}^{m}\sum_{\nu_{i}\bmod k_{i}^{2}}G\left(\boldsymbol{\nu},\mathbf{k}\right)\right)\nonumber \\
 & =\frac{\left(2H\right)^{d+1}}{q^{d+1}}\prod_{i=1}^{m}\sum_{\nu_{i}\bmod k_{i}^{2}}e\left(-\frac{\ell_{i}\nu_{i}}{k_{i}^{2}}\right)G\left(\boldsymbol{\nu},\mathbf{k}\right)+O\left(\left(\frac{H^{d}}{q^{d}}+H^{\frac{1}{2}}\right)\sum_{g\in V_{q}^{\sharp}}\prod_{i=1}^{m}\rho_{g}\left(k_{i}^{2}\right)\right)\label{eq:SumR_P}
\end{align}Plugging \eqref{eq:SumR_P} into \eqref{eq:MP_m,zSahrp} gives the main term in \propref{prop:truncated-moment-reduction}'s second assertion. Next, we shall use the following root-moment bound to bound the error term in \eqref{eq:SumR_P}. Let $I_{p}:=\left\{ i\in\left[m\right]:p\mid k_{i}\right\} $. The following local bound controls the root-density factors.

\begin{lemma}\label{lem:root-density-product-bound}
For any fixed $m\ge1$ and for any $\varepsilon>0$ and squarefrees $k_1,...,k_m$, we have\[
\sum_{g\in V_{q}^{\sharp}}\prod_{i=1}^{m}\rho_{g}\left(k_{i}^{2}\right)\ll_{m,d,\varepsilon} q^{d+\frac{1}{2}+\varepsilon}\prod_{i=1}^{m}k_{i}.
\]
\end{lemma}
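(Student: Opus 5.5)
By the Chinese remainder theorem, $V_q^\sharp\cong\prod_{p\mid K}V_{p^2}^\sharp$, where $V_{p^2}^\sharp=V_{p^2}\setminus\{0\}$. For squarefree $k_i$ we have $\rho_g(k_i^2)=\prod_{p\mid k_i}\rho_{g_p}(p^2)$, with $g_p=g\bmod p^2$. So the left-hand side factors as
\[
\prod_{p\mid K}\ \sum_{g_p\in V_{p^2}^\sharp}\rho_{g_p}(p^2)^{|I_p|}.
\]
Since $q=\prod_{p\mid K}p^2$ and $\prod_i k_i=\prod_{p\mid K}p^{|I_p|}$, it suffices to prove a local bound. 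We aim to show
\[
\sum_{g\in V_{p^2}^\sharp}\rho_g(p^2)^{j}\le C_{m,d}\,p^{2d+1}\,p^{j}\qquad(1\le j\le m).
\]
Taking the product over $p\mid K$ then gives $C_{m,d}^{\omega(K)}q^{d+1/2}\prod_ik_i$. Finally $C^{\omega(K)}\ll_\varepsilon K^\varepsilon\le q^\varepsilon$, which is the claim.

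For the local bound, split $V_{p^2}^\sharp$ by whether $p\mid g$.

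\emph{Case $g\not\equiv0\bmod p$.} We use the trivial bound $\rho_g(p^2)\le p^2$ and count how many $g$ have many roots. Every root mod $p^2$ reduces to a root of $\bar g=g\bmod p$, and $\bar g$ is a nonzero polynomial of degree $\le d$, so it has at most $d$ roots mod $p$. A simple root of $\bar g$ lifts uniquely by Hensel. A multiple root $\nu$ lifts to either $0$ or $p$ residues, namely $p$ exactly when $g(\nu)\equiv0\bmod p^2$. Hence $\rho_g(p^2)\le d+p\cdot\#\{\text{multiple roots }\nu\text{ of }\bar g\text{ with }p^2\mid g(\nu)\}$.

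Now count the $g$ for which a given $\nu$ qualifies. The conditions are $g(\nu)\equiv0\bmod p^2$ and $g'(\nu)\equiv0\bmod p$. Writing $g$ in the basis $(t-\nu)^i$, this forces the constant coefficient to vanish mod $p^2$ and the linear coefficient mod $p$. So the number of such $g$ is $\le p^{2(d+1)}/p^{3}=p^{2d-1}$. Taking a union over $\nu$ and over the at most $d$ such roots, $\rho_g(p^2)\le d+dp$ outside a set of size $\ll_d p^{2d}$, and $\rho_g\le p^2$ always. This gives
\[
\sum_{g}\rho_g(p^2)^j\ll_{m,d}p^{2d+2}+p^{2d}\,p^{j}.
\]
The first term $p^{2d+2}$ is not yet of the shape $p^{2d+1+j}$ when $j=0$. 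For $j\ge1$, however, $p^{2d+2}\le p^{2d+1+j}$, so this case is fine. (The $p^j$ from the multiple-root $g$ is crude, since the degree-$\le d$ constraint forces at most $d$ such residues; this is harmless.)

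\emph{Case $g=pg_1$ with $g_1\not\equiv0\bmod p$.} Here $\rho_g(p^2)=p\cdot\#\{\text{roots of }\bar g_1\bmod p\}\le dp$. There are $\le p^{d+1}$ such $g$, so this case contributes $\le (dp)^j p^{d+1}\le d^m p^{2d+1+j}$.

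Combining the two cases proves the local bound for $j\ge1$. Primes $p\mid K$ always have $|I_p|\ge1$, so this is all we need.

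The step that needs the most care is making sure the bound has exactly the shape $p^{2d+1}p^{|I_p|}$ rather than $p^{2d+2}$, because the exponent $d+\tfrac12$ in the lemma depends on saving a factor $p$. This works because $|I_p|\ge1$ absorbs the generic contribution $p^{2d+2}$. If a sharper bound were ever needed, the first case could be refined using the average $\mathbb{E}\rho_g(p^2)\approx1$, but the bound above suffices for the stated lemma.
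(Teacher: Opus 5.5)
Your proof is correct and follows the paper's route. The Chinese remainder theorem reduces the sum to the local factors $\sum_{g\in V_{p^2}^\sharp}\rho_g(p^2)^{|I_p|}$, and the local bound $\ll p^{2d+1+|I_p|}$ (that is, $\mathbb{E}\rho_g(p^2)^{|I_p|}\ll p^{|I_p|-1}$, using $|I_p|\ge1$) is what the paper takes from \lemref{lem:root-count-moment-bounds}. That lemma is proved with the same ingredients you reprove inline: $\rho_g(p^2)\le d+pZ_g$, density $\ll p^{-2}$ of singular liftable roots, and the separate class $p\mid g$.

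One wording slip: your computation actually uses $\rho_g(p^2)\le d$ off the exceptional set of size $\le p^{2d}$ and $\rho_g(p^2)\le d+dp$ on it. That is what yields $p^{2d+2}+p^{2d}p^{j}$. The trivial bound $\rho_g\le p^2$ on the exceptional set, as your sentence states it, would give $p^{2d}p^{2j}$ and fail for $j\ge2$.
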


\begin{proof}
After normalizing by $|V_q^\sharp|$, the Chinese remainder
theorem shows that the left-hand side is the product,
over $p\mid q$, of
\[
\mathbb{E}_{g\in V_{p^2}^\sharp}
\rho_g(p^2)^{|I_p|}.
\]
By \lemref{lem:root-count-moment-bounds},
each local factor is $O_{m,d}(p^{|I_p|-1})$. Multiplication over $p\mid q$ gives for any $\varepsilon>0$\[
\prod_{p\mid q}p^{\left\lvert I_{p}\right\rvert -1}\ll_{m,d,\varepsilon}q^{\varepsilon}\frac{\prod_{i=1}^{m}k_{i}}{\sqrt{q}}
\] using the divisor bound for the implied constants. Multiplying back by $\left\lvert V_{q}^{\sharp}\right\rvert \ll q^{d+1}$ gives the desired bound.
\end{proof}

\begin{proof}[Proof of second part of \propref{prop:truncated-moment-reduction}]
We are left with bounding the contribution of the error term in
\eqref{eq:SumR_P} to \eqref{eq:MP_m,zSahrp}.
By \cite[Lemma~2.2]{key-14}, we have
\[
\frac{1}{k^2}\sum_{0<\ell<k^2}
\left|E_x\left(\frac{\ell}{k^2}\right)\right|
\ll \log(2k).
\]
Hence, by \lemref{lem:root-density-product-bound}, the error is
\[
\begin{aligned}
&\ll_{m,d,\varepsilon}\frac{1}{H^{d+1}}
\sum_{\substack{k_1,\ldots,k_m\ge1\\K\le z}}
q^{d+\frac12 + \varepsilon}\prod_{i=1}^{m}k_i\log(2k_i)
\left(\frac{H^d}{q^d}+H^{\frac12}\right)\\
&\ll_{m,d,\varepsilon}
\frac{1}{H}\sum_{K\le z}K^{m+1+\varepsilon}
+\frac{1}{H^{d+\frac12}}\sum_{K\le z}K^{m+2d+1+\varepsilon}\\
&\ll_{m,d,\varepsilon}
\frac{z^{m+2+\varepsilon}}{H}
+\frac{z^{m+2d+2+\varepsilon}}{H^{d+\frac12}},
\end{aligned}
\]
where we used $q=K^2$, $k_i\mid K$, and the divisor bound for the
number of tuples with least common multiple $K$.
Together with the error in \eqref{eq:MP_m,zSahrp}, this completes
the proof of the second part of \propref{prop:truncated-moment-reduction}.
\end{proof}

\section{Non-paired frequency analysis}\label{sec:nonpaired-frequency-analysis}

In this section we use the truncated formula of
\propref{prop:truncated-moment-reduction} to bound the total contribution to
$\mathcal{M}_{m,z}(x)$ from frequency tuples that are not exact
pairings, which we now define. For even $m=2k$, call a frequency tuple
$\left(
\ell_{1}/k_{1}^{2},\ldots,
\ell_{m}/k_{m}^{2}
\right)$
appearing in $\mathcal{M}_{m,z}(x)$  \emph{exactly paired} if the
indices $\{1,2,\ldots,2k\}$ admit a perfect matching $\mathcal{P}$ such that
\[
\frac{\ell_{i}}{k_{i}^{2}}
+
\frac{\ell_{j}}{k_{j}^{2}}
\in\mathbb{Z}
\qquad
\text{for every }\{i,j\}\in\mathcal{P}.
\]
For odd $m$, no exactly paired tuple exists. We write
\[
\mathcal{M}_{m,z}(x)
=
\mathcal{M}_{m,z}^{\mathrm{pair}}(x)
+
\mathcal{M}_{m,z}^{\mathrm{nonpair}}(x),
\]
where $\mathcal{M}_{m,z}^{\mathrm{pair}}(x)$ is the restriction to
exactly paired frequency tuples and
$\mathcal{M}_{m,z}^{\mathrm{nonpair}}(x)$ is the restriction to all
remaining frequency tuples. The main result of this section is the following proposition.
\begin{proposition}\label{prop:nonpaired-frequency-power-saving}
Fix $m\geq3$ and $\beta>0$. There exists $\delta_{m,\beta}>0$ such that, uniformly for $z\leq x^{\beta}$,\begin{equation}
\left\lvert \mathcal{M}_{m,z}^{\mathrm{nonpair}}\left(x\right)\right\rvert \ll_{m,d,\beta}x^{\frac{m}{4}-\delta_{m,\beta}}.\label{eq:GoodBad-8-6}
\end{equation}
\end{proposition}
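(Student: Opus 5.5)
The plan is to reduce \propref{prop:nonpaired-frequency-power-saving} to the non-paired bound of GMR for the simpler sum \eqref{eq:form2}, after isolating the arithmetic weight
\[
W(\boldsymbol{\ell},\mathbf{k}):=\frac{1}{q^{d+1}}\sum_{\nu_i\bmod k_i^2}e\Bigl(-\sum_i\frac{\ell_i\nu_i}{k_i^2}\Bigr)G(\boldsymbol{\nu},\mathbf{k}).
\]

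\textbf{Step 1: zero-sum constraint and factorization.} By the Chinese remainder theorem, $W$ factors over primes $p\mid K$ as a product of local weights $W_p$. Each $W_p$ is an average over $g\in V_{p^2}^\sharp$ of $\prod_{i\in I_p}R_g(-\ell_i,p^2)$, where $\ell_i$ is read at $p$. The set $V_{p^2}^\sharp$ is invariant under translation $g(t)\mapsto g(t+c)$, and this shifts every root. Averaging over $c$ therefore shows that $W_p=0$ unless $\sum_{i\in I_p}\ell_i/k_i^2$ has no $p$-part modulo $1$. Hence $W$ vanishes off the zero-sum relation. Each $W_p$ is also bounded by the root-count moments $\mathbb{E}\rho_g(p^2)^{|I_p|}\ll p^{|I_p|-1}$ of \lemref{lem:root-count-moment-bounds}, which is the local input behind \lemref{lem:root-density-product-bound}.

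\textbf{Step 2: good/bad split.} Fix a threshold $\eta>0$. Call a pair $(\mathbf{k},\boldsymbol{\ell})$ \emph{good} if $|W_p|\le C_{m,d}$ for every $p\mid K$, that is, $|W|\ll K^{\varepsilon}$. In the good case, insert the trivial bound $|W|\ll K^\varepsilon$ into \eqref{eq:form}. The remaining sum is exactly the non-paired zero-sum sum \eqref{eq:form2} with absolute values, which GMR bound by $x^{m/4-\delta}$ for $K\le x^\beta$ via Montgomery--Vaughan/Montgomery--Soundararajan type estimates for $\sum\prod|E_x(\ell_i/k_i^2)|$ over non-paired zero-sum tuples. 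The factor $K^\varepsilon\le x^{\beta\varepsilon}$ is absorbed by taking $\varepsilon$ small relative to $\delta$. I would verify that GMR's bound is really stated with absolute values, or at least with arbitrary bounded weights on zero-sum tuples. If it uses cancellation, the good case will need its structural input redone.

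\textbf{Step 3: bad case (main obstacle).} A pair is \emph{bad} if $W_p$ is large at some primes. In that case there are many $\nu$ with a common $g$ vanishing at them modulo $p^2$ and aligned phases. Computing $W_p$ explicitly from the structure of roots modulo $p^2$ should show that largeness forces local divisibility constraints. Typically at such $p$ several $k_i$ share $p$ and the numerators $\ell_i$ are divisible by $p$, so the frequency has reduced denominator $p$ rather than $p^2$. Let $b(\mathbf{k})$ be the product of such bad primes. Then $|W|\ll b^{A}$ for some $A=A(m,d)$. In exchange, the frequencies are confined to a sparse set: the number of admissible $\boldsymbol{\ell}$ drops by a power of $b$.

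I would then bound the bad contribution by H\"older. Pair a weighted higher moment of $\prod|E_x|$, as in \lemref{lem:bad-frequency-weighted-sum-bound}, with a moment bound for the arithmetic factor $\sum b^{A}$ over the sparse configurations, as in \lemref{lem:arithmetic-factor-higher-moment-bound}. Dyadically decomposing in $b$ should give a saving $b^{-c}$ from sparsity that beats $b^{A}$ once the sizes are balanced. Small $b$ is treated as in the good case, and large $b$ gains a power of $x$ from sparsity. The real difficulty is calibrating the threshold and exponents so that these two regimes overlap uniformly in $z\le x^\beta$; I would first try the crude bound $|W_p|\ll p^{|I_p|-1}$ together with counting tuples whose $\ell_i$ are divisible by $p$. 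Combining both cases gives \eqref{eq:GoodBad-8-6} with some $\delta_{m,\beta}>0$.
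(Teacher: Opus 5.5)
Your outline matches the paper's: CRT factorization with the zero-sum constraint, a good/bad split, GMR for the good part, and divisibility sparsity for the bad part. Thresholding on the bad-prime product $b$ instead of on $|\mathfrak A_{\mathbf k}(\boldsymbol\ell)|$ would be a legitimate variant. The gaps are in the bad case, which you rightly call the main obstacle.

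\textbf{The missing local bound.} What is needed is
\[
|A_p(\boldsymbol\lambda_p)|\ll_{m,d}p^{\max\{0,u_p-2\}},\qquad u_p:=\#\{i\in I_p:p\mid\ell_i\}.
\]
So growth comes only from the coordinates with $p\mid\ell_i$. This rests on two facts.
\begin{itemize}
\item At a primitive frequency $\lambda/p^2$, a singular root modulo $p$ either has no lift or contributes the complete sum $\sum_{k\bmod p}e(\lambda k/p)=0$. Hence $|T_g(\lambda/p^2;p^2)|\le d$ pointwise.
\item When $p\mid\lambda$, the size-$p$ contributions come from singular fibres, which occur with probability $O(p^{-2})$. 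Hence $\mathbb E|T_g|^n\ll 1+p^{n-2}$.
\end{itemize}
H\"older over the $u_p$ divisible coordinates then gives the bound (\lemref{lem:local-arithmetic-factor-bound}). Only with it do you know that a bad prime has $u_p\ge3$, and that the sparsity cost $p^{-u_p}$ beats the weight, netting $p^{-2}$.

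Your fallback $|W_p|\ll p^{|I_p|-1}$ does not work. After sparsity it leaves $p^{|I_p|-1-u_p}$, which is no saving unless $u_p=|I_p|$. For example, with $|I_p|=2$ and $u_p=0$ it loses a factor $p$ with nothing to compensate, while the true size is $O(1)$.

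\textbf{The missing base bound.} Sparsity is a saving relative to some base bound, and you never say what bounds $\sum\prod F_x(\ell_i/N_i)$ over bad tuples at scale $x^{m/4}$. GMR's machinery is not built for these divisibility restrictions. A H\"older pairing of moments of $\prod|E_x|$ with moments of $W$ is also not how the two lemmas you cite fit together.

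The paper bounds $|\mathfrak A_{\mathbf k}(\boldsymbol\ell)|^{1+\vartheta}$ pointwise by a function of the divisibility pattern. It then uses two crude estimates in complementary ranges of $K$.
\begin{itemize}
\item For $K\le Y$, the normalized $L^1$ bound $N^{-1}\sum_{a\mid\ell}F_x(\ell/N)\ll\log(2N)/a$ gives $K^{\varepsilon}$ per $\mathbf k$ (\lemref{lem:arithmetic-factor-higher-moment-bound}).
\item For $K>Y$, the Montgomery--Vaughan Fundamental Lemma gives $x^{m/2}K^{-2+\varepsilon}$ per $\mathbf k$ after normalization (\lemref{lem:bad-frequency-weighted-sum-bound}). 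It applies because the zero-sum relation forces each maximal $p$-exponent of the reduced denominators to be attained twice.
\end{itemize}
Summed, these give $Y^{1+\varepsilon}+x^{m/2}Y^{-1+\varepsilon}$, which is exactly $x^{m/4+o(1)}$ at $Y=x^{m/4}$. So any strict saving on the bad set yields a power saving; the paper's saving is Rankin's $q^{-\eta\vartheta}$. In your $b$-formulation the saving would be $b^{-2}$ in the first range and at least $b^{-1}$ in the second, since the local exponent $\Theta_p$ is $\le -1$ when $u_p\ge3$. That suffices for $b>x^{\epsilon'}$, but this $K$-split is the idea your proposal lacks.

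\textbf{The good case.} Your hedge is warranted, because ``not exactly paired'' is not GMR's ``non-paired''. Denominator tuples that are paired and non-repeated in GMR's sense, but have some nonzero pair-sum, are not covered by GMR's non-paired estimates. The paper first makes a square-block change of variables to put the frequencies into GMR's classes. It then re-runs GMR's Proposition~4.3 with the positive convolution weights $J_x^+$. The zero-sum relation forces at least two nonzero pair blocks, which saves $x^{-1/2+o(1)}$.
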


The remainder of the section is devoted to proving
\propref{prop:nonpaired-frequency-power-saving}.
As explained in \secref{sec:main-objects-proof-structure},
our approach builds on the method of GMR
\cite[Proposition~2.1]{key-14}.
The new feature relative to GMR's argument
is the arithmetic weight appearing in \eqref{eq:MathcalMP_m,z}:
\begin{equation}\mathfrak{A}_{\mathbf{k}}\left(\boldsymbol{\ell}\right):=\frac{1}{q^{d+1}}\sum_{\nu_{i}\bmod k_{i}^{2}}e\left(-\sum_{i=1}^{m}\frac{\ell_{i}\nu_{i}}{k_{i}^{2}}\right)G\left(\boldsymbol{\nu},\mathbf{k}\right) =\frac{\delta_{d}\left(q\right)}{q^{d+1}}\sum_{g\in V_{q}^{\sharp}}\prod_{i=1}^{m}\sum_{\substack{\nu_{i}\bmod k_{i}^2\\
g\left(\nu_{i}\right)\equiv0\bmod k_{i}^2
}
}e\left(-\frac{\ell_{i}\nu_{i}}{k_{i}^2}\right).\label{eq:ArithmeticFactor}\end{equation}

\begin{remark}
Our `exactly paired tuples' definition differs from the paired/non-paired classification used in \cite{key-14}. In particular, the classification in \cite{key-14} applies to frequency tuples \textit{after} certain steps, which include transforming the denominators to be squares; we do not apply these transformations for this definition.
\end{remark}

\subsection{Local estimates for root exponential sums}\label{sec:root-exponential-sum-estimates}

For a prime $p$, a polynomial $g\in V_{p^{2}}$, and a frequency 
$\xi\in\mathbb{Q}/\mathbb{Z}$, define
\[
T_{g}\left(\xi;p^{2}\right)
:=
\sum_{\substack{u\bmod p^{2}\\
g\left(u\right)\equiv0\bmod p^{2}
}}
e\left(\xi u\right).
\]
We call these sums root (exponential) sums. The inner sums in
\eqref{eq:ArithmeticFactor} are products of such sums, by the Chinese remainder theorem. The purpose of this
subsection is to obtain bounds on those sums, which are needed to control $\mathfrak A_{\mathbf{k}}(\boldsymbol{\ell})$. When $\xi=\lambda/p^2$, the behavior of the root sum depends on whether $p\nmid \lambda$ (a case we call `primitive') or not. At primitive frequencies we have
pointwise cancellation, while
for frequencies divisible by $p$ we must use averaging over $g$ to get cancellation. The
following lemma gives both bounds.
\begin{lemma}\label{lem:root-exponential-sum-moment-bounds}
Let $p$ be prime and let $n,d\geq1$ be fixed. Uniformly for
$\lambda\bmod p^{2}$,
\[
\mathbb{E}_{g\in V_{p^{2}}^{\sharp}}
\left|
T_{g}\left(\frac{\lambda}{p^{2}};p^{2}\right)
\right|^{n}
\ll_{d,n}
\begin{cases}
1, & p\nmid\lambda,\\
1+p^{n-2}, & p\mid\lambda.
\end{cases}
\]
In particular, if $(a,p)=1$, then
\[
\mathbb{E}_{g\in V_{p^{2}}}
\left|
T_{g}\left(\frac{a}{p^{2}};p^{2}\right)
\right|^{n}
\ll_{d,n}1,
\qquad
\mathbb{E}_{g\in V_{p^{2}}}
\left|
T_{g}\left(\frac{a}{p};p^{2}\right)
\right|^{n}
\ll_{d,n}1+p^{n-2}.
\]
\end{lemma}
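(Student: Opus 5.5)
The approach is to decompose the roots of $g$ modulo $p^2$ according to their reductions modulo $p$, and to handle the primitive and non-primitive cases separately. Write $g=p^{j}h$ with $j\in\{0,1\}$; this is possible since $g\in V_{p^2}^\sharp$ means $g\not\equiv 0\bmod p^2$. Then sort the roots $u\bmod p^2$ by $u_0:=u\bmod p$, writing $u=u_0+pt$ with $t\bmod p$.

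Take first the case $j=0$, i.e.\ $\bar g:=g\bmod p\neq0$. A root $u$ of $g$ modulo $p^2$ must reduce to a root $u_0$ of $\bar g$. If $u_0$ is a simple root, Hensel's lemma gives exactly one lift. If $u_0$ is a multiple root, the condition becomes $g(u_0)+pt\,g'(u_0)\equiv0\bmod p^2$ with $g'(u_0)\equiv0\bmod p$, so either no $t$ works or all $p$ values do. In the latter case the $t$-sum is $e(\lambda u_0/p^2)\sum_{t\bmod p}e(\lambda t/p)$, which vanishes when $p\nmid\lambda$ and equals $p$ in absolute value when $p\mid\lambda$. Since $\bar g\neq0$ has at most $d$ roots, this yields the following pointwise bounds:
\[
|T_g|\le d \text{ when } p\nmid\lambda, \qquad |T_g|\le d+p\,N_g \text{ when } p\mid\lambda,
\]
where $N_g:=\#\{u_0\bmod p: \bar g(u_0)=\bar g'(u_0)=0,\ p^2\mid g(u_0)\}$, a quantity at most $d$.

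Now take $j=1$, so $g=ph$ with $\bar h\neq0$. The roots modulo $p^2$ are exactly the $u$ with $\bar h(u_0)=0$, each such $u_0$ carrying all $p$ lifts. The sum is therefore $\sum_{u_0}e(\lambda u_0/p^2)\sum_{t}e(\lambda t/p)$. This is $0$ when $p\nmid\lambda$, and it has absolute value at most $dp$ when $p\mid\lambda$. Combining the two cases, the primitive bound $\ll_{d,n}1$ holds pointwise.

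For $p\mid\lambda$, I bound the $n$th moment by $\ll_{d,n} 1+p^n\,\mathbb{P}(\text{bad})$, where ``bad'' means that either $j=1$ or $N_g\ge1$. Among $g\in V_{p^2}^\sharp$, the proportion with $p\mid g$ is $(p^{d+1}-1)/(p^{2d+2}-1)\le p^{-d-1}\le p^{-2}$, using $d\ge1$. For the event $N_g\ge1$, fix $u_0\bmod p$. The conditions $g(u_0)\equiv0\bmod p^2$ and $g'(u_0)\equiv0\bmod p$ are three independent linear conditions on the coefficients, two modulo $p$ for $g(u_0)$ and one for $g'(u_0)$, so they have probability $p^{-3}$. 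This independence needs $d\ge1$, and $g'(u_0)$ is linearly independent from $g(u_0)$ because $g'$ involves $a_1$ with coefficient $1$ after a shift. A union bound over the $p$ choices of $u_0$ gives probability $\le p^{-2}$. Passing from $V_{p^2}$ to $V_{p^2}^\sharp$ changes these probabilities by at most a factor of $2$. Together this gives $1+p^{n-2}$.

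For the ``in particular'' statements, averaging over all of $V_{p^2}$ adds only the polynomial $g=0$, whose weight is $p^{-2d-2}$ and whose root sum has $|T_0|\le p^2$. Its contribution is $p^{2n-2d-2}$, and I need this to be $\ll 1$ for $a/p^2$ and $\ll 1+p^{n-2}$ for $a/p$. The second holds since $d\ge1$. The first requires more care: when $\xi=a/p^2$ is primitive, $T_0=\sum_{u\bmod p^2}e(au/p^2)=0$ exactly, so the zero polynomial contributes nothing. When $\xi=a/p$, we have $T_0=0$ as well, since $p\nmid a$ and the sum runs over a full period. Hence the $V_{p^2}$ and $V_{p^2}^\sharp$ expectations agree up to a factor $1+O(p^{-2d-2})$, and the frequency $a/p=ap/p^2$ falls under the case $p\mid\lambda$. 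The main obstacle I anticipate is the bookkeeping in the multiple-root case and the probability estimate for $N_g\ge1$; the remaining steps are routine.
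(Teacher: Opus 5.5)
Your proof is correct and follows essentially the paper's argument: the fibre decomposition $u=u_0+pt$, the pointwise bounds $|T_g|\le d$ (primitive) and $|T_g|\le d+pZ_g$ (non-primitive), and a bound $\ll p^{-2}$ on the probability of the bad set coming from three independent linear conditions together with the density of polynomials with $p\mid g$. One small fix: your crude claim that $p^{2n-2d-2}\ll 1+p^{n-2}$ holds ``since $d\ge1$'' is false for $n>2d$. It is also unnecessary, because, as you then observe (and as the paper does), $T_0=0$ for both $a/p^2$ and $a/p$, so you should delete that sentence.
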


\begin{proof}
Suppose first that $p\nmid\lambda$. We begin with the proof of the stronger pointwise
bound
\[
\left|
T_{g}\left(\frac{\lambda}{p^{2}};p^{2}\right)
\right|
\leq d
\qquad
\left(g\in V_{p^{2}}\right).
\]
Every residue class modulo $p^{2}$ can be written uniquely in the form
$u=\beta+kp$, with $\beta,k\in\mathbb{Z}/p\mathbb{Z}$. Hence
\begin{align*}
T_{g}\left(\frac{\lambda}{p^{2}};p^{2}\right)
&=
\sum_{\substack{\beta\in\mathbb{Z}/p\mathbb{Z}\\
g\left(\beta\right)\equiv0\bmod p
}}
e\left(\frac{\lambda\beta}{p^{2}}\right)
\sum_{\substack{k\in\mathbb{Z}/p\mathbb{Z}\\
g\left(\beta+kp\right)\equiv0\bmod p^{2}
}}
e\left(\frac{\lambda k}{p}\right)
\\
&=
\sum_{\substack{\beta\in\mathbb{Z}/p\mathbb{Z}\\
g\left(\beta\right)\equiv0\bmod p
}}
e\left(\frac{\lambda\beta}{p^{2}}\right)
\sum_{\substack{k\in\mathbb{Z}/p\mathbb{Z}\\
g\left(\beta\right)+kp\,g'\left(\beta\right)\equiv0\bmod p^{2}
}}
e\left(\frac{\lambda k}{p}\right),
\end{align*}
where the second equality follows from
\[
g\left(\beta+kp\right)
\equiv
g\left(\beta\right)+kp\,g'\left(\beta\right)
\bmod{p^{2}}.
\] Fix a root $\beta$ of $g$ modulo $p$. If
$g'\left(\beta\right)\equiv0\bmod p$, then either $\beta$ has no lift to
a root modulo $p^{2}$, or every residue $\beta+kp$, $k\bmod p$, is such a
lift. In the latter case their total contribution is
\[
e\left(\frac{\lambda\beta}{p^{2}}\right)
\sum_{k\in\mathbb{Z}/p\mathbb{Z}}
e\left(\frac{\lambda k}{p}\right)
=0,
\]
since $p\nmid\lambda$. Thus singular roots modulo $p$ contribute
nothing. If instead $g'\left(\beta\right)\not\equiv0\bmod p$, Hensel's lemma
shows that $\beta$ has a unique lift to a root modulo $p^{2}$. Hence each
nonsingular root contributes a single term of absolute value $1$. If
$g\bmod p$ is the zero polynomial, there are no nonsingular roots.
Otherwise, $g\bmod p$ is a nonzero polynomial of degree at most $d$, so
it has at most $d$ roots modulo $p$. This proves the pointwise bound, and
therefore
\[
\mathbb{E}_{g\in V_{p^{2}}^{\sharp}}
\left|
T_{g}\left(\frac{\lambda}{p^{2}};p^{2}\right)
\right|^{n}
\ll_{d,n}1.
\]
Suppose now that $p\mid\lambda$. For $g\in V_{p^{2}}^{\sharp}$, define
\[
Z_{g}
:=
\#\left\{
y\bmod p:
g\left(y\right)\equiv0\bmod p^{2},
\quad
g'\left(y\right)\equiv0\bmod p
\right\}.
\]
A nonsingular root modulo $p$ has at most one lift modulo $p^{2}$,
whereas each residue counted by $Z_{g}$ gives a complete fibre
$y+pt$, $t\bmod p$. Therefore, uniformly for $p\mid\lambda$,
\[
\left|
T_{g}\left(\frac{\lambda}{p^{2}};p^{2}\right)
\right|
\leq d+pZ_{g}.
\]
Moreover, $Z_{g}\leq d$. Indeed, if $g\bmod p$ is nonzero, every residue
counted by $Z_g$ is a root of the nonzero polynomial $g\bmod p$, of
which there are at most $d$. If all coefficients of $g$ are divisible by $p$, write
$g=ph$ modulo $p^{2}$. Since $g\in V_{p^{2}}^{\sharp}$,
the polynomial $h\bmod p$ is nonzero and $\deg h\leq d$.
Moreover, $g(y)\equiv0\bmod p^{2}$ is equivalent to
$h(y)\equiv0\bmod p$. Hence there are at most $d$ such
residues $y\bmod p$. To estimate the mean of $Z_g$, split $V_{p^{2}}^{\sharp}$ according to
whether $g\bmod p$ is nonzero or all coefficients of $g$ are divisible
by $p$. In the first class, for each fixed $y\bmod p$, the conditions
\[
g\left(y\right)\equiv0\bmod p^{2},
\qquad
g'\left(y\right)\equiv0\bmod p
\]
impose three independent linear conditions modulo $p$, and hence occur
with probability $O_{d}\left(p^{-3}\right)$. Summing over $y\bmod p$
gives a contribution of $O_{d}\left(p^{-2}\right)$. The class in which all
coefficients are divisible by $p$ has relative density
\[
\frac{p^{d+1}-1}{p^{2d+2}-1}
=
O\left(p^{-d-1}\right)
=
O\left(p^{-2}\right),
\]
since $d\geq1$. Consequently,
\[
\mathbb{E}_{g\in V_{p^{2}}^{\sharp}}Z_{g}
\ll_{d}p^{-2},
\qquad
\mathbb{E}_{g\in V_{p^{2}}^{\sharp}}Z_{g}^{n}
\leq
d^{n-1}\mathbb{E}_{g\in V_{p^{2}}^{\sharp}}Z_g
\ll_{d,n}p^{-2}.
\]
It follows that
\[
\mathbb{E}_{g\in V_{p^{2}}^{\sharp}}
\left|
T_{g}\left(\frac{\lambda}{p^{2}};p^{2}\right)
\right|^{n}
\ll_{d,n}
1+p^{n}\mathbb{E}_{g\in V_{p^{2}}^{\sharp}}Z_{g}^{n}
\ll_{d,n}
1+p^{n-2}.
\] Finally, for the two nonzero frequencies: $a/p^{2}$ and $a/p$ with $(a,p)=1$, the zero polynomial
contributes zero by additive orthogonality. Passing from
$V_{p^{2}}^{\sharp}$ to $V_{p^{2}}$ therefore changes the average only
by the bounded factor $1-p^{-2d-2}$. This proves the two final estimates.
\end{proof}

The following lemma bounds the moments of $\rho_g(p^2)$ over $V_{p^2}^{\sharp}$.

\begin{lemma}\label{lem:root-count-moment-bounds}
Let $p$ be a prime and $\ell\in\mathbb N$. Then
\[
\mathbb E_{g\in V_{p^2}^{\sharp}}
\rho_g(p^2)^\ell
\ll_{\ell,d}1+p^{\ell-2}.
\]
\end{lemma}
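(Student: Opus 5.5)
The plan is to bound $\rho_g(p^2)$ pointwise by the same decomposition used for $T_g$ in the proof of \lemref{lem:root-exponential-sum-moment-bounds}, and then average. Write each residue modulo $p^2$ as $u=\beta+kp$ with $\beta,k\bmod p$, and use
\[
g(\beta+kp)\equiv g(\beta)+kp\,g'(\beta)\bmod p^2 .
\]
The roots of $g$ modulo $p^2$ then lie over roots $\beta$ of $g$ modulo $p$, and we split according to the type of $\beta$.

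\begin{itemize}
\item If $g'(\beta)\not\equiv0\bmod p$, Hensel's lemma gives exactly one lift of $\beta$, and the number of such $\beta$ is at most $d$ when $g\bmod p\neq0$.
\item If $g'(\beta)\equiv0\bmod p$, then either $\beta$ has no lift, or all $p$ residues $\beta+kp$ are roots. The latter happens exactly when $\beta$ is counted by $Z_g$ from the previous proof.
\end{itemize}

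This gives the pointwise bound $\rho_g(p^2)\le d+pZ_g$. It holds both when $g\bmod p\ne0$, and when $g=ph$ with $h\bmod p\neq0$. In the second case every root modulo $p^2$ lies over a root of $h$ modulo $p$, and each such fibre has size $p$, so $\rho_g(p^2)=p\cdot\#\{y\bmod p: h(y)\equiv0\}\le pZ_g$.

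Next I raise this to the $\ell$th power. Using $(a+b)^\ell\ll_\ell a^\ell+b^\ell$ gives
\[
\rho_g(p^2)^\ell\ll_{\ell} d^\ell+p^\ell Z_g^\ell .
\]
Averaging over $g\in V_{p^2}^\sharp$ and invoking the estimate established in the proof of \lemref{lem:root-exponential-sum-moment-bounds},
\[
\mathbb E_{g\in V_{p^2}^\sharp} Z_g^\ell\le d^{\ell-1}\,\mathbb E_{g\in V_{p^2}^\sharp} Z_g\ll_{d,\ell} p^{-2},
\]
yields
\[
\mathbb E_{g\in V_{p^2}^\sharp}\rho_g(p^2)^\ell\ll_{\ell,d}1+p^{\ell-2},
\]
as required.

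The one point needing care, which I regard as the main step, is the mean bound $\mathbb E Z_g\ll p^{-2}$ and its inputs; I would recheck them rather than just cite them. First, $Z_g\le d$ in both classes. Second, when $g\bmod p\neq0$, for each fixed $y$ the three conditions $g(y)\equiv0\bmod p^2$ and $g'(y)\equiv0\bmod p$ cost probability $p^{-3}$: the condition modulo $p^2$ imposes two independent linear conditions on the digits of the coefficients, and $g'(y)\equiv0$ imposes a third that is independent of them when $d\ge1$. Summing over the $p$ choices of $y$ gives $p^{-2}$. Third, the class $p\mid g$ has density $O(p^{-d-1})\le O(p^{-2})$, and on it $Z_g\le d$.

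If any independence claim fails for small $p$, those primes only affect implied constants, since $\rho_g(p^2)\le p^2$ trivially.
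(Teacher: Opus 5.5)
Your argument is correct and is essentially the paper's: it is exactly the $p\mid\lambda$ case of the proof of \lemref{lem:root-exponential-sum-moment-bounds} (the pointwise bound $d+pZ_g$ together with $\mathbb E Z_g^{n}\ll_{d,n}p^{-2}$), specialized to $\lambda=0$. The paper just notes that $T_g(0;p^2)=\rho_g(p^2)$ and cites that lemma with $\lambda=0$ and $n=\ell$, so you could have stopped after that observation rather than re-deriving the bound.
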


\begin{proof}
Since $T_g(0;p^2)=\rho_g(p^2)$, the result follows
from \lemref{lem:root-exponential-sum-moment-bounds}
with $\lambda=0$ and $n=\ell$.
\end{proof}

\subsection{The arithmetic factor}\label{sec:arithmetic-factor}
We derive a product formula, over primes $p$ dividing $K$, for the arithmetic factor $\mathfrak{A}_{\mathbf{k}}(\boldsymbol{\ell})$. For a prime $p\mid K$, the local factor in the formula will depend on
which of the corresponding numerators $\ell_i$, with $p\mid k_i$, are
also divisible by $p$. The local factor will be related to the root sums studied earlier. To state the formula we require some notation. For $p\mid K$ recall
$ I_p = \{ i \in [m]: p \mid k_i\}.$
For $i\in [m]$ put
$N_i:=k_i^{2}$, so that
$q=[N_1,\ldots,N_m]$. For each prime $p\mid K$ and $i\in I_p$, write
$\lambda_{i,p}\bmod p^{2}$ for the $p$-component of
$\ell_i/N_i$, namely
\begin{equation}
\lambda_{i,p}
\equiv
\ell_i\left(\frac{N_i}{p^{2}}\right)^{-1}
\bmod p^{2},
\label{eq:Pcomponent}
\end{equation}
so that
\[ \frac{\ell_i}{N_i} = \sum_{p\mid N_i}  \frac{\lambda_{i,p}}{p^2} \bmod 1.\]
Put $\boldsymbol{\lambda}_p
:=
\left(\lambda_{i,p}\right)_{i\in I_p}$.
Define the corresponding local arithmetic factor by
\[
A_p\left(\boldsymbol{\lambda}_p\right)
:=
\frac{1}{\left|V_{p^{2}}^{\sharp}\right|}
\sum_{g\in V_{p^{2}}^{\sharp}}
\prod_{i\in I_p}
T_g\left(-\frac{\lambda_{i,p}}{p^{2}};p^{2}\right)
\] and put 
\begin{equation}
\mathcal{H}\left(\mathbf{k}\right)
:=
\left\{
\boldsymbol{\ell}\in\mathbb{Z}^m:
0< \ell_i<N_i,\ 
\sum_{i\in[m]}\frac{\ell_i}{N_i}\in\mathbb{Z}
\right\}.
\label{eq:zeroSum}
\end{equation}
The following lemma first shows $\mathfrak{A}_{\mathbf{k}}\left(\boldsymbol{\ell}\right)=0$ when $\boldsymbol{\ell} \not\in \mathcal{H}\left(\mathbf{k}\right)$, and gives the product formula when $\boldsymbol{\ell} \in \mathcal{H}\left(\mathbf{k}\right)$.
\begin{lemma}\label{lem:arithmetic-factor-factorization}
Let $\boldsymbol{\ell}=(\ell_1,\ldots,\ell_m)\in\mathbb{Z}^m$
satisfy $0<\ell_i<N_i$ for every $i\in[m]$.
Then the arithmetic factor
$\mathfrak{A}_{\mathbf{k}}(\boldsymbol{\ell})$
vanishes unless
$\boldsymbol{\ell}\in\mathcal{H}(\mathbf{k})$.
For $\boldsymbol{\ell}\in\mathcal{H}(\mathbf{k})$,
\[
\mathfrak{A}_{\mathbf{k}}(\boldsymbol{\ell})
=
\frac{1}{\zeta(2d+2)}
\prod_{p\mid K}A_p(\boldsymbol{\lambda}_p).
\]
\end{lemma}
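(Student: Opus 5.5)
The plan is to use the Chinese remainder theorem to split everything over the primes $p\mid K$, and then obtain both the vanishing statement and the product formula from a single translation-invariance argument.

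\textbf{Factorization over primes.} Since the $k_i$ are squarefree (otherwise $\mu(k_i)=0$ and there is nothing to prove), we have $q=K^2=\prod_{p\mid K}p^2$. The Chinese remainder theorem gives a bijection $V_q\cong\prod_{p\mid K}V_{p^2}$, $g\mapsto(g_p)_p$, under which $V_q^{\sharp}$ corresponds exactly to $\prod_{p\mid K}V_{p^2}^{\sharp}$. This is because the condition defining $V_q^{\sharp}$ is imposed prime by prime.

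For a fixed $g$, the inner sum over $\nu_i$ in \eqref{eq:ArithmeticFactor} runs over roots modulo $N_i=\prod_{p\mid k_i}p^2$. Writing $\nu_i$ through its components $u_p\bmod p^2$ for $p\mid k_i$, and using the decomposition $\ell_i/N_i=\sum_{p\mid N_i}\lambda_{i,p}/p^2$ from \eqref{eq:Pcomponent}, we get $e(-\ell_i\nu_i/N_i)=\prod_{p\mid k_i}e(-\lambda_{i,p}u_p/p^2)$. Hence the $\nu_i$-sum equals $\prod_{p\mid k_i}T_{g_p}(-\lambda_{i,p}/p^2;p^2)$. Taking the product over $i$ and regrouping by primes, the summand becomes $\prod_{p\mid K}\prod_{i\in I_p}T_{g_p}(-\lambda_{i,p}/p^2;p^2)$. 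The sum over $g\in V_q^{\sharp}$ therefore factors as
\[
\prod_{p\mid K}\Bigl(|V_{p^2}^{\sharp}|\,A_p(\boldsymbol{\lambda}_p)\Bigr).
\]

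\textbf{The normalization.} We multiply by $\delta_d(q)/q^{d+1}$. Here $|V_{p^2}^{\sharp}|=p^{2(d+1)}-1=p^{2d+2}(1-p^{-2d-2})$, so
\[
\prod_{p\mid K}|V_{p^2}^{\sharp}|=q^{d+1}\prod_{p\mid K}\bigl(1-p^{-2d-2}\bigr).
\]
This cancels exactly against the Euler factor in $\delta_d(q)=\zeta(2d+2)^{-1}\prod_{p\mid q}(1-p^{-2d-2})^{-1}$, leaving
\[
\mathfrak{A}_{\mathbf{k}}(\boldsymbol{\ell})=\zeta(2d+2)^{-1}\prod_{p\mid K}A_p(\boldsymbol{\lambda}_p)
\]
for every admissible $\boldsymbol{\ell}$, whether or not it lies in $\mathcal{H}(\mathbf{k})$.

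\textbf{The vanishing.} It remains to show that the product vanishes off $\mathcal{H}(\mathbf{k})$. The idea is to exploit invariance of $V_{p^2}^{\sharp}$ under the substitution $g(t)\mapsto g(t+c)$ for $c\in\mathbb{Z}/p^2\mathbb{Z}$. This map is a bijection of $V_{p^2}$ that preserves degree at most $d$, and it maps $p^2$-nonzero polynomials to $p^2$-nonzero polynomials, so it preserves $V_{p^2}^{\sharp}$. The roots of $g(t+c)$ are $u-c$, where $u$ ranges over the roots of $g$, so
\[
T_{g(\cdot+c)}(\xi;p^2)=e(-\xi c)\,T_g(\xi;p^2).
\]
Substituting this into the average defining $A_p$ gives
\[
A_p(\boldsymbol{\lambda}_p)=e\Bigl(c\sum_{i\in I_p}\lambda_{i,p}/p^2\Bigr)A_p(\boldsymbol{\lambda}_p)\quad\text{for all }c.
\]
Consequently $A_p(\boldsymbol{\lambda}_p)=0$ unless $\sum_{i\in I_p}\lambda_{i,p}\equiv0\bmod p^2$.

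Finally, $\sum_i\ell_i/N_i\equiv\sum_{p\mid K}\bigl(\sum_{i\in I_p}\lambda_{i,p}\bigr)/p^2\bmod 1$. By uniqueness of the partial-fraction decomposition modulo $1$ over coprime denominators, this sum is an integer if and only if each local sum vanishes modulo $p^2$. So if $\boldsymbol{\ell}\notin\mathcal{H}(\mathbf{k})$, some local factor $A_p$ is zero, and hence $\mathfrak{A}_{\mathbf{k}}(\boldsymbol{\ell})=0$.

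The only delicate points are bookkeeping. One is that the CRT decomposition of the character $e(-\ell_i\nu_i/N_i)$ matches the definition \eqref{eq:Pcomponent} of $\lambda_{i,p}$. The other is that the shift preserves $V^{\sharp}_{p^2}$. Neither should pose a real obstacle.
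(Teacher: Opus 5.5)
Your proof is correct and uses the same ingredients as the paper: the Chinese remainder factorization of the sum over $V_q^{\sharp}$ and of the root sums, the identity $\delta_d(q)/q^{d+1}=1/(\zeta(2d+2)|V_q^{\sharp}|)$, and translation invariance of the polynomial average. The only difference is the order of the steps. The paper first proves the vanishing with one global shift $g(X)\mapsto g(X-h)$, $h\bmod q$, and then factors. You factor first and then shift inside each local factor $A_p$ separately, which is equivalent by CRT. Your ordering has the small bonus of showing that the product formula holds for every admissible $\boldsymbol{\ell}$, with the vanishing off $\mathcal{H}(\mathbf{k})$ coming from a vanishing local factor.
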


\begin{proof}
We first prove the vanishing statement. Fix
$h\bmod q$ and make the change of variables
\[
g(X)\mapsto g(X-h),\qquad \nu_i\mapsto \nu_i+h.
\]
This is a bijection of the summation variables defining $\mathfrak{A}_{\mathbf{k}}(\boldsymbol{\ell})$, while the exponential
factor is multiplied by
$e\left(-h\sum_{i=1}^{m}\ell_i/N_i\right).$
Hence, for every $h\bmod q$,
\[
\mathfrak{A}_{\mathbf{k}}(\boldsymbol{\ell})
=
e\left(-h\sum_{i=1}^{m}\frac{\ell_i}{N_i}\right)
\mathfrak{A}_{\mathbf{k}}(\boldsymbol{\ell}).
\]
Therefore $\mathfrak{A}_{\mathbf{k}}\left(\boldsymbol{\ell}\right)=0$ when $
\boldsymbol{\ell} \not\in \mathcal{H}\left(\mathbf{k}\right)$, as needed. We turn to the second assertion. The multiplier in \eqref{eq:Pcomponent} is a unit modulo
$p^{2}$, so $p\mid\lambda_{i,p}$
if and only if
$p\mid\ell_i$. The requirement  $\boldsymbol{\ell} \in \mathcal{H}\left(\mathbf{k}\right)$ is equivalent to
\[\sum_{i\in I_p}\lambda_{i,p}\equiv0\bmod p^{2}
\qquad\forall p\mid K.\]
Using the local root sum
$T_g\left(\xi;p^{2}\right)$, and since
\[
\frac{\delta_d(q)}{q^{d+1}}
=
\frac{1}{\zeta\left(2d+2\right)}
\frac{1}{\left|V_q^{\sharp}\right|},
\]
the Chinese remainder theorem gives the prime-by-prime decomposition of $\mathfrak{A}_{\mathbf{k}}\left(\boldsymbol{\ell}\right)$.
\end{proof}

With this factorization established, the next lemma bounds each local factor and identifies the only possible source of growth. Put $u_{p}\left(\boldsymbol{\ell}\right)
:=
\#\left\{
i\in I_{p}:p\mid\ell_{i}
\right\}$.
\begin{lemma}\label{lem:local-arithmetic-factor-bound}
For every $p\mid K$ and every
$\boldsymbol{\ell}\in\mathcal{H}\left(\mathbf{k}\right)$,
\begin{equation}\label{eq:Apbnd}
\left\lvert
A_{p}\left(\boldsymbol{\lambda}_{p}\right)
\right\rvert
\ll_{m,d}
p^{\max\left\{0,u_{p}\left(\boldsymbol{\ell}\right)-2\right\}}.
\end{equation}
Consequently, for every $\varepsilon>0$,
\[
\left\lvert
\mathfrak{A}_{\mathbf{k}}\left(\boldsymbol{\ell}\right)
\right\rvert
\ll_{m,d,\varepsilon}
q^{\varepsilon}
\prod_{p\mid K}
p^{\max\left\{0,u_{p}\left(\boldsymbol{\ell}\right)-2\right\}}.
\]
In particular, a positive
power of $p \mid K$ can occur in the bound only when
$u_{p}\left(\boldsymbol{\ell}\right)\geq3$.
\end{lemma}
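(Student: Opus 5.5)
We bound $A_p(\boldsymbol{\lambda}_p)$ directly from its definition, using Hölder's inequality over $g\in V_{p^2}^{\sharp}$ together with the moment bounds of \lemref{lem:root-exponential-sum-moment-bounds}. The idea is to separate the indices $i\in I_p$ according to whether the frequency is primitive. Write $I_p=J\sqcup J'$, where
\[
J:=\{i\in I_p:p\nmid\lambda_{i,p}\},\qquad J':=\{i\in I_p:p\mid\lambda_{i,p}\}.
\]
Since $p\mid\lambda_{i,p}$ if and only if $p\mid\ell_i$, we have $|J'|=u:=u_p(\boldsymbol{\ell})$.

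For $i\in J$, the proof of \lemref{lem:root-exponential-sum-moment-bounds} gives the pointwise bound $|T_g(-\lambda_{i,p}/p^2;p^2)|\le d$. These factors therefore contribute at most $d^{|J|}\ll_{m,d}1$, and
\[
|A_p(\boldsymbol{\lambda}_p)|\le d^{m}\,\mathbb{E}_{g\in V_{p^2}^{\sharp}}\prod_{i\in J'}\left|T_g\left(-\frac{\lambda_{i,p}}{p^2};p^2\right)\right|.
\]
The remaining factors we handle by averaging over $g$.

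If $u=0$ the bound is $O(1)$ and we are done. If $u\ge1$, apply Hölder's inequality with $u$ equal exponents:
\[
\mathbb{E}\prod_{i\in J'}|T_g|\le\prod_{i\in J'}\left(\mathbb{E}|T_g(-\lambda_{i,p}/p^2;p^2)|^{u}\right)^{1/u}.
\]
By the case $p\mid\lambda$ of \lemref{lem:root-exponential-sum-moment-bounds} with $n=u$, each factor is $\ll(1+p^{u-2})^{1/u}$. The product is then $\ll 1+p^{u-2}\ll p^{\max\{0,u-2\}}$, which is exactly \eqref{eq:Apbnd}.

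The one point needing care is the $p\mid\lambda$ case itself, including $\lambda\equiv0$, where $T_g=\rho_g(p^2)$. This case is already covered uniformly in $\lambda$ by \lemref{lem:root-exponential-sum-moment-bounds}; equivalently, one can use the pointwise bound $|T_g|\le d+pZ_g$ together with $\mathbb{E}Z_g^n\ll p^{-2}$. One should also check that the Hölder step loses nothing: the crude alternative of bounding each factor by $d+pZ_g$ and expanding gives $\mathbb{E}(d+pZ_g)^u\ll 1+p^{u}\,\mathbb{E}Z_g^u\ll1+p^{u-2}$, the same bound. So either route works, and this step is the only place where the saving $p^{-2}$ from the rarity of singular lifts enters.

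For the global statement, invoke \lemref{lem:arithmetic-factor-factorization}. For $\boldsymbol{\ell}\in\mathcal{H}(\mathbf{k})$ it gives
\[
\mathfrak{A}_{\mathbf{k}}(\boldsymbol{\ell})=\zeta(2d+2)^{-1}\prod_{p\mid K}A_p(\boldsymbol{\lambda}_p).
\]
Applying \eqref{eq:Apbnd} at each $p\mid K$ yields
\[
|\mathfrak{A}_{\mathbf{k}}(\boldsymbol{\ell})|\le C_{m,d}^{\omega(K)}\prod_{p\mid K}p^{\max\{0,u_p-2\}}.
\]
Finally, $C^{\omega(K)}\ll_\varepsilon K^{\varepsilon}\le q^{\varepsilon}$ by the divisor bound. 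The last assertion is immediate, since $\max\{0,u-2\}>0$ exactly when $u\ge3$.
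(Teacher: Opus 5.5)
Your proposal is correct and follows essentially the same route as the paper's proof. The paper also bounds the primitive-frequency factors pointwise by $d$ and applies H\"older with $u_p$ equal exponents to the divisible-frequency factors via \lemref{lem:root-exponential-sum-moment-bounds}. It then concludes with the factorization of \lemref{lem:arithmetic-factor-factorization} and the divisor bound $C^{\omega(K)}\ll_\varepsilon K^\varepsilon\le q^\varepsilon$.
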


\begin{proof}
By definition \eqref{eq:Pcomponent}, $u_{p}\left(\boldsymbol{\ell}\right)$ is the number of $i\in [m]$ for which 
$p\mid\lambda_{i,p}$. \lemref{lem:root-exponential-sum-moment-bounds}
gives
\[
\mathbb{E}_{g\in V_{p^{2}}^{\sharp}}
\left\lvert
T_{g}\left(-\frac{\lambda_{i,p}}{p^{2}};p^{2}\right)
\right\rvert^{u_{p}}
\ll_{m,d}
1+p^{u_{p}-2}.
\]
If $u_{p}=0$, all factors are bounded pointwise. If $u_{p}\geq1$,
Hölder's inequality applied to the $u_{p}$ divisible-frequency factors
gives
\[
\left\lvert
A_{p}\left(\boldsymbol{\lambda}_{p}\right)
\right\rvert
\ll_{m,d}
\prod_{\substack{i\in I_{p}\\
p\mid\lambda_{i,p}}}
\left(
\mathbb{E}_{g\in V_{p^{2}}^{\sharp}}
\left\lvert
T_{g}\left(-\frac{\lambda_{i,p}}{p^{2}};p^{2}\right)
\right\rvert^{u_{p}}
\right)^{1/u_{p}}\ll_{m,d}
1+p^{u_{p}-2}.\]
This gives \eqref{eq:Apbnd}. Using \lemref{lem:arithmetic-factor-factorization} and plugging \eqref{eq:Apbnd} gives
\[
\left\lvert
\mathfrak{A}_{\mathbf{k}}\left(\boldsymbol{\ell}\right)
\right\rvert
\leq
C_{m,d}^{\omega(K)}
\prod_{p\mid K}
p^{\max\left\{0,u_{p}\left(\boldsymbol{\ell}\right)-2\right\}}.
\]
for some constant $C_{m,d}>0$. For every $\varepsilon>0$, the divisor bound $C_{m,d}^{\omega(K)}
\ll_{m,d,\varepsilon}
K^{\varepsilon}
$
together with $q=K^{2}$ gives the second bound needed.
\end{proof}
\subsection{Good-bad decomposition of frequencies}\label{sec:good-bad-frequency-decomposition}
We partition the frequencies $(\ell_i/k^2_i)_{1\le i \le m}$ into two disjoint sets of \emph{good} and \emph{bad} frequencies we now define. Fix a small parameter $\eta>0$ to be chosen later. For each $\mathbf{k}\in\mathbb{N}^m$, define the good set

\begin{equation}
\mathcal{G}_{\eta}\left(\mathbf{k}\right):=\left\{ \boldsymbol{\ell}\in\mathcal{H}\left(\mathbf{k}\right):\left\lvert \mathfrak{A}_{\mathbf{k}}\left(\boldsymbol{\ell}\right)\right\rvert \leq q^{\eta}\right\} ,\label{eq:goodSet}
\end{equation}and the bad set

\begin{equation}
\mathcal{B}_{\eta}\left(\mathbf{k}\right):=\left\{ \boldsymbol{\ell}\in\mathcal{H}\left(\mathbf{k}\right):\left\lvert \mathfrak{A}_{\mathbf{k}}\left(\boldsymbol{\ell}\right)\right\rvert >q^{\eta}\right\} \label{eq:badSet}
\end{equation}and decompose

\begin{equation}
\mathcal{M}_{m,z}^{\mathrm{nonpair}}\left(x\right):=\mathcal{M}_{m,z}^{\mathrm{good}}\left(x\right)+\mathcal{M}_{m,z}^{\mathrm{bad}}\left(x\right)\label{eq:GoodBad}
\end{equation} with the additional restriction
$\boldsymbol{\ell}\in\mathcal{G}_{\eta}\left(\mathbf{k}\right)$ or
$\boldsymbol{\ell}\in\mathcal{B}_{\eta}\left(\mathbf{k}\right)$,
respectively.

We now analyze the arithmetic factor
\(\mathfrak{A}_{\mathbf{k}}(\boldsymbol{\ell})\) appearing in \eqref{eq:MathcalMP_m,z}. In the GMR setting (\cite[Proposition 2.1]{key-14}) no such additional factor occurs, so their estimates can be applied directly after reducing the frequencies. In our setting, the same argument still gives the required power saving whenever $\left|\mathfrak A_{\mathbf{k}}(\boldsymbol{\ell})\right|\leq q^{\eta}$
for a sufficiently small \(\eta>0\). Since $q\leq z^2$ and $z\leq x^\beta$, this introduces
at most a factor $x^{2\beta\eta}$. As we shall see,
this loss can be absorbed into the power savings
obtained in GMR's estimates
(\cite[Section~5]{key-14}). We therefore split the frequencies according to whether $\left\lvert \mathfrak{A}_{\mathbf{k}}\left(\boldsymbol{\ell}\right)\right\rvert \leq q^{\eta}$ holds or not.  
The bad frequencies require a separate estimate. 

We first treat the good contribution and then estimate the bad contribution separately in \secref{sec:bad-frequency-weighted-estimates}. Throughout, we use
\[
\left|E_x(\alpha)\right|\leq F_x(\alpha),
\qquad
F_x(\alpha):=\min\left(x,\left\Vert\alpha\right\Vert^{-1}\right),
\]
where \(\left\Vert\alpha\right\Vert\) denotes the distance from \(\alpha\) to the nearest integer. This notation goes back to Montgomery and Vaughan \cite{key-16}. 

\subsection{Good frequencies: reduction to the GMR argument}
\label{sec:good-frequency-GMR-reduction}

The argument in this subsection reduces the analysis of $\mathcal{M}_{m,z}^{\mathrm{good}}\left(x\right)$ to existing bounds derived in \cite[\S\S4--5]{key-14}. We begin by rewriting the good-frequency contribution in the notation used by GMR. For each frequency $\ell_i/N_i$, reduce
\[
\frac{\ell_i}{N_i}
=
\frac{a_i}{r_i},
\qquad
(a_i,r_i)=1.
\]
Write $r_i=c_i v_i^2,$ $k_i=c_i v_i b_i$
where $c_i,v_i,b_i$ are squarefree and pairwise coprime, and set
\[
n_i:=c_i v_i,
\qquad
\widetilde r_i:=n_i^2=c_i r_i,
\qquad
u_i:=a_i c_i.
\]
Then
\[
N_i=k_i^2=\widetilde r_i b_i^2,
\qquad
\frac{\ell_i}{N_i}
=
\frac{u_i}{\widetilde r_i},
\qquad
\ell_i=u_i b_i^2.
\]
We refer to $u_i/\widetilde r_i$ as the \emph{square-block} representation of $\ell_i/N_i$; this representation is the one used in \cite{key-14}, although they do not use this name. Note that $u_i/\widetilde{r_i}$ is not necessarily reduced. We refer to the denominator $\widetilde{r_i}$ as the \emph{block-square} denominator.
For squarefree $n$ and $c\mid n$, let
\[
\mathcal R_c(n^2)
:=
\left\{
\frac{u}{n^2}:
0<u<n^2,\;
(u,n^2)=c
\right\};
\]
we call this set the class of fractions of \emph{type} $c$.
With these notations, the frequency $u_i/\widetilde r_i$ belongs to
$\mathcal R_{c_i}(\widetilde r_i)$. Moreover,
\[
\mathcal R_B(n^2)
:=
\bigsqcup_{c\mid n}\mathcal R_c(n^2)
=
\left\{
\frac{u}{n^2}:
0<u<n^2,\;
(u,n^2)\ \text{is squarefree}
\right\},
\]
that is, the fractions of type $c$, as $c$ varies over divisors of $n$, form a partition of the frequency class $\mathcal R_B(n^2)$, which appears in 
\cite[p.~9]{key-14}, for $B:=\{p^2:p\ \mathrm{prime}\}$.
The preceding change of variables also gives
\[
\frac{\mu(k_i)}{N_i}
=
\frac{\mu(n_i)\mu(b_i)}{\widetilde r_i b_i^2},
\]
since $k_i=n_i b_i$, with $n_i$ and $b_i$ squarefree and coprime.
Consequently, writing $\sigma_i:=u_i/\widetilde r_i$
and $\widetilde r_i:=n_i^2$, the good contribution
takes the form
\begin{equation}
\mathcal M_{m,z}^{\mathrm{good}}(x)
=
\sum_{\substack{
n_i\ge 2,\ c_i\mid n_i\\
b_i\ge 1,\ (b_i,n_i)=1,\ (1\le i\le m)\\
K=[n_1b_1,\ldots,n_mb_m]\le z
}}
\frac{\prod_{i=1}^{m}\mu(n_i)}{\widetilde r_1\cdots\widetilde r_m}
\prod_{i=1}^m\frac{\mu(b_i)}{b_i^2}
\sum_{\substack{
\sigma_i\in\mathcal R_{c_i}(\widetilde r_i),\ 1\leq i\leq m\\
\sum_{i=1}^m\sigma_i\in\mathbb Z\\
(\sigma_1,\ldots,\sigma_m)\ \mathrm{not\ exactly\ paired}\\
|\mathfrak A_{\mathbf k}(\boldsymbol\ell)|\leq q^\eta
}}
\mathfrak A_{\mathbf k}(\boldsymbol\ell)
\prod_{i=1}^m E_x(\sigma_i),
\label{eq:M_m,zGoodGMR}
\end{equation}
where, in the inner sum, $k_i=n_i b_i$ and
$\ell_i=u_i b_i^2$. For a tuple of square-block denominators $\widetilde{\mathbf r}
=
(\widetilde r_1,\ldots,\widetilde r_m),$
write
\[
S_x(\widetilde{\mathbf r})
:=
\sum_{\substack{
\sigma_i\in
\mathcal R_B(\widetilde r_i),\ 1\leq i\leq m\\
\sum_{i=1}^m\sigma_i\in\mathbb Z
}}
\prod_{i=1}^m F_x(\sigma_i).
\]
We denote by
\(S_x^{\mathrm{exact}}(\widetilde{\mathbf r})\)
the contribution from exactly paired tuples
\((\sigma_1,\ldots,\sigma_m)\), and by
\(S_x^{\mathrm{nonexact}}(\widetilde{\mathbf r})\)
the contribution from all remaining tuples. Thus
\[
S_x(\widetilde{\mathbf r})
=
S_x^{\mathrm{exact}}(\widetilde{\mathbf r})
+
S_x^{\mathrm{nonexact}}(\widetilde{\mathbf r}).
\]
\begin{lemma}\label{lem:good-frequency-square-block-reduction}
For $m\ge3$ and $z\ge1$, the good-frequency contribution satisfies
\begin{equation}
\left|
\mathcal M_{m,z}^{\mathrm{good}}(x)
\right|
\ll_m
z^{2\eta}
\sum_{\substack{
\widetilde{\mathbf r}=(n_1^2,\ldots,n_m^2)\\
n_i\ge2\ \mathrm{squarefree}\ (1\le i\le m)
}}
\frac{
S_x^{\mathrm{nonexact}}(\widetilde{\mathbf r})
}{
\widetilde r_1\cdots\widetilde r_m
}.
\label{eq:good-frequency-GMR-reduction}
\end{equation}
\end{lemma}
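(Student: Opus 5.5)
We start from the representation \eqref{eq:M_m,zGoodGMR} and apply the triangle inequality. On the good set the arithmetic factor satisfies $|\mathfrak A_{\mathbf k}(\boldsymbol\ell)|\le q^{\eta}=K^{2\eta}\le z^{2\eta}$, since $K\le z$. We also use $|E_x(\sigma_i)|\le F_x(\sigma_i)$. The two constraints $|\mathfrak A_{\mathbf k}(\boldsymbol\ell)|\le q^\eta$ and $K\le z$ can then be dropped, since every remaining summand is nonnegative. This leaves
\[
|\mathcal M^{\mathrm{good}}_{m,z}(x)|\le z^{2\eta}\sum_{n_i,c_i,b_i}\frac{1}{\widetilde r_1\cdots\widetilde r_m}\prod_{i=1}^m\frac{\mu^2(b_i)}{b_i^2}\sum_{\substack{\sigma_i\in\mathcal R_{c_i}(\widetilde r_i)\\ \sum\sigma_i\in\mathbb Z,\ \text{not exactly paired}}}\prod_{i=1}^m F_x(\sigma_i).
\]

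Next we carry out the sum over the types $c_i\mid n_i$. The sets $\mathcal R_{c}(n^2)$, $c\mid n$, partition $\mathcal R_B(n^2)$. Hence for fixed $n_i$ and $b_i$, summing over all $(c_1,\dots,c_m)$ merges the inner sums into a single sum over $\sigma_i\in\mathcal R_B(\widetilde r_i)$ with $\sum\sigma_i\in\mathbb Z$ and $(\sigma_i)$ not exactly paired. Each tuple $\boldsymbol\sigma$ is counted exactly once, because its type is determined by $(u_i,n_i^2)$. One point needs checking here: the condition ``not exactly paired'' refers to the frequencies $\ell_i/N_i$, but $\ell_i/N_i=\sigma_i$ as elements of $\mathbb Q/\mathbb Z$. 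Exact pairing of $\boldsymbol\ell$ is therefore the same as exact pairing of $\boldsymbol\sigma$, which is the notion defining $S_x^{\mathrm{nonexact}}$. Both notions require $\sigma_i+\sigma_j\in\mathbb Z$ over a perfect matching, and they do not depend on the chosen representatives. Consequently the inner sum, after summing over types, is at most $S_x^{\mathrm{nonexact}}(\widetilde{\mathbf r})$, which depends only on $\widetilde{\mathbf r}=(n_1^2,\dots,n_m^2)$.

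Finally we sum over the $b_i$. The coprimality conditions $(b_i,n_i)=1$ can be dropped, giving $\prod_i\sum_{b_i\ge1}b_i^{-2}\le\zeta(2)^m\ll_m1$. The remaining sum runs over squarefree $n_i\ge2$; the condition $n_i\ge 2$ holds because $\ell_i\ne0$ forces $\widetilde r_i>1$. This is exactly the right-hand side of \eqref{eq:good-frequency-GMR-reduction}.

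The only delicate step is the bookkeeping of the change of variables $(k_i,\ell_i)\leftrightarrow(n_i,c_i,b_i,\sigma_i)$. We must verify that it is injective, so that no tuple $\boldsymbol\sigma$ is overcounted beyond the harmless factor coming from the $b_i$-sum. We must also verify that the non-exact-pairing condition transfers correctly. Both follow from the uniqueness of the reduced fraction $a_i/r_i$ and of the decomposition $k_i=c_iv_ib_i$.
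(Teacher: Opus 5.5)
Your proposal is correct and follows essentially the same argument as the paper. You bound $|\mathfrak A_{\mathbf k}(\boldsymbol\ell)|\le q^\eta\le z^{2\eta}$ and $|E_x|\le F_x$, and drop the remaining constraints by nonnegativity. You then merge the type classes $\mathcal R_{c}(n_i^2)$, $c\mid n_i$, into $\mathcal R_B(n_i^2)$, noting correctly that the zero-sum and non-exact-pairing conditions depend only on the values $\sigma_i=\ell_i/N_i$ in $\mathbb Q/\mathbb Z$. Finally you absorb the $b_i$-sums into $\prod_i\sum_{b_i}\mu^2(b_i)b_i^{-2}\ll_m 1$. Your check that $(k_i,\ell_i)\leftrightarrow(n_i,c_i,b_i,\sigma_i)$ is a bijection is a useful detail that the paper leaves implicit.
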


\begin{proof}
On the good set,
\[
\left|
\mathfrak A_{\mathbf k}(\boldsymbol\ell)
\right|
\leq q^\eta
\leq z^{2\eta},
\]
since $q=K^2$ and $K\leq z$. Moreover, uniformly for fixed $n_i$,
\begin{equation}
\sum_{\substack{b_i\geq1\\(b_i,n_i)=1}}
\frac{\mu^2(b_i)}{b_i^2}
\ll 1.
\label{eq:squarefree-completion-summability}
\end{equation}
Hence, after taking absolute values in \eqref{eq:M_m,zGoodGMR} and summing over the \(b_i\), their total contribution is bounded. Also, since the classes
$\mathcal R_c(n^2)$ form a disjoint partition of
$\mathcal R_B(n^2)$, summing over the types $c_i$
replaces the type-restricted frequency sums by the complete
square-block classes from $\mathcal{R}_B$, without any multiplicity. The frequencies
themselves have not changed, so both the zero-sum condition and the
exclusion of exact pairings are preserved.
Using the majorant $F_x$ introduced in \secref{sec:good-bad-frequency-decomposition}, we obtain the bound in the lemma, initially over the reduced-denominator tuples arising from the preceding parametrization. Since the summands are nonnegative, the sum may be enlarged to the complete square-block range of GMR: indeed, $\widetilde r_i=n_i^2=\prod_{p\mid n_i}p^2$ is a product of distinct blocks from $B$.
\end{proof}
The following lemma supplies the required power saving for this complete sum.

\begin{lemma}
\label{lem:good-frequency-power-saving}
Fix $m\geq3$ and $\beta>0$. For $\eta>0$ sufficiently small, there
exists $\delta_{m,\beta}>0$ such that, uniformly for $z\leq x^\beta$,
\[
\left|
\mathcal M_{m,z}^{\mathrm{good}}(x)
\right|
\ll_{m,\beta}
x^{\frac m4-\delta_{m,\beta}}.
\]
\end{lemma}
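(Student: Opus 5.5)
The plan is to combine \lemref{lem:good-frequency-square-block-reduction} with the GMR bound for the non-exactly-paired square-block sum. By \eqref{eq:good-frequency-GMR-reduction},
\[
\left|\mathcal M_{m,z}^{\mathrm{good}}(x)\right|\ll_m z^{2\eta}\,\Sigma,\qquad \Sigma:=\sum_{\widetilde{\mathbf r}}\frac{S_x^{\mathrm{nonexact}}(\widetilde{\mathbf r})}{\widetilde r_1\cdots\widetilde r_m}.
\]
Since $z\le x^\beta$, the prefactor is at most $x^{2\beta\eta}$. It therefore suffices to show that $\Sigma\ll_m x^{m/4-\delta'_m}$ for some $\delta'_m>0$ depending only on $m$, restricted if necessary to $[n_1,\ldots,n_m]\le z$. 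Then choosing $\eta<\delta'_m/(4\beta)$ gives the result with $\delta_{m,\beta}=\delta'_m/2$.

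To bound $\Sigma$, I would import the estimates of \cite[\S\S4--5]{key-14} for the set $B=\{p^2\}$. GMR bound exactly this weighted sum of $\prod F_x(\sigma_i)$ over zero-sum tuples with $\sigma_i\in\mathcal R_B(\widetilde r_i)$, with the exactly paired configurations removed.

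Their argument has three steps. First, the Montgomery--Vaughan bound for sums of $\prod F_x$ over zero-sum rationals with given denominators, in the refined form of Montgomery--Soundararajan, controls tuples whose denominators have large lcm. Second, Nunes' observation controls the density of square-block denominators, which produces the exponent $1/4$ per frequency. Third, there is a combinatorial decomposition of non-paired tuples. In it, either some subsum of frequencies has a nontrivial, non-opposite relation, which costs a power of $x$, or the tuple splits into pairs. In the latter case, any pair that is not exactly opposite, or any coincidence of denominators, yields a saving.

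The one point needing care is compatibility of definitions. The paper's Remark notes that GMR's paired/non-paired split is made after passing to square-block denominators, whereas here exact pairing is defined for the original frequencies $\ell_i/k_i^2$. Because the frequencies $\sigma_i=\ell_i/N_i$ themselves are unchanged by the square-block rewrite, ``exactly paired'' here means $\sigma_i+\sigma_j\in\mathbb Z$ on a perfect matching. This is exactly the diagonal configuration GMR isolate as their main term. Thus $S_x^{\mathrm{nonexact}}$ is contained in the set of configurations to which their power-saving estimates apply.

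I would verify this by checking two things. First, GMR's main-term tuples are precisely those with $\sigma_i=-\sigma_j$ on a matching. Second, every other tuple falls into one of their error cases, with a saving of the form $x^{m/4-\delta'_m}$ that is uniform in the truncation.

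The main obstacle is this bookkeeping of matching GMR's case division to $S_x^{\mathrm{nonexact}}$. In particular, it must be confirmed that their saving $\delta'_m$ does not depend on a truncation parameter tied to the short-interval length in a way incompatible with $z\le x^\beta$. If it does depend on such a parameter, I would substitute $z=x^\beta$ into their bounds and check that the exponent saving is still a positive constant depending on $m$ and $\beta$. Then $\eta$ is chosen small in terms of that constant.
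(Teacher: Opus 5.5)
Your plan is essentially the paper's proof: reduce via \lemref{lem:good-frequency-square-block-reduction}, bound the complete (untruncated, so the saving $\Delta_m$ depends only on $m$) square-block sum through GMR's trichotomy of paired non-repeated, repeated and non-paired denominator tuples, and absorb $z^{2\eta}\le x^{2\beta\eta}$ by taking $\eta$ small. The one step the paper carries out rather than cites is the paired non-repeated case. There the zero-sum relation $\sum_j h_j/R_j\in\mathbb Z$ forces at least two nonzero pair sums, and rerunning the argument of GMR's Proposition~4.3 with the positive convolution $J_x^+$ gives $x^{m/4-1/2+o(1)}$. Note also that you only need the inclusion of GMR's main-term configurations in the exactly paired ones, not equality: exact pairings with repeated denominators fall into GMR's repeated class.
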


\begin{proof}
By \lemref{lem:good-frequency-square-block-reduction}, it remains to bound
\[
\sum_{\substack{
\widetilde{\mathbf r}=(n_1^2,\ldots,n_m^2)\\
n_i\ge2\ \mathrm{squarefree}\ (1\le i\le m)
}}
\frac{
S_x^{\mathrm{nonexact}}(\widetilde{\mathbf r})
}{
\widetilde r_1\cdots\widetilde r_m
}.
\]
We split the square-block denominator tuples
$\widetilde{\mathbf r}$ according to the classification
used in \cite[\S\S4--5]{key-14} for the square-block family $B$.
Suppose first that $m=2k$ and that $\widetilde{\mathbf r}$
is paired and non-repeated in the sense of GMR. After relabeling, its entries occur in $k$ pairs with distinct common denominators \(R_1,\ldots,R_k\). Write the two frequencies in the \(j\)-th pair as $u_j/R_j$,
$u'_j/R_j,$
and let \(h_j\in[1,R_j]\) be
\[
h_j\equiv u_j+u'_j\bmod{R_j}.
\]
Then
\[
h_j=R_j
\quad\Longleftrightarrow\quad
\frac{u_j}{R_j}+\frac{u'_j}{R_j}\in\mathbb Z,
\]
so the case \(h_j=R_j\) for every \(j\) is precisely the exactly paired frequency contribution. Since this contribution is excluded from
\(S_x^{\mathrm{nonexact}}(\widetilde{\mathbf r})\), at least one \(j\) satisfies
\(0<h_j<R_j\).
Moreover, the zero-sum condition in \eqref{eq:M_m,zGoodGMR} gives
\[
\frac{h_1}{R_1}+\cdots+\frac{h_k}{R_k}\in\mathbb Z.
\]
Hence there cannot be exactly one index with \(0<h_j<R_j\). Thus at least two such blocks occur. To apply the argument of
\cite[Proposition 4.3]{key-14} to our positive weights, put
\[
J_x^+(b,R):=
\sum_{\substack{\sigma,\sigma'\in\mathcal R_B(R)\\
\sigma+\sigma'=b/R\;(\bmod 1)}}
F_x(\sigma)F_x(\sigma').
\]
By \cite[Lemma 2.2]{key-14},
\[
\sum_{a=1}^{R-1}F_x(a/R)\ll R\log(2R),
\qquad
\sum_{a=1}^{R-1}F_x(a/R)^2\ll Rx.
\]
Since $J_x^+(\cdot,R)$ is the convolution of $F_x$
restricted to $\mathcal R_B(R)$ with itself,
\cite[Lemma 2.2]{key-14} and Young's inequality give
\[
\sum_{0<b<R}J_x^+(b,R)\ll R^2(\log(2R))^2,
\qquad
\sum_{b\bmod R}J_x^+(b,R)^2
\ll R^3x(\log(2R))^2.
\]
Let $Z\ge1$ be a parameter to be chosen below, and split
the sum according to whether $[R_1,\ldots,R_j]\le Z$ or
$[R_1,\ldots,R_j]>Z$. Applying \cite[Lemma~4.1]{key-14},
using the first estimate for small least common multiples
and the second for large ones, as in the proof of
\cite[Proposition~4.3]{key-14}, gives, for $j\ge2$,
\[
\begin{aligned}
W_j^+(x)
&:=
\sum_{\substack{R_1,\ldots,R_j>1\\R_i\in[B]}}
\frac{1}{R_1^2\cdots R_j^2}
\sum_{\substack{0<b_i<R_i\\
\sum_i b_i/R_i\in\mathbb Z}}
\prod_{i=1}^j J_x^+(b_i,R_i)\\
&\ll_j x^{j/2}Z^{-1/2+o(1)}
       +Z^{1/2+o(1)}.
\end{aligned}
\]
Choosing $Z=x^{j/2}$ to balance the two terms, we obtain $W_j^+(x)\ll_j x^{j/4+o(1)}$
where $Z=x^{j/2}$. Here we also use
\cite[Lemma 2.4]{key-14} and the fact that
the semigroup generated by $B$ has index $1/2$.

For an exactly paired block, \cite[Lemma 2.2]{key-14}
gives
\[
D_x:=
\sum_{\substack{R\in[B]\\R>1}}
\frac{J_x^+(0,R)}{R^2}
\ll
\sum_{\substack{R\in[B]\\R\le x}}1
+x\sum_{\substack{R\in[B]\\R>x}}\frac1R
\ll x^{1/2}.
\]
Therefore, the contribution of paired and non-repeated
denominator tuples with exactly $j\ge2$ nonzero
pair sums is at most
\[
D_x^{k-j}W_j^+(x)
\ll_m x^{m/4-j/4+o(1)}
\ll_m x^{m/4-1/2+o(1)}.
\]
If instead \(\widetilde{\mathbf r}\) is repeated or non-paired in the sense of GMR, the estimates of \cite[\S5]{key-14} give a power saving for the entire corresponding frequency sum, and hence also for its restriction to nonexact tuples. When $m$ is odd, the paired and non-repeated case cannot occur.
These cases exhaust all square-block denominator tuples
$\widetilde{\mathbf r}$. Consequently, for some \(\Delta_m>0\),
\[
\sum_{\widetilde{\mathbf r}}
\frac{
S_x^{\mathrm{nonexact}}(\widetilde{\mathbf r})
}{
\widetilde r_1\cdots\widetilde r_m
}
\ll_m
x^{\frac m4-\Delta_m}.
\label{eq:GMR-nonexact-square-block-bound}
\]
Substituting this into \eqref{eq:good-frequency-GMR-reduction} gives
\[
\left|
\mathcal M_{m,z}^{\mathrm{good}}(x)
\right|
\ll_m
z^{2\eta}x^{\frac m4-\Delta_m}.
\]
Since \(z\le x^\beta\),
\[
\left|
\mathcal M_{m,z}^{\mathrm{good}}(x)
\right|
\ll_m
x^{\frac m4-\Delta_m+2\beta\eta}.
\]
Choosing \(\eta>0\) sufficiently small so that $\delta_{m,\beta}:=\Delta_m-2\beta\eta>0$
proves the result.
\end{proof}

\subsection{Bad frequencies}\label{sec:bad-frequency-weighted-estimates}
We now turn to the bad set, where the arithmetic factor can be too large for reusing the GMR estimate.
Instead, we prove two complementary lemmas dealing with large and small $q=[k_1,\ldots,k_m]^2$, respectively.
For $m>2$, fix a small parameter
\begin{equation}
0<\vartheta<\frac{1}{m-2}\label{eq:thetaMoment}
\end{equation}
to be chosen later. Throughout this subsection, $m,d,\vartheta$ and $\eta$
are fixed independently of $x,z$ and $K$. For a prime $p$, let $\nu_p(n):=\max\{k\ge0:p^k \mid n\}$. 

\begin{lemma}
\label{lem:bad-frequency-weighted-sum-bound}
With $\vartheta$ as in \eqref{eq:thetaMoment}, for every
$\varepsilon>0$, uniformly in
$\mathbf{k}\in\mathbb N^m$, we have
\[
\sum_{\boldsymbol{\ell}\in\mathcal{B}_{\eta}(\mathbf{k})}
\prod_{i=1}^{m}
F_x\left(\frac{\ell_i}{N_i}\right)
\left|
\mathfrak{A}_{\mathbf{k}}(\boldsymbol{\ell})
\right|
\ll_{m,d,\vartheta}
q^{-\eta\vartheta+\varepsilon}
\frac{N_1\cdots N_m}{q}
x^{\frac{m}{2}}.
\]
\end{lemma}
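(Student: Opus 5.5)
Since $|\mathfrak A_{\mathbf k}(\boldsymbol\ell)|>q^{\eta}$ on $\mathcal B_\eta(\mathbf k)$, the plan is a Rankin-type trick: for any $\vartheta>0$,
\[
\mathds{1}_{\boldsymbol\ell\in\mathcal B_\eta(\mathbf k)}\,|\mathfrak A_{\mathbf k}(\boldsymbol\ell)|\le q^{-\eta\vartheta}|\mathfrak A_{\mathbf k}(\boldsymbol\ell)|^{1+\vartheta}.
\]
It then suffices to show
\[
\sum_{\boldsymbol\ell\in\mathcal H(\mathbf k)}\prod_i F_x(\ell_i/N_i)\,|\mathfrak A_{\mathbf k}(\boldsymbol\ell)|^{1+\vartheta}\ll q^{\varepsilon}\frac{N_1\cdots N_m}{q}x^{m/2}.
\]
By \lemref{lem:local-arithmetic-factor-bound},
\[
|\mathfrak A_{\mathbf k}(\boldsymbol\ell)|^{1+\vartheta}\ll q^{\varepsilon}\prod_{p\mid K}p^{(1+\vartheta)\max\{0,u_p(\boldsymbol\ell)-2\}},
\]
so the weight depends only on which numerators $\ell_i$ are divisible by which primes $p\mid k_i$.

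Next I would split the $\ell$-sum by this divisibility pattern. For each $i$, let $d_i\mid k_i$ be the product of the primes $p\mid k_i$ with $p\mid\ell_i$. There are $\ll q^\varepsilon$ choices of $(d_1,\dots,d_m)$. Fix one, set $D_p:=\#\{i:p\mid d_i\}=u_p$, and write $\ell_i=d_i\ell_i'$. I then need to bound
\[
\sum_{\substack{\ell_i'\ \text{with }d_i\ell_i'<N_i\\ \sum_i d_i\ell_i'/N_i\in\mathbb Z}}\prod_i F_x\!\left(\frac{\ell_i'}{N_i/d_i}\right).
\]
Drop the coprimality conditions, since all terms are nonnegative. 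Handle the zero-sum constraint in the standard way: sum freely over $\ell_1',\dots,\ell_{m-1}'$, which makes $\ell_m'$ lie in at most one residue class. Use $F_x\le x$ on the constrained variable, or better, Cauchy–Schwarz across two variables. This uses \cite[Lemma 2.2]{key-14} in the forms $\sum_{a\bmod M}F_x(a/M)^2\ll Mx$ and $\sum_a F_x(a/M)\ll M\log M$.

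The target bound $\frac{N_1\cdots N_m}{q}x^{m/2}$ is what one gets from $m$ free sums of $F_x$, each of size roughly $N_i^{1/2}x^{1/2}$ after Cauchy–Schwarz in $L^2$, combined with a saving of $1/q$ from the single linear constraint modulo $q$. Concretely, I would expand the constraint with additive characters modulo $q$ and apply Hölder, or argue prime by prime. I expect the divisibility restriction $d_i\mid\ell_i$ to save a factor of roughly $d_i$ in each coordinate, giving $\prod_i d_i^{-1}$ relative to the main count, up to a factor $x^{1/2}$-type loss. At each prime this saving is $p^{-D_p}$, possibly only $p^{-D_p/2}$ after square-root losses. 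It must compensate the weight $p^{(1+\vartheta)(D_p-2)}$. The balance $(1+\vartheta)(D_p-2)\le D_p-\text{const}$ holds for all $D_p\le m$ exactly when $\vartheta(m-2)<$ const. This is where the hypothesis $\vartheta<1/(m-2)$ in \eqref{eq:thetaMoment} enters, and the argument should close with the factor $q^{-\eta\vartheta+\varepsilon}$ remaining.

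The main obstacle is making the local saving from $d_i\mid\ell_i$ precise while keeping the zero-sum constraint, which couples the coordinates. The moduli $N_i/d_i$ are no longer squares, and the constraint modulo $q$ interacts with the divisibility at each $p$. I would first try a prime-by-prime count. At a prime $p$ with $D_p$ indices divisible by $p$, the $p$-components $\lambda_{i,p}$ satisfy $\sum\lambda_{i,p}\equiv0\bmod p^2$, with $D_p$ of them lying in $p\mathbb Z/p^2$. The local density of such configurations is $p^{-D_p}\cdot p^{-2}$ times the number of unconstrained configurations, up to a factor $p$ when all indices are divisible. Combined with the $L^\infty/L^1$ bounds for $F_x$ above, this should recover the claimed power of $x$ and the claimed ratio $\frac{N_1\cdots N_m}{q}$.
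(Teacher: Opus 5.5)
Your Rankin step, your use of \lemref{lem:local-arithmetic-factor-bound}, and your split by local divisibility patterns all agree with the paper. The analytic core, however, is missing. Nothing in the proposal actually bounds the zero-sum sum $\sum_{\boldsymbol\ell\in\mathcal H(\mathbf k)}\prod_i F_x(\ell_i/N_i)$ (within a pattern) by $\frac{N_1\cdots N_m}{q}x^{m/2}$ times the local savings, and the routes you list cannot do it.

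\begin{itemize}
\item \emph{Free summation.} Summing freely over $\ell_1,\dots,\ell_{m-1}$ and using $F_x\le x$ on $\ell_m$ gives about $x\prod_{i<m}N_i$. Take $m=4$ and $\mathbf k=(a,a,b,b)$ with $(a,b)=1$. The zero-sum condition splits as $\ell_1+\ell_2\equiv0\bmod a^2$ and $\ell_3+\ell_4\equiv0\bmod b^2$, so the unweighted sum is $\asymp a^2b^2x^2$ when $a^2,b^2>x$, matching the target. Free summation loses a factor $\min(a,b)^2/x$. This is fatal in the range $K>x^{m/4}$ where the lemma is used in \lemref{lem:bad-frequency-power-saving}.
\item \emph{Characters and H\"older.} A single character expansion modulo $q$ followed by H\"older and Hausdorff--Young only gives $\prod_i\|F_x(\cdot/N_i)\|_{\ell^{m/(m-1)}}$. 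In the same example this is about $a^3b^3x$, still too large once $ab>x$.
\item \emph{Density count.} A local-density count is not an upper bound at all, because $F_x$ is concentrated near integers. For $m=2$ and $N_1=N_2=N>x$, density times free sum predicts $N(\log N)^2$, while the truth is $\asymp Nx$.
\end{itemize}

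Your heuristic $\prod_i(N_ix)^{1/2}\cdot q^{-1}$ is also off by $(N_1\cdots N_m)^{1/2}$. The correct shape is $\frac{(N_1\cdots N_m)^{1/2}}{q}\prod_i\|F_x(\cdot/N_i)\|_{\ell^2}$, and this is precisely the Montgomery--Vaughan Fundamental Lemma \eqref{eq:reduced-frequency-zero-sum}, which is the missing ingredient. It must be applied to the fully reduced fractions $\ell_i/N_i=a_i/r_i$ with $r_i=N_i/(\ell_i,N_i)$. The zero-sum condition with $(a_i,r_i)=1$ forces the maximal $p$-adic valuation among the $r_i$ to be attained at least twice. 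The lemma then gives $\ll\frac{r_1\cdots r_m}{[r_1,\dots,r_m]}x^{m/2}$ for the sum over reduced zero-sum tuples. This is a genuine $L^2$ statement handling all coupling patterns at once, not a consequence of Cauchy--Schwarz on a chosen pair of variables.

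The local bookkeeping must then be done with this reduced structure. Your version extracts $d_i$ and drops coprimality, so the fractions $\ell_i'/(N_i/d_i)$ need not be reduced; this loses exactly what is needed. Put $c_{i,p}:=v_p((\ell_i,N_i))\in\{0,1,2\}$ and $C_p:=\sum_{i\in I_p}c_{i,p}$. Then exactly
\[
\frac{r_1\cdots r_m/[r_1,\dots,r_m]}{N_1\cdots N_m/q}=\prod_{p\mid K}p^{\min_{i\in I_p}c_{i,p}-C_p}.
\]
So the saving at $p$ is $p^{-C_p}$, with $C_p\ge u_p$, when some $i\in I_p$ has $p\nmid\ell_i$. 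When $p$ divides every $\ell_i$ with $i\in I_p$, the saving is only $p^{1-C_p}$ or $p^{2-C_p}$, because the lcm itself drops. Against the weight $p^{(1+\vartheta)(u_p-2)}$, the worst case (all $c_{i,p}=1$) gives the exponent $-1+\vartheta(u_p-2)$. This is where $\vartheta(m-2)<1$ is genuinely used.

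The weaker saving $p^{-D_p/2}$ that you allow for would not close the argument, since $(1+\vartheta)(D_p-2)>D_p/2$ once $D_p\ge4$. Once you have the Fundamental Lemma and this identity, the local exponent $(1+\vartheta)\max(0,u_p-2)+\min_i c_{i,p}-C_p$ is $\le0$ for each of the $3^{|I_p|}$ patterns at each $p\mid K$. Summing over patterns costs only $q^{\varepsilon}$, and your Rankin step then finishes the proof.
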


\begin{proof}
Let $r_1,\ldots,r_m\geq2$ and put
\[
R:=[r_1,\ldots,r_m].
\]
For every prime $p\mid R$, the maximum of
$v_p(r_1),\ldots,v_p(r_m)$ is necessarily attained at least twice;
otherwise, after multiplying
$\sum_i a_i/r_i\in\mathbb Z$ by $R$ and reducing modulo $p$, one obtains
a contradiction to $(a_i,r_i)=1$ for the unique index attaining the
maximum. Hence, we may use the Fundamental Lemma of
Montgomery and Vaughan \cite[Chapter~8, \S4]{key-15} that gives
\begin{equation}
\sum_{\substack{
a_i\bmod r_i,\ (a_i,r_i)=1\\
\sum_{i=1}^m a_i/r_i\in\mathbb Z
}}
\prod_{i=1}^{m}
F_x\left(\frac{a_i}{r_i}\right)
\ll_m
\frac{r_1\cdots r_m}{R}
x^{\frac m2}.
\label{eq:reduced-frequency-zero-sum}
\end{equation}
We now apply this estimate to the frequencies occurring in
$\mathcal H(\mathbf k)$. For a fixed $\boldsymbol\ell\in\mathcal H(\mathbf k)$,
reduce each frequency by setting
\[
h_i:=(\ell_i,N_i),
\qquad
r_i:=\frac{N_i}{h_i},
\qquad
a_i:=\frac{\ell_i}{h_i}.
\]
Since $0<\ell_i<N_i$, we have $r_i\ge2$ and
$1\le a_i<r_i$, with $(a_i,r_i)=1$.
Moreover, $\ell_i/N_i=a_i/r_i$, and the zero-sum
condition becomes $\sum_{i=1}^m a_i/r_i\in\mathbb Z$ so the bound \eqref{eq:reduced-frequency-zero-sum} applies.

It remains to compare the right-hand side in
\eqref{eq:reduced-frequency-zero-sum} with the quantity
$N_1\cdots N_m/q$, while at the same time incorporating the bound for
the arithmetic factor $\left|
\mathfrak{A}_{\mathbf{k}}(\boldsymbol{\ell})
\right|$. Recall that $I_p=\{i:p\mid k_i\}$.
For $p\mid K$ and $i\in I_p$ define
\[
c_{i,p}:=v_p(h_i)\in\{0,1,2\},
\qquad
e_{i,p}:=v_p(r_i)=2-c_{i,p}
\]
 and put
\[
E_p:=\max_{i\in I_p}e_{i,p},
\qquad
C_p:=\sum_{i\in I_p}c_{i,p}.
\]
By the definition of $h_i$, $p\mid\ell_i$ if and only if
$c_{i,p}\geq1,$
and therefore $u_p(\boldsymbol\ell)
=
\#\{i\in I_p:c_{i,p}\geq1\}.$
For brevity, write this quantity as $u_p$.
We then have
\begin{equation}
\frac{r_1\cdots r_m/R}{N_1\cdots N_m/q}
=
\prod_{p\mid K}
p^{\,2-E_p-C_p}.
\label{eq:reduced-original-lcm-ratio}
\end{equation}
On the other hand, raising the arithmetic-factor
\eqref{eq:ArithmeticFactor} to the power $1+\vartheta$ gives by \lemref{lem:local-arithmetic-factor-bound}
\[
\left|
\mathfrak A_{\mathbf k}(\boldsymbol\ell)
\right|^{1+\vartheta}
\ll_{m,d,\vartheta}
q^{\varepsilon'}
\prod_{p\mid K}
p^{(1+\vartheta)\max(0,u_p-2)}.
\]
Thus, for a fixed local pattern \((c_{i,p})_{i\in I_p}\), the additional contribution of \(p\), relative to the base factor $(N_1\cdots N_m\,x^{m/2})/q,$
is given by the exponent
\[
\Theta_p
:=
(1+\vartheta)\max(0,u_p-2)
+
2-E_p-C_p.
\]
Since
\[
E_p
=
\max_{i\in I_p}(2-c_{i,p})
=
2-\min_{i\in I_p}c_{i,p},
\]
we may equivalently write
\begin{equation}
\Theta_p
=
(1+\vartheta)\max(0,u_p-2)
+
\min_{i\in I_p}c_{i,p}
-
C_p.
\label{eq:local-pattern-exponent}
\end{equation}
We claim that $\Theta_p\leq0$ for every local pattern.
If $u_p\leq2$, then the first term in
\eqref{eq:local-pattern-exponent} vanishes, and therefore
\[
\Theta_p
=
\min_{i\in I_p}c_{i,p}-C_p
\leq0.
\]
Suppose now that $u_p\geq3$. If
$\min_i c_{i,p}=0$, then $C_p\geq u_p$, so using also $u_p\le m$, we get
\[
\Theta_p
\leq
(1+\vartheta)(u_p-2)-u_p
=
-2+\vartheta(u_p-2)
<0.
\]
If $\min_i c_{i,p}=1$, then every $c_{i,p}\geq1$, and again
$C_p\geq u_p$. Hence
\[
\Theta_p
\leq
(1+\vartheta)(u_p-2)+1-u_p
=
-1+\vartheta(u_p-2)
<0.
\]
Finally, if $\min_i c_{i,p}=2$, then every $c_{i,p}=2$, so
$C_p=2u_p$, and
\[
\Theta_p
=
(1+\vartheta)(u_p-2)+2-2u_p
<0.
\]
These inequalities follow from $u_p\leq m$ and $\vartheta(m-2)<1,$
as imposed in \eqref{eq:thetaMoment}.
Thus no local pattern contributes a positive power of $p$. For each
$p\mid K$ there are at most $3^{|I_p|}\leq3^m$
possible local patterns. Summing over them therefore contributes at most
$C_{m,\vartheta}^{\omega(K)}
\ll_{m,\vartheta,\varepsilon''}q^{\varepsilon''}$
for every $\varepsilon''>0$, uniformly in $K$.
Combining this with
\eqref{eq:reduced-frequency-zero-sum},
\eqref{eq:reduced-original-lcm-ratio}, and the arithmetic-factor bound
gives, for $\varepsilon>0$
\begin{equation}
\sum_{\boldsymbol\ell\in\mathcal H(\mathbf k)}
\prod_{i=1}^{m}
F_x\left(\frac{\ell_i}{N_i}\right)
\left|
\mathfrak A_{\mathbf k}(\boldsymbol\ell)
\right|^{1+\vartheta}
\ll_{m,d,\vartheta}
q^{\varepsilon}
\frac{N_1\cdots N_m}{q}
x^{\frac m2}.
\label{eq:arithmetic-factor-moment}
\end{equation}
We now restrict to the bad set and apply Rankin's trick. By definition, \(|\mathfrak A_{\mathbf k}(\boldsymbol\ell)|>q^\eta\) for \(\boldsymbol\ell\in\mathcal B_\eta(\mathbf k)\), and hence
\[
|\mathfrak A_{\mathbf k}(\boldsymbol\ell)|
\le q^{-\eta\vartheta}
|\mathfrak A_{\mathbf k}(\boldsymbol\ell)|^{1+\vartheta}.
\]
Applying \eqref{eq:arithmetic-factor-moment} therefore yields
\[
\sum_{\boldsymbol{\ell}\in\mathcal{B}_{\eta}(\mathbf{k})}
\prod_{i=1}^{m}
F_x\left(\frac{\ell_i}{N_i}\right)
\left|
\mathfrak{A}_{\mathbf{k}}(\boldsymbol{\ell})
\right|
\ll_{m,d,\vartheta}
q^{-\eta\vartheta+\varepsilon}
\frac{N_1\cdots N_m}{q}
x^{\frac m2},
\]
which proves the lemma.
\end{proof}

\begin{lemma}
\label{lem:arithmetic-factor-higher-moment-bound}
With $\vartheta$ as in \eqref{eq:thetaMoment}, for every
$\varepsilon>0$, we have uniformly in $\mathbf{k}\in\mathbb N^m$
\[
\sum_{\boldsymbol{\ell}\in\mathcal H(\mathbf{k})}
\left(
\prod_{i=1}^m
\frac{F_x(\ell_i/N_i)}{N_i}
\right)
\left|
\mathfrak A_{\mathbf{k}}(\boldsymbol{\ell})
\right|^{1+\vartheta}
\ll_{m,d,\vartheta,\varepsilon}
K^\varepsilon.
\]
\end{lemma}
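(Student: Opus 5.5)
The plan is to derive this directly from the estimate \eqref{eq:arithmetic-factor-moment} established inside the proof of \lemref{lem:bad-frequency-weighted-sum-bound}. The sum there is
\[
\sum_{\boldsymbol\ell\in\mathcal H(\mathbf k)}\prod_i F_x(\ell_i/N_i)\,\bigl|\mathfrak A_{\mathbf k}(\boldsymbol\ell)\bigr|^{1+\vartheta}\ll q^{\varepsilon}\frac{N_1\cdots N_m}{q}x^{m/2}.
\]
Dividing by $N_1\cdots N_m$ would give $q^{\varepsilon-1}x^{m/2}$. This is not uniform in $x$, so a bare division does not suffice. The desired bound has no $x$ at all, which signals that we should use the $\ell^1$-type estimate $\sum_{0<a<r}F_x(a/r)\ll r\log(2r)$ from \cite[Lemma 2.2]{key-14} rather than the Montgomery--Vaughan fundamental lemma.

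The first step is to rerun the local-pattern bookkeeping of \lemref{lem:bad-frequency-weighted-sum-bound} with this replacement. Reduce each frequency as there, writing $\ell_i/N_i=a_i/r_i$ with $h_i=(\ell_i,N_i)$. Then organize the sum according to the local pattern $(c_{i,p})$ for each $p\mid K$. For a fixed pattern, the set of $r_i$ is determined.

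Next, bound $|\mathfrak A_{\mathbf k}|^{1+\vartheta}$ by $q^{\varepsilon}\prod_p p^{(1+\vartheta)\max(0,u_p-2)}$ using \lemref{lem:local-arithmetic-factor-bound}. Drop the zero-sum condition and sum each $a_i$ freely over $(a_i,r_i)=1$. This gives at most $\prod_i r_i\log(2r_i)\ll K^{\varepsilon}\prod_i r_i$.

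It remains to show, for each pattern,
\[
\prod_i \frac{r_i}{N_i}\prod_{p\mid K}p^{(1+\vartheta)\max(0,u_p-2)}=\prod_{p\mid K}p^{(1+\vartheta)\max(0,u_p-2)-C_p}\le1.
\]
Since $C_p\ge u_p$, the exponent is at most $(1+\vartheta)(u_p-2)-u_p=-2+\vartheta(u_p-2)<0$ when $u_p\ge3$, and at most $-C_p\le0$ otherwise. This uses \eqref{eq:thetaMoment}.

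Summing over the at most $3^{m}$ patterns per prime costs $C^{\omega(K)}\ll K^{\varepsilon}$. Collecting the factors $q^\varepsilon=K^{2\varepsilon}$ and the logarithms gives the claimed $K^{\varepsilon}$ after renaming $\varepsilon$.

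The main obstacle is making sure that discarding the zero-sum constraint loses nothing essential. That constraint is exactly what produced the $1/R$ saving in \eqref{eq:reduced-frequency-zero-sum}, and here we do not need it. I expect the crude count to suffice because the exponent computation above already has room to spare; the $-C_p$ term absorbs every divisibility pattern.

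If a sharper statement were needed, I would instead keep one index free in the constraint and apply the $\ell^1$ bound to the remaining $m-1$ indices. That would recover an extra factor $1/\max r_i$, but I do not expect this to be required.
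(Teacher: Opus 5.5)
Your proposal is correct and is essentially the paper's argument: you partition by local divisibility data at each $p\mid K$, bound $|\mathfrak A_{\mathbf k}(\boldsymbol\ell)|^{1+\vartheta}$ pattern-wise via \lemref{lem:local-arithmetic-factor-bound}, drop the zero-sum condition, and use the $x$-free $\ell^1$ bound of \cite[Lemma 2.2]{key-14}, so that each local exponent is nonpositive by \eqref{eq:thetaMoment}. The only cosmetic difference is the choice of pattern: the paper records only whether $p\mid\ell_i$ (the sets $U_p$), gains $p^{-u_p}$ via \eqref{eq:normalized-frequency-divisibility} and sums $\Sigma_p=1+O(p^{-1})$, whereas you use exact valuations $c_{i,p}$, gain $p^{-C_p}\le p^{-u_p}$, and absorb the $3^{m\omega(K)}$ pattern count into $K^{\varepsilon}$, which works equally well.
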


\begin{proof}
We use the normalized $L^1$-estimate for $F_x$, which we now recall. For $N\geq2$ and
$a\mid N$, applying
\cite[Lemma 2.2]{key-14} gives
\begin{equation}
\frac{1}{N}
\sum_{\substack{0<\ell<N\\a\mid\ell}}
F_x(\ell/N)
=
\frac{1}{N}
\sum_{0<b<N/a}
F_x\bigl(b/(N/a)\bigr)
\ll
\frac{\log(2N)}{a}.
\label{eq:normalized-frequency-divisibility}
\end{equation}
Fix $\mathbf{k}$ and consider a tuple
$\boldsymbol{\ell}\in\mathcal H(\mathbf{k})$.
We partition the tuples according to the exact local
divisibility sets
\[
U_p:=\{i\in I_p:p\mid\ell_i\},
\qquad
u_p:=|U_p|.
\]
Thus the sets \((U_p)_{p\mid K}\) determine, for each \(p\mid K\) and \(i\in I_p\), whether \(p\mid\ell_i\). For each \(i\), define
\[
a_i:=\prod_{\substack{p\mid K\\ i\in U_p}}p.
\]
Then every tuple with this divisibility pattern satisfies \(a_i\mid\ell_i\). Moreover,
\begin{equation}
\prod_{i=1}^m a_i^{-1}
=
\prod_{p\mid K}p^{-u_p}.
\label{eq:divisibility-pattern-cost}
\end{equation}
For this divisibility pattern, \lemref{lem:local-arithmetic-factor-bound} gives
\begin{equation}
\left|
\mathfrak A_{\mathbf{k}}(\boldsymbol{\ell})
\right|^{1+\vartheta}
\leq
C_{m,d,\vartheta}^{\omega(K)}
\prod_{p\mid K}
p^{(1+\vartheta)\max(0,u_p-2)}.
\label{eq:arithmetic-factor-pattern-bound}
\end{equation}
The right-hand side depends only on the chosen sets $U_p$, and not on
the individual values of the $\ell_i$.
We now bound
\[\sum_{\substack{\boldsymbol{\ell}\ \mathrm{in\ the\ fixed\ pattern}\\
\boldsymbol{\ell}\in\mathcal H(\mathbf{k})}}
\prod_{i=1}^m\frac{F_x(\ell_i/N_i)}{N_i}.\]
Dropping the
zero-sum condition defining $\mathcal H(\mathbf{k})$, as well as the
conditions $p\nmid\ell_i$ corresponding to $i\notin U_p$, can only
increase the sum. Hence \eqref{eq:normalized-frequency-divisibility},
applied independently for $i=1,\ldots,m$, gives
\begin{align}
\sum_{\substack{\boldsymbol{\ell}\ \mathrm{in\ the\ fixed\ pattern}\\
\boldsymbol{\ell}\in\mathcal H(\mathbf{k})}}
\prod_{i=1}^m\frac{F_x(\ell_i/N_i)}{N_i}\leq
\prod_{i=1}^m
\bigg(
\frac{1}{N_i}
\sum_{\substack{0<\ell_i<N_i\\a_i\mid\ell_i}}
F_x(\ell_i/N_i)
\bigg)\ll_m
\left(\prod_{i=1}^m\log(2N_i)\right)
\prod_{p\mid K}p^{-u_p},
\label{eq:frequency-pattern-bound}
\end{align}
where the last inequality uses
\eqref{eq:divisibility-pattern-cost}.
Combining \eqref{eq:arithmetic-factor-pattern-bound} and
\eqref{eq:frequency-pattern-bound}, the contribution associated with a
fixed collection $(U_p)_{p\mid K}$ is bounded by
\[
C_{m,d,\vartheta}^{\omega(K)}
\prod_{i=1}^m\log(2N_i)
\prod_{p\mid K}
p^{(1+\vartheta)\max(0,u_p-2)-u_p}.
\]
We may now sum independently over the possible subsets
$U_p\subseteq I_p$. Thus the resulting local factor at $p$ is
\begin{equation}
\Sigma_p
:=
\sum_{U\subseteq I_p}
p^{(1+\vartheta)\max(0,|U|-2)-|U|}.
\label{eq:local-divisibility-sum}
\end{equation}
To estimate $\Sigma_p$, let $u=|U|$. The exponent in
\eqref{eq:local-divisibility-sum} is
\[
(1+\vartheta)\max(0,u-2)-u
=
\begin{cases}
0, & u=0,\\
-1, & u=1,\\
-2, & u=2,\\
-2+\vartheta(u-2), & u\geq3.
\end{cases}
\]
By \eqref{eq:thetaMoment}, $\kappa:=1-\vartheta(m-2)>0.$
Since $u\leq m$, for $u\geq3$ we have
\[
-2+\vartheta(u-2)
\leq
-1-\kappa.
\]
There are at most $\binom{m}{u}$ subsets of size $u$, and therefore
\begin{equation}
\Sigma_p
=
1+O_m(p^{-1})+O_{m,\vartheta}(p^{-1-\kappa}).
\label{eq:local-divisibility-sum-bound}
\end{equation}
Summing over all local divisibility patterns and using
\eqref{eq:local-divisibility-sum-bound}, we obtain
\[
\sum_{\boldsymbol{\ell}\in\mathcal H(\mathbf{k})}
\left(
\prod_{i=1}^m
\frac{F_x(\ell_i/N_i)}{N_i}
\right)
\left|
\mathfrak A_{\mathbf{k}}(\boldsymbol{\ell})
\right|^{1+\vartheta}
\ll_{m,d,\vartheta}
C_{m,d,\vartheta}^{\omega(K)}
\left(\prod_{i=1}^m\log(2N_i)\right)
\prod_{p\mid K}\Sigma_p.
\]
Since $N_i=k_i^2\leq K^2$, the logarithmic factor is $K^{o(1)}$.
Furthermore, $C_{m,d,\vartheta}^{\omega(K)}=K^{o(1)}$
and $\prod_{p\mid K}\Sigma_p=K^{o(1)}$ using \eqref{eq:local-divisibility-sum-bound} and the divisor bound.
Consequently the right-hand side is $K^{o(1)}$, and hence, for every
$\varepsilon>0$,
\[
\sum_{\boldsymbol{\ell}\in\mathcal H(\mathbf{k})}
\left(
\prod_{i=1}^m
\frac{F_x(\ell_i/N_i)}{N_i}
\right)
\left|
\mathfrak A_{\mathbf{k}}(\boldsymbol{\ell})
\right|^{1+\vartheta}
\ll_{m,d,\vartheta,\varepsilon}
K^\varepsilon,
\]
as claimed.
\end{proof}

We now combine \lemref{lem:bad-frequency-weighted-sum-bound} and \lemref{lem:arithmetic-factor-higher-moment-bound} by splitting the bad contribution according to the size of $q$.

\begin{lemma}
\label{lem:bad-frequency-power-saving}
There exists
$\delta_m^{\mathrm{bad}}>0$ such that, uniformly in $z>0$,
\[
\left|
\mathcal M_{m,z}^{\mathrm{bad}}(x)
\right|
\ll_{m,d}
x^{\frac m4-\delta_m^{\mathrm{bad}}}.
\]
\end{lemma}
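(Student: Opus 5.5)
We bound $\mathcal M_{m,z}^{\mathrm{bad}}(x)$ by taking absolute values in \eqref{eq:MathcalMP_m,z} restricted to $\boldsymbol\ell\in\mathcal B_\eta(\mathbf k)$, using $|E_x|\le F_x$ and $|\mu(k_i)|\le1$. This gives
\[
\left|\mathcal M_{m,z}^{\mathrm{bad}}(x)\right|\le\sum_{\mathbf k}\ \sum_{\boldsymbol\ell\in\mathcal B_\eta(\mathbf k)}\prod_{i=1}^m\frac{F_x(\ell_i/N_i)}{N_i}\left|\mathfrak A_{\mathbf k}(\boldsymbol\ell)\right|,
\]
with the sum over squarefree tuples $\mathbf k$ with $K\le z$; dropping $K\le z$ only enlarges it, which is why the bound is uniform in $z$. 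We introduce a threshold $Y=x^{\alpha}$ and split the $\mathbf k$-sum according to whether $K\le Y$ or $K>Y$.

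For $K>Y$ we use \lemref{lem:bad-frequency-weighted-sum-bound}. Dividing its bound by $N_1\cdots N_m$, the contribution of a fixed $\mathbf k$ is $\ll q^{-\eta\vartheta+\varepsilon}q^{-1}x^{m/2}=K^{-2-2\eta\vartheta+2\varepsilon}x^{m/2}$. There are $\ll K^{\varepsilon}$ tuples with a given lcm $K$, so summing over $K>Y$ gives
\[
\ll x^{m/2}Y^{-1-2\eta\vartheta+3\varepsilon}.
\]

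For $K\le Y$ we use the bad-set condition in the other direction: $|\mathfrak A_{\mathbf k}|>q^\eta\ge1$, so $|\mathfrak A_{\mathbf k}|\le|\mathfrak A_{\mathbf k}|^{1+\vartheta}$. Then \lemref{lem:arithmetic-factor-higher-moment-bound} bounds each $\mathbf k$ by $K^{\varepsilon}$. Summing over $K\le Y$, again with $K^\varepsilon$ tuples per $K$, gives $\ll Y^{1+2\varepsilon}$.

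Balancing the two pieces, we choose $Y=x^{m/4}$. The small-$K$ part is then $x^{m/4+O(\varepsilon)}$ and the large-$K$ part is $x^{m/4-m\eta\vartheta/2+O(\varepsilon)}$, so only the large-$K$ range has a power saving as it stands. The small-$K$ range is the main obstacle: \lemref{lem:arithmetic-factor-higher-moment-bound} as stated gives no saving there. To fix this, we rerun its proof using the bad-set condition to extract a saving.

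On $\mathcal B_\eta$ we write $|\mathfrak A|\le q^{-\eta\vartheta'}|\mathfrak A|^{1+\vartheta'}$ with $\vartheta'<\vartheta$ still admissible in \eqref{eq:thetaMoment}. The argument of \lemref{lem:arithmetic-factor-higher-moment-bound} still yields $K^{\varepsilon}$ for the $|\mathfrak A|^{1+\vartheta'}$-weighted sum, because the local sums $\Sigma_p$ stay $1+O(p^{-1})$. Hence each $\mathbf k$ now contributes $\ll K^{-2\eta\vartheta'+\varepsilon}$, and the range $K\le Y$ gives $Y^{1-2\eta\vartheta'+2\varepsilon}$.

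With $Y=x^{m/4}$ both pieces are then $\ll x^{m/4-\delta}$ with $\delta\approx\tfrac m2\eta\min(\vartheta,\vartheta')-O(\varepsilon)>0$. The remaining bookkeeping is to take $\varepsilon$ small in terms of $\eta,\vartheta,m$ and check that $\vartheta'$ can be chosen uniformly, giving $\delta_m^{\mathrm{bad}}>0$.
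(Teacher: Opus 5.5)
Your argument is correct and is essentially the paper's proof. Both split at $Y=x^{m/4}$, use \lemref{lem:bad-frequency-weighted-sum-bound} for $K>Y$, and for $K\le Y$ combine Rankin's trick $|\mathfrak A_{\mathbf k}(\boldsymbol\ell)|\le q^{-\eta\vartheta}|\mathfrak A_{\mathbf k}(\boldsymbol\ell)|^{1+\vartheta}$ on $\mathcal B_\eta(\mathbf k)$ with \lemref{lem:arithmetic-factor-higher-moment-bound}. The detour through $\vartheta'<\vartheta$ is unnecessary: that lemma already holds for every $\vartheta$ satisfying \eqref{eq:thetaMoment}, so you can take $\vartheta'=\vartheta$ and apply it directly, without rerunning its proof, to get $Y^{1-2\eta\vartheta+\varepsilon}$ as the paper does.
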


\begin{proof}
Put $a:=\eta\vartheta,$
so that $0<a<1/2$. We split $\mathcal M_{m,z}^{\mathrm{bad}}$ according to $K>Y$ or $K\leq Y$ where $Y>0$ is a parameter to be optimized later.
For the range $K\leq Y$, Rankin's trick on the bad set gives
\[
|\mathfrak A_{\mathbf k}(\boldsymbol\ell)|
\leq
q^{-a}
|\mathfrak A_{\mathbf k}(\boldsymbol\ell)|^{1+\vartheta}.
\]
Applying \lemref{lem:arithmetic-factor-higher-moment-bound} shows that the
contribution of a fixed tuple $\mathbf k$ is
\[
\ll_{m,d,\vartheta,\varepsilon}
K^{-2a+\varepsilon}.
\]
By the divisor bound $\#\{\mathbf k:[k_1,\ldots,k_m]=K\}
\ll_{m,\varepsilon} K^\varepsilon.$
Summing over $K\leq Y$ gives
\begin{equation}
\left|
\mathcal M_{m,z}^{\mathrm{bad}}(x;K\leq Y)
\right|
\ll_{m,d,\vartheta,\varepsilon}
Y^{1-2a+\varepsilon}.
\label{eq:bad-small-lcm}
\end{equation}
For $K>Y$, we use \lemref{lem:bad-frequency-weighted-sum-bound}. After multiplying its
estimate by the factor $(N_1\cdots N_m)^{-1}$ occurring in
$\mathcal M_{m,z}^{\mathrm{bad}}(x)$, the contribution of a fixed
$\mathbf k$ is
\[
\ll_{m,d,\vartheta,\varepsilon}
x^{\frac m2}K^{-2-2a+\varepsilon}.
\]
Using the same estimate for the number of tuples with lcm $K$ and
summing the convergent tail, we obtain
\begin{equation}
\left|
\mathcal M_{m,z}^{\mathrm{bad}}(x;K>Y)
\right|
\ll_{m,d,\vartheta,\varepsilon}
x^{\frac m2}Y^{-1-2a+\varepsilon}.
\label{eq:bad-large-lcm}
\end{equation}
Choose $Y=x^{m/4}.$
If $z\leq Y$, only \eqref{eq:bad-small-lcm} is needed; if $z>Y$, we
combine \eqref{eq:bad-small-lcm} and \eqref{eq:bad-large-lcm}. In either
case,
\[
\left|
\mathcal M_{m,z}^{\mathrm{bad}}(x)
\right|
\ll_{m,d,\eta,\vartheta,\varepsilon}
x^{\frac m4-\frac{m\eta\vartheta}{2}+\varepsilon}.
\]
Taking $\varepsilon<m\eta\vartheta/4$ gives
\[
\left|
\mathcal M_{m,z}^{\mathrm{bad}}(x)
\right|
\ll_{m,d}
x^{\frac m4-\delta_m^{\mathrm{bad}}},
\qquad
\delta_m^{\mathrm{bad}}
:=
\frac{m\eta\vartheta}{4}>0,
\]
as required.
\end{proof}

\subsection{Proof of \propref{prop:nonpaired-frequency-power-saving}}
\label{sec:completion-nonpaired-estimate}

Fix $\beta>0$ and suppose that $z\leq x^\beta$.
Choose $\vartheta$ as in \eqref{eq:thetaMoment},
and then fix $\eta>0$ sufficiently small in terms of
$m,\beta,\vartheta$ so that both
\lemref{lem:good-frequency-power-saving} and
\lemref{lem:bad-frequency-power-saving} apply.
The resulting power-saving exponents may depend on
these fixed parameters, but are independent of $x$
and $z$. By \lemref{lem:good-frequency-power-saving},
there exists $\delta_{m,\beta}^{\mathrm{good}}>0$ such that
\[
\left|
\mathcal M_{m,z}^{\mathrm{good}}(x)
\right|
\ll_m
x^{\frac m4-\delta_{m,\beta}^{\mathrm{good}}}.
\]
By \lemref{lem:bad-frequency-power-saving}, there exists
$\delta_m^{\mathrm{bad}}>0$ such that
\[
\left|
\mathcal M_{m,z}^{\mathrm{bad}}(x)
\right|
\ll_{m,d}
x^{\frac m4-\delta_m^{\mathrm{bad}}}.
\]
Since
\[
\mathcal M_{m,z}^{\mathrm{nonpair}}(x)
=
\mathcal M_{m,z}^{\mathrm{good}}(x)
+
\mathcal M_{m,z}^{\mathrm{bad}}(x),
\]
it follows, with $\delta_{m,\beta}
:=
\min\{
\delta_{m,\beta}^{\mathrm{good}},
\delta_m^{\mathrm{bad}}
\}>0,$
that
\[
\left|
\mathcal M_{m,z}^{\mathrm{nonpair}}(x)
\right|
\ll_{m,d}
x^{\frac m4-\delta_{m,\beta}}.
\]
This proves \propref{prop:nonpaired-frequency-power-saving}.\qed

\section{The local model}\label{sec:local-model}

The purpose of this section is to replace the truncated frequency representation
$\mathcal M_{m,z}(x)$ from \propref{prop:truncated-moment-reduction} by an equivalent representation that is better suited for extracting the main contribution coming from exactly paired frequency tuples. For this purpose, we introduce the additional notation and objects needed below. Recall $V_{\square}^{\sharp}$ defined in 

\secref{sec:local-model-subsubsection}. The set
$V_{\square}^{\sharp}$ has Haar measure
$\zeta(2d+2)^{-1}$ inside $V_\square$. Throughout this section, $dg$ on
$V_{\square}^{\sharp}$ denotes the normalized product probability
measure for which the coordinates $g_p$ are independent and uniform on
$V_{p^2}^{\sharp}$. Thus the factor $\zeta(2d+2)^{-1}$ restores the
corresponding Haar mass in $V_\square$.

For $g=(g_p)_p\in V_{\square}^{\sharp}$, define
\[
\mathds{1}_g(n)
:=
\prod_p
\mathds{1}_{g_p(n)\not\equiv0\bmod p^2},
\qquad
\mathfrak S(g)
:=
\prod_p
\left(
1-\frac{\rho_{g_p}(p^2)}{p^2}
\right),
\]
and
\begin{equation}\label{eq:Egdef}
E_g(x)
:=
\sum_{n\le x}\mathds{1}_g(n)-x\mathfrak S(g).
\end{equation}
Observe that $E_g$ is \textit{not} the same as $E_P$, as $g$ is a tuple of independent polynomials with coefficients in $\mathbb{Z}/p^2\mathbb{Z}$, while $P\in \mathbb{Z}[t]$. We refer to $g$, $E_g(x)$ and adjacent objects as the `local models' (of $P$ and $E_P(x)$ and their adjacent objects).

For squarefree $k$, put
\[
\rho_g(k^2)
:=
\prod_{p\mid k}\rho_{g_p}(p^2),
\]
and
\[
\Delta_g(k;x)
:=
\sum_{\substack{n\le x\\
g_p(n)\equiv0\bmod p^2\ \forall p\mid k}}
1
-
x\frac{\rho_g(k^2)}{k^2}.
\]
For $z>0$, define the truncated local moment
\[
\mathcal L_{m,z}(x)
:=
\frac{1}{\zeta(2d+2)}
\sum_{\substack{k_1,\ldots,k_m\ge1\\K\le z}}
\prod_{i=1}^m\mu(k_i)
\int_{V_{\square}^{\sharp}}
\prod_{i=1}^m\Delta_g(k_i;x)\,dg.
\]
The following proposition identifies this local truncation with the
frequency truncation introduced in
\secref{sec:moment-representations}.

\begin{proposition}\label{prop:local-model-moment-representation}
For every fixed $m,d\ge1$ and every $z\ge1$, $\mathcal L_{m,z}(x)
=
\mathcal M_{m,z}(x).$
Moreover, for every $\varepsilon>0$,
\[
\frac{1}{\zeta(2d+2)}
\int_{V_{\square}^{\sharp}}E_g(x)^m\,dg
=
\mathcal L_{m,z}(x)
+
O_{m,d,\varepsilon}
\left(
x^m z^{-1+\varepsilon}
\right).
\]
Consequently, there exists $C_d>0$, depending only on $d$, such that
whenever $H\ge x^{C_dm^2}$,
\[
\mathrm{MP}_m(x,H)
=
\frac{1}{\zeta(2d+2)}
\int_{V_{\square}^{\sharp}}E_g(x)^m\,dg
+
o_{m,d}\left(x^{m/4}\right).
\]
\end{proposition}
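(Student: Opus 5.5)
The proof has three parts, taken in order: an exact identity $\mathcal L_{m,z}=\mathcal M_{m,z}$, a truncation estimate for the local model, and a combination with \propref{prop:truncated-moment-reduction}.

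\textbf{The identity $\mathcal L_{m,z}(x)=\mathcal M_{m,z}(x)$.} Fix a tuple $\mathbf k$ of squarefree $k_i$ with $K\le z$. The quantity $\Delta_g(k;x)$ depends only on $(g_p)_{p\mid k}$. Repeating the computation in \lemref{lem:harmonic-expansion} with $P$ replaced by the local model gives
\[\Delta_g(k;x)=\frac1{k^2}\sum_{0<\ell<k^2}E_x\Big(\frac{\ell}{k^2}\Big)\sum_{\substack{\nu\bmod k^2\\ g_p(\nu)\equiv0 \bmod p^2\ \forall p\mid k}}e\Big(-\frac{\ell\nu}{k^2}\Big).\]
Multiplying over $i$ and integrating, the integrand depends only on $(g_p)_{p\mid K}$. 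Since these coordinates are independent and uniform on $V_{p^2}^\sharp$, the Chinese remainder theorem identifies them with a uniform element of $V_q^\sharp$, where $q=K^2$. The integral is therefore $|V_q^\sharp|^{-1}$ times a sum over $g\in V_q^\sharp$ of the product of root sums. Comparing with the second expression in \eqref{eq:ArithmeticFactor}, and using $\delta_d(q)/q^{d+1}=\zeta(2d+2)^{-1}|V_q^\sharp|^{-1}$ as in the proof of \lemref{lem:arithmetic-factor-factorization}, the $\mathbf k$-term of $\mathcal L_{m,z}$ equals the $\mathbf k$-term of $\mathcal M_{m,z}$. Both are finite sums, so $\mathcal L_{m,z}=\mathcal M_{m,z}$ exactly.

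\textbf{Truncation for the local model.} For every $g$ with $\mathfrak S(g)$ absolutely convergent, which holds almost surely, we have $E_g(x)=\sum_k\mu(k)\Delta_g(k;x)$. This uses $\mathds 1_g(n)=\sum_{k:\,g_p(n)\equiv0\ \forall p\mid k}\mu(k)$. Here the local model differs from the polynomial case: the support in $k$ is not finite for each $n$, since $g_p(n)\equiv 0$ may hold for infinitely many $p$. I will therefore first truncate the prime coordinates at $p\le y$. The model is then periodic and all sums are finite, and I let $y\to\infty$ at the end, justified by dominated convergence with the moment bounds below. Expanding the $m$th power, the error is the sum over tuples with $K>z$ of $\prod_i|\Delta_g(k_i;x)|$. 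I bound each factor by
\[A_g(k;n):=\mathds 1_{g(n)\equiv0 \bmod k^2}+\frac{\rho_g(k^2)}{k^2},\]
so the error is at most
\[\sum_{\mathbf n\in[x]^m}\ \sum_{K>z}\int\prod_i A_g(k_i;n_i)\,dg.\]
For fixed $\mathbf n$, expand the product over subsets as in \lemref{lem:large-lcm-tail-polynomial-average}. Each prime $p\mid K$ appears in at least one congruence, with either a fixed point $n_i$ or an averaged root $y_j$. By independence, the $p$-factor of the integral is $\ll_{m,d} p^{-2}$: a single congruence $g_p(u)\equiv0\bmod p^2$ has probability $\asymp p^{-2}$, and further congruences at the same $p$ only lower the probability, up to the bounded root-count moments of \lemref{lem:root-count-moment-bounds} at the averaged points. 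Combined with the divisor bound on tuples with given lcm, this gives
\[\int\prod_i A_g\ll C^{\omega(K)}K^{-2}.\]
Summing over $K>z$ yields $z^{-1+\varepsilon}$, and summing over $\mathbf n$ contributes $x^m$. The main obstacle is this local counting, namely making the "at least one congruence per prime gives $p^{-2}$" argument uniform when several points $n_i$ coincide modulo $p$ and when the averaged roots have $p\mid\ell$ degeneracies. I would handle it by a Hölder step together with \lemref{lem:root-count-moment-bounds}, which keeps the local factor at $O(p^{-2})$ rather than $O(p^{-2}\cdot p^{|I_p|-2})$. If this fails, I would accept the weaker bound $z^{-1+\varepsilon}$ coming from the $|I_p|=1$ primes, which is all the statement needs.

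\textbf{Conclusion.} Take $z=x^{A}$ with $A$ large in terms of $m$, for instance $A=m+1$; then $x^mz^{-1+\varepsilon}=o(x^{m/4})$. \propref{prop:truncated-moment-reduction} gives
\[\mathrm{MP}_m=\mathcal M_{m,z}+O\!\left(x^{m+\varepsilon}H^\varepsilon z^{-1+\varepsilon}+H^{-1/3+\varepsilon}x^{m+\varepsilon}+\cdots+\frac{z^{m+2d+2+\varepsilon}}{H^{d+1/2}}+\frac{x^{m+1}z^m}{H}\right).\]
All these terms are $o(x^{m/4})$ once $H\ge x^{C_dm^2}$ with $C_d$ large, since every exponent of $x$ is $O_d(m^2)$. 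The hypothesis $z\le\sqrt{2H}$ of \propref{prop:truncated-moment-reduction} also holds. Combining this with the first two parts proves the final assertion.
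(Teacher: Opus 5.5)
Your first two parts follow the paper: the exact identity via the Chinese remainder theorem and $\delta_d(q)/q^{d+1}=(\zeta(2d+2)|V_q^\sharp|)^{-1}$, and the tail bound via truncation to $p\le y$ and the majorants $A_g(k_i;n_i)$. The ``main obstacle'' you flag there is not real. Write each $\rho_g(k_j^2)/k_j^2$ as an average of root indicators over residues $y_j$. Then, for every fixed choice of residues, the event at $p\mid K$ is contained in a single event $g_p(u)\equiv0\bmod p^2$. That event has probability $(p^{2d}-1)/(p^{2d+2}-1)\le p^{-2}$, whatever coincidences occur among the $n_i$. So no H\"older step is needed, and no numerators $\ell$ appear in this physical-space bound. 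The hedge about $|I_p|=1$ primes can simply be dropped.

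The genuine gap is in the conclusion. The first error term in \propref{prop:truncated-moment-reduction} is $x^{m+\varepsilon}H^{\varepsilon}z^{-1+\varepsilon}$, with $H^{\varepsilon}$ in the numerator. It comes from the root-density tail estimate in \lemref{lem:large-lcm-tail-polynomial-average}. The hypothesis $H\ge x^{C_dm^2}$ is only a lower bound, so $H$ may be larger than any power of $x$. This is exactly the regime $x=H^{o(1)}$ needed for \thmref{thm:gaussian-mixture-limit}.

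With your choice $z=x^{m+1}$ this term equals $H^{\varepsilon}x^{-1+O(m\varepsilon)}$, which is not $o(x^{m/4})$ when, say, $H=e^{x}$. Your remark that every exponent of $x$ is $O_d(m^2)$ does not touch this factor. You also cannot shrink $\varepsilon$ with $H$, since the implied constant depends on $\varepsilon$.

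The remedy is to let $z$ grow with $H$, as the paper does. Take $z=x^{\beta}H^{\varepsilon'}$ with $\beta=(3m+1)/4$, $\varepsilon'=1/(8(m+2d+4))$ and $\varepsilon<\varepsilon'/4$. Then:
\begin{itemize}
\item $H^{\varepsilon}z^{-1+\varepsilon}$ carries a negative power of $H$;
\item $z^2\le 2H$ still holds;
\item the terms $z^{m+2+\varepsilon}/H$, $z^{m+2d+2+\varepsilon}/H^{d+1/2}$ and $x^{m}z^{m}(\log(2x)/H+x/H^2)$ all have $H$-exponent below $-7/8$, so they are $o(x^{m/4})$ once $H\ge x^{C_dm^2}$;
\item the local truncation error satisfies $x^mz^{-1+\varepsilon}\le x^{(m-1)/4+\beta\varepsilon}=o(x^{m/4})$ for $\varepsilon$ small.
\end{itemize}
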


\subsection{Proof of \propref{prop:local-model-moment-representation}}
\label{sec:Proposition-19}

For a parameter $y\ge2$  and $g=(g_p)_p\in V_{\square}^{\sharp}$, put
\begin{equation}
E_g^{(y)}(x)
:=
\sum_{n\le x}
\mathds{1}_{g,y}(n)
-
x
\prod_{p\le y}
\left(
1-\frac{\rho_{g_p}(p^2)}{p^2}
\right),
\label{eq:E_g-truncated}
\end{equation}
where 
\[
\mathds{1}_{g,y}(n)
:=
\prod_{p\le y}
\mathds{1}_{g_p(n)\not\equiv0\bmod p^2}.
\]
We first establish the two ingredients needed for the proof of \propref{prop:local-model-moment-representation}.

\begin{lemma}\label{lem:local-model-mobius-expansion}
For every fixed $x$, the products defining $\mathds{1}_g(n)$ and
$\mathfrak S(g)$ converge almost surely. Moreover,
\[
E_g^{(y)}(x)
\xrightarrow[y\to\infty]{}
E_g(x)
\]
almost surely and in $L^m$ for every fixed $m\ge1$.
\end{lemma}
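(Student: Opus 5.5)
We will use a Borel--Cantelli argument for the almost sure statements and dominated convergence for the $L^m$ statement.

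\emph{Step 1: almost sure convergence of the products.} For fixed $x$, the event that $g_p(n)\equiv0\bmod p^2$ for some $n\le x$ has probability at most
\[
\sum_{n\le x}\mathbb P_{g_p\in V_{p^2}^\sharp}\bigl(g_p(n)\equiv0\bmod p^2\bigr)\ll \frac{x}{p^2}.
\]
This holds because, for fixed $n$, the map $g\mapsto g(n)$ from $V_{p^2}$ to $\mathbb Z/p^2\mathbb Z$ is a surjective homomorphism (via the constant coefficient), and removing $0$ from $V_{p^2}$ changes probabilities only by a factor $1+O(p^{-2d-2})$. Since $\sum_p x/p^2<\infty$, Borel--Cantelli shows that almost surely only finitely many primes $p$ have some $n\le x$ with $p^2\mid g_p(n)$. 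Hence each of the finitely many products $\mathds 1_g(n)$, $n\le x$, stabilises at some finite $y$.

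For $\mathfrak S(g)$, we have $\mathbb E\,\rho_{g_p}(p^2)\ll 1$ by \lemref{lem:root-count-moment-bounds} with $\ell=1$. Therefore
\[
\mathbb E\sum_p\frac{\rho_{g_p}(p^2)}{p^2}<\infty,
\]
so $\sum_p\rho_{g_p}(p^2)/p^2<\infty$ almost surely. The only other input needed is $\rho_{g_p}(p^2)<p^2$ for all $p$ outside a null set: $\rho_{g_p}(p^2)=p^2$ means $g_p$ vanishes identically mod $p^2$ as a function, which has probability $O(p^{-3})$, so by Borel--Cantelli again it happens for only finitely many $p$ almost surely. The remaining issue is that a factor could vanish at a small prime. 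This is harmless: if some factor vanishes, both the truncated and the full product are eventually $0$, and otherwise the product converges by summability of $\rho/p^2$. Hence $\prod_{p\le y}(1-\rho_{g_p}(p^2)/p^2)\to\mathfrak S(g)$ almost surely, since monotone decreasing bounded products always converge.

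\emph{Step 2: almost sure convergence of $E_g^{(y)}(x)$.} Combining Step 1 for the counting term and for the singular series term gives $E_g^{(y)}(x)\to E_g(x)$ almost surely.

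\emph{Step 3: convergence in $L^m$.} Both $E_g^{(y)}(x)$ and $E_g(x)$ lie in $[-x,x]$ deterministically, since the indicators and the products are in $[0,1]$. Dominated convergence with the constant dominating function $(2x)^m$ therefore upgrades almost sure convergence to $L^m$ convergence for every fixed $m$.

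\emph{Main obstacle.} The only point needing care is the measure-theoretic bookkeeping in Step 1, namely that the infinite product $\mathfrak S(g)$ is well defined and is the limit of the truncations even when some local factor is zero or close to zero. Monotonicity of the partial products handles this.
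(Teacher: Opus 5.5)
Your proof is correct and follows essentially the same route as the paper. Both use Borel--Cantelli with the $\ll_d p^{-2}$ evaluation probability for the indicators $\mathds{1}_g(n)$, finiteness of $\mathbb E\sum_p\rho_{g_p}(p^2)/p^2$ for the Euler product $\mathfrak S(g)$, and dominated convergence from the deterministic bound $|E_g^{(y)}(x)|,|E_g(x)|\le 2x$. Your extra Borel--Cantelli step ensuring $\rho_{g_p}(p^2)<p^2$ is redundant but harmless: it already follows from summability of $\rho_{g_p}(p^2)/p^2$, and in fact $\rho_{g_p}(p^2)\le dp<p^2$ holds deterministically for $p>d$.
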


\begin{proof}
For each prime $p$, the coordinate $g_p$ is uniformly distributed on
$V_{p^2}^{\sharp}$. For fixed $n$, surjectivity of evaluation at $n$ on
$V_{p^2}$ gives
\[
\mathbb P\bigl(g_p(n)\equiv0\bmod p^2\bigr)
=
\frac{p^{2d}-1}{p^{2d+2}-1}
\ll_d p^{-2}.
\]
Since $\sum_p p^{-2}<\infty$, Borel--Cantelli implies that, almost
surely, only finitely many primes satisfy
$g_p(n)\equiv0\bmod p^2$. Hence $\mathds{1}_g(n)$ is well-defined
almost surely, simultaneously for all $n\le x$.
Similarly, Fubini--Tonelli gives
\[
\mathbb E
\sum_p\frac{\rho_{g_p}(p^2)}{p^2}
=
\sum_p\frac1{p^2}
\sum_{\nu\bmod p^2}
\mathbb P\bigl(g_p(\nu)\equiv0\bmod p^2\bigr)
\ll_d
\sum_p p^{-2}
<\infty.
\]
Thus $\sum_p \rho_{g_p}(p^2)/p^2<\infty$
almost surely, and therefore, the Euler product defining $\mathfrak S(g)$
converges almost surely.

By finite inclusion--exclusion,
\begin{equation} \label{E_gP_y}
E_g^{(y)}(x)
=
\sum_{\substack{k\geq1\\
p\mid k\Rightarrow p\le y}}
\mu(k)\Delta_g(k;x).
\end{equation}
We now show that $E_g^{(y)}(x)$ converges almost surely to $E_g(x)$.
By the almost-sure convergence established above, we may fix $g$ for
which $\mathds{1}_g(n)$ is defined for every $n\leq x$ and
$\mathfrak{S}(g)$ converges. For each fixed $n\leq x$,
\[
\mathds{1}_{g,y}(n)
=
\prod_{p\le y}
\mathds{1}_{g_p(n)\not\equiv0\bmod p^2}
\xrightarrow[y\to\infty]{\mathrm{a.s.}}
\prod_p
\mathds{1}_{g_p(n)\not\equiv0\bmod p^2}
=
\mathds{1}_g(n).
\]
Indeed, if $g_p(n)\equiv0\bmod p^2$ for some prime $p$, then once this
prime is $\le y$, the finite product is identically zero; if no
such prime exists, all the finite products are equal to one. Since
there are only finitely many $n\leq x$, it follows that
\[
\sum_{n\leq x}\mathds{1}_{g,y}(n)
\xrightarrow[y\to\infty]{\mathrm{a.s.}}
\sum_{n\leq x}\mathds{1}_g(n).
\]
Moreover, by the definition of the convergent Euler product,
\[
\prod_{p\le y}
\left(
1-\frac{\rho_{g_p}(p^2)}{p^2}
\right)
\xrightarrow[y\to\infty]{\mathrm{a.s.}}
\prod_p
\left(
1-\frac{\rho_{g_p}(p^2)}{p^2}
\right)
=
\mathfrak{S}(g).
\]
Consequently, as $y\rightarrow \infty$, $E_g^{(y)}(x)
\xrightarrow{\mathrm{a.s.}}
E_g(x).$
Finally, $\lvert E_g^{(y)}(x)\rvert\leq2x$ uniformly in $y$ and $g$.
Dominated convergence therefore gives
\[
E_g^{(y)}(x)
\xrightarrow[y\to\infty]{L^m}
E_g(x)
\]
for every fixed $m\geq1$. By \eqref{E_gP_y}, we conclude that
\[
E_g(x)
=
\lim_{y\to\infty}
\sum_{\substack{k\geq1\\
p\mid k\Rightarrow p\le y}}
\mu(k)\Delta_g(k;x),
\]
where the convergence holds both almost surely and in $L^m$.
\end{proof}

We now prove the local-model counterpart of \lemref{lem:large-lcm-tail-polynomial-average}. In the local setting the independence of the prime coordinates yields a cleaner bound, with no dependence on $H$.

\begin{lemma}\label{lem:large-lcm-tail-local-model}
Let $\mathbf{n}=\left(n_{1},\ldots,n_{m}\right)\in\mathbb{Z}^{m}$ and put\[
A_{g}\left(k;n\right):=\mu^{2}\left(k\right)\left(\mathds{1}_{\substack{g_{p}\left(n\right)\equiv0\bmod p^{2}\\
\forall p\mid k
}
}+\frac{\rho_{g}\left(k^{2}\right)}{k^{2}}\right).
\]For all $\varepsilon >0$, uniformly for $z\geq1$,\[
\sum_{K>z}\mathbb E_{g\in V_{\square}^{\sharp}}\prod_{i=1}^{m}A_{g}\left(k_{i};n_{i}\right)\ll_{m,d,\varepsilon}z^{-1+\varepsilon}.
\]
\end{lemma}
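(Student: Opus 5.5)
The plan is to exploit independence of the coordinates $g_p$ to factor the expectation prime by prime, and then sum over $\mathbf{k}$ with the divisor bound.

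Fix squarefree $k_1,\ldots,k_m$ with lcm $K$. Expanding the product $\prod_i A_g(k_i;n_i)$ gives a sum over subsets $I\subseteq[m]$ of terms
\[
\prod_{i\in I}\mathds{1}_{g_p(n_i)\equiv0\bmod p^2\ \forall p\mid k_i}\prod_{j\notin I}\frac{\rho_g(k_j^2)}{k_j^2}.
\]
Each such term is a product over $p\mid K$ of functions of $g_p$ alone, so its expectation factors as
\[
\prod_{p\mid K}\mathbb E_{g_p\in V_{p^2}^{\sharp}}\Big[\prod_{i\in I\cap I_p}\mathds{1}_{g_p(n_i)\equiv0\bmod p^2}\prod_{j\in I_p\setminus I}\frac{\rho_{g_p}(p^2)}{p^2}\Big].
\]
We bound each local factor by $O_{m,d}(p^{-2})$. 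If $I\cap I_p\neq\emptyset$, drop all but one indicator. Bound each factor $\rho_{g_p}(p^2)/p^2$ by $\rho_{g_p}(p^2)/p^2$ itself, and use H\"older together with \lemref{lem:root-count-moment-bounds} to write $\rho_{g_p}(p^2)^\ell/p^{2\ell}$ on average as $\ll 1+p^{\ell-2}$ over $p^{2\ell}$, which is $\ll p^{-2}$.

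More concretely, I would write $\rho_{g_p}(p^2)=\sum_{y\bmod p^2}\mathds{1}_{g_p(y)\equiv0}$, exactly as in the proof of \lemref{lem:large-lcm-tail-polynomial-average}. Then every term is an average over auxiliary residues $y_j$ of a probability that $g_p$ vanishes mod $p^2$ at a given point. By surjectivity of evaluation, such a probability is $\frac{p^{2d}-1}{p^{2d+2}-1}\ll p^{-2}$. Selecting one condition and discarding the rest gives the local factor $\ll_{m,d} p^{-2}$, uniformly in $n_i$. Hence
\[
\mathbb E\prod_i A_g(k_i;n_i)\le 2^m C_{m,d}^{\omega(K)}K^{-2}\ll_{m,d,\varepsilon}K^{-2+\varepsilon}.
\]
No $H$ or $x$ enters, which is the cleaner feature noted before the lemma; this step is routine.

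Finally, sum over tuples: the number of squarefree $m$-tuples with lcm $K$ is $\ll_{m,\varepsilon}K^{\varepsilon}$. This gives $\sum_{K>z}K^{-2+2\varepsilon}\ll z^{-1+2\varepsilon}$, and renaming $\varepsilon$ finishes the argument. The only point needing care (the main obstacle, mild here) is the justification that the infinite-product measure allows factoring. This holds because each term depends only on the finitely many coordinates $g_p$ with $p\mid K$, so Fubini on the finite product suffices. One must also note that the non-squarefree $k_i$ vanish through the factor $\mu^2$.
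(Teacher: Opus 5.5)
Your proposal is correct and follows the paper's proof essentially verbatim. You write $\rho_{g_p}(p^2)$ as a sum of root indicators and keep a single vanishing condition at each $p\mid K$, which costs $\ll_d p^{-2}$ because it pins down the constant coefficient modulo $p^2$; factoring over primes by independence gives $\mathbb E\prod_i A_g(k_i;n_i)\le C_{m,d}^{\omega(K)}K^{-2}$, and the divisor bound for tuples of given lcm finishes the argument. (The H\"older detour you mention first is unnecessary: every factor lies in $[0,1]$, so discarding all but one condition already suffices.)
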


\begin{proof}
Express each factor $\rho_g(k^2)/k^2$ as the average of the root indicators over residue classes modulo $k^2$. For every $p\mid K$, at least one resulting local
condition has the form
\[
g_p(u)\equiv0\bmod p^2.
\]
Once the remaining coefficients of $g_p$ are fixed, such a condition
determines the constant coefficient modulo $p^2$. Hence the probability
of the local conditions at $p$ is $\ll_{m,d}p^{-2}$. Independence over
the primes therefore yields
\[
\mathbb E_{g\in V_{\square}^{\sharp}}
\prod_{i=1}^m A_g(k_i;n_i)
\le
\frac{C_{m,d}^{\omega(K)}}{K^2}.
\]
for some $C_{m,d}>0$. Since the number of squarefree $m$-tuples with
least common multiple $K$ is
$\ll_{m,\varepsilon}K^\varepsilon$, summing over $K>z$ proves the
claim.
\end{proof}

\begin{proof}[Proof of \propref{prop:local-model-moment-representation}]
Fix $m\geq1$ and $z>0$. We first prove the exact identity $\mathcal{L}_{m,z}\left(x\right)=\mathcal{M}_{m,z}\left(x\right)$. For a fixed tuple $\left(k_1,\dots,k_m\right)$ with $K\le z$, the product $\prod_{i=1}^{m}\Delta_{g}\left(k_{i};x\right)$ depends only on the reduction of $g\in V_\square^{\sharp}$ modulo $q=K^{2}$. Hence its Haar integral is the uniform average over $V_{q}^{\sharp}$. Recall that $\left(\zeta\left(2d+2\right)\left\lvert V_{q}^{\sharp}\right\rvert\right)^{-1} =\delta_{d}\left(q\right)/q^{d+1}$,
and applying to each $\Delta_{g}\left(k_{i};x\right)$ the same Fourier expansion as in \lemref{lem:harmonic-expansion}, the resulting expression is exactly $\mathcal{M}_{m,z}\left(x\right)$. This proves the first assertion. By \lemref{lem:local-model-mobius-expansion} and \lemref{lem:large-lcm-tail-local-model}, after summing over $n_{1},\ldots,n_{m}\leq x$, for every $\varepsilon>0$ we also have\[
\frac{1}{\zeta\left(2d+2\right)}\int_{V_{\square}^{\sharp}}E_{g}\left(x\right)^{m}\,dg=\mathcal{L}_{m,z}\left(x\right)+O_{m,d,\varepsilon}\left(x^{m}z^{-1+\varepsilon}\right).
\]This proves the second assertion. Combining the first two assertions with \propref{prop:truncated-moment-reduction} gives\begin{align}
 & \mathrm{MP}_{m}\left(x,H\right)=\frac{1}{\zeta\left(2d+2\right)}\int_{V_{\square}^{\sharp}}E_{g}\left(x\right)^{m}\,dg\nonumber \\
 & +O_{m,d,\varepsilon}\left(\frac{x^{m+\varepsilon}H^{\varepsilon}}{z^{1-\varepsilon}}+H^{-\frac{1}{3}+\varepsilon}x^{m+\varepsilon}+H^{-\frac{1}{2}+\varepsilon}x^{m+\frac{d}{2}+\varepsilon}+x^{m}z^{m}\left(\frac{\log\left(2x\right)}{H}+\frac{x}{H^{2}}\right)+\frac{z^{m+2+\varepsilon}}{H}+\frac{z^{m+2d+2+\varepsilon}}{H^{d+\frac{1}{2}}}+x^{m}z^{-1+\varepsilon}\right).\label{eq:PolynomialLocalReduction}
\end{align}It remains to choose the truncation parameter $z$. Set\[
\beta:=\frac{3m+1}{4},\qquad\varepsilon':=\frac{1}{8\left(m+2d+4\right)},\qquad z:=x^{\beta}H^{\varepsilon'}.
\]Choose $\varepsilon>0$ sufficiently small in terms of $m,d$, for instance so that $\varepsilon<\min\left\{ \varepsilon'/4,\left(16\beta\right)^{-1},1/12\right\} $. For these choices of parameters, if
$H\ge x^{C_dm^2}$ with $C_d>0$ sufficiently large depending only on $d$, then $z^2\le 2H$, as required for the application of
\propref{prop:truncated-moment-reduction},
and every error term above is
$o_{m,d}(x^{m/4})$.
\end{proof}

\subsection{Reduced finite-prime Fourier expansion}
\label{sec:finite-prime-fourier-expansion}

We now derive a second representation of the
$m$th moment of the local discrepancy $E_g(x)$.
The representation in
\propref{prop:local-model-moment-representation} is obtained by
truncating according to the
least common multiple $K$. For the analysis of the main term, it is more
convenient instead to keep only finitely many prime coordinates at
first, write the resulting frequencies in lowest terms, and
then let the prime cutoff tend to infinity.

We start with additional notations. For a positive integer $r$, put
\[
\mathcal C(r)
:=
\left\{
\frac ar:a\in\mathbb Z
\right\}/\mathbb Z,
\qquad
\mathcal R(r)
:=
\left\{
\frac ar\in\mathcal C(r):(a,r)=1
\right\}.
\]
Recall the set $\mathscr R$ defined in
\eqref{eq:R-sf-decomposition}, consisting of the possible reduced denominators of fractions $\ell/k^2$
with $k$ squarefree.
For $g_p\in V_{p^2}^{\sharp}$, put
\[
U_p(g_p)
:=
\left\{
u\in\mathbb Z/p^2\mathbb Z:
g_p(u) \not\equiv 0\bmod p^2
\right\}.
\]
We decompose each frequency $\sigma\in\mathbb Q/\mathbb Z$ as
\[ \sigma = \sum_{p} \sigma_p\]
where $\sigma_p$ is a fraction with denominator a power of $p$; we call $\sigma_p$ the $p$-primary component of $\sigma$. If $p\mid r$ and $\sigma\in\mathcal R(r)$,
then $p^2\sigma_p=0$ in $\mathbb Q/\mathbb Z$, so
\[
u\longmapsto e(-\sigma_pu),
\qquad u\in\mathbb Z/p^2\mathbb Z,
\]
is a well-defined additive character. For $r\in\mathscr R$ and
$\sigma\in\mathcal R(r)$, define
\[
\Lambda_p(g_p,\sigma,r)
:=
\begin{cases}
\displaystyle
1-\frac{\rho_{g_p}(p^2)}{p^2},
& p\nmid r,\\[2mm]
\displaystyle
\int_{\mathbb Z/p^2\mathbb Z\setminus U_p(g_p)}
e(-\sigma_pu)\,du,
& p\mid r,
\end{cases}
\]
where $du$ is the normalized counting measure on
$\mathbb Z/p^2\mathbb Z$. For $y\ge2$, put
\[
\Lambda^{(y)}(g,\sigma,r)
:=
\prod_{p\le y}\Lambda_p(g_p,\sigma,r),
\]
whenever every prime divisor of $r$ is at most $y$, and also define
\[
\Lambda(g,\sigma,r)
:=
\prod_p\Lambda_p(g_p,\sigma,r).
\]
For a fixed pair $(r,\sigma)$, the latter product converges almost surely and
in $L^q$ for every $q$. Indeed, only finitely many factors with
$p\mid r$ are present, while the remaining product is a subproduct of the Euler product defining $\mathfrak S(g)$, which is convergent by \lemref{lem:local-model-mobius-expansion}.
If $p\mid r$ then a standard calculation shows that
\[
\Lambda_p(g_p,\sigma,r)
=
p^{-2}T_{g_p}(-\sigma_p;p^2).
\]
For $y\ge2$, put $Q_y:=\prod_{p\le y}p^2$.
Note that we can write \eqref{eq:E_g-truncated} as
\[
E_g^{(y)}(x)
=
\sum_{n\le x}
\prod_{p\le y}
\mathds{1}_{U_p(g_p)}(n)
-
x
\prod_{p\le y}
\left(
1-\frac{\rho_{g_p}(p^2)}{p^2}
\right).
\]
The next lemma gives a reduced Fourier
expansion of $E_g^{(y)}(x)$.

\begin{lemma}\label{lem:finite-prime-local-model-convergence}
For every fixed $m\ge1$ and $x$, we have
\begin{equation}
E_g^{(y)}(x)
=
\sum_{\substack{r\in\mathscr R,\ r>1\\
p\mid r\Rightarrow p\le y}}
\mu(\operatorname{rad}(r))
\sum_{\sigma\in\mathcal R(r)}
E_x(\sigma)\Lambda^{(y)}(g,\sigma,r).
\label{eq:FinitePrimeFourier}
\end{equation}
\end{lemma}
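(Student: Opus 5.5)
Since the indicator $n\mapsto\prod_{p\le y}\mathds{1}_{U_p(g_p)}(n)$ is $Q_y$-periodic, the plan is to expand it in additive characters on $\mathbb Z/Q_y\mathbb Z$, prime by prime via the Chinese remainder theorem, and then regroup the frequencies by their reduced denominator.

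For each $p\le y$ we have on $\mathbb Z/p^2\mathbb Z$
\[
\mathds{1}_{U_p(g_p)}(n)=\sum_{\tau_p\in\mathcal C(p^2)}\widehat{\mathds{1}}_{U_p(g_p)}(\tau_p)\,e(\tau_p n),\qquad \widehat{\mathds{1}}_{U_p(g_p)}(\tau_p)=\int_{\mathbb Z/p^2\mathbb Z}\mathds{1}_{U_p(g_p)}(u)e(-\tau_pu)\,du .
\]
The zero coefficient is $1-\rho_{g_p}(p^2)/p^2$. For $\tau_p\ne0$, orthogonality gives $\int_{U_p}e(-\tau_pu)\,du=-\int_{\mathbb Z/p^2\setminus U_p}e(-\tau_pu)\,du$. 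Multiplying over $p\le y$ gives
\[
\prod_{p\le y}\mathds{1}_{U_p(g_p)}(n)=\sum_{\sigma\in\mathcal C(Q_y)}\prod_{p\le y}c_p(\sigma_p)\,e(\sigma n),
\]
where $\sigma=\sum_p\sigma_p$ is the primary decomposition. Here $c_p(0)=1-\rho_{g_p}(p^2)/p^2$ and $c_p(\sigma_p)=-\int_{\mathbb Z/p^2\setminus U_p}e(-\sigma_pu)\,du$ for $\sigma_p\neq0$.

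Next we sum over $n\le x$. This replaces $e(\sigma n)$ by $E_x(\sigma)$. The term $\sigma=0$ contributes $E_x(0)\prod_{p\le y}(1-\rho_{g_p}(p^2)/p^2)=x\prod_{p\le y}(\cdots)$, which cancels exactly the subtracted main term in \eqref{eq:E_g-truncated}; here $x$ is an integer. Each nonzero $\sigma\in\mathcal C(Q_y)$ has a unique reduced denominator $r>1$ with $r\mid Q_y$, so $\sigma\in\mathcal R(r)$. Every such $r$ has the form $\prod p^{e_p}$ with $e_p\in\{0,1,2\}$, that is, $r=ab^2\in\mathscr R$ with all prime factors at most $y$. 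Conversely, every such $r$ divides $Q_y$. Thus the sum over nonzero $\sigma$ reorganizes as the double sum over $r$ and $\sigma\in\mathcal R(r)$ in \eqref{eq:FinitePrimeFourier}.

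It remains to match the coefficients. For $\sigma\in\mathcal R(r)$ we have $\sigma_p\neq0$ exactly when $p\mid r$. The product of coefficients is therefore
\[
\prod_{p\le y,\,p\nmid r}\Bigl(1-\tfrac{\rho_{g_p}(p^2)}{p^2}\Bigr)\prod_{p\mid r}(-1)\!\int_{\mathbb Z/p^2\setminus U_p}e(-\sigma_pu)\,du=(-1)^{\omega(r)}\Lambda^{(y)}(g,\sigma,r).
\]
Since $(-1)^{\omega(r)}=\mu(\operatorname{rad}(r))$, this gives the identity, and it holds pointwise in $g$.

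The only step needing care, and the one I expect to be the main obstacle, is the bookkeeping of signs and of the zero/nonzero components. One must check that $\sigma_p$ is a well-defined character on $\mathbb Z/p^2$ because $e_p\le2$, and that the sign flip occurs exactly at the primes dividing $r$. Nothing analytic is involved, since the sum is finite for fixed $y$.
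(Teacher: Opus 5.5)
Your proposal is correct and follows the paper's proof: the same Fourier expansion on $\mathbb Z/Q_y\mathbb Z$, with the zero frequency cancelled by the centering term, the frequencies regrouped by reduced denominator $r\in\mathscr R$, and a sign $-1$ from orthogonality at each $p\mid r$. The only difference is cosmetic: you multiply the local expansions on $\mathbb Z/p^2\mathbb Z$ and use the CRT bijection $\prod_{p\le y}\mathcal C(p^2)\to\mathcal C(Q_y)$, whereas the paper factors the global Fourier coefficient through the same CRT step.
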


\begin{proof}
For the Fourier expansion, regard
\[
n\longmapsto
\prod_{p\le y}\mathds{1}_{U_p(g_p)}(n)
\]
as a function on $\mathbb Z/Q_y\mathbb Z$. Its Fourier expansion is
\[
\prod_{p\le y}\mathds{1}_{U_p(g_p)}(n)
=
\sum_{\sigma\in\mathcal C(Q_y)}
\widehat{\mathds{1}}_{g,y}(\sigma)e(\sigma n),
\]
where
\[
\widehat{\mathds{1}}_{g,y}(\sigma)
:=
\frac1{Q_y}
\sum_{u\bmod Q_y}
\prod_{p\le y}
\mathds{1}_{U_p(g_p)}(u)e(-\sigma u).
\]
At $\sigma=0$, the Chinese remainder theorem gives
\[
\widehat{\mathds{1}}_{g,y}(0)
=
\prod_{p\le y}
\left(
1-\frac{\rho_{g_p}(p^2)}{p^2}
\right),
\]
so the zero-frequency term is exactly canceled by the centering term
in $E_g^{(y)}(x)$.
Every nonzero $\sigma\in\mathcal C(Q_y)$ has a unique reduced
representation
\[
\sigma=\frac ar,
\qquad
(a,r)=1,\quad r\mid Q_y,\quad r>1.
\]
The possible reduced denominators are precisely the
$r\in\mathscr R$ supported on primes $p\le y$. Under the Chinese
remainder decomposition,
\[
\widehat{\mathds{1}}_{g,y}(\sigma)
=
\prod_{p\le y}
\left(
\frac1{p^2}
\sum_{\substack{u\bmod p^2\\u\in U_p(g_p)}}
e(-\sigma_pu)
\right).
\]
If $p\nmid r$, then $\sigma_p=0$, and the local factor is
\[
1-\frac{\rho_{g_p}(p^2)}{p^2}
=
\Lambda_p(g_p,\sigma,r).
\]
If $p\mid r$, then the $p$-component $\sigma_p$ is
nontrivial modulo $p^2$, since $r$ is the reduced
denominator of $\sigma$. Hence, by orthogonality
of additive characters, $\sum_{u\bmod p^2}e(\sigma_p u)=0$ and hence
\[
\frac1{p^2}
\sum_{\substack{u\bmod p^2\\u\in U_p(g_p)}}
e(-\sigma_pu)
=
-\Lambda_p(g_p,\sigma,r).
\]
It then follows that
\[
\widehat{\mathds{1}}_{g,y}(\sigma)
=
(-1)^{\omega(r)}
\Lambda^{(y)}(g,\sigma,r)
=
\mu(\operatorname{rad}(r))
\Lambda^{(y)}(g,\sigma,r),
\]
which proves \eqref{eq:FinitePrimeFourier}.
\end{proof}

For $y\ge2$, set
\[
\mathscr R_y
:=
\left\{
r\in\mathscr R:
r>1,\ p\mid r\Rightarrow p\le y
\right\}.
\]
We now raise \eqref{eq:FinitePrimeFourier} to the $m$th power. After
averaging over $g$, translation invariance forces the reduced
frequencies to satisfy the same zero-sum relation as in the original
frequency representation.

\begin{proposition}\label{prop:finite-prime-local-moment-expansion}
For $y\ge2$, define
\[
\mathcal C_m^{(y)}(g,x)
:=
\sum_{r_1,\ldots,r_m\in\mathscr R_y}
\prod_{i=1}^m\mu(\operatorname{rad}(r_i))
\sum_{\substack{
\sigma_i\in\mathcal R(r_i)\ (1\le i\le m)\\
\sigma_1+\cdots+\sigma_m\in\mathbb Z
}}
\prod_{i=1}^m
E_x(\sigma_i)
\Lambda^{(y)}(g,\sigma_i,r_i).
\]
Then
\begin{equation}
\int_{V_\square^\sharp}E_g(x)^m\,dg
=
\lim_{y\to\infty}
\int_{V_\square^\sharp}
\mathcal C_m^{(y)}(g,x)\,dg.
\label{eq:LocalMomentLimit}
\end{equation}
\end{proposition}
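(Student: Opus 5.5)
The plan has two steps. First, by \lemref{lem:local-model-mobius-expansion} we have $E_g^{(y)}(x)\to E_g(x)$ in $L^m$. Since $|E_g^{(y)}(x)|\le 2x$ uniformly in $y$ and $g$, the $m$th powers also converge in $L^1$. Hence
\[
\int_{V_\square^\sharp}E_g(x)^m\,dg=\lim_{y\to\infty}\int_{V_\square^\sharp}E_g^{(y)}(x)^m\,dg .
\]
It therefore suffices to show that, for each fixed $y$,
\[
\int_{V_\square^\sharp}E_g^{(y)}(x)^m\,dg=\int_{V_\square^\sharp}\mathcal C_m^{(y)}(g,x)\,dg .
\]

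For the second step, fix $y$ and raise the expansion \eqref{eq:FinitePrimeFourier} of \lemref{lem:finite-prime-local-model-convergence} to the $m$th power. Because $\mathscr R_y$ is a finite set (its elements divide $Q_y$), this is a finite sum and there are no convergence issues. The result is the sum over all $r_1,\ldots,r_m\in\mathscr R_y$ and all $\sigma_i\in\mathcal R(r_i)$, without the zero-sum constraint. What remains is to show that, after integrating in $g$, every tuple with $\sigma_1+\cdots+\sigma_m\notin\mathbb Z$ contributes zero.

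To see this, note that $\prod_i\Lambda^{(y)}(g,\sigma_i,r_i)$ depends only on $(g_p)_{p\le y}$. Its $p$-factor is built from local Fourier coefficients of $\mathds{1}_{U_p(g_p)}$ or of its complement. Fix $h\in\mathbb Z/Q_y\mathbb Z$ and apply the shift $g_p(t)\mapsto g_p(t-h)$ at each $p\le y$. This preserves $V_{p^2}^\sharp$ and the uniform measure, and it sends $U_p(g_p)$ to $U_p(g_p)+h$. Consequently each local coefficient at frequency $\sigma_{i,p}$ is multiplied by $e(-\sigma_{i,p}h)$. This holds both in the case $p\mid r_i$ and in the case $p\nmid r_i$, where $\sigma_{i,p}=0$ and the factor is shift-invariant because $\rho_{g_p}$ is unchanged.

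Multiplying over $p\le y$ and over $i$, the integrand acquires the factor $e(-h\sum_i\sigma_i)$, since $\sigma_i=\sum_{p\le y}\sigma_{i,p}$. By invariance of the measure, the integral equals itself times this factor for every $h$. Each $\sigma_i$ has denominator dividing $Q_y$, so choosing $h=1$ gives $e(-\sum_i\sigma_i)\ne1$ whenever $\sum_i\sigma_i\notin\mathbb Z$, and the integral vanishes. This is exactly the argument used for the vanishing statement in \lemref{lem:arithmetic-factor-factorization}, and it leaves $\int\mathcal C_m^{(y)}(g,x)\,dg$.

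The only delicate point is verifying carefully that the shift acts on the $p$-primary components by the expected character, with the sign conventions of $\Lambda_p$, including the case $p\nmid r$. Since this follows directly from the definitions, I expect no real obstacle.
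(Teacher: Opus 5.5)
Your proposal is correct and follows the paper's proof: for fixed $y$ you raise the finite Fourier identity \eqref{eq:FinitePrimeFourier} to the $m$th power, kill every tuple with $\sigma_1+\cdots+\sigma_m\notin\mathbb Z$ using invariance of the product measure under polynomial translation, and then let $y\to\infty$ via $E_g^{(y)}(x)\to E_g(x)$ in $L^m$ together with the uniform bound $2x$. The differences are cosmetic: you take the limit step first, you shift by $g_p(t)\mapsto g_p(t-h)$ rather than $g_p(t+h)$ (which only flips the sign of the character), and you specialize to $h=1$.
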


\begin{proof}
For fixed $y$, \eqref{eq:FinitePrimeFourier} is a finite sum. Raising
it to the $m$th power and integrating gives
\[
\int_{V_\square^\sharp}E_g^{(y)}(x)^m\,dg
=
\sum_{r_1,\ldots,r_m\in\mathscr R_y}
\prod_{i=1}^m\mu(\operatorname{rad}(r_i))
\sum_{\substack{\sigma_i\in\mathcal R(r_i)\\1\le i\le m}}
\prod_{i=1}^mE_x(\sigma_i)\,
I_y(\mathbf r,\boldsymbol\sigma),
\]
where
\[
I_y(\mathbf r,\boldsymbol\sigma)
:=
\int_{V_\square^\sharp}
\prod_{i=1}^m
\Lambda^{(y)}(g,\sigma_i,r_i)\,dg.
\]
For $h\bmod Q_y$, define
\[
(\tau_hg)_p(t):=g_p(t+h),
\qquad p\le y.
\]
Translation preserves the product measure and gives
\[
\Lambda^{(y)}(\tau_hg,\sigma,r)
=
e(h\sigma)\Lambda^{(y)}(g,\sigma,r).
\]
Consequently,
\[
I_y(\mathbf r,\boldsymbol\sigma)
=
e\left(
h\sum_{i=1}^m\sigma_i
\right)
I_y(\mathbf r,\boldsymbol\sigma)
\]
for every $h\bmod Q_y$. Thus when $\sigma_1+\cdots+\sigma_m\notin\mathbb Z$, we have $I_y(\mathbf r,\boldsymbol\sigma)= 0$.
It follows that
\[
\int_{V_\square^\sharp}E_g^{(y)}(x)^m\,dg
=
\int_{V_\square^\sharp}
\mathcal C_m^{(y)}(g,x)\,dg.
\]
Finally,
\lemref{lem:finite-prime-local-model-convergence} gives
$E_g^{(y)}(x)\to E_g(x)$ in $L^m$. Since
$|E_g^{(y)}(x)|,|E_g(x)|\le2x$, we have
\[
\int_{V_\square^\sharp}
\left|
E_g^{(y)}(x)^m-E_g(x)^m
\right|\,dg
\longrightarrow0.
\]
Taking $y\to\infty$ proves \eqref{eq:LocalMomentLimit}.
\end{proof}

\subsection{Reduction to exactly paired frequencies}
\label{sec:exactly-paired-contribution}
Proposition
\ref{prop:finite-prime-local-moment-expansion} provides a representation of
the \emph{full} local-model moment. We now isolate its exactly paired
part and show that all remaining frequencies contribute negligibly. 

For even $m$, let $\mathcal C_{m,\mathrm{pair}}^{(y)}(g,x)$
denote the restriction of $\mathcal C_m^{(y)}(g,x)$ to those reduced
frequency tuples $(\sigma_1,\ldots,\sigma_m)$ for which the frequencies are exactly paired. For odd $m$, put
\[
\mathcal C_{m,\mathrm{pair}}^{(y)}(g,x):=0.
\]
This is the same notion of exact pairing as in
\secref{sec:nonpaired-frequency-analysis}: replacing a rational
frequency by its reduced representative does not change its value in
$\mathbb Q/\mathbb Z$, and hence does not change whether it can be
paired with its negative.

Before comparing this object with the truncated frequency expression,
we record that the large-$K$ estimate of
\lemref{lem:large-lcm-tail-local-model} remains valid after restricting
to exactly paired Fourier frequencies.

\begin{lemma}\label{lem:paired-large-lcm-tail-local-model}
Fix $m\ge1$ and $\varepsilon>0$.
For $y\ge2$ and $Z\ge1$, let
$\mathcal T_{m,y}^{\mathrm{pair}}(x;Z)$ denote the
contribution of exactly paired frequency tuples with
$K=[k_1,\ldots,k_m]>Z$ to the finite-prime Fourier
expansion of
\[
\frac{1}{\zeta(2d+2)}
\int_{V_\square^\sharp}E_g^{(y)}(x)^m\,dg.
\]
Then, uniformly for $y\ge2$ and $Z\ge1$.
\[
\left|\mathcal T_{m,y}^{\mathrm{pair}}(x;Z)\right|
\ll_{m,d,\varepsilon}
x^m Z^{-1+\varepsilon}.
\]
\end{lemma}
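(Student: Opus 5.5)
The plan is to bound the paired tail trivially in the frequency variables, using the crude bound $|E_x(\sigma)|\le x$, and to extract all the decay in $Z$ from the arithmetic weights $\Lambda^{(y)}$. Here I read $K$ as $[\operatorname{rad}(r_1),\ldots,\operatorname{rad}(r_m)]$, the lcm of the squarefree kernels of the reduced denominators; this is the $K$ of the $\mathcal M_{m,z}$ representation, since the identity $\mathcal L_{m,z}=\mathcal M_{m,z}$ of \propref{prop:local-model-moment-representation} matches the two expansions prime by prime.

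\emph{Step 1: reduce to pairs.} I fix a perfect matching $\mathcal P$ of $[m]$; there are $O_m(1)$ of them. On a tuple paired by $\mathcal P$ we have $\sigma_j=-\sigma_i$ and $r_j=r_i$ for $\{i,j\}\in\mathcal P$. Since $\Lambda^{(y)}(g,-\sigma,r)=\overline{\Lambda^{(y)}(g,\sigma,r)}$, the contribution is bounded by
\[
x^m\sum_{r_1,\ldots,r_k\in\mathscr R_y,\ K>Z}\ \sum_{\sigma_j\in\mathcal R(r_j)}\mathbb E_g\prod_{j=1}^k\bigl|\Lambda^{(y)}(g,\sigma_j,r_j)\bigr|^2 .
\]
Here $k=m/2$, and the sum over tuples of frequencies that are exactly paired in several ways is dominated by the sum over matchings.

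\emph{Step 2: factor over primes.} By independence of the $g_p$, the expectation factors over $p\le y$. For $p$ dividing none of the $r_j$, every factor satisfies $|\Lambda_p|\le 1$, so I drop these primes. For $p\mid\operatorname{rad}(r_j)$ I use the identity $\Lambda_p=p^{-2}T_{g_p}(-\sigma_{j,p};p^2)$. Let $J_p=\{j:p\mid r_j\}$ and $n_p=|J_p|$. Hölder's inequality together with \lemref{lem:root-exponential-sum-moment-bounds} (with exponent $2n_p$) bounds the local factor by
\[
\ll_{m,d}\ p^{-4n_p}\prod_{j\in J_p}\bigl(1+p^{2n_p-2}\mathds 1_{v_p(r_j)=1}\bigr)^{1/n_p}\ \ll\ p^{-4n_p}\prod_{j\in J_p} p^{2\,\mathds 1_{v_p(r_j)=1}}.
\]
Indeed, a frequency with $v_p(r_j)=2$ is primitive, so its moment is $O(1)$. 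A frequency with $v_p(r_j)=1$ has the form $a/p$ with $(a,p)=1$, and its $2n_p$th moment is $\ll p^{2n_p-2}$; taking the $1/n_p$ power gives at most $p^{2}$.

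\emph{Step 3: sum.} This bound does not depend on the numerators. Write $r_j=a_jb_j^2$ with $a_j,b_j$ coprime and squarefree. Summing over $\sigma_j\in\mathcal R(r_j)$ costs at most $r_j=a_jb_j^2$. Combining with Step 2, each $r_j$ carries the weight
\[
C^{\omega(r_j)}\,a_j\,b_j^{2}\,a_j^{2}\,(a_jb_j)^{-4}=C^{\omega(r_j)}\,a_j^{-1}b_j^{-2}.
\]
This is not summable over $a_j$, so I refine the estimate for the primes with $v_p(r_j)=1$. For such $p$ there are only $p-1$ choices of $\sigma_{j,p}$, which contributes $p$ rather than $p^2$ to the count of numerators. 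Accounting for the numerator count prime by prime via CRT, the true per-prime cost is at most $p\cdot p^{-4}\cdot p^{2}=p^{-1}$ when $v_p=1$, and $p^2\cdot p^{-4}=p^{-2}$ when $v_p=2$. This gives the same $a_j^{-1}b_j^{-2}$, so the refinement does not help with counting. The real gain has to come from using the moment bound more sharply. When $p\mid r_j$ for a single $j$ (so $n_p=1$), the relevant second moment is $\mathbb E|T|^2\ll 1+p^0=O(1)$, not $p^2$, and the cost at $p$ becomes $p\cdot p^{-4}=p^{-3}$. When $n_p\ge2$, Hölder instead gives $p^{-4n_p+2n_p-2}$ times the numerator counts $\le p^{n_p}$, that is at most $p^{-n_p-2}\le p^{-4}$. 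So every prime $p\mid K$ costs $\ll p^{-2}$ overall.

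Summing over tuples $(r_j)$ with $\operatorname{rad}$-lcm equal to $K$, with at most $K^{\varepsilon}$ patterns of exponents and assignments, yields $\ll x^m\sum_{K>Z}K^{-2+\varepsilon}\ll x^mZ^{-1+\varepsilon}$, uniformly in $y$.

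The main obstacle is the bookkeeping at primes $p$ with $v_p(r_j)=1$. There the frequency is non-primitive and the root-sum moments grow, and one must check that the Hölder loss $p^{2n_p-2}$ is always beaten by the $p^{-4n_p}$ normalization together with the reduced count of numerators. If this fails, I would fall back on the argument of \lemref{lem:large-lcm-tail-local-model}: express the paired part through the $\Delta_g(k_i;x)$ and use the identity $\mathcal L=\mathcal M$.
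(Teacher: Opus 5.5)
\textbf{Gap: your argument bounds a different tail from the one in the lemma.}

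In the statement, $K=[k_1,\ldots,k_m]$ is the lcm of the M\"obius moduli in $E_g^{(y)}(x)=\sum_k\mu(k)\Delta_g(k;x)$, with each $\Delta_g(k_i;x)$ expanded in the \emph{unreduced} frequencies $\ell_i/k_i^2$. This is how $K$ enters $\mathcal M_{m,z}(x)$. It is also what \propref{prop:local-model-exact-pairing-reduction} needs, both to identify the $K\le z$ part with $\mathcal M^{\mathrm{pair}}_{m,z}(x)$ and for the Cauchy argument in $y$, where every term of the difference has some $k_i$ divisible by a prime $p>y_1$.

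Your claim that this $K$ equals $[\operatorname{rad}(r_1),\ldots,\operatorname{rad}(r_m)]$ is false. One only has $\operatorname{rad}(r_i)\mid k_i$, with strict divisibility whenever $p^2\mid\ell_i$ for some $p\mid k_i$; for example $k=pq$, $\ell=p^2\ell'$ has reduced denominator $q^2$. So a single reduced term $\mu(\operatorname{rad} r)\Lambda^{(y)}(g,\sigma,r)$ aggregates $k$-terms with arbitrarily large $k$. The extra primes are hidden in the factors $1-\rho_{g_p}(p^2)/p^2$ with $p\nmid r$, which your Step 2 discards via $|\Lambda_p|\le 1$.

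Consequently you control only the terms with $[\operatorname{rad}(r_i)]>Z$, a proper subset of those with $K>Z$. The $k$-terms with $[\operatorname{rad}(r_i)]\le Z<K$ are untouched. These are exactly the terms that separate $\mathcal M^{\mathrm{pair}}_{m,z}$ from the reduced paired sum, and exactly the ones through which $\Lambda^{(y)}$ of old denominators changes as $y$ grows. Within your own reading the computation does close: Hölder gives $(1+p^{2n_p-2})^{1/n_p}\ll p^{2-2/n_p}$, which is $O(1)$ for $n_p=1$, so with CRT counting every prime costs $\ll p^{-2}$. The non-summable $a_j^{-1}b_j^{-2}$ came only from bounding this factor by $p^2$.

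\textbf{What is missing: a way to handle the unreduced expansion.} There the terms are no longer nonnegative, and the two members of a pair can have different moduli: $\ell_i/k_i^2=-\ell_j/k_j^2$ with $k_i\neq k_j$. The paper proceeds as follows.
\begin{itemize}
\item Impose each pair relation by orthogonality, $\mathds 1_{\sigma_i+\sigma_j=0}=Q_y^{-1}\sum_{h\bmod Q_y}e(h(\sigma_i+\sigma_j))$.
\item Absorb the characters using translation covariance, $\Lambda^{(y)}(\tau_hg,\sigma,r)=e(h\sigma)\Lambda^{(y)}(g,\sigma,r)$. The paired part with $K>Z$ then becomes an average over shifts $h_s$ of $\int\prod_s\Delta_{\tau_{h_s}g}(k_{i_s};x)\Delta_{\tau_{h_s}g}(k_{j_s};x)\,dg$.
\item Bound $|\Delta_{\tau_hg}(k;x)|\le\sum_{n\le x}A_g(k;n+h)$ and apply \lemref{lem:large-lcm-tail-local-model}, which is uniform in the shifted arguments. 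This gives $x^mZ^{-1+\varepsilon}$.
\end{itemize}
This is what your fallback lacks: the paired restriction is not a product of $\Delta$'s until the character average over translations is inserted.

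Alternatively, your frequency-side bookkeeping can be repaired by working with unreduced tuples and absolute values. A prime $p\mid k_i$ with $\sigma_{i,p}=0$ contributes $p^{-2}\rho_{g_p}(p^2)$ with a single numerator class. Then \lemref{lem:root-exponential-sum-moment-bounds} with $\lambda=0$, applying Hölder only to the non-primitive factors, again gives a cost $\ll p^{-2}$ at every prime of $K$.
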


\begin{proof}
For odd $m$ there are no exactly paired tuples. Suppose that $m=2k$
and fix a perfect matching
\[
\mathcal P=\{\{i_1,j_1\},\ldots,\{i_k,j_k\}\}.
\]
Let $\mathcal T_{\mathcal P}(Z,y)$ denote the contribution of the paired
frequency tuples according to $\mathcal P$ and arising from
$(k_1,\ldots,k_m)$ with $K>Z$.
Since all frequencies occurring in $E_g^{(y)}(x)$
have denominators dividing $Q_y$, orthogonality
of additive characters gives
\[
\mathds{1}_{\sigma_{i_s}+\sigma_{j_s}=0}
=
\frac1{Q_y}
\sum_{h_s\bmod Q_y}
e\!\left(h_s(\sigma_{i_s}+\sigma_{j_s})\right).
\]
Using $\Lambda^{(y)}(\tau_hg,\sigma,r)
=
e(h\sigma)\Lambda^{(y)}(g,\sigma,r),$
and applying this identity independently to the $k$ pairs, We obtain
\[
\mathcal T_{\mathcal P}(Z,y)
=
\frac{1}{Q_y^k}
\sum_{h_1,\ldots,h_k\bmod Q_y}
\frac{1}{\zeta(2d+2)}
\sum_{\substack{
k_1,\ldots,k_m\ge1\\
K=[k_1,\ldots,k_m]>Z\\
p\mid K \Rightarrow p\le y
}}
\prod_{i=1}^m\mu(k_i)
\int_{V_\square^\sharp}
\prod_{s=1}^k
\Delta_{\tau_{h_s}g}(k_{i_s};x)
\Delta_{\tau_{h_s}g}(k_{j_s};x)\,dg.
\] For every $h$,
\[
\left|\Delta_{\tau_hg}(k;x)\right|
\le
\sum_{n\le x}A_g(k;n+h),
\]
because
$(\tau_hg)_p(n)=g_p(n+h)$ and translation does not change
$\rho_g(k^2)$. Hence
\[
|\mathcal T_{\mathcal P}(Z,y)|
\le
\frac{1}{Q_y^k}
\sum_{\mathbf h\bmod Q_y}
\sum_{n_1,\ldots,n_m\le x}
\sum_{K>Z}
\mathbb E_g
\prod_{s=1}^k
A_g(k_{i_s};n_{i_s}+h_s)
A_g(k_{j_s};n_{j_s}+h_s).
\]
We have enlarged the $k_i$-sum by dropping the restriction
$p\mid k_i\Rightarrow p\le y$. The summands are now nonnegative, and
\lemref{lem:large-lcm-tail-local-model} applies uniformly to the
shifted residue variables. Thus the inner $K>Z$ sum is
$O_{m,d,\varepsilon}(Z^{-1+\varepsilon})$, uniformly in the shifted vector $\mathbf h$
and $y$. Summing over $n_1,\ldots,n_m\le x$ and averaging over
$\mathbf h$ gives
\[
\mathcal T_{\mathcal P}(Z,y)
\ll_{m,d,\varepsilon}
x^mZ^{-1+\varepsilon}.
\]
Finally, summing over the finitely many perfect matchings proves the
claim.
\end{proof}

We are now ready to extract the exactly paired contribution in the local model.

\begin{proposition}\label{prop:local-model-exact-pairing-reduction}
Fix $m\ge1$. There exists $\delta_m>0$ such that
\begin{equation}
\frac{1}{\zeta(2d+2)}
\int_{V_\square^\sharp}E_g(x)^m\,dg
=
\frac{1}{\zeta(2d+2)}
\lim_{y\to\infty}
\int_{V_\square^\sharp}
\mathcal C_{m,\mathrm{pair}}^{(y)}(g,x)\,dg
+
O_{m,d}\left(
x^{m/4-\delta_m}
\right).
\label{eq:ReducedPairedLocalMoment}
\end{equation}
Consequently, whenever $H\ge x^{C_dm^2}$,
\[
\mathrm{MP}_m(x,H)
=
\frac{1}{\zeta(2d+2)}
\lim_{y\to\infty}
\int_{V_\square^\sharp}
\mathcal C_{m,\mathrm{pair}}^{(y)}(g,x)\,dg
+
o_{m,d}\left(x^{m/4}\right).
\]
\end{proposition}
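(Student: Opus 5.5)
We compare the finite-prime expansion of \propref{prop:finite-prime-local-moment-expansion} with the $K$-truncated expression $\mathcal M_{m,z}(x)=\mathcal L_{m,z}(x)$ of \propref{prop:local-model-moment-representation}. Both sides are built from the same frequencies: a reduced frequency $\sigma_i\in\mathcal R(r_i)$ with $r_i\in\mathscr R_y$ arises from a fraction $\ell_i/k_i^2$ with $k_i$ squarefree. Exact pairing depends only on the value of $\sigma_i$ in $\mathbb Q/\mathbb Z$, as noted before \lemref{lem:paired-large-lcm-tail-local-model}. So the splitting $\mathcal M_{m,z}=\mathcal M^{\mathrm{pair}}_{m,z}+\mathcal M^{\mathrm{nonpair}}_{m,z}$ is compatible with the splitting of $\mathcal C^{(y)}_m$ into paired and nonpaired parts. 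For $m\le 2$ the statement should be essentially trivial: for $m=1$ no zero-sum tuple avoids $\sigma=0$ except none, and for $m=2$ the zero-sum relation forces $\sigma_2=-\sigma_1$. Hence we may assume $m\ge3$, where \propref{prop:nonpaired-frequency-power-saving} is available.

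The plan is as follows. First, fix $z=x^{\beta}$ with $\beta$ large, say $\beta=m$. By \propref{prop:local-model-moment-representation},
\[
\frac1{\zeta(2d+2)}\int E_g(x)^m\,dg=\mathcal M_{m,z}(x)+O(x^{m-\beta+\varepsilon}),
\]
and \propref{prop:nonpaired-frequency-power-saving} gives $\mathcal M_{m,z}(x)=\mathcal M^{\mathrm{pair}}_{m,z}(x)+O(x^{m/4-\delta})$.

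Second, we identify $\mathcal M^{\mathrm{pair}}_{m,z}(x)$ with the $K\le z$ part of $\zeta(2d+2)^{-1}\int\mathcal C^{(y)}_{m,\mathrm{pair}}\,dg$ for every $y\ge z$. For fixed $y$, expand $E_g^{(y)}(x)^m$ via \eqref{E_gP_y} as a finite sum over tuples $\mathbf k$ supported on primes $\le y$, and Fourier expand each $\Delta_g(k_i;x)$ exactly as in \lemref{lem:harmonic-expansion}. The terms with $K\le z$ are then literally those of $\mathcal M_{m,z}(x)$, with the same frequency tuples. Grouping the frequencies by reduced denominator recovers $\mathcal C^{(y)}_m$ after integration, and the paired restriction matches on both sides. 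The remaining terms, with $K>z$, restricted to paired tuples are $\mathcal T^{\mathrm{pair}}_{m,y}(x;z)$. By \lemref{lem:paired-large-lcm-tail-local-model} these are $O(x^mz^{-1+\varepsilon})$ uniformly in $y$.

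Hence, uniformly in $y\ge z$,
\[
\frac1{\zeta(2d+2)}\int\mathcal C^{(y)}_{m,\mathrm{pair}}\,dg=\mathcal M^{\mathrm{pair}}_{m,z}(x)+O(x^{m-\beta+\varepsilon}).
\]
This shows that the $y$-limit exists, since the left side is a Cauchy family up to an error that tends to $0$ as $z\to\infty$. Letting $z\to\infty$ along with $y$ proves existence, and fixing $z=x^m$ yields \eqref{eq:ReducedPairedLocalMoment}. The consequence for $\mathrm{MP}_m$ then follows from the last assertion of \propref{prop:local-model-moment-representation}.

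The main obstacle is the bookkeeping in the second step. The paired sum over reduced frequencies must be shown to decompose by tuples $\mathbf k$ in the same way as the unpaired one, even though the pairing condition is imposed on the sum $\sum_i$ of Fourier terms coming from many $\mathbf k$. We would handle this by performing the pairing restriction at the level of individual frequency tuples $(\ell_i/k_i^2)$, before reduction. The expansion of $\prod_i\Delta_g(k_i;x)$ is a sum over such tuples, and the map to reduced frequencies preserves values. Collecting the coefficient of a fixed reduced tuple over all $\mathbf k$ then reproduces $\Lambda^{(y)}$, by the inclusion–exclusion underlying \lemref{lem:finite-prime-local-model-convergence}. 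Concretely, summing $\mu(k)\,k^{-2}R(-\ell,k^2)$ over the $k$ sharing a given reduced frequency gives $\mu(\operatorname{rad} r)\Lambda^{(y)}$.
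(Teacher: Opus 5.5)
Your proposal is correct and follows essentially the same route as the paper: truncate at $K\le z=x^{\beta}$ via \propref{prop:local-model-moment-representation}, remove the non-paired tuples with \propref{prop:nonpaired-frequency-power-saving} (treating $m\le 2$ directly), identify the $K\le z$ paired part of the finite-prime expansion with $\mathcal M^{\mathrm{pair}}_{m,z}(x)$ for $y\ge z$, and use \lemref{lem:paired-large-lcm-tail-local-model} both for the $K>z$ paired tail and for the existence of the $y$-limit. The differences are minor. You take $\beta=m$ rather than the paper's $\beta=(3m+1)/4$. You get existence of the limit by uniformly approximating by $\mathcal M^{\mathrm{pair}}_{m,z}(x)$ with $z$ free, whereas the paper notes that the difference between cutoffs $y_1<y_2$ only involves terms with $K>y_1$. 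Your explicit check that summing $\mu(k)k^{-2}R_g(-\ell,k^2)$ over the $k$ sharing a reduced frequency gives $\mu(\operatorname{rad}(r))\Lambda^{(y)}(g,\sigma,r)$ fills in a step the paper states only briefly.
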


\begin{proof}
Put $z:=x^{\frac{3m+1}{4}}.$
We first use the original frequency representation to show
that frequency tuples which are not exactly paired contribute negligibly on the scale $x^{m/4}$.
By \propref{prop:local-model-moment-representation},
\[
\frac{1}{\zeta(2d+2)}
\int_{V_\square^\sharp}E_g(x)^m\,dg
=
\mathcal M_{m,z}(x)
+
O_{m,d,\varepsilon}
\left(
x^mz^{-1+\varepsilon}
\right).
\]
If $m\ge3$, then
\propref{prop:nonpaired-frequency-power-saving}, applied with
$\beta=(3m+1)/4$, gives
\[
\mathcal M_{m,z}(x)
=
\mathcal M_{m,z}^{\mathrm{pair}}(x)
+
O_{m,d}
\left(
x^{m/4-\delta_m^{(1)}}
\right)
\]
for some $\delta_m^{(1)}>0$. For $m=1$, there is no nonzero frequency
satisfying the zero-sum relation, while for $m=2$ every surviving
zero-sum pair consists of opposite frequencies. Thus the same
conclusion holds in these two cases, with
$\mathcal M_{m,z}^{\mathrm{pair}}(x)=0$ when $m=1$.
Since
\[
x^mz^{-1+\varepsilon}
=
x^{m/4-1/4+\frac{3m+1}{4}\varepsilon},
\]
we obtain, for a suitable $\delta_m^{(2)}>0$,
\begin{equation}
\frac{1}{\zeta(2d+2)}
\int_{V_\square^\sharp}E_g(x)^m\,dg
=
\mathcal M_{m,z}^{\mathrm{pair}}(x)
+
O_{m,d}
\left(
x^{m/4-\delta_m^{(2)}}
\right).
\label{eq:local-via-original-paired}
\end{equation}
We next show that $\mathcal M_{m,z}^{\mathrm{pair}}(x)$
also arises as the contribution of exactly paired frequencies
with $K\le z$ in the finite-prime Fourier expansion. Before reducing frequencies, the finite-prime M\"obius
expansion of $E_g^{(y)}(x)$ is
\[
E_g^{(y)}(x)
=
\sum_{\substack{k\ge1\\p\mid k\Rightarrow p\le y}}
\mu(k)\Delta_g(k;x).
\]
Since $x$, and hence $z$, is fixed when taking $y\to\infty$,
we may restrict to $y\ge z$. In this range, every tuple
with $K\le z$ occurs in the finite-prime expansion. For
this range, the identity $\mathcal L_{m,z}(x)=\mathcal M_{m,z}(x)$
from
\propref{prop:local-model-moment-representation} identifies the
exactly paired contribution with
$\mathcal M_{m,z}^{\mathrm{pair}}(x)$. Reducing the individual rational
frequencies does not alter this contribution, because exact pairing
depends only on their values in $\mathbb Q/\mathbb Z$.

The remaining exactly paired terms have $K>z$. By
\lemref{lem:paired-large-lcm-tail-local-model}, their total contribution
is
\[
O_{m,d,\varepsilon}
\left(
x^mz^{-1+\varepsilon}
\right),
\]
uniformly in $y\ge z$. Moreover, the same lemma shows that the paired finite-prime
contributions form a Cauchy sequence as \(y\to\infty\): if \(y_2>y_1\),
then, before reducing the frequencies, every term in their difference
contains at least one modulus \(k_i\) divisible by a prime \(p>y_1\),
and hence has \(K>y_1\). Therefore the limit
\[
\lim_{y\to\infty}
\int_{V_\square^\sharp}
\mathcal C_{m,\mathrm{pair}}^{(y)}(g,x)\,dg
\]
exists, and
\begin{equation}
\frac{1}{\zeta(2d+2)}
\lim_{y\to\infty}
\int_{V_\square^\sharp}
\mathcal C_{m,\mathrm{pair}}^{(y)}(g,x)\,dg
=
\mathcal M_{m,z}^{\mathrm{pair}}(x)
+
O_{m,d,\varepsilon}
\left(
x^mz^{-1+\varepsilon}
\right).
\label{eq:reduced-paired-via-original-paired}
\end{equation}
Combining
\eqref{eq:local-via-original-paired} and
\eqref{eq:reduced-paired-via-original-paired}, and taking
$\varepsilon>0$ sufficiently small, proves
\eqref{eq:ReducedPairedLocalMoment}. The final assertion follows from
the last assertion of
\propref{prop:local-model-moment-representation}.
\end{proof}

\section{The paired main term and the limiting variance}
\label{sec:paired-main-term-limiting-variance}

By \propref{prop:local-model-exact-pairing-reduction}, the non-paired
frequencies have already been removed with a negligible error.
The purpose of this section is therefore to evaluate the remaining
exactly paired contribution. The outcome is the following fixed-moment
asymptotic. Let $C\colon V_\square\to[0,\infty)$ be the function constructed in
\secref{sec:limiting-variance-construction}.

\begin{proposition}\label{prop:fixed-moment-asymptotics}
Fix $m\ge1$. There exists a constant $C_d>0$ such that, whenever
$H\ge x^{C_dm^2}$,
\[
\mathrm{MP}_m(x,H)
=
\begin{cases}
\displaystyle
\mu_m x^{m/4}
\int_{V_\square} C(g)^{m/2}\,dg
+
o\left(x^{m/4}\right),
& m\ \mathrm{even},\\[2mm]
\displaystyle
o\left(x^{m/4}\right),
& m\ \mathrm{odd}.
\end{cases}
\]
\end{proposition}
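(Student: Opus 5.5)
By \propref{prop:local-model-exact-pairing-reduction}, it suffices to evaluate
\[
\frac{1}{\zeta(2d+2)}\lim_{y\to\infty}\int_{V_\square^\sharp}\mathcal C_{m,\mathrm{pair}}^{(y)}(g,x)\,dg
\]
up to $o(x^{m/4})$. For odd $m$ this quantity is zero by definition, so the odd case is immediate. For $m=2k$ we expand the exactly paired restriction over perfect matchings $\mathcal P$ of $[2k]$, using inclusion--exclusion so that tuples admitting several matchings are counted once. The correction terms come from tuples admitting two distinct matchings. Such tuples force a coincidence $\sigma_i=\pm\sigma_j$ across different pairs, i.e.\ a repeated pair-denominator. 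We bound this contribution with the majorant $F_x$ and the estimate $\sum_{r}\sum_{\sigma\in\mathcal R(r)}F_x(\sigma)^4/r^{4}\ll x^{o(1)}\cdot x$-type bounds. The point is that a repeated pair costs a factor $x^{1/2}$ relative to the generic paired term, in the same way as GMR's treatment of repeated tuples. This gives $O(x^{k/2-\delta})$.

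For a fixed matching, we use $\Lambda(g,-\sigma,r)=\overline{\Lambda(g,\sigma,r)}$ and $E_x(-\sigma)=\overline{E_x(\sigma)}$. Each pair then contributes $|E_x(\sigma)|^2|\Lambda^{(y)}(g,\sigma,r)|^2$. Summing over the pair variables independently (the distinctness constraint was handled above) gives $S^{(y)}(g,x)^k$. The sum of $S^{(y)}$ includes $\mu^2(\operatorname{rad} r)=1$, since the signs cancel in pairs. Letting $y\to\infty$, the monotone/$L^q$ convergence $S^{(y)}\to S(g,x)$, with the tail controlled by \lemref{lem:paired-large-lcm-tail-local-model}, yields
\[
\mathrm{MP}_{2k}(x,H)=\frac{(2k-1)!!}{\zeta(2d+2)}\int_{V_\square^\sharp}S(g,x)^k\,dg+o(x^{k/2}).
\]

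It remains to show $\int S(g,x)^k\,dg=x^{k/2}\int C^k+o(x^{k/2})$, where $C$ is extended to $V_\square$ with the measure normalization $\zeta(2d+2)^{-1}$ matching Haar mass. This is the step I expect to be the main obstacle. I would follow the martingale route outlined in the introduction. First, I rewrite $S$ via \lemref{lem:one-pair-sum-decomposition} as $2\sum_\lambda\lambda^{-2}\sum_{(r,\lambda)=1}c_{g,\lambda}(r)f(r/(x\lambda))$. Next, I prove $Y^{-1/2}A_{g,\lambda}(Y)\to D_g(\lambda)$ in every $L^q$. For each fixed $z$, conditioning on $\mathcal F_z$ reduces $A^{(z)}_{g,\lambda}$ to multiplicative sums in progressions modulo $\prod_{p\le z}p^2$, whose $Y^{1/2}$ asymptotics follow from the Dirichlet series having a simple pole at $s=1/2$. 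The hard estimate is uniform in $Y$: it requires $\sup_Y\|Y^{-1/2}(A-A^{(z)})\|_{L^q}\to0$ as $z\to\infty$. For this I would bound $q$-th moments of $c_{g,\lambda}$ prime by prime using \lemref{lem:root-exponential-sum-moment-bounds}, and then apply Doob's $L^q$ inequality to the martingale in $z$.

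Finally, partial summation against the smooth weight $f$, with dominated convergence in $\lambda$ justified by $L^q$ bounds of the form $\|Y^{-1/2}A_{g,\lambda}(Y)\|_q\ll\lambda^{\epsilon}$, gives $x^{-1/2}S(g,x)\to C(g)$ in $L^k$. Hence $\int (x^{-1/2}S)^k\to\int C^k$, which completes the argument.
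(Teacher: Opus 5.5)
Your overall route is the paper's: reduce to exact pairings via \propref{prop:local-model-exact-pairing-reduction}, factor each matching into copies of $S(g,x)$, build $D_g(\lambda)$ and $C$ through the martingale in $z$, and transfer by partial summation. Your inclusion--exclusion over matchings is a valid, slightly more economical substitute for the paper's restriction to distinct pair-denominators. All exactly paired terms are nonnegative, so the overcount is carried only by tuples with a coincidence $\sigma_i=\pm\sigma_{i'}$ between two pairs. Their weight is $R(g,x):=\sum_r\sum_{\sigma\in\mathcal R(r)}|E_x(\sigma)|^4|\Lambda(g,\sigma,r)|^4\le\sum_rA_r(g,x)^2$. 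However, the two quantitative steps you rely on would not go through as described.

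\textbf{The uniform tail estimate.} Doob's $L^q$ inequality only gives $\|\sup_z|A^{(z)}_{g,\lambda}(Y)|\|_{L^q}\ll\|A_{g,\lambda}(Y)\|_{L^q}$. That is boundedness, not a rate for $A_{g,\lambda}(Y)-A^{(z)}_{g,\lambda}(Y)$ uniform in $Y$. You need to estimate the martingale differences $\Delta_p(Y)$ produced by revealing $g_p$, uniformly in $Y$ and $\lambda$. In the paper this rests on three inputs. The first is the a priori bound $\mathbb E A_{g,\lambda}(Y)^q\ll Y^{q/2}$ (\lemref{lem:partial-sum-moment-bound}); this needs a multivariable Dirichlet-series argument over the $q$-fold expansion, not just moments of single coefficients. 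The second is its prime-omitted refinements, which gain $p^{-1/2}$ on the part with $p\mid a$ and $p^{-1}$ on the part with $p\mid b$ (\lemref{lem:prime-omitted-partial-sum-moment-bound}). The third is the centered local moments \eqref{eq:CenteredExpe}. Together they give $\|\Delta_p(Y)\|_{L^{2q}}\ll p^{-1/2-1/q}+p^{-1}$. The $p\mid b$ part is of order $p^{-1}$, and $\sum_p p^{-1}$ diverges, so no triangle-inequality or maximal-inequality argument can sum these increments. You need orthogonality: Burkholder's square-function inequality as in \lemref{lem:prime-tail-uniform-approximation}, or $L^2$-orthogonality of the increments followed by interpolation with the uniform $L^{2q}$ bounds.

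\textbf{The coincident-pair bound.} Your estimate $\sum_r\sum_{\sigma\in\mathcal R(r)}F_x(\sigma)^4r^{-4}$ uses $|\Lambda|\le r^{-1}$. This is the true pointwise size, since $X_{p,1}\ge\frac14$ on $\mathcal E_p$ by \lemref{lem:app-singular-one-root}. But $\#\{r\in\mathscr R:r\le x\}\asymp x$, not $\asymp x^{1/2}$ as for GMR's square-block denominators. So this sum is of order $x$, exactly the size of two generic pairs, and there is no saving.

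Moreover, the coincident block shares $g$ with the other $k-2$ pair factors. $S(g,x)$ is $O(x^{1/2})$ only in $L^k$, not uniformly in $g$, because $C$ is unbounded. So the block cannot be estimated in isolation.

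What works is the averaged bound $\|\Lambda(g,\sigma,r)\|_{L^{2q}}\ll r^{\varepsilon}/(ra^{1/q})$ of \lemref{lem:diagonal-block-norm-bounds}, which comes from $\|X_{p,1}\|_{L^q}\ll p^{-2/q}$. Follow it with Hölder: $\int R\,S^{k-2}\,dg\le\|R\|_{L^{k/2}}\|S\|_{L^k}^{k-2}\ll x^{\max(1/2,\,1-4/k)+\varepsilon}\,x^{(k-2)/2}$, using $\|S\|_{L^k}\ll x^{1/2}$ from the uniform moment bounds. This is a power saving, but only $x^{-\min(1/2,4/k)+\varepsilon}$, not the GMR-type $x^{-1/2}$ you assert.

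Two smaller points:
\begin{itemize}
\item When $\nu_p(r)=2$ the $p$-local factor depends on $r\bmod p^4$, so you need progressions modulo $\prod_{p\le z}p^4$, not $\prod_{p\le z}p^2$.
\item $S^{(y)}$ is not monotone in $y$: new denominators enter, but existing terms get multiplied by factors $X_{p,0}\le1$. The passage $y\to\infty$ therefore needs dominated convergence rather than monotone convergence.
\end{itemize}
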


The proof proceeds in three steps, which we briefly indicate. We first show that \emph{each} exact pair
contributes the \emph{same} non-negative one-pair sum $S(g,x)$. We then
determine the limiting behavior of $x^{-1/2}S(g,x)$ and thereby
construct $C(g)$; this is by far the most technical and interesting step. Finally, we show that the contribution of tuples of exact pairs where two pairs have the same denominators, is negligible.

The next subsection elaborates on these steps, and gives a complete proof of \propref{prop:fixed-moment-asymptotics} up to two results which occupy the rest of the section.

\subsection{Exact frequency pairings and the one-pair contribution}
\label{sec:exact-frequency-pairings}

By \propref{prop:local-model-exact-pairing-reduction}, it remains to
analyze the limiting exactly paired part
\[
\lim_{y\to\infty}
\int_{V_\square^\sharp}
\mathcal C_{m,\mathrm{pair}}^{(y)}(g,x)\,dg
\]
of the reduced finite-prime expansion. For odd $m$ there are no exactly paired tuples, so in this subsection
we assume that $m=2k$. Fix a perfect matching $\mathcal P
=
\bigl\{
\{i_1,j_1\},\ldots,\{i_k,j_k\}
\bigr\}$
of $\{1,\ldots,2k\}$. A tuple paired according to $\mathcal P$
satisfies
\[
\sigma_{j_s}=-\sigma_{i_s}
\qquad (1\le s\le k).
\]
Since the frequencies are written in lowest terms, opposite
frequencies have the same reduced denominator. After relabeling the
indices, we may therefore write $r_{k+j}=r_j$ and $\sigma_{k+j}=-\sigma_j$ for all $1\le j\le k$.

We first restrict to those terms of
$\mathcal C_{2k,\mathrm{pair}}^{(y)}(g,x)$ for which the pair-denominators $r_1,\ldots,r_k$ are distinct. Substituting these relations into the finite-prime expansion
of \propref{prop:finite-prime-local-moment-expansion},
the contribution corresponding to the fixed matching $\mathcal P$ is
\[
\int_{V_\square^\sharp}
\sum_{\substack{
r_1,\ldots,r_k\in\mathscr R_y\\
r_1,\ldots,r_k\ \mathrm{distinct}
}}
\prod_{j=1}^k
\left[
\mu^2(\operatorname{rad}(r_j))
\sum_{\sigma\in\mathcal R(r_j)}
E_x(\sigma)E_x(-\sigma)
\Lambda^{(y)}(g,\sigma,r_j)
\Lambda^{(y)}(g,-\sigma,r_j)
\right]
\,dg.
\]
Thus, for a fixed matching, the exactly paired contribution separates
into one factor for each of the $k$ pairs.

For every reduced frequency $\sigma$,
\[
E_x(-\sigma)=\overline{E_x(\sigma)},
\qquad
\Lambda^{(y)}(g,-\sigma,r)
=
\overline{\Lambda^{(y)}(g,\sigma,r)}.
\]
Hence each one-pair factor is non-negative:
\[
E_x(\sigma)E_x(-\sigma)
\Lambda^{(y)}(g,\sigma,r)
\Lambda^{(y)}(g,-\sigma,r)
=
|E_x(\sigma)|^2
|\Lambda^{(y)}(g,\sigma,r)|^2.
\]
For $r=ab^2\in\mathscr R$, we therefore define the limiting
contribution associated with one exact pair of reduced denominator
$r$ by
\begin{equation}
A_r(g,x)
:=
\mu^2(ab)
\sum_{\sigma\in\mathcal R(r)}
|E_x(\sigma)|^2
|\Lambda(g,\sigma,r)|^2.
\label{eq:Ar-one-pair}
\end{equation}
Here, for each fixed $r$ and $\sigma$,
\[
\Lambda^{(y)}(g,\sigma,r)
\longrightarrow
\Lambda(g,\sigma,r)
\qquad (y\to\infty),
\]
with $\Lambda(g,\sigma,r)$ as defined in
\secref{sec:finite-prime-fourier-expansion}.
Summing over all possible reduced denominators gives the total
contribution of one exact pair:
\begin{equation}
S(g,x)
:=
\sum_{\substack{r\in\mathscr R\\ r>1}}
A_r(g,x).
\label{eq:S-one-pair-sum}
\end{equation}
Consequently, after letting $y\to\infty$, the part associated with the
fixed matching $\mathcal P$ and with distinct pair-denominators is
\begin{equation}
\int_{V_\square^\sharp}
\sum_{\substack{
r_1,\ldots,r_k\in\mathscr R\\
r_1,\ldots,r_k>1\ \mathrm{distinct}
}}
\prod_{j=1}^k A_{r_j}(g,x)\,dg.
\label{eq:fixed-pairing-distinct-denominators}
\end{equation}
On the other hand,
\[
S(g,x)^k
=
\sum_{1<r_1,\ldots,r_k\in\mathscr R}
\prod_{j=1}^k A_{r_j}(g,x).
\]
Thus the only difference between
\eqref{eq:fixed-pairing-distinct-denominators} and
\[
\int_{V_\square^\sharp}S(g,x)^k\,dg
\]
is the contribution of tuples for which at least two pair
denominators coincide. In the short \S\ref{sec:completion-paired-contribution} we show that this contribution is negligible, through the following proposition.
\begin{proposition}\label{prop:repeated-denominators-negligible}
For every fixed $k\ge2$,
\[
\int_{V_{\square}^{\sharp}}\Bigg(S\left(g,x\right)^{k}-\sum_{\substack{r_{1},...,r_{k}\in\mathscr{R}\\
r_{1},...,r_{k}>1\ \mathrm{distinct}
}
}\prod_{i=1}^{k}A_{r_{i}}\left(g,x\right)\Bigg)dg=o\left(x^{\frac{k}{2}}\right).\]
\end{proposition}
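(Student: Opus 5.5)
The difference $S(g,x)^k-\sum_{\mathrm{distinct}}\prod_i A_{r_i}(g,x)$ is a nonnegative sum, since every $A_r(g,x)\ge0$. It runs over $k$-tuples of denominators in which at least one coincidence $r_i=r_j$ with $i\neq j$ occurs. By a union bound over the $\binom{k}{2}$ choices of a coinciding pair, it suffices to show
\[
\int_{V_\square^\sharp}\sum_{r\in\mathscr R,\ r>1}A_r(g,x)^2\,S(g,x)^{k-2}\,dg=o\left(x^{k/2}\right).
\]
Hölder's inequality with exponents $(k-1)/1$ and $(k-1)/(k-2)$, applied to the functions $\sum_rA_r^2$ and $S^{k-2}$, reduces this to two bounds:
\[
\int S(g,x)^{k-1}\,dg\ll_k x^{(k-1)/2}
\qquad\text{and}\qquad
\int\Big(\sum_rA_r(g,x)^2\Big)^{k-1}dg=o\left(x^{k-1}\right).
\]

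The first bound is a crude moment bound for $S$. Expand $S^{k-1}$ as a sum over $(k-1)$-tuples of denominators $(r_1,\dots,r_{k-1})$. Bound $\mathbb E_g\prod_j|\Lambda(g,\sigma_j,r_j)|^2$ prime by prime using independence. Since $\Lambda_p=p^{-2}T_{g_p}(-\sigma_p;p^2)$, the factor at a prime $p$ is at most $p^{-2u}\,\mathbb E|T_{g_p}|^{2u}$, where $u$ is the number of $r_j$ divisible by $p$. By \lemref{lem:root-exponential-sum-moment-bounds}, this is $\ll p^{-2u}(1+p^{2u-2})\ll p^{-2}$ for each $p\mid[r_1,\ldots,r_{k-1}]$. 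The primes $p\nmid r_j$ contribute factors at most $1$. Next apply $\sum_{\sigma\in\mathcal R(r)}|E_x(\sigma)|^2\le\sum_{a\bmod r}F_x(a/r)^2\ll rx$ (from \cite[Lemma 2.2]{key-14}) together with the trivial bound $\ll r^2$ when $r\le x$. Summed over $r\in\mathscr R$, this gives the one-pair bound $\sum_r\min(r^2,rx)\,r^{-2}\ll x^{1/2}$. This is the same estimate as for $D_x$ in \lemref{lem:good-frequency-power-saving}, because $\#\{r\in\mathscr R:r\le X\}\asymp X^{1/2}$. When the $r_j$ share primes, the prime-by-prime bound gives $p^{-2}$ at each prime of the lcm rather than a product $\prod_j p^{-2}$. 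The cost of these overlaps is controlled as in the GMR-type lcm counting, at the price of $x^{o(1)}$ factors at worst. I expect the uniform (not $x^{o(1)}$-lossy) bound $\int S^{k-1}\ll x^{(k-1)/2}$ to be the main obstacle. If it fails, the Hölder split can be rebalanced so that an $x^{\varepsilon}$ loss is absorbed by a power saving in the second bound.

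For the second bound I aim for a power saving. The key observation is that a single $r$ carries only $x^{1/2}$-scale mass spread over many $r$, so no individual $A_r$ is large. Pointwise, $A_r(g,x)\le\sum_{\sigma\in\mathcal R(r)}F_x(\sigma)^2\prod_{p\mid r}p^{-4}|T_{g_p}|^2$. Expanding $\big(\sum_rA_r^2\big)^{k-1}$ gives $2(k-1)$ copies of each chosen $r_j$, and the same prime-by-prime estimate of the $T$-moments yields the weight $\prod_{p\mid[r_j]}p^{-2}\cdot p^{O(1)}$ only through the bound $1+p^{n-2}$. For a single $r$, the term $\mathbb E A_r^2$ is bounded by $(\min(r^2,rx))^2\,r^{-4}\cdot r^{\varepsilon}$, roughly. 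Summing over $r\in\mathscr R$ gives $\sum_{r\le x}r^{\varepsilon}+x^2\sum_{r>x}r^{-2+\varepsilon}\ll x^{1/2+\varepsilon}$, which is much smaller than the square $x$ of the one-pair scale. The same structure persists for the $(k-1)$-fold product, giving $\ll x^{(k-1)/2+\varepsilon}=o(x^{k-1})$. With both bounds in hand, Hölder gives $o(x^{k/2})$, which proves \propref{prop:repeated-denominators-negligible}.
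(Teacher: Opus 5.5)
Your reduction is the paper's: the difference is at most $\binom{k}{2}R_2(g,x)S(g,x)^{k-2}$ with $R_2=\sum_r A_r^2$, followed by H\"older. Your exponents $k-1,\frac{k-1}{k-2}$ versus the paper's $\frac k2,\frac{k}{k-2}$ make no real difference. Where you differ is in how the two factors are bounded.

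First, the moment bound for $S$ that you flag as the main obstacle is already available without loss. By \lemref{lem:one-pair-sum-decomposition}, $x^{-1/2}S(g,x)=2\sum_{\lambda\ge1}\lambda^{-3/2}\mathcal T_{\lambda,x\lambda}(f)$, and \lemref{lem:weighted-coefficient-sum-limit} bounds $\|\mathcal T_{\lambda,T}(f)\|_{L^q}$ uniformly in $\lambda,T$. Hence $\|S(g,x)\|_{L^q}\ll_{d,q}x^{1/2}$; the paper simply quotes this.

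Second, for $R_2$ the paper never forms joint moments across different denominators. Minkowski gives $\|R_2\|_{L^{k/2}}\le\sum_r\|A_r\|_{L^k}^2$, and the one-denominator bound $\|A_r\|_{L^q}\ll\min(r,x)r^{-1+\varepsilon}a^{-2/q}$ of \lemref{lem:diagonal-block-norm-bounds} gives $x^{\max(1/2,1-4/k)+\varepsilon}$.

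Your full expansion of $\mathbb E R_2^{k-1}$ is a genuinely different and sharper route. Carried out, it gives $\mathbb E\prod_j A_{r_j}^2\ll C^{\omega(L)}\prod_j\min(r_j,x)^2r_j^{-2}\cdot[a_1,\ldots,a_{k-1}]^{-2}$, where $L=[r_1,\ldots,r_{k-1}]$. The $a$-sum then converges exactly as in \lemref{lem:partial-sum-moment-bound}, so $\mathbb E R_2^{k-1}\ll x^{(k-1)/2}(\log x)^{O(1)}$. The final saving is $x^{1/2-o(1)}$ for every $k$, versus the paper's $x^{\min(1/2,4/k)-\varepsilon}$. However, your text checks only a single $r$ and asserts that ``the same structure persists''; that overlap bookkeeping is the part you would still have to write out. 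Minkowski makes it unnecessary. In your own H\"older split it gives $\|R_2\|_{L^{k-1}}\le\sum_r\|A_r\|_{L^{2k-2}}^2\ll x^{\max(1/2,\,1-2/(k-1))+\varepsilon}=o(x)$, which already suffices.

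One correction to the local computation in your sketch. Since $|\Lambda_p|^2=p^{-4}|T_{g_p}|^2$, the factor at $p$ is $p^{-4u}\,\mathbb E\prod_j|T_{g_p}(-\sigma_{j,p};p^2)|^2$. Put $u_1:=\#\{j:p\,\|\,r_j\}$. This factor is $\ll p^{-4u}p^{2u_1-2}$ when $u_1\ge1$, and $\ll p^{-4u}$ when $u_1=0$. Relative to $\prod_j r_j^{-2}$, this is a gain of $p^{-2}$ at primes dividing some $a_j$ and an $O(1)$ factor at all other primes.

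Your statement ``$\ll p^{-2}$ at each prime of the lcm'', read literally, would give only $\operatorname{rad}(r)^{-2}=(ab)^{-2}$ for a single $r=ab^2$. The sum over $b$ would then diverge. With the correct factors, the single-$r$ bound $\mathbb E A_r\ll C^{\omega(r)}\min(r,x)r^{-1}a^{-2}$ that you actually use is right.
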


From \propref{prop:repeated-denominators-negligible} it follows that, for each fixed perfect matching,
\begin{equation}
\eqref{eq:fixed-pairing-distinct-denominators}
=
\int_{V_\square^\sharp}S(g,x)^k\,dg
+
o\left(x^{k/2}\right).
\label{eq:fixed-pairing-via-S}
\end{equation}
The remaining task is therefore to determine the asymptotic behavior
of the single quantity $S(g,x)$. In
\secref{sec:limiting-variance-construction}, which is the main novel probabilistic part of the paper, we construct a
non-negative function
\[
C\colon V_\square^\sharp\longrightarrow[0,\infty)
\]
and prove, for every fixed $k\ge1$,
\begin{equation}
x^{-1/2}S(g,x)
\longrightarrow
C(g)
\qquad\text{in }L^k.
\label{eq:S-to-C}
\end{equation}
In particular,
\begin{equation}
\int_{V_\square^\sharp}S(g,x)^k\,dg
=
x^{k/2}
\int_{V_\square^\sharp}C(g)^k\,dg
+
o\left(x^{k/2}\right).
\label{eq:S-moment-asymptotic}
\end{equation}
There are $\mu_{2k}=(2k-1)!!$
perfect matchings $\mathcal{P}$ of the $2k$ frequency indices. Whenever the pair
denominators are distinct, the frequency tuple determines its matching
uniquely. Therefore
\eqref{eq:fixed-pairing-via-S} and
\eqref{eq:S-moment-asymptotic}, together with \propref{prop:repeated-denominators-negligible}, will give the complete exactly paired
contribution
\[
\mu_{2k}x^{k/2}
\int_{V_\square^\sharp}C(g)^k\,dg
+
o\left(x^{k/2}\right).
\]
Finally, extend $C$ to $V_\square$ by setting
\begin{equation}\label{eq:Cextend}
C(g):=0
\qquad
g\in V_\square\setminus V_\square^\sharp.
\end{equation}
Since $dg$ on $V_\square^\sharp$ is the normalized product probability
measure and $V_\square^\sharp$ has Haar mass
$\zeta(2d+2)^{-1}$ in $V_\square$,
\begin{equation}
\frac{1}{\zeta(2d+2)}
\int_{V_\square^\sharp}C(g)^k\,dg
=
\int_{V_\square}C(g)^k\,dg.
\label{eq:C-sharp-to-Haar}
\end{equation}
Thus \propref{prop:fixed-moment-asymptotics} will follow from
\propref{prop:local-model-exact-pairing-reduction} once
\eqref{eq:S-to-C} and \propref{prop:repeated-denominators-negligible} have been
established. This is the content of the rest of the section.

\subsection{Asymptotics of the exact-pair contribution and construction of the limiting variance}
\label{sec:limiting-variance-construction}

It remains to determine the asymptotic behavior of $S(g,x)$, which is the contribution of opposite reduced-frequency pairs over all reduced denominators. 
Let
\[
V(t):=\frac{\sin(\pi t)}{\pi t},
\qquad
f(t):=\frac{\left|V(1/t)\right|^2}{t^2}.
\]
For $\lambda\geq1$ and $r=ab^2\in\mathscr R$ with $(\lambda,r)=1$,
define
\[
c_{g,\lambda}(r)
:=
\mu^2(ab)r^2
\left|\Lambda\left(g,\frac{\lambda}{r},r\right)\right|^2,
\]
and put
\begin{equation}
A_{g,\lambda}(Y)
:=
\sum_{\substack{r\leq Y\\
r\in\mathscr R\\
(\lambda,r)=1}}
c_{g,\lambda}(r).
\label{eq:A_g(Y)}    
\end{equation}

The first step is to express $S(g,x)$ in terms of these coefficients.
The following identity is the analogue of the representation used in
the proof of \cite[Lemma 3.4]{key-14}; it allows us to reduce the study of $S(g,x)$ to that of partial sums of $c_{g,\lambda}$.

\begin{lemma}\label{lem:one-pair-sum-decomposition}
With the above notation,
\[
S(g,x)
=
2\sum_{\lambda\geq1}\lambda^{-2}
\sum_{\substack{r\in\mathscr R\\
(\lambda,r)=1}}
c_{g,\lambda}(r)
f\left(\frac{r}{x\lambda}\right).
\]
\end{lemma}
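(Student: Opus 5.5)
The plan is to expand $|E_x(\sigma)|^2$ via the partial-fraction expansion of $\csc^2$, and then to reindex the resulting double sum over pairs $(\sigma,n)$ by the single numerator $j=a+nr$. Throughout, $x$ is a positive integer, as in the definition of $E_x$.

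First, for $\sigma\notin\mathbb Z$ the geometric sum gives
\[
|E_x(\sigma)|^2=\frac{\sin^2(\pi x\sigma)}{\sin^2(\pi\sigma)}.
\]
The classical identity $\pi^2/\sin^2(\pi\sigma)=\sum_{n\in\mathbb Z}(\sigma+n)^{-2}$ then yields
\[
|E_x(\sigma)|^2=\frac{1}{\pi^2}\sum_{n\in\mathbb Z}\frac{\sin^2(\pi x(\sigma+n))}{(\sigma+n)^2},
\]
because $\sin^2(\pi x\sigma)$ is $1$-periodic in $\sigma$ when $x\in\mathbb Z$.

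Second, fix $r\in\mathscr R$ with $r>1$, and write $\sigma=a/r$ with $a$ running over the residues modulo $r$ coprime to $r$. The weight $|\Lambda(g,\sigma,r)|^2$ depends only on $\sigma\in\mathbb Q/\mathbb Z$, since each local factor uses only the $p$-primary component $\sigma_p$ modulo $1$. Hence $|\Lambda(g,a/r,r)|^2=|\Lambda(g,j/r,r)|^2$ for every integer $j\equiv a\bmod r$. As $(a,n)$ ranges over (residues coprime to $r$)$\times\mathbb Z$, the integer $j=a+nr$ ranges bijectively over all nonzero integers with $(j,r)=1$; here $j\neq0$ because $r>1$. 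Therefore
\[
A_r(g,x)=\frac{\mu^2(\operatorname{rad} r)\,r^2}{\pi^2}\sum_{\substack{j\neq0\\(j,r)=1}}\frac{\sin^2(\pi xj/r)}{j^2}\,\Big|\Lambda\Big(g,\frac jr,r\Big)\Big|^2 .
\]
Since $\Lambda(g,-\sigma,r)=\overline{\Lambda(g,\sigma,r)}$ and $\sin^2$ is even, the terms $j$ and $-j$ agree. Writing $\lambda=|j|\ge1$ therefore produces a factor $2$, and $\mu^2(\operatorname{rad} r)=\mu^2(ab)$ for $r=ab^2$.

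Third, I match the resulting summand with the function $f$. Setting $t=r/(x\lambda)$, we have $V(1/t)=\sin(\pi x\lambda/r)/(\pi x\lambda/r)$, so
\[
f(t)=\frac{|V(1/t)|^2}{t^2}=\frac{\sin^2(\pi x\lambda/r)}{\pi^2}.
\]
Hence each term equals $2\lambda^{-2}\,c_{g,\lambda}(r)\,f\big(r/(x\lambda)\big)$. Summing over $r$ and interchanging the $r$- and $\lambda$-sums gives the claimed formula.

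The only point needing care is the legitimacy of these rearrangements, and I expect it to be the main (though mild) obstacle. Every term is nonnegative, so Tonelli's theorem justifies all interchanges pointwise in $g$, with both sides possibly infinite. Both sides are finite almost surely, for instance by the $L^1$ bounds implicit in the earlier treatment of $S(g,x)$, but finiteness is not even needed for the identity itself.
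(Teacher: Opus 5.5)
Your proof is correct and follows the paper's route. The paper quotes the identity $|E_x(\theta)|^2=x^2\sum_{\ell\in\mathbb Z}|V(x(\theta+\ell))|^2$ from GMR, which is exactly what your partial-fraction expansion of $\pi^2/\sin^2(\pi\sigma)$ produces; it then reindexes by $\lambda=n+\ell r$ and pairs $\pm\lambda$ just as you do. One detail is worth adding: the right-hand side also contains $r=1\in\mathscr R$, while your left-hand sum runs only over $r>1$, so note that these extra terms vanish, since $f(1/(x\lambda))=\sin^2(\pi x\lambda)/\pi^2=0$ for integer $x$.
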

\begin{proof}
For integer $x$, the Fourier identity used in the proof of
\cite[Lemma 3.4]{key-14} gives
\[
|E_x(\theta)|^2
=
x^2\sum_{\ell\in\mathbb Z}
\left|V\left(x(\theta+\ell)\right)\right|^2.
\]
If $\theta=n/r$, then as $\ell$ varies over $\mathbb Z$,
$\lambda=n+\ell r$ runs exactly over the integers satisfying
$\lambda\equiv n\bmod r$. Moreover,
$(n,r)=1$ is equivalent to $(\lambda,r)=1$. Therefore
\[
\sum_{\sigma\in\mathcal R(r)}
|E_x(\sigma)|^2|\Lambda(g,\sigma,r)|^2
=
x^2
\sum_{\substack{\lambda\in\mathbb Z\setminus\{0\}\\
(\lambda,r)=1}}
\left|V\left(\frac{x\lambda}{r}\right)\right|^2
\left|\Lambda\left(g,\frac{\lambda}{r},r\right)\right|^2.
\]
The terms for $\lambda$ and $-\lambda$ are equal. Also,
\[
f\left(\frac{r}{x\lambda}\right)
=
\left(\frac{x\lambda}{r}\right)^2
\left|V\left(\frac{x\lambda}{r}\right)\right|^2.
\]
Substituting this into the definition of $S(g,x)$ and using the definition of $c_{g,\lambda}(r)$
gives
\[
S(g,x)
=
2\sum_{\lambda\geq1}\lambda^{-2}
\sum_{\substack{r\in\mathscr R\\
(\lambda,r)=1}}
c_{g,\lambda}(r)
f\left(\frac{r}{x\lambda}\right),
\]
as required.
\end{proof}

By \lemref{lem:one-pair-sum-decomposition}, the asymptotic behavior of $S(g,x)$ is determined by the coefficients $c_{g,\lambda}(r)$ and, in particular, by the growth of their partial sums $A_{g,\lambda}(Y)$. We therefore begin by describing the prime-by-prime structure of these coefficients.
Let $r=ab^2\in\mathscr R$. For a prime $p$, we have $\nu_p(r)\in\{0,1,2\},$
according as $p\nmid r$, $p\mid a$, or $p\mid b$. These three cases yield the following local factors
in the factorization of $r^2|\Lambda(g,\lambda/r,r)|^2$. For $e\in\{0,1,2\}$ and a coordinate $g_p \in V_{p^2}^{\sharp}$ define
\begin{equation}\label{eq:Xpdef}
X_{p,e}(g_p;\xi)
:=
\begin{cases}
\displaystyle
\left(1-\frac{\rho_{g_p}(p^2)}{p^2}\right)^2,
& e=0,\\[2mm]
\displaystyle
p^{-2}
\left|T_{g_p}\left(\frac{\xi}{p};p^2\right)\right|^2,
& e=1,\quad
\xi\in(\mathbb Z/p\mathbb Z)^\times,\\[2mm]
\displaystyle
\left|T_{g_p}\left(\frac{\xi}{p^2};p^2\right)\right|^2,
& e=2,\quad
\xi\in(\mathbb Z/p^2\mathbb Z)^\times.
\end{cases}
\end{equation}
If $p\mid r$, let $\xi_p(r,\lambda)$ denote the numerator of the
$p$-primary component of $\lambda/r$, so that this component is
$\xi_p(r,\lambda)/p^{\nu_p(r)}$ in $\mathbb Q/\mathbb Z$. Since
$(\lambda,r)=1$, this numerator is a unit modulo $p^{\nu_p(r)}$. The
coefficient $c_{g,\lambda}(r)$ therefore factors as
\begin{equation}
c_{g,\lambda}(r)
=
\mu^2(ab)
\prod_p
X_{p,\nu_p(r)}
\left(g_p;\xi_p(r,\lambda)\right),
\label{eq:c_g,lamda(r)}
\end{equation}
where no frequency argument is needed when $p\nmid r$. Note that this is a prime-by-prime factorization,
rather than multiplicativity in $r$, since
$\xi_p(r,\lambda)$ also depends on $r/p^{v_p(r)}$, the largest divisor
of $r$ coprime to $p$.

The next lemma gives the moment estimates for these three types
of local factors. We then use this to bound the moments of
$A_{g,\lambda}(Y)$ uniformly in $\lambda$, and later to show that the
effect of the prime coordinates $p>z$ becomes negligible as
$z\to\infty$. In \lemref{lem:local-coefficient-moment-bounds}, all expectations are with respect to the
uniform probability measure on $V_{p^2}^{\sharp}$, obtained by
restricting the uniform measure on $V_{p^2}$ to
$V_{p^2}^{\sharp}=V_{p^2}\setminus\{0\}$ and re-normalizing.

\begin{lemma}\label{lem:local-coefficient-moment-bounds}
Let $p$ be prime. For every fixed
$q\geq1$,
\[
\mathbb E X_{p,e}^q(g_p,\xi)
\ll_{d,q}
\begin{cases}
p^{-2}, & e=1,\\
1, & e=2.
\end{cases}
\]
uniformly for $\xi\in (\mathbb{Z}/p^e\mathbb{Z})^{\times}$.
Moreover, if $s\geq1$, $t\geq0$,
\[
\mathbb E
\prod_{j=1}^s X_{p,1}(g_p;\xi_j)
\prod_{\ell=1}^t X_{p,2}(g_p;\eta_\ell)
\ll_{d,s,t}p^{-2}
\]
holds uniformly for $\xi_j \in (\mathbb{Z}/p\mathbb{Z})^{\times}$ and $\eta_\ell \in (\mathbb{Z}/p^2\mathbb{Z})^{\times}$. Finally,
\[
\mathbb E X_{p,e}
=
\begin{cases}
1+O_d(p^{-2}),
& e=0,\\[1mm]
\displaystyle
\frac{2p^{-2}(1-p^{-1})}{1-p^{-2d-2}},
& e=1,\\[3mm]
\displaystyle
\frac{1-p^{-1}}{1-p^{-2d-2}},
& e=2.
\end{cases}
\]
\end{lemma}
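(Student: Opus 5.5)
The plan is to treat the three types of local factor separately, reducing each to the root-sum estimates of \lemref{lem:root-exponential-sum-moment-bounds} and the singular-root analysis in its proof.

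\emph{Moment bounds.} For $e=2$ the pointwise bound $|T_{g_p}(\xi/p^2;p^2)|\le d$ for $\xi$ a unit gives $X_{p,2}\le d^2$, hence every moment is $O_{d,q}(1)$. For $e=1$, write $u=\beta+kp$. The character $e(\xi u/p)$ depends only on $\beta$, so
\[
T_{g_p}(\xi/p;p^2)=\sum_{\beta}e(\xi\beta/p)\,N_\beta ,
\]
where $N_\beta$ is the number of lifts of $\beta$. We have $N_\beta\le1$ at nonsingular roots and $N_\beta=p$ at singular liftable roots, the latter counted by $Z_g$. Hence $|T|\le d+pZ_g$, and therefore
\[
X_{p,1}\ll p^{-2}+Z_g^2 .
\]
Using $Z_g\le d$ and $\mathbb E Z_g\ll p^{-2}$ (both from the proof of \lemref{lem:root-exponential-sum-moment-bounds}), we get $\mathbb E X_{p,1}^q\ll p^{-2q}+\mathbb E Z_g\ll p^{-2}$.

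\emph{Mixed products.} Bound every $X_{p,2}$ factor by $d^2$ pointwise. Then apply Hölder (or the same pointwise bound $X_{p,1}\ll p^{-2}+Z_g^2\ll 1$) to the $s\ge1$ factors $X_{p,1}$. This reduces the product to at most one factor's $q$th moment, giving $\ll p^{-2}$.

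\emph{Exact means.} Use Parseval over the frequency. For $e=2$, average $|T_g(\xi/p^2)|^2$ over $g$ and expand as $\sum_{u,v}e(\xi(u-v)/p^2)\,\mathbb P(g(u)\equiv g(v)\equiv0)$. By translation invariance the probability depends only on $u-v$. Evaluating the joint vanishing probabilities on $V_{p^2}$ (two linear conditions whose rank depends on $v_p(u-v)$) and correcting for the removed zero polynomial (which is what produces the factor $(1-p^{-2d-2})^{-1}$) then yields a character sum over $h=u-v$. For $\xi$ a unit this sum evaluates to $1-p^{-1}$; the computation splits into the cases $h=0$, $p\,\|\,h$, and $p\nmid h$. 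The case $e=1$ is analogous: the character has conductor $p$, only $p\mid h$ contributes a nonvanishing sum, and this produces $2p^{-2}(1-p^{-1})$. For $e=0$, expand the square and use $\mathbb E\rho=O(1)$ and $\mathbb E\rho^2\ll1$ from \lemref{lem:root-count-moment-bounds}; this gives $1-2\mathbb E\rho/p^2+O(p^{-4}\mathbb E\rho^2)=1+O(p^{-2})$.

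I expect the main obstacle to be the exact constants in the $e=1$ and $e=2$ means. They require careful bookkeeping of the conditional probabilities $\mathbb P(g(0)\equiv g(h)\equiv 0 \bmod p^2)$ for $V_{p^2}$ when $d\ge1$. These probabilities are $p^{-4}$ when $p\nmid h$; when $p\mid h$, $h\neq0$, one must check linear (in)dependence of the evaluation maps modulo $p^2$ (these give $p^{-3}$); and the $h=0$ term gives $p^{-2}$. Summing the resulting Ramanujan-type sums then gives the stated closed forms.
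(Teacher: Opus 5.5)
Your proposal follows the paper's proof essentially step for step. You bound the $e=2$ moments pointwise via $|T_{g_p}(\xi/p^2;p^2)|\le d$, and the $e=1$ moments via $|T_{g_p}(\xi/p;p^2)|\le d+pZ_g$ with $Z_g\le d$ and $\mathbb E Z_g\ll p^{-2}$; the paper just quotes the resulting bound $1+p^{2q-2}$ from \lemref{lem:root-exponential-sum-moment-bounds}. You handle the mixed products by bounding the $X_{p,2}$ factors by $d^2$ and applying H\"older. You compute the exact means by expanding $|T|^2$ with the joint vanishing probability $\gcd(p^2,u-v)/p^4$ on $V_{p^2}$ and then correcting for the zero polynomial.

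One remark in your $e=1$ mean is false as stated. In your split into $h=0$, $v_p(h)=1$ and $p\nmid h$, the class $p\nmid h$ does not give a vanishing sum:
\[
\sum_{h\in(\mathbb Z/p^2\mathbb Z)^\times}e\left(\frac{\xi h}{p}\right)=p\sum_{a\in(\mathbb Z/p\mathbb Z)^\times}e\left(\frac{\xi a}{p}\right)=-p .
\]
This term is needed. The total $p^2+p(p-1)-p=2p(p-1)$, times $p^{-2}$, gives $\mathbb E_{V_{p^2}}|T_g(\xi/p;p^2)|^2=2(1-p^{-1})$. Keeping only the $p\mid h$ terms would give $2-p^{-1}$ instead.

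The statement you want holds in the paper's organization, $\gcd(p^2,h)=\sum_{\delta\mid(p^2,h)}\varphi(\delta)$. There the $\delta=1$ term is a complete sum of a nontrivial character and vanishes, so only the sums over $p\mid h$ survive.

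You should also say explicitly that the zero polynomial has $T_0(\xi)=0$ at every nonzero frequency. That is why passing from $V_{p^2}$ to $V_{p^2}^{\sharp}$ produces exactly the factor $(1-p^{-2d-2})^{-1}$.
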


\begin{proof}
We first prove the moment bounds. For $e=2$, the frequency is primitive
modulo $p^2$, so \lemref{lem:root-exponential-sum-moment-bounds}, with exponent $2q$, gives
\[
\mathbb E X_{p,2}^q
=
\mathbb E
\left|
T_{g_p}\left(\frac{\xi}{p^2};p^2\right)
\right|^{2q}
\ll_{d,q}1.
\]
For $e=1$, the second case of \lemref{lem:root-exponential-sum-moment-bounds} gives
\[
\mathbb E X_{p,1}^q
=
p^{-2q}
\mathbb E
\left|
T_{g_p}\left(\frac{\xi}{p};p^2\right)
\right|^{2q}
\ll_{d,q}
p^{-2q}\left(1+p^{2q-2}\right)
\ll_{d,q}p^{-2}.
\]
Recall also from the primitive-frequency part of
\lemref{lem:root-exponential-sum-moment-bounds} that
\[
\left|
T_{g_p}\left(\frac{\eta}{p^2};p^2\right)
\right|
\leq d.
\]
Thus the factors $X_{p,2}$ are pointwise bounded, and Hölder's
inequality applied to the $s$ factors of type $e=1$ gives
\[
\begin{aligned}
\mathbb E
\prod_{j=1}^s X_{p,1}(g_p;\xi_j)
\prod_{\ell=1}^t X_{p,2}(g_p;\eta_\ell)
&\ll_{d,t}
\prod_{j=1}^s
\left(
\mathbb E X_{p,1}(g_p;\xi_j)^s
\right)^{1/s}\\
&\ll_{d,s,t}p^{-2}.
\end{aligned}
\]
It remains to compute the first moments. We first average over the full
space $V_{p^2}$. Expanding the squared absolute values in the definitions
of $X_{p,1}$ and $X_{p,2}$ reduces the calculation to
counting pairs of roots. Writing
$g(t)=a_0+a_1t+\cdots+a_dt^d$,
for $u,v\bmod p^2$, counting the possible values of
$a_0$ and $a_1$ gives
\[
\mathbb P_{g\in V_{p^2}}
\bigl(
g(u)\equiv g(v)\equiv0\bmod p^2
\bigr)
=
\frac{\gcd(p^2,u-v)}{p^4}.
\]
Hence, for $\xi\in(\mathbb Z/p^2\mathbb Z)^\times$,
\begin{align*}
\mathbb E_{g\in V_{p^2}}
\left|
T_g\left(\frac{\xi}{p^2};p^2\right)
\right|^2
&=
p^{-2}
\sum_{h\bmod p^2}
\gcd(p^2,h)e\left(\frac{\xi h}{p^2}\right)\\
&=
p^{-2}
\sum_{d\mid p^2}\varphi(d)
\sum_{\substack{h\bmod p^2\\d\mid h}}
e\left(\frac{\xi h}{p^2}\right)\\
&=
\frac{\varphi(p^2)}{p^2}
=
1-\frac1p.
\end{align*}
The same calculation with the character $e(\xi h/p)$ gives, for
$\xi\in(\mathbb Z/p\mathbb Z)^\times$,
\[
\mathbb E_{g\in V_{p^2}}
\left|
T_g\left(\frac{\xi}{p};p^2\right)
\right|^2
=
2\left(1-\frac1p\right).
\]
The zero polynomial contributes zero to both nonzero-frequency root
sums. Since
\[
\frac{|V_{p^2}^{\sharp}|}{|V_{p^2}|}
=
1-p^{-2d-2},
\]
conditioning on $V_{p^2}^{\sharp}$ therefore gives
\[
\mathbb E X_{p,2}
=
\frac{1-p^{-1}}{1-p^{-2d-2}},
\qquad
\mathbb E X_{p,1}
=
\frac{2p^{-2}(1-p^{-1})}{1-p^{-2d-2}}.
\]
Finally,
\[
X_{p,0}
=
\left(
1-\frac{\rho_{g_p}(p^2)}{p^2}
\right)^2
=
1-\frac{2\rho_{g_p}(p^2)}{p^2}
+\frac{\rho_{g_p}(p^2)^2}{p^4}.
\]
By \lemref{lem:root-count-moment-bounds},
\[
\mathbb E\rho_{g_p}(p^2)^j\ll_d1
\qquad (j=1,2).
\]
Consequently, averaging over $V_{p^2}^{\sharp}$ gives
\[
\mathbb E X_{p,0}
=
1-\frac{2}{p^2}\mathbb E\rho_{g_p}(p^2)
+\frac{1}{p^4}\mathbb E\rho_{g_p}(p^2)^2
=1+O_d(p^{-2}).
\]
This proves the lemma.
\end{proof}

We shall also need centered versions of these estimates. Put
\[
\eta_{p,e}:=\mathbb E X_{p,e}.
\]
For every fixed integer $q\geq2$, we have
\begin{equation}
\mathbb E
|X_{p,e}-\eta_{p,e}|^q
\ll_{d,q}
\begin{cases}
p^{-q-2}, & e=0,\\
p^{-2}, & e=1,\\
1, & e=2.
\end{cases}
\label{eq:CenteredExpe}
\end{equation}
The cases $e=1,2$ follow immediately from
\lemref{lem:local-coefficient-moment-bounds}. For $e=0$, \lemref{lem:root-count-moment-bounds} gives
\[
\mathbb E\rho_{g_p}(p^2)^j
\ll_{d,j}1+p^{j-2},
\]
and
\[
|X_{p,0}-1|
\ll
\frac{\rho_{g_p}(p^2)}{p^2}
+
\frac{\rho_{g_p}(p^2)^2}{p^4}.
\]
Consequently,
\[
\mathbb E|X_{p,0}-1|^q
\ll_{d,q}
p^{-2q}\mathbb E\rho_{g_p}(p^2)^q
+
p^{-4q}\mathbb E\rho_{g_p}(p^2)^{2q}
\ll_{d,q}p^{-q-2}.
\]
Since $\eta_{p,0}=1+O_d(p^{-2})$, the same estimate holds for
$X_{p,0}-\eta_{p,0}$.

The preceding local estimates now combine to give the key uniform moment bound for the coefficient partial sums.

\begin{lemma}\label{lem:partial-sum-moment-bound}
Fix an integer $q\geq2$. Uniformly for $Y\geq1$ and $\lambda\geq1$,
\[
\mathbb E A_{g,\lambda}(Y)^q
\ll_{d,q}Y^{q/2}.
\]
\end{lemma}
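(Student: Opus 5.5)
We plan to expand the $q$-th power of $A_{g,\lambda}(Y)$ and use independence of the coordinates $g_p$ to evaluate the expectation prime by prime. Write
\[
\mathbb E A_{g,\lambda}(Y)^q=\sum_{\substack{r_1,\ldots,r_q\le Y\\ r_i\in\mathscr R,\ (r_i,\lambda)=1}}\mu^2(a_1b_1)\cdots\mu^2(a_qb_q)\,\mathbb E\prod_{i=1}^q c_{g,\lambda}(r_i).
\]
By the factorization \eqref{eq:c_g,lamda(r)} and independence, the expectation is a product over primes $p$ of local expectations
\[
\mathbb E\prod_{i=1}^q X_{p,\nu_p(r_i)}\bigl(g_p;\xi_p(r_i,\lambda)\bigr),
\]
where only the exponent pattern $(\nu_p(r_i))_{i\le q}\in\{0,1,2\}^q$ matters for the bounds. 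The frequencies $\xi_p(r_i,\lambda)$ depend on the other primes, but every bound in \lemref{lem:local-coefficient-moment-bounds} is uniform in the units $\xi$. So this dependence is harmless, and the result is automatically uniform in $\lambda$.

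\textbf{Local bounds.} For primes with $\nu_p(r_i)=0$ for all $i$, the factor is $\mathbb E X_{p,0}^q$. This equals $1+O_{d,q}(p^{-2})$, by expanding $X_{p,0}=1+O(\rho/p^2+\rho^2/p^4)$ and using \lemref{lem:root-count-moment-bounds}. The product over all such $p$ is therefore bounded by the convergent product $\prod_p(1+Cp^{-2})\ll_{d,q}1$. For a prime dividing some $r_i$, the $e=0$ factors are bounded pointwise by $1$ (since $0\le 1-\rho/p^2\le1$), so we may drop them. If at least one $e=1$ factor occurs, the mixed bound of \lemref{lem:local-coefficient-moment-bounds} gives $\ll p^{-2}$. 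If only $e=2$ factors occur, they are pointwise bounded by $d^2$, giving $O(1)$. Hence
\[
\mathbb E\prod_i c_{g,\lambda}(r_i)\le C_{d,q}^{\omega(r_1\cdots r_q)}\prod_{p\mid a_1\cdots a_q}p^{-2}.
\]

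\textbf{Counting.} We reduce to an arithmetic sum. Setting $A:=\operatorname{rad}(a_1\cdots a_q)$ and absorbing $C^{\omega}$ into a divisor-type weight, we must bound
\[
\sum_{r_i=a_ib_i^2\le Y}\frac{C^{\omega(r_1\cdots r_q)}}{A^2}.
\]
A prime in $A$ costs $p^{-2}$ once but may divide several $a_i$. Grouping by $A$, the number of tuples $(a_i)$ with $a_i\mid A$ is $\le 2^{q\omega(A)}$, and each $a_i$ ranges over divisors of $A$. For fixed $a_i$, the count of $b_i\le\sqrt{Y/a_i}$ gives $\prod_i\sqrt{Y/a_i}$. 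This yields the bound
\[
Y^{q/2}\sum_{A}\frac{C'^{\omega(A)}}{A^2}\sum_{a_i\mid A}\prod_i a_i^{-1/2}\ll Y^{q/2}\sum_A\frac{C''^{\omega(A)}}{A^2}\ll_{d,q}Y^{q/2}.
\]
The danger is the divisor weight $C^{\omega(b_i)}$ on the $b_i$-sum, which would give $Y^{q/2}(\log Y)^{C}$. We avoid it by not bounding the $e=2$ factors crudely. For a prime $p$ dividing some $b_i$ but no $a_j$, the local factor is $\mathbb E\prod X_{p,2}$. When exactly one $i$ has $p\mid b_i$, this equals $\eta_{p,2}\le 1/(1-p^{-2d-2})$, whose product is bounded, so no growth arises. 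When $p$ divides $b_i$ for two or more indices, the factor is $O(1)$, but such coincidences save a factor $p^{2}$ (or more) in the count of the tuples $(b_i)$, which pays for the constant. We would therefore write each $b_i=\prod_p$ and sum multiplicatively: the generating local factor per prime is
\[
1+\sum_i p^{-1}\eta_{p,2}\cdot(\text{Dirichlet weight})+O(p^{-2})+O(p^{-2})\ \text{from } e=1,
\]
which matches the density of squarefree $b$ with no extra logarithm. Rigorously, we replace the sharp cutoff by Rankin's trick at exponent $\tfrac12+1/\log Y$ in each $r_i$, bounding the sum by $Y^{q/2}\prod_p(\text{local Euler factor at }s=\tfrac12+1/\log Y)$. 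The key point is that the linear-in-$p^{-1}$ term of the Euler factor is $\sum_i p^{-2s}\eta_{p,2}$. Since $\eta_{p,2}\le 1+O(p^{-2})$, this is at most $\zeta(2s)$-like per index. That gives roughly $(\log Y)^q$, too lossy, so the Rankin approach must instead be applied index by index after separating the "diagonal" primes.

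\textbf{Main obstacle.} The main obstacle is this logarithm-free counting for the $b_i$-variables. We would handle it by conditioning on the set $D$ of primes dividing at least two of the $r_i$ or any $a_i$. Such primes contribute convergent factors $\ll C^{\omega}p^{-2}$ per prime, summable against the counting. Off $D$, the $r_i$ have pairwise coprime "private" parts, and the expectation factors exactly as $\prod_i\prod_{p\| b_i\text{ private}}\eta_{p,2}$. This reduces the problem to $q$ independent one-variable sums $\sum_{b\le\sqrt{Y/a}}\mu^2(b)\prod_{p\mid b}\eta_{p,2}$. Because $\eta_{p,2}\le1+O(p^{-2})$, each such sum is $\ll\sqrt{Y/a}$ with no logarithm, and we conclude $\mathbb E A_{g,\lambda}(Y)^q\ll_{d,q}Y^{q/2}$.
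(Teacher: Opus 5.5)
Your final argument (the last paragraph) is correct, and it takes a genuinely different route from the paper's.

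The paper works as follows:
\begin{itemize}
\item it normalizes each local factor by $W_p(\mathbf 0)=\mathbb E X_{p,0}^q$;
\item it computes the single-$e_i=2$ case precisely as $1-\frac1p+O(p^{-2})$;
\item it forms the multivariable Dirichlet series in $b_1,\ldots,b_q$ and strips off $\prod_i\zeta(s_i)$;
\item it shows the leftover coefficients $h_{\mathbf a}$ have weighted $\ell^1$-norm $\ll C^{\omega(A)}\prod_{p\mid A}p^{-2}$, so that convolution bounds the $b$-sums by $B_1\cdots B_q$.
\end{itemize}

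You instead split the primes of each tuple into the shared set $D$ and the private $b$-primes. A private prime has local factor $\mathbb E[X_{p,2}X_{p,0}^{q-1}]\le\eta_{p,2}<1$. This is an inequality, obtained from $0\le X_{p,0}\le1$, not the exact factorization you wrote. Consequently no $C^{\omega(b)}$ weight ever appears. The private parts $b_i''\le (Y/(a_ib_i'^2))^{1/2}$ can then be counted trivially, after dropping their coprimality conditions by positivity. Note that the range must include the $D$-part $b_i'$, not just $\sqrt{Y/a_i}$ as you wrote.

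The $D$-data then sum against $\prod_i a_i^{-1/2}b_i'^{-1}$ with Euler factors $1+O(p^{-2})$. The $p^{-2}$ saving has two sources:
\begin{itemize}
\item when some $p\mid a_i$, it comes from the expectation, via the mixed bound of \lemref{lem:local-coefficient-moment-bounds};
\item when $p$ divides at least two $b_i$ and no $a_j$, it comes from the counting weight $p^{-|J_b|}\le p^{-2}$, since there the expectation is only $O(1)$.
\end{itemize}

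Your route is more elementary. It needs only $\eta_{p,2}\le 1+O(p^{-2})$ and the pointwise bound $X_{p,2}\le d^2$. It requires no normalization by $W_p(\mathbf 0)$, no zeta-factor cancellation, and no precise $1-\frac1p$ asymptotic. The paper's Dirichlet-series packaging is reused for the prime-omitted variants and mirrored in the appendix. Your decomposition would adapt to those equally easily.

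In a write-up, discard your earlier Counting and Rankin paragraphs. As you noticed yourself, the crude bound carrying $C^{\omega(r_1\cdots r_q)}$ does lose powers of $\log Y$, so only the private/shared split should remain.
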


\begin{proof}
Expanding the $q$th power and using non-negativity in \eqref{eq:A_g(Y)} gives
\[
\mathbb E A_{g,\lambda}(Y)^q
\leq
\sum_{\substack{r_1,\ldots,r_q\leq Y \\
r_i=a_ib_i^2 \\ (r_i, \lambda)=1,(1\le i\le q)}}
\mathbb E\prod_{i=1}^q c_{g,\lambda}(r_i).
\]
For a prime $p$, write, $\mathbf e_p
:=
\bigl(\nu_p(r_1),\ldots,\nu_p(r_q)\bigr)
\in\{0,1,2\}^q$
and put
\[
W_p(\mathbf e,\boldsymbol\xi)
:=
\mathbb E_{g_p\in V_{p^2}^\sharp}
\prod_{i=1}^q
X_{p,e_i}(g_p;\xi_i).
\]
We normalize by
\[
\widetilde W_p(\mathbf e,\boldsymbol\xi)
:=
\frac{W_p(\mathbf e,\boldsymbol\xi)}
     {W_p(\mathbf0)},
\qquad
W_p(\mathbf0)
=
\mathbb E X_{p,0}^q
=
1+O_{d,q}(p^{-2}).
\]
The product of the factors $W_p(\mathbf0)$ converges absolutely.

The estimates of \lemref{lem:local-coefficient-moment-bounds}, together
with $0\leq X_{p,0}\leq1$, imply
\begin{equation}
\widetilde W_p(\mathbf e,\boldsymbol\xi)
\ll_{d,q}
\begin{cases}
p^{-2},
& \text{if some }e_i=1,\\
1,
& \text{if at least two }e_i=2
  \text{ and no }e_i=1.
\end{cases}
\label{eq:W_pEstimates}
\end{equation}
The only remaining case is that exactly one $e_i$ equals $2$ and all
the others are $0$.  Here
\[
1-X_{p,0}^{q-1}
\ll_q
\frac{\rho_{g_p}(p^2)}{p^2},
\]
while the unique $X_{p,2}$-factor has mean
$1-p^{-1}+O_d(p^{-2})$ by \lemref{lem:local-coefficient-moment-bounds}.  Hence
\lemref{lem:local-coefficient-moment-bounds} and
\lemref{lem:root-count-moment-bounds} give
\begin{equation}
\widetilde W_p(\mathbf e,\boldsymbol\xi)
=
1-\frac1p+O_{d,q}(p^{-2}).
\label{eq:W_pEstimates2}
\end{equation}
Recall that $r_i=a_i b_i^2$, with $a_i,b_i$ squarefree and $(a_i,b_i)=1$. We now fix the tuple
$(a_1,\ldots,a_q)$ and sum first over the variables $b_i$. Denote $A:=a_1\cdots a_q.$
For $p\nmid A$, define the local Dirichlet series
\[
F_p(\mathbf s)
:=
\sum_{I\subseteq[q]}
\widetilde W_p(\mathbf e(I))
p^{-\sum_{i\in I}s_i},
\]
where $i\in I$ means $p\mid b_i$.  By
\eqref{eq:W_pEstimates}--\eqref{eq:W_pEstimates2},
\[
F_p(\mathbf s)
=
1+
\sum_{i=1}^q
\left(1-\frac1p+O_{d,q}(p^{-2})\right)p^{-s_i}
+
O_{d,q}\left(
\sum_{|I|\geq2}
p^{-\sum_{i\in I}s_i}
\right).
\]
Factor off the local zeta factors by writing
\[
H_p(\mathbf s)
:=
F_p(\mathbf s)
\prod_{i=1}^q(1-p^{-s_i}).
\]
At $s_1=\cdots=s_q=1$, the sum of the absolute values of the
nonconstant terms of $H_p$ is $O_{d,q}(p^{-2})$: the linear terms have
coefficients
\[
-\frac1p+O_{d,q}(p^{-2})
\]
and are multiplied by $p^{-1}$, while every term of degree at least two
already contributes $O_{d,q}(p^{-2})$.

If $p\mid A$, then \(p\mid a_i\) for at least one
\(i\), i.e. \(e_i=1\).  By the first case of
\eqref{eq:W_pEstimates}, the normalized local contribution at such a
prime is therefore $O_{d,q}(p^{-2}),$
uniformly in the remaining divisibility pattern of the \(b_j\)'s.

We now sum over the square variables $b_1,\ldots,b_q$.  
For fixed $\mathbf a=(a_1,\ldots,a_q)$, define
$\mathcal W_{\mathbf a}(\mathbf b)=0$ unless every
$b_i$ is squarefree and $(a_i,b_i)=1$. Otherwise, set
\[
\mathcal W_{\mathbf a}(\mathbf b)
:=
\prod_p
\sup_{\boldsymbol\xi}
\left|
\widetilde W_p
\bigl(
\nu_p(a_1b_1^2),\ldots,\nu_p(a_qb_q^2);
\boldsymbol\xi
\bigr)
\right|,
\]
where $\mathbf b=(b_1,\ldots,b_q)$.
Thus $\mathcal W_{\mathbf a}(\mathbf b)$ is a
prime-by-prime factorized majorant for the normalized
contribution of $(a_1b_1^2,\ldots,a_qb_q^2)$
to the expanded moment.
Its multivariable Dirichlet series admits the factorization
\[
\sum_{b_1,\ldots,b_q\geq1}
\frac{\mathcal W_{\mathbf a}(\mathbf b)}
     {b_1^{s_1}\cdots b_q^{s_q}}
=
\prod_{i=1}^q\zeta(s_i)
\sum_{d_1,\ldots,d_q\geq1}
\frac{h_{\mathbf a}(d_1,\ldots,d_q)}
     {d_1^{s_1}\cdots d_q^{s_q}},
\]
which defines the coefficients $h_{\mathbf a}$.
The local estimates above imply that, after the zeta factors are
removed, the local $\ell^1$-norm at $s_1=\cdots=s_q=1$ is
$1+O_{d,q}(p^{-2})$ for $p\nmid A$, while for $p\mid A$ it is
$O_{d,q}(p^{-2})$, since then some $e_i=1$.  Hence
\[
\sum_{d_1,\ldots,d_q\geq1}
\frac{|h_{\mathbf a}(d_1,\ldots,d_q)|}
     {d_1\cdots d_q}
\ll_{d,q}
C^{\omega(A)}
\prod_{p\mid A}p^{-2}.
\]
By convolution, for any $B_1,\ldots,B_q\geq1$,
\[
\sum_{\substack{b_i\leq B_i\\1\leq i\leq q}}
\mathcal W_{\mathbf a}(\mathbf b)
\ll_{d,q}
B_1\cdots B_q\,
C^{\omega(A)}
\prod_{p\mid A}p^{-2}.
\]
Since $a_i b_i^2\leq Y$, taking $B_i=\left(Y/a_i\right)^{1/2}$
shows that the contribution of the fixed tuple
$\mathbf a=(a_1,\ldots,a_q)$ is
\[
\ll_{d,q}
Y^{q/2}
\left(\prod_{i=1}^q a_i^{-1/2}\right)
C^{\omega(A)}
\prod_{p\mid A}p^{-2}.
\]
It remains to sum the above bounds over the squarefree variables $a_i$.  If a prime $p$ divides
exactly those $a_i$ with $i\in J$, where
$\emptyset\neq J\subseteq[q]$, its contribution is $\ll_{d,q}p^{-2-|J|/2}$.
Thus the local Euler factor is
\[
1+
O_{d,q}\left(
p^{-2}
\sum_{\emptyset\neq J\subseteq[q]}
p^{-|J|/2}
\right)
=
1+O_{d,q}(p^{-5/2}),
\]
so the Euler product converges absolutely.  Therefore, uniformly in $\lambda$
\[
\mathbb E A_{g,\lambda}(Y)^q
\ll_{d,q}Y^{q/2}.
\]
\end{proof}

We shall also need the corresponding coefficients after removing one local prime from \eqref{eq:c_g,lamda(r)}.  For a prime $p$, define
\[
c_{g,\lambda}^{[p]}(r)
:=
\mu^2(ab)
\prod_{\substack{
\ell\ \mathrm{prime}\\
\ell\neq p
}}
X_{\ell,\nu_\ell(r)}
\left(g_\ell;\xi_\ell(r,\lambda)\right),
\qquad r=ab^2\in\mathscr R,
\]
and put
\[
A_{g,\lambda}^{[p]}(Y)
:=
\sum_{\substack{r\leq Y\\
r\in\mathscr R\\
(r,\lambda)=1}}
c_{g,\lambda}^{[p]}(r).
\]
We also define
\[
A_{g,\lambda}^{[p],0}(Y)
:=
\sum_{\substack{r\leq Y\\
r\in\mathscr R\\
(r,\lambda)=1\\
p\nmid r}}
c_{g,\lambda}^{[p]}(r)
\]
and for $e=1,2$, let
\[
A_{g,\lambda}^{[p],1}(Y)
:=
\sum_{\substack{r=ab^2\leq Y\\
(r,\lambda)=1\\
p\mid a}}
c_{g,\lambda}^{[p]}(r),
\]
and
\[
A_{g,\lambda}^{[p],2}(Y)
:=
\sum_{\substack{r=ab^2\leq Y\\
(r,\lambda)=1\\
p\mid b}}
c_{g,\lambda}^{[p]}(r).
\]
Thus, for $e=0,1,2$, $A_{g,\lambda}^{[p],e}$
is the part of the prime-omitted sum
$A_{g,\lambda}^{[p]}$ with $\nu_p(r)=e$.
Since all coefficients $c_{g,\lambda}^{[p]}(r)$
are nonnegative, $0\leq A_{g,\lambda}^{[p],0}(Y)
\leq A_{g,\lambda}^{[p]}(Y).$
Hence the $L^q$ bound for $A_{g,\lambda}^{[p],0}(Y)$
follows from
\lemref{lem:prime-omitted-partial-sum-moment-bound}.
The same argument also gives the following prime-omitted form, which we
record separately for later use.

\begin{lemma}\label{lem:prime-omitted-partial-sum-moment-bound}
Fix an integer $q\geq2$ and a prime $p$. Uniformly for $Y\geq1$ and
$\lambda\geq1$,
\[
\mathbb E\left(A_{g,\lambda}^{[p]}(Y)\right)^q
\ll_{d,q}Y^{q/2}.
\]
Moreover,
\[
\mathbb E\left(A_{g,\lambda}^{[p],1}(Y)\right)^q
\ll_{d,q}
p^{-q/2}Y^{q/2},
\]
and
\[
\mathbb E\left(A_{g,\lambda}^{[p],2}(Y)\right)^q
\ll_{d,q}
p^{-q}Y^{q/2}.
\]
\end{lemma}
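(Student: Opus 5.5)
The plan is to rerun the proof of \lemref{lem:partial-sum-moment-bound} verbatim, with the single change that the local factor at the omitted prime $p$ is replaced by $1$. For the first bound, expand the $q$th power of $A_{g,\lambda}^{[p]}(Y)$ into a sum over $r_1,\ldots,r_q\le Y$, using nonnegativity of the coefficients $c_{g,\lambda}^{[p]}(r)$. By independence of the coordinates $(g_\ell)_\ell$, the expectation factors over primes $\ell$. At every $\ell\neq p$ the local factor is exactly $W_\ell(\mathbf e_\ell,\boldsymbol\xi)$ as before, and we use the same normalisation by $W_\ell(\mathbf 0)$. At $\ell=p$ the local factor is identically $1$, whatever the pattern $\mathbf e_p$.

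We then sum over $b_1,\ldots,b_q$ for fixed $\mathbf a$ exactly as in \lemref{lem:partial-sum-moment-bound}. For $p\nmid A$ the local Dirichlet series at $p$ becomes $\sum_{I}p^{-\sum_{i\in I}s_i}=\prod_i(1+p^{-s_i})$. After removing the zeta factors this gives $\prod_i(1-p^{-2s_i})$, whose $\ell^1$-norm at $\mathbf s=\mathbf 1$ is $1+O_q(p^{-2})$. For $p\mid A$ the factor at $p$ is $O(1)$ in place of $O(p^{-2})$, and this only costs a bounded constant. Hence the Euler products still converge with constants independent of $p$, and summing over the $a_i$ (the $a$-Euler factor at $p$ is now $1+O(p^{-1/2})$, still just a constant) gives $\mathbb E(A^{[p]}_{g,\lambda}(Y))^q\ll_{d,q}Y^{q/2}$ uniformly in $p$ and $\lambda$.

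For $A^{[p],1}$ and $A^{[p],2}$ we use the same expansion. Now every $r_i$ is forced to satisfy $p\mid a_i$, respectively $p\mid b_i$, and there is no arithmetic saving at $p$ because the factor there is $1$. The saving instead comes from the size restriction. In the case $e=1$, write $a_i=pa_i'$, so that $a_i'b_i^2\le Y/p$. Running the argument with $Y$ replaced by $Y/p$, the factor at $p$ is trivial and the other primes are unchanged, so we get $\ll (Y/p)^{q/2}=p^{-q/2}Y^{q/2}$. In the case $e=2$, write $b_i=pb_i'$, so that $a_i b_i'^2\le Y/p^2$, giving $\ll (Y/p^2)^{q/2}=p^{-q}Y^{q/2}$. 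In both cases the coprimality conditions at $p$ that hold among the remaining variables (for example $p\nmid a_i'$, $p\nmid b_i'$) can be dropped, since the summands are nonnegative. The frequencies $\xi_\ell(r,\lambda)$ at the primes $\ell\neq p$ do change with $p$, but the local bounds are uniform in $\boldsymbol\xi$ (the suprema in $\mathcal W_{\mathbf a}$), so this causes no difficulty.

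The main obstacle is only bookkeeping. One must check that deleting the prime $p$, and rescaling $Y$, keeps every Euler product in \lemref{lem:partial-sum-moment-bound} absolutely convergent, with constants independent of $p$ and $\lambda$. The point to verify carefully is that the local factor at $p$ is now $O(1)$ rather than $1+O(p^{-2})$, so it contributes a bounded multiplicative constant instead of a convergent tail. Since only one prime is affected, this is harmless.
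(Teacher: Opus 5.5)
Your proposal is correct and follows essentially the same route as the paper. Both rerun the majorant argument of \lemref{lem:partial-sum-moment-bound} with the $p$-local factor deleted, absorb the changed frequencies into the suprema, and obtain the savings $p^{-q/2}$ and $p^{-q}$ from the shortened ranges after writing $a_i=pa_i'$, respectively $b_i=pb_i'$. Your bookkeeping at the omitted prime is slightly more careful than the paper's one-line remark that the Euler factor at $p$ ``is removed'': there the factor is bounded but not $1$ (for instance $1+O(p^{-1/2})$ in the $a$-sum), and this is exactly what makes the bounds uniform in $p$.
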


\begin{proof}
Omitting the $p$-local factor leaves the proof of
\lemref{lem:partial-sum-moment-bound} unchanged, except that the Euler
factor at $p$ is removed. Hence
\[
\mathbb E\left(A_{g,\lambda}^{[p]}(Y)\right)^q
\ll_{d,q}Y^{q/2}.
\]
For $A_{g,\lambda}^{[p],1}(Y)$, every term has $p\mid a$. Thus, in the
expanded $q$th moment, $p\mid a_i$ for every $i$. Writing
$a_i=pa_i'$, we have
\[
B_i=\left(\frac{Y}{a_i}\right)^{1/2}
=
p^{-1/2}\left(\frac{Y}{a_i'}\right)^{1/2}.
\]
Since the $p$-local factor has been omitted, the remaining weighted sum
is bounded by the same multiplicative majorant as in the proof of
\lemref{lem:partial-sum-moment-bound}. Therefore the $q$ ranges contribute
a factor $p^{-q/2}$, and
\[
\mathbb E\left(A_{g,\lambda}^{[p],1}(Y)\right)^q
\ll_{d,q}
p^{-q/2}Y^{q/2}.
\]
Similarly, for $A_{g,\lambda}^{[p],2}(Y)$ every term has $p\mid b$.
Writing $b_i=pb_i'$ shortens each of the $q$ ranges by a factor $p$,
while the remaining weighted sum is controlled by the same
prime-omitted majorant. Hence
\[
\mathbb E\left(A_{g,\lambda}^{[p],2}(Y)\right)^q
\ll_{d,q}
p^{-q}Y^{q/2}.
\]
\end{proof}

We are now ready to begin the identification of the main term of $Y^{-1/2} A_{g,\lambda}(Y)$.
We first do so after retaining only finitely many prime coordinates. Let $\mathcal F_z$ be as in \eqref{eq:sigmaAlgbra} and put
\[
A_{g,\lambda}^{(z)}(Y)
:=
\mathbb E\!\left[
A_{g,\lambda}(Y)\mid\mathcal F_z
\right].
\]
As $z$ increases, $A_{g,\lambda}^{(z)}(Y)$ forms a \textbf{martingale} with
respect to the filtration $(\mathcal F_z)_{z\ge2}$. The next lemma identifies the limit of
$Y^{-1/2}A_{g,\lambda}^{(z)}(Y)$ for a fixed $z$.

\begin{lemma}\label{lem:finite-prime-partial-sum-limit}
Fix $z\geq2$ and $\lambda\geq1$. There exists a non-negative
$\mathcal F_z$-measurable random variable $D_g^{(z)}(\lambda)$ such that,
for every $q\geq1$,
\[
Y^{-1/2}A_{g,\lambda}^{(z)}(Y)
\longrightarrow
D_g^{(z)}(\lambda)
\]
in $L^q$ as $Y\to\infty$.
\end{lemma}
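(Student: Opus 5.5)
We compute the conditional expectation explicitly. Since the coordinates $(g_p)_p$ are independent and $c_{g,\lambda}(r)$ factors prime by prime as in \eqref{eq:c_g,lamda(r)}, conditioning on $\mathcal F_z$ replaces each factor with $p>z$ by its mean:
\[
\mathbb E[c_{g,\lambda}(r)\mid\mathcal F_z]
=\mu^2(ab)\prod_{p\le z}X_{p,\nu_p(r)}\bigl(g_p;\xi_p(r,\lambda)\bigr)\prod_{p>z}\eta_{p,\nu_p(r)}.
\]
For $p>z$ the mean $\eta_{p,e}$ does not depend on $\xi$, by \lemref{lem:local-coefficient-moment-bounds}. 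Hence the only dependence on the part of $r$ supported on primes $>z$ is through the multiplicative function $h(s):=\mu^2(\operatorname{rad} s)\prod_{p\mid s}\eta_{p,\nu_p(s)}/\eta_{p,0}$ on $s\in\mathscr R$ with $p\mid s\Rightarrow p>z$, together with the constant $\prod_{p>z}\eta_{p,0}$, which converges.

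We write $r=r_zs$, where $r_z$ is the $z$-smooth part of $r$, so $r_z$ ranges over the finitely many elements of $\mathscr R$ dividing $Q_z$ and $(s,Q_z)=1$. For fixed $r_z$, the numerators $\xi_p(r,\lambda)$ with $p\le z$ depend on $s$ only through $s\bmod r_z$, or more precisely through $s^{-1}\lambda$ modulo $p^{\nu_p(r_z)}$. We split the sum over $s$ by the residue class $s\equiv c\bmod r_z$ with $(c,r_z)=1$, and impose $(s,\lambda)=1$. This gives
\[
A^{(z)}_{g,\lambda}(Y)=\prod_{p>z}\eta_{p,0}\sum_{r_z\mid Q_z,\,(r_z,\lambda)=1}\sum_{c\bmod r_z}\Bigl(\mu^2\prod_{p\le z}X_{p,\nu_p(r_z)}(g_p;\xi_p(c))\Bigr)\sum_{\substack{s\le Y/r_z,\ s\equiv c\,(r_z)\\(s,\lambda Q_z)=1}}h(s),
\]
which is a finite combination of $\mathcal F_z$-measurable, bounded coefficients and deterministic sums.

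The key step is the asymptotic $\sum_{s\le T,\,s\equiv c\,(r_z),\,(s,\lambda Q_z)=1}h(s)\sim\kappa(c)\,T^{1/2}$. Write $s=ab^2$. We have $h\approx p^{-2}$ at $\nu_p=1$ and $h=1-1/p+O(p^{-2})$ at $\nu_p=2$, so the Dirichlet series $\sum h(s)s^{-w}$ equals $\zeta(2w)$ times a factor absolutely convergent for $\Re w>1/2-\delta$ (near $w=1/2$ the $a$-part contributes $\sum p^{-2-1/2}$). To handle the progression we insert Dirichlet characters mod $r_z$. The principal character gives a simple pole at $w=1/2$. A nonprincipal $\chi$ gives $L(2w,\chi^2)$ times a holomorphic factor; this has a pole only when $\chi^2$ is principal, that is, for quadratic $\chi$, which are finitely many and each contributes a definite residue. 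A Tauberian theorem (Wiener--Ikehara, or an elementary convolution argument, since the structure is squares times a rapidly convergent series) then gives the limit $\kappa(c)T^{1/2}$. The residues are explicit and nonnegative because $h\ge0$.

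Summing the finitely many terms, $Y^{-1/2}A^{(z)}_{g,\lambda}(Y)$ converges pointwise, for every $g$, to a nonnegative $\mathcal F_z$-measurable $D^{(z)}_g(\lambda)$. Here $\kappa(c)$ is scaled by $r_z^{-1/2}$. To upgrade to $L^q$ convergence we use the uniform bound of \lemref{lem:partial-sum-moment-bound}. By conditional Jensen, $\mathbb E(A^{(z)}_{g,\lambda}(Y))^{2q}\le\mathbb E A_{g,\lambda}(Y)^{2q}\ll Y^{q}$, so $Y^{-1/2}A^{(z)}_{g,\lambda}(Y)$ is bounded in $L^{2q}$, hence uniformly integrable in $L^q$. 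Vitali's theorem then gives $L^q$ convergence. Alternatively, since $g$ takes only finitely many values on $\mathcal F_z$, pointwise convergence on each atom suffices directly.

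The main obstacle is the Tauberian step for the progression sums with the quadratic-character poles. It requires checking that the non-square part of the Euler product converges absolutely slightly to the left of $1/2$ uniformly in the class. I would first try to avoid analytic continuation: write $h=\mathbf 1_{\square}*\,\tilde h$, with $\tilde h$ having absolutely convergent $\sum|\tilde h(n)|n^{-1/2+\delta}$, and count squares $b^2$ in progressions directly.
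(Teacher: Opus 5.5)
Your proof is correct. Its skeleton matches the paper's:
\begin{itemize}
\item condition on $\mathcal F_z$, so that each factor at $p>z$ becomes its mean $\eta_{p,e}$, which does not depend on $\xi$;
\item observe that the low-prime factors see $r$ only through finitely many residue classes;
\item get a pointwise limit for each configuration $(g_p)_{p\le z}$;
\item pass to $L^q$ because $\mathcal F_z$ has finitely many atoms.
\end{itemize}

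The difference is in how the arithmetic asymptotic is organized.

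\textbf{The paper's route.} It writes $r=ab^2$ and groups by $r\bmod M_z$, where $M_z=\prod_{p\le z}p^4$. It then fixes the squarefree $a$ and treats the linear $b$-sum by the convolution $w_a=1*h_a$, with $\sum_n|h_a(n)|/n<\infty$. This forces a second limiting step: dominated convergence over $a$, using $\alpha_p\ll p^{-2}$.

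\textbf{Your factorization.} Splitting $r=r_zs$ into $z$-smooth and $z$-rough parts is cleaner. The modulus drops to $r_z$, and the classes $s\bmod r_z$ are automatically coprime to $r_z$, so characters are available. The whole rough part $s=ab^2$ is then handled at once.

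\textbf{The character/Tauberian route.} This gives an explicit description of the limit through the poles at $w=1/2$ of $L(2w,\chi^2)$ for quadratic $\chi$. It shows that the limit depends on $c$ only through its quadratic character values. The cost is analytic continuation plus Wiener--Ikehara, used in the variable $u=2w$ and in the form that allows zero residue. It also yields only the asymptotic, with no rate.

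\textbf{Your fallback.} The decomposition $h=\mathbf 1_\square*\tilde h$ is the most economical version of either argument, and I would present that one. A single convolution with $\sum_n|\tilde h(n)|n^{-1/2+\delta}<\infty$ does three things:
\begin{itemize}
\item it absorbs both the $a$-part and the $1-1/p$ correction on squares;
\item it gives an error $O(T^{1/2-\delta})$;
\item it removes the separate dominated-convergence step over $a$.
\end{itemize}
In this sense it is essentially the paper's argument, reorganized in one variable.

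\textbf{A small slip.} The factor $\mu^2(\operatorname{rad} s)$ in your definition of $h$ is identically $1$. What you actually need is $h$ supported on cube-free $s$ (that is, $s\in\mathscr R$) with $(s,\lambda)=1$. That is what your Euler-factor computation uses.
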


\begin{proof}
Recall the notation $\eta_{p,e}=\mathbb E X_{p,e}$. By independence of the
prime coordinates, conditioning on $\mathcal F_z$ replaces each local
factor at $p>z$ by its expectation. Set
\[
\kappa_z:=\prod_{p>z}\eta_{p,0},
\qquad
\alpha_p:=\frac{\eta_{p,1}}{\eta_{p,0}},
\qquad
\beta_p:=\frac{\eta_{p,2}}{\eta_{p,0}},
\]
and, for squarefree $a,b$,
\[
\alpha(a):=\prod_{\substack{p\mid a\\p>z}}\alpha_p,
\qquad
\beta(b):=\prod_{\substack{p\mid b\\p>z}}\beta_p.
\]
Then
\[
A_{g,\lambda}^{(z)}(Y)
=
\kappa_z
\sum_{\substack{ab^2\leq Y\\
a,b\ \mathrm{squarefree}\\
(a,b)=(ab,\lambda)=1}}
\prod_{p\leq z}
X_{p,\nu_p(ab^2)}
\!\left(g_p;\xi_p(ab^2,\lambda)\right)
\alpha(a)\beta(b).
\]
By \lemref{lem:local-coefficient-moment-bounds},
$\kappa_z$ converges absolutely and
\[
\alpha_p\ll_d p^{-2},
\qquad
\beta_p=1-\frac1p+O_d(p^{-2}).
\]
For fixed $z$, the product over $p\leq z$ depends on $r=ab^2$ through
only finitely many congruence classes. Indeed, if
$r=p^em$, with $e=\nu_p(r)\in\{1,2\}$ and $(m,p)=1$, then the
$p$-primary component of $\lambda/r$ is
\[
\frac{\lambda\overline m}{p^e}\in\mathbb Q/\mathbb Z,
\]
where $\overline m$ is the inverse of $m\bmod p^e$. Hence the
$p$-local factor is determined by $r\bmod p^{2e}$. Thus all factors
with $p\leq z$ are determined by
\[
r\bmod M_z,
\qquad
M_z:=\prod_{p\leq z}p^4.
\]
Consequently, grouping the terms according to the residue
class $c\equiv ab^2\bmod M_z$, we can express
$A_{g,\lambda}^{(z)}(Y)$ as a finite sum of the form
\[
\sum_{c\bmod M_z}
\gamma_{g,z,\lambda}(c)
\sum_{\substack{
ab^2\leq Y\\
a,b\ \mathrm{squarefree}\\
(a,b)=(ab,\lambda)=1\\
ab^2\equiv c\bmod M_z
}}
\alpha(a)\beta(b),
\]
where $\gamma_{g,z,\lambda}(c)$ is the non-negative
$\mathcal F_z$-measurable coefficient determined by
the corresponding product of local factors.
Fix such a residue class $c$ and a squarefree $a$. Then
$ab^2\equiv c\bmod{M_z}$ restricts $b$ to finitely many residue classes
modulo $M_z$. Define
\[
w_a(n):=
\mu^2(n)\mathds{1}_{(n,a\lambda)=1}
\prod_{\substack{p\mid n\\p>z}}\beta_p.
\]
The function $w_a$ is multiplicative. Writing $w_a=1*h_a$, for $p>z$ with $p\nmid a\lambda$
we have
\[
h_a(p)=\beta_p-1,\qquad
h_a(p^2)=-\beta_p,\qquad
h_a(p^j)=0\quad(j\ge3).
\]
For $p\mid a\lambda$, we instead have
$h_a(p)=-1$ and $h_a(p^2)=0$, while for
$p\le z$ with $p\nmid a\lambda$, we have
$h_a(p)=0$ and $h_a(p^2)=-1$. Hence
\begin{equation}
\sum_{n\geq1}\frac{|h_a(n)|}{n}
\ll_{d,z,\lambda}C^{\omega(a)}.   
\label{eq:abs-convergence-h_a}
\end{equation}
For a fixed residue class \(u\bmod M_z\), convolution gives
\[
\sum_{\substack{b\leq B\\ b\equiv u\!\!\!\bmod{M_z}}}w_a(b)
=
\sum_{d\leq B}h_a(d)
\#\left\{m\leq\frac Bd:
dm\equiv u\bmod{M_z}\right\}.
\]
Writing \(g=(d,M_z)\), the inner count is \(0\) unless \(g\mid u\), and otherwise equals
\[
\frac{g}{M_z}\frac Bd+O_z(1).
\]
Since \eqref{eq:abs-convergence-h_a} also implies
\[
\sum_{d\leq B}|h_a(d)|=o_a(B),
\]
we obtain
\[
\sum_{\substack{
1\le b\le B\\
ab^2\equiv c\bmod{M_z}
}}
w_a(b)
=
\delta_c(a,\lambda)B+o_a(B)
\]
for some \(\delta_c(a,\lambda)\geq0\). To justify summing the asymptotic over $a$, we also need
a uniform bound. Since the inner count is $\ll_z B/d$,
we have
\[
\sum_{\substack{
1\le b\le B\\
ab^2\equiv c\bmod{M_z}
}}
w_a(b)
\ll_{d,z,\lambda}C^{\omega(a)}B.
\]
uniformly for \(B\geq1\).
Taking $B=(Y/a)^{1/2}$ gives
\[
Y^{-1/2}\alpha(a)
\sum_{\substack{b\leq(Y/a)^{1/2}\\
ab^2\equiv c\!\!\!\bmod{M_z}}}
w_a(b)
\longrightarrow
\frac{\alpha(a)}{\sqrt a}\,\delta_c(a,\lambda),
\]
with uniform majorant
\[
\ll_{d,z,\lambda}
\frac{|\alpha(a)|C^{\omega(a)}}{\sqrt a}.
\]
This is summable over squarefree $a$, since
\[
\sum_{\substack{a\geq1\\a\ \mathrm{squarefree}}}
\frac{|\alpha(a)|C^{\omega(a)}}{\sqrt a}
\leq
\prod_{p\le z}\left(1+\frac{C}{p^{1/2}}\right)\prod_{p>z}
\left(1+\frac{C|\alpha_p|}{p^{1/2}}\right)
<\infty.
\]
Dominated convergence therefore applies to the $a$-sum. Summing over
the finitely many classes $c\bmod M_z$ yields a non-negative,
$\mathcal F_z$-measurable limit
\[
Y^{-1/2}A_{g,\lambda}^{(z)}(Y)
\longrightarrow
D_g^{(z)}(\lambda)
\]
pointwise in $g$. Finally, for fixed $z$ both sides depend on $g$ only through the finite
coordinate space
\[
\prod_{p\leq z}V_{p^2}^{\sharp}.
\]
Hence pointwise convergence is uniform, and therefore also holds in
$L^q$ for every $q\geq1$.
\end{proof}

We shall state the following standard form of Doob's martingale convergence theorem.
\begin{proposition}[Doob's martingale convergence theorem]
\label{prop:doob-martingale-convergence}
Let $(\mathcal G_n)_{n\geq1}$ be an increasing sequence of
$\sigma$-algebras, and put
\[
\mathcal G_\infty
:=
\sigma\bigg(\bigcup_{n\geq1}\mathcal G_n\bigg).
\]
For any random variable $X\in L^p$ with $p>1$, we have
\[
\mathbb E[X\mid\mathcal G_n]
\longrightarrow
\mathbb E[X\mid\mathcal G_\infty]
\]
almost surely and in $L^p$. In particular, if $X$ is
$\mathcal G_\infty$-measurable, then
\[
\mathbb E[X\mid\mathcal G_n]
\longrightarrow X
\]
almost surely and in $L^p$.
\end{proposition}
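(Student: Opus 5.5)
We will prove both convergences by approximating with a function of finitely many $\sigma$-algebras, which avoids upcrossing arguments. Put $X_n:=\mathbb E[X\mid\mathcal G_n]$ and $X_\infty:=\mathbb E[X\mid\mathcal G_\infty]$. Since $\mathcal G_n\subseteq\mathcal G_\infty$, the tower property gives $X_n=\mathbb E[X_\infty\mid\mathcal G_n]$. Hence it suffices to prove the result for a $\mathcal G_\infty$-measurable $X\in L^p$, which is also the final assertion. By conditional Jensen, conditional expectation is a contraction on $L^p$: $\|\mathbb E[W\mid\mathcal G_n]\|_p\le\|W\|_p$.

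The first step is a density statement: $\bigcup_n L^p(\mathcal G_n)$ is dense in $L^p(\mathcal G_\infty)$. Let $\mathcal D$ be the class of sets $A\in\mathcal G_\infty$ such that, for every $\varepsilon>0$, some $B\in\bigcup_n\mathcal G_n$ satisfies $\mathbb P(A\triangle B)<\varepsilon$. Then $\mathcal D$ contains the algebra $\bigcup_n\mathcal G_n$, which is an algebra because the $\mathcal G_n$ increase. Moreover, $\mathcal D$ is closed under complements and under increasing unions, by continuity of measure. So $\mathcal D$ is a monotone class, and the monotone class theorem gives $\mathcal D=\mathcal G_\infty$. Since $\mathcal G_\infty$-simple functions are dense in $L^p(\mathcal G_\infty)$, and $\|\mathds 1_A-\mathds 1_B\|_p=\mathbb P(A\triangle B)^{1/p}$, density follows.

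For $L^p$ convergence, fix $\varepsilon>0$ and choose $Y\in L^p(\mathcal G_N)$ with $\|X_\infty-Y\|_p<\varepsilon$. For $n\ge N$ we have $\mathbb E[Y\mid\mathcal G_n]=Y$, so
\[
\|X_n-X_\infty\|_p\le\|\mathbb E[X_\infty-Y\mid\mathcal G_n]\|_p+\|Y-X_\infty\|_p\le2\varepsilon .
\]

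For almost sure convergence, we use the same decomposition together with Doob's maximal inequality. The process $M_n:=\mathbb E[X_\infty-Y\mid\mathcal G_n]$ is a martingale, and the maximal inequality gives
\[
\mathbb P\Big(\sup_n|M_n|>t\Big)\le t^{-p}\sup_n\|M_n\|_p^p\le t^{-p}\varepsilon^p .
\]
This inequality is proved for finitely many $n$ by the usual stopping-time argument and then extended by monotone convergence. For $n\ge N$ we have $|X_n-X_\infty|\le|M_n|+|Y-X_\infty|$. Therefore
\[
\mathbb P\Big(\limsup_n|X_n-X_\infty|>2t\Big)\le t^{-p}\varepsilon^p+\mathbb P(|Y-X_\infty|>t)\le2t^{-p}\varepsilon^p .
\]
Letting $\varepsilon\to0$ and then $t\to0$ along a countable sequence gives $X_n\to X_\infty$ almost surely.

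The main obstacle is the density step, the only place where $\mathcal G_\infty=\sigma(\bigcup_n\mathcal G_n)$ is used. The maximal inequality is standard and we will simply cite it. The hypothesis $p>1$ is not needed for the argument above, since the weak-type maximal inequality also holds for $p=1$. It matters only if one instead wants the strong maximal bound $\|\sup_n|X_n|\|_p\ll_p\|X\|_p$.
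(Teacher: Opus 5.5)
Your proof is correct, but it takes a different route from the paper, whose proof is a citation. The paper invokes Billingsley's Theorem~35.6 for the almost sure convergence. That is the classical route: upcrossing-based convergence of the martingale, then a separate argument identifying the limit as $\mathbb E[X\mid\mathcal G_\infty]$. The paper then obtains $L^p$ convergence from the almost sure one via conditional Jensen; for instance, $|X_n|^p\le\mathbb E[|X|^p\mid\mathcal G_n]$ gives uniform integrability.

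You reverse the order. You get $L^p$ convergence first, from the density of $\bigcup_n L^p(\mathcal G_n)$ in $L^p(\mathcal G_\infty)$ together with contractivity of conditional expectation. You then extract almost sure convergence by a Banach-principle argument from the weak-type maximal inequality.

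What your route buys:
\begin{itemize}
\item It avoids upcrossings entirely and never has to identify a limit after the fact.
\item It works for every $1\le p<\infty$.
\item It shows that the hypothesis $\mathcal G_\infty=\sigma(\bigcup_n\mathcal G_n)$ enters only through the density step.
\end{itemize}
The cost is that it handles only closed martingales $\mathbb E[X\mid\mathcal G_n]$, not arbitrary $L^1$-bounded ones. That generality is not needed here.

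The paper uses only the $L^p$ half of this proposition: $\mathbb E[A_{g,\lambda}(Y)\mid\mathcal F_P]\to A_{g,\lambda}(Y)$ in $L^{2q}$, in the proof of \lemref{lem:prime-tail-uniform-approximation}. So your density-plus-contraction step alone, with no maximal inequality and no martingale theory, already suffices for the paper's application.

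One small caveat. Your identity $\|\mathds{1}_A-\mathds{1}_B\|_p=\mathbb P(A\triangle B)^{1/p}$ and the density step require $p<\infty$. This is not a gap, because the $L^p$ assertion of the statement itself fails for $p=\infty$ (dyadic filtration, $X=\mathds{1}_{[0,x_0)}$ with $x_0$ non-dyadic). The proposition should be read with $1<p<\infty$.
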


\begin{proof}
The almost-sure convergence is Doob's martingale convergence
theorem \cite[Theorem 35.6]{key-18}; the $L^p$ convergence is its
standard $L^p$ consequence, using conditional Jensen's inequality.
\end{proof}

We next show that the contribution of the prime coordinates $p>z$ is
uniformly negligible after normalization.

\begin{lemma}\label{lem:prime-tail-uniform-approximation}
For every fixed integer $q\geq1$, and uniformly in $\lambda$
\[
\lim_{z\to\infty}
\sup_{Y\geq1}
\left\|
Y^{-1/2}
\left(
A_{g,\lambda}(Y)-A_{g,\lambda}^{(z)}(Y)
\right)
\right\|_{L^q}
=
0
\]
\end{lemma}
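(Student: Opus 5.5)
The plan is to peel off the primes $p>z$ one at a time, writing the difference as a sum of martingale increments and bounding each increment separately. Order the primes $p_1<p_2<\cdots$ and, for a prime $p>z$ with predecessor $p^-$, put
\[
\Delta_p(Y):=A^{(p)}_{g,\lambda}(Y)-A^{(p^-)}_{g,\lambda}(Y).
\]
For fixed $Y$ only finitely many $r\le Y$ occur, but infinitely many primes $p\nmid r$ still enter through the factors $X_{p,0}$. So I first show $A^{(w)}_{g,\lambda}(Y)\to A_{g,\lambda}(Y)$ in $L^q$ as $w\to\infty$ for fixed $Y$. This follows from \propref{prop:doob-martingale-convergence}, since $A_{g,\lambda}(Y)$ is $\sigma(\bigcup_w\mathcal F_w)$-measurable and lies in every $L^q$ by \lemref{lem:partial-sum-moment-bound}. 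Hence
\[
A_{g,\lambda}(Y)-A^{(z)}_{g,\lambda}(Y)=\sum_{p>z}\Delta_p(Y)
\]
in $L^q$. It suffices to treat even $q\ge2$, since smaller $q$ follow by monotonicity of $L^q$ norms.

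Next I express each increment through the prime-omitted sums. Conditioning on $\mathcal F_p$ versus $\mathcal F_{p^-}$ differs only in whether the $p$-factor $X_{p,\nu_p(r)}$ is kept or replaced by its mean $\eta_{p,\nu_p(r)}$. Write $B^{e}_p:=\mathbb E[A^{[p],e}_{g,\lambda}(Y)\mid\mathcal F_{p^-}]$, which is independent of $g_p$. Then
\[
\Delta_p(Y)=\sum_{e\in\{0,1,2\}}\bigl(X_{p,e}-\eta_{p,e}\bigr)B^e_p,
\]
where for $e\in\{1,2\}$ the variable $X_{p,e}$ depends on $\xi_p(r,\lambda)$, i.e.\ on $r/p^e\bmod p^e$. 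To handle this dependence I split $B^e_p$ according to the residue class of $r/p^e$ modulo $p^e$ into at most $p^e$ pieces $B^{e,u}_p$, so that
\[
\Delta_p=\sum_e\sum_u (X_{p,e}(\cdot;\xi_u)-\eta_{p,e})B^{e,u}_p.
\]
Independence of $g_p$ from $\mathcal F_{p^-}$ together with \eqref{eq:CenteredExpe} then gives the bounds
\[
\|(X_{p,0}-\eta_{p,0})B^0_p\|_q\ll p^{-1-2/q}\,\|B^0_p\|_q\ll p^{-1-2/q}Y^{1/2},
\]
using \lemref{lem:prime-omitted-partial-sum-moment-bound} and conditional Jensen. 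For $e=1$ I would bound the whole term by $\sup_\xi X_{p,1}\cdot A^{[p],1}$-type quantities instead of splitting into residues. Since $X_{p,1}\le p^{-2}(d+pZ_g)^2$, we have $\mathbb E\sup_\xi X_{p,1}^q\ll p^{-2}$, which gives $\ll p^{-2/q}p^{-1/2}Y^{1/2}$. The case $e=2$ similarly gives $\ll p^{-1}Y^{1/2}$, using $|X_{p,2}|\le d^2$.

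The main obstacle is summing over $p$. The triangle inequality applied to $\sum_p p^{-1}$ or $\sum_p p^{-1/2-2/q}$ diverges, so I must exploit orthogonality of the increments. For this I apply Burkholder's (or Rosenthal's) inequality for the martingale differences $\Delta_p$:
\[
\Bigl\|\sum_{p>z}\Delta_p\Bigr\|_q\ll_q\Bigl\|\Bigl(\sum_{p>z}\Delta_p^2\Bigr)^{1/2}\Bigr\|_q\le\Bigl(\sum_{p>z}\|\Delta_p\|_q^2\Bigr)^{1/2}\quad(q\ge2).
\]
This requires square-summable bounds, and I am not sure the $e=1$ bound $p^{-1-4/q}$ (after squaring) is square-summable for large $q$. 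The $e=2$ term gives $p^{-2}$ after squaring, which is fine, and the $e=0$ term gives $p^{-2-4/q}$, also fine. For $e=1$ I would improve the estimate by noting that $A^{[p],1}$ is tiny: it carries the weight $\alpha_p\ll p^{-2}$ after conditioning. More precisely, if $\|\cdot\|_q$ of $(X_{p,1}-\eta)$ is $p^{-2/q}$ while $\|B^1_p\|_q\ll p^{-1/2}Y^{1/2}$, the squared sum is $\sum p^{-1-4/q}<\infty$, so it is square-summable after all. The remaining subtlety is the residue splitting for $e=2$, which I avoid through the pointwise bound $|X_{p,2}|\le d^2$ and nonnegativity. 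Thus the tail satisfies
\[
\Bigl\|Y^{-1/2}\sum_{p>z}\Delta_p\Bigr\|_q\ll\Bigl(\sum_{p>z}p^{-1-4/q}\Bigr)^{1/2}\to0,
\]
uniformly in $Y$ and $\lambda$, as $z\to\infty$.
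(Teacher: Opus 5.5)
Your proposal is correct and follows the paper's proof essentially step for step. You use Doob's theorem to write the tail as $\sum_{p>z}\Delta_p(Y)$, split each increment by $\nu_p(r)\in\{0,1,2\}$, bound the pieces using independence of $g_p$ from $\mathcal F_{p^-}$ together with \eqref{eq:CenteredExpe} and \lemref{lem:prime-omitted-partial-sum-moment-bound}, and finish with Burkholder plus Minkowski on a square-summable series. Your pointwise majorant $\sup_\xi|X_{p,e}-\eta_{p,e}|\cdot B^e_p$, which relies on nonnegativity of the conditioned prime-omitted coefficients, is a slightly cleaner way to handle the frequency dependence than the paper's uniform bound on mixed centered moments.

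The aside that $B^1_p$ carries a weight $\alpha_p$ is mistaken, because the $p$-factor has been omitted from it. It is also not needed, since $\sum_p p^{-1-4/q}$ converges for every fixed $q$.
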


\begin{proof}
It is enough to prove the assertion in $L^{2q}$. For a prime $p$, put
\[
\mathcal F_{p^-}:=\sigma(g_\ell:\ell<p),
\qquad
\mathcal F_p:=\sigma(g_\ell:\ell\leq p),
\]
and define the normalized martingale difference
\[
\Delta_p(Y)
:=
Y^{-1/2}
\left(
\mathbb E[A_{g,\lambda}(Y)\mid\mathcal F_p]
-
\mathbb E[A_{g,\lambda}(Y)\mid\mathcal F_{p^-}]
\right).
\]
By \lemref{lem:partial-sum-moment-bound},
$A_{g,\lambda}(Y)\in L^{2q}$. Since the prime-coordinate
$\sigma$-algebras generate the full product $\sigma$-algebra,
\propref{prop:doob-martingale-convergence} gives
\[
\mathbb E[A_{g,\lambda}(Y)\mid\mathcal F_P]
\longrightarrow
A_{g,\lambda}(Y)
\]
in $L^{2q}$ as the prime cutoff $P\to\infty$. Hence
\begin{equation}
Y^{-1/2}
\left(
A_{g,\lambda}(Y)-A_{g,\lambda}^{(z)}(Y)
\right)
=
\sum_{p>z}\Delta_p(Y)
\label{eq:MrtingaleConvergence}
\end{equation}
with convergence in $L^{2q}$.
We estimate the three possible local exponents separately. Write
\[
\Delta_p
=
\Delta_{p,0}+\Delta_{p,1}+\Delta_{p,2},
\]
where $\Delta_{p,e}$ contains the centered local factor
$X_{p,e}-\eta_{p,e}$. For $e=0$,
\eqref{eq:CenteredExpe} gives
\[
\mathbb E
|X_{p,0}-\eta_{p,0}|^{2q}
\ll_{d,q}p^{-2q-2}.
\]
Writing
\[
\Delta_{p,0}(Y)
=
Y^{-1/2}
\left(X_{p,0}-\eta_{p,0}\right)B_p(Y),
\]
where
\[
B_p(Y)
:=
\mathbb E\!\left[
A_{g,\lambda}^{[p],0}(Y)\mid\mathcal F_{p^-}
\right],
\]
the factor \(B_p(Y)\) is independent of \(g_p\). By conditional expectation and \lemref{lem:prime-omitted-partial-sum-moment-bound}, $\mathbb E|B_p(Y)|^{2q}
\ll_{d,q}Y^q.$
Therefore,
\[
\|\Delta_{p,0}(Y)\|_{L^{2q}}^{2q}
\ll_{d,q}p^{-2q-2}.
\]
For $e=1$, expanding the centered factors and using
\lemref{lem:local-coefficient-moment-bounds} gives, uniformly in the local frequencies,
\[
\mathbb E
\prod_{\nu=1}^{2q}
\left|
X_{p,1}(\xi_\nu)-\eta_{p,1}
\right|
\ll_{d,q}p^{-2}.
\]
Every denominator appearing in this part has $p\mid a_i$, so \lemref{lem:prime-omitted-partial-sum-moment-bound} gives the additional
factor $p^{-q}$. Hence
\[
\|\Delta_{p,1}(Y)\|_{L^{2q}}^{2q}
\ll_{d,q}p^{-q-2}.
\]
For $e=2$, the centered local factors have uniformly bounded mixed
moments, while $p\mid b_i$ for every denominator in the expanded
$2q$th moment. By \lemref{lem:prime-omitted-partial-sum-moment-bound}
we therefore get
\[
\|\Delta_{p,2}(Y)\|_{L^{2q}}^{2q}
\ll_{d,q}p^{-2q}.
\]
Consequently,
\[
\|\Delta_p(Y)\|_{L^{2q}}
\ll_{d,q}
p^{-1-1/q}
+
p^{-1/2-1/q}
+
p^{-1}.
\]
Applying the scalar Burkholder square-function inequality
\cite[Theorem 3.2]{key-22} to a finite sum and then Minkowski's
inequality in $L^q$ gives
\[
\left\|
\sum_{z<p\leq P}\Delta_p(Y)
\right\|_{L^{2q}}
\ll_q
\left(
\sum_{z<p\leq P}
\|\Delta_p(Y)\|_{L^{2q}}^2
\right)^{1/2}.
\]
Letting $P\to\infty$ in view of
\eqref{eq:MrtingaleConvergence}, we obtain uniformly in $Y$ and
$\lambda$
\[
\left\|
\sum_{p>z}\Delta_p(Y)
\right\|_{L^{2q}}
\ll_{d,q}
\left(
\sum_{p>z}
\left(
p^{-2-2/q}
+
p^{-1-2/q}
+
p^{-2}
\right)
\right)^{1/2}
=o_z(1).
\]
Together with \eqref{eq:MrtingaleConvergence}, this proves the
assertion in $L^{2q}$ and hence also in $L^q$.
\end{proof}

Lemmas \ref{lem:finite-prime-partial-sum-limit} and \ref{lem:prime-tail-uniform-approximation} now combine to
give the normalized limit of the full coefficient partial sums. This
is the intermediate random quantity from which $C(g)$ will be
constructed.

\begin{proposition}\label{prop:partial-sum-limits}
For every $\lambda\geq1$, there exists a non-negative random variable
$D_g(\lambda)$ such that, for every fixed integer $k\geq1$,
\[
Y^{-1/2}A_{g,\lambda}(Y)
\longrightarrow
D_g(\lambda)
\]
in $L^k$. Moreover,
\[
\sup_{\lambda\geq1}\sup_{Y\geq1}
\left\|
Y^{-1/2}A_{g,\lambda}(Y)
\right\|_{L^k}
\ll_{d,k}1,
\]
and consequently
\[
\sup_{\lambda\geq1}
\|D_g(\lambda)\|_{L^k}
\ll_{d,k}1.
\]
\end{proposition}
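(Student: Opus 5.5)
The plan is to combine \lemref{lem:finite-prime-partial-sum-limit} with \lemref{lem:prime-tail-uniform-approximation} through a standard $\varepsilon/3$ Cauchy argument in $L^k$. Fix $\lambda\ge1$ and $k\ge1$, and write
\[
a_Y:=Y^{-1/2}A_{g,\lambda}(Y),\qquad a_Y^{(z)}:=Y^{-1/2}A_{g,\lambda}^{(z)}(Y).
\]

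\textbf{Uniform bound.} Jensen's inequality (monotonicity of $L^k$ norms on a probability space) gives $\|a_Y\|_{L^k}\le\|a_Y\|_{L^{\max(k,2)}}$. \lemref{lem:partial-sum-moment-bound} then yields
\[
\sup_{\lambda\ge1}\sup_{Y\ge1}\|a_Y\|_{L^k}\ll_{d,k}1 .
\]
This settles the second displayed assertion, since that lemma is uniform in both $\lambda$ and $Y$.

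\textbf{Cauchy property.} Given $\varepsilon>0$, use \lemref{lem:prime-tail-uniform-approximation} to choose $z$ with
\[
\sup_{Y\ge1}\|a_Y-a_Y^{(z)}\|_{L^k}<\varepsilon .
\]
For this fixed $z$, \lemref{lem:finite-prime-partial-sum-limit} says that $a_Y^{(z)}\to D_g^{(z)}(\lambda)$ in $L^k$. In particular $(a_Y^{(z)})_Y$ is Cauchy, so $\|a_Y^{(z)}-a_{Y'}^{(z)}\|_{L^k}<\varepsilon$ for all $Y,Y'\ge Y_0(\varepsilon,z)$. The triangle inequality gives
\[
\|a_Y-a_{Y'}\|_{L^k}<3\varepsilon\qquad (Y,Y'\ge Y_0).
\]
Hence $(a_Y)$ is Cauchy in $L^k$, and completeness produces a limit $D_g^{[k]}(\lambda)$.

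\textbf{Independence of $k$.} Convergence in $L^{k'}$ with $k'\ge k$ implies convergence in $L^k$, and $L^k$ limits are unique almost everywhere. So the limits for different $k$ agree a.e., and we can define a single $D_g(\lambda)$. Equivalently, take $D_g(\lambda)$ as the $L^1$ limit and note that it coincides with the $L^k$ limit for every $k$. As a by-product,
\[
\|D_g(\lambda)-D_g^{(z)}(\lambda)\|_{L^k}\le\varepsilon ,
\]
so also $D_g^{(z)}(\lambda)\to D_g(\lambda)$ in $L^k$ as $z\to\infty$. This identification may be useful later.

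\textbf{Non-negativity.} Each $a_Y\ge0$, because the coefficients $c_{g,\lambda}(r)$ are non-negative. Along a subsequence $a_Y\to D_g(\lambda)$ almost surely, so $D_g(\lambda)\ge0$ a.s.; we redefine it on a null set to be non-negative everywhere.

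\textbf{Bound on the limit.} Fatou's lemma along that a.s.-convergent subsequence, or simply continuity of the norm under $L^k$ convergence, gives
\[
\|D_g(\lambda)\|_{L^k}\le\sup_Y\|a_Y\|_{L^k}\ll_{d,k}1,
\]
uniformly in $\lambda$.

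\textbf{Main obstacle.} None of these steps is difficult once the two lemmas are available. The key input is that the tail bound in \lemref{lem:prime-tail-uniform-approximation} holds uniformly in $Y$, which is exactly what allows the limits in $z$ and in $Y$ to be interchanged. The only care required is in applying \lemref{lem:partial-sum-moment-bound} for $k=1$, which is handled by passing to $q=2$ as above.
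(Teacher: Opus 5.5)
Your proof is correct and uses the same two inputs as the paper, \lemref{lem:finite-prime-partial-sum-limit} and the $Y$-uniform tail bound of \lemref{lem:prime-tail-uniform-approximation}, in the same triangle-inequality and completeness argument. The only difference is which family you show to be Cauchy: the paper shows $D_g^{(z)}(\lambda)$ is Cauchy in $z$ and defines $D_g(\lambda)$ as its limit, while you show $Y^{-1/2}A_{g,\lambda}(Y)$ is Cauchy in $Y$ and then recover $D_g^{(z)}(\lambda)\to D_g(\lambda)$ in $L^k$, which is the fact the appendix later relies on.
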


\begin{proof}
Fix $k\geq1$. By \lemref{lem:finite-prime-partial-sum-limit},
\[
Y^{-1/2}A_{g,\lambda}^{(z)}(Y)
\longrightarrow
D_g^{(z)}(\lambda)
\]
in $L^k$. If $z_1>z_2$, then
\begin{align*}
\left\|
D^{(z_1)}_g(\lambda)-D^{(z_2)}_g(\lambda)
\right\|_{L^k}
\leq{}&
\limsup_{Y\to\infty}
\left\|
Y^{-1/2}
\left(A_{g,\lambda}(Y)-A_{g,\lambda}^{(z_1)}(Y)\right)
\right\|_{L^k}\\
&+
\limsup_{Y\to\infty}
\left\|
Y^{-1/2}
\left(A_{g,\lambda}(Y)-A_{g,\lambda}^{(z_2)}(Y)\right)
\right\|_{L^k}
\end{align*}
By \lemref{lem:prime-tail-uniform-approximation}, the right-hand
side tends to zero as $z_1,z_2\to\infty$. Thus
$D_g^{(z)}(\lambda)$ is Cauchy in $L^k$; denote its limit by
$D_g(\lambda)$.
Now
\begin{align*}
\left\|
Y^{-1/2}A_{g,\lambda}(Y)-D_g(\lambda)
\right\|_{L^k}
\leq{}&
\left\|
Y^{-1/2}(A_{g,\lambda}(Y)-A_{g,\lambda}^{(z)}(Y))
\right\|_{L^k}\\
&+
\left\|
Y^{-1/2}A_{g,\lambda}^{(z)}(Y)-D_g^{(z)}(\lambda)
\right\|_{L^k}
+
\left\|
D_g^{(z)}(\lambda)-D_g(\lambda)
\right\|_{L^k}.
\end{align*}
First letting $Y\to\infty$ and then $z\to\infty$, using
\lemref{lem:finite-prime-partial-sum-limit} and \lemref{lem:prime-tail-uniform-approximation}, proves
\[
Y^{-1/2}A_{g,\lambda}(Y)
\longrightarrow
D_g(\lambda)
\]
in $L^k$. The limit is non-negative because the approximating
variables are non-negative. The limits obtained for different values
of $k$ agree by uniqueness of limits in probability, so
$D_g(\lambda)$ is independent of the chosen moment exponent.

For $k\geq2$, the uniform norm bound follows directly from
\lemref{lem:partial-sum-moment-bound}; for $k=1$, use its $q=2$ case and
$\|\cdot\|_{L^1}\leq\|\cdot\|_{L^2}$. Passing to the limit gives the
same uniform bound for $D_g(\lambda)$.
\end{proof}

We have now determined the asymptotic behavior of the coefficient
partial sums $A_{g,\lambda}(Y)$. The following lemma transfers this
asymptotic to the specific weight $f$ appearing in
\lemref{lem:one-pair-sum-decomposition}, following the
regular-variation argument of \cite[Proposition~3.5]{key-14}.
Recall that $f(t)=t^{-2}|V(1/t)|^2$, and put
\[
\mathcal T_{\lambda,T}(f)
:=
T^{-1/2}
\sum_{\substack{r\in\mathscr R\\(r,\lambda)=1}}
c_{g,\lambda}(r)f(r/T).
\]

\begin{lemma}\label{lem:weighted-coefficient-sum-limit}
For every fixed $k\ge1$ and $\lambda\ge1$,
\begin{equation}
\mathcal T_{\lambda,T}(f)
\longrightarrow
\frac{D_g(\lambda)}{2\pi},
\label{eq:weighted-coefficient-limit}
\end{equation}
in $L^k$, as $T\to\infty$. Moreover,
\[
\sup_{\lambda\ge1}\sup_{T\ge1}
\|\mathcal T_{\lambda,T}(f)\|_{L^k}
\ll_{d,k}1.
\]
\end{lemma}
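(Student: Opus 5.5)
We will carry this out by Abel (partial) summation. The goal is to convert the weighted sum into an integral of the normalized partial sums $Y^{-1/2}A_{g,\lambda}(Y)$, whose $L^k$ limit is $D_g(\lambda)$ by \propref{prop:partial-sum-limits}. Then we pass to the limit using the uniform bound $\sup_{\lambda,Y}\|Y^{-1/2}A_{g,\lambda}(Y)\|_{L^k}\ll_{d,k}1$.

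\textbf{Summation by parts.} First record the properties of $f(t)=t^{-2}|V(1/t)|^2=\sin^2(\pi/t)/\pi^2$. It is smooth on $(0,\infty)$ and bounded, with $f(t)\sim t^{-2}$ as $t\to\infty$ and $f'(t)\ll t^{-3}$ there. As $t\to0^+$, $f(t)$ is bounded but oscillates, and $f'(t)=-\sin(2\pi/t)/(\pi t^2)$. This last fact is the potential obstacle: $\int_0^1 t^{1/2}|f'(t)|\,dt$ diverges. We therefore cannot naively integrate by parts against $|f'|$ near $0$, and we split the sum at $r=\epsilon T$.

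For $r>\epsilon T$, write
\[
\sum_{r>\epsilon T}c_{g,\lambda}(r)f(r/T)=-A_{g,\lambda}(\epsilon T)f(\epsilon)-\int_{\epsilon T}^\infty A_{g,\lambda}(Y)\,\frac{1}{T}f'(Y/T)\,dY.
\]
The boundary term at infinity vanishes in $L^k$ because $A(Y)\ll Y^{1/2}$ in norm and $f(Y/T)\ll T^2/Y^2$. Substituting $Y=uT$, the normalized version becomes
\[
-\bigl(T^{-1/2}A(\epsilon T)\bigr)f(\epsilon)-\int_\epsilon^\infty u^{1/2}\bigl((uT)^{-1/2}A(uT)\bigr)f'(u)\,du.
\]

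\textbf{Passing to the limit.} By Minkowski's integral inequality, the uniform $L^k$ bound, and $\int_\epsilon^\infty u^{1/2}|f'(u)|\,du<\infty$, dominated convergence in $L^k$ applies pointwise in $u$, since $(uT)^{-1/2}A(uT)\to D_g(\lambda)$ in $L^k$ for each fixed $u$. The limit is
\[
D_g(\lambda)\Bigl(-\epsilon^{1/2}f(\epsilon)-\int_\epsilon^\infty u^{1/2}f'(u)\,du\Bigr)=D_g(\lambda)\cdot\tfrac12\int_\epsilon^\infty u^{-1/2}f(u)\,du,
\]
after integrating by parts back.

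\textbf{The small-$r$ range.} For $r\le\epsilon T$ we bound trivially, using $0\le f\le\pi^{-2}$ and nonnegativity of the coefficients:
\[
T^{-1/2}\sum_{r\le\epsilon T}c_{g,\lambda}(r)f(r/T)\le\pi^{-2}\,T^{-1/2}A_{g,\lambda}(\epsilon T).
\]
Its $L^k$ norm is $\ll\epsilon^{1/2}$ uniformly in $T$ and $\lambda$. Also $\int_0^\epsilon u^{-1/2}f\,du\ll\epsilon^{1/2}$. Letting $T\to\infty$ and then $\epsilon\to0$ gives
\[
\mathcal T_{\lambda,T}(f)\to\frac{D_g(\lambda)}{2}\int_0^\infty u^{-1/2}f(u)\,du
\]
in $L^k$.

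\textbf{The constant.} The substitution $u=1/v$ gives
\[
\int_0^\infty u^{-1/2}f(u)\,du=\pi^{-2}\int_0^\infty v^{-3/2}\sin^2(\pi v)\,dv.
\]
The standard integral $\int_0^\infty v^{-3/2}\sin^2(av)\,dv=\sqrt{2\pi a}\cdot\tfrac12\cdot\sqrt{\ldots}$ should be checked to yield exactly $\pi^{-2}\cdot\pi=1/\pi$, which produces $D_g(\lambda)/(2\pi)$. Indeed, one finds $\int_0^\infty v^{-3/2}\sin^2(\pi v)\,dv=\pi$, consistent with GMR's constant. This is the same computation as in \cite[Proposition~3.5]{key-14}, which we will cite.

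\textbf{Uniform bound.} The uniform bound $\sup_{\lambda,T}\|\mathcal T_{\lambda,T}(f)\|_{L^k}\ll_{d,k}1$ follows from the same decomposition with $\epsilon=1$ together with \propref{prop:partial-sum-limits}'s uniform bound.
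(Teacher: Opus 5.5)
Your argument is correct, but it reaches the weight $f$ by a different mechanism than the paper.

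The paper never differentiates $f$. It reads off $\mathcal T_{\lambda,T}(\mathds{1}_{(u,v]})\to D_g(\lambda)(\sqrt v-\sqrt u)$ in $L^k$ directly from Proposition~\ref{prop:partial-sum-limits}, extends this to step functions, and approximates $f$ uniformly on $(\delta,M]$ by step functions. It then disposes of the two tails separately: near $0$ via $f\ll 1$ and nonnegativity of the coefficients (exactly as you do), and near $\infty$ via a dyadic decomposition and $f\ll t^{-2}$, which gives $M^{-3/2}$.

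You instead integrate by parts on $(\epsilon T,\infty)$ and pass to the limit by Minkowski's integral inequality plus dominated convergence in $u$, using $\int_\epsilon^\infty u^{1/2}|f'(u)|\,du<\infty$. This treats all of $(\epsilon,\infty)$, including the tail at infinity, in one step. Your observation that $t^{1/2}f'(t)\notin L^1(0,1)$ correctly explains why a cut at $\epsilon$ is unavoidable.

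The price is regularity. Your route needs $f\in C^1$ with $u^{1/2}f'\in L^1(\epsilon,\infty)$, which holds here since $f'(u)=-\sin(2\pi/u)/(\pi u^2)$. The paper's route uses only continuity and $f\ll\min(1,t^{-2})$, so it would apply unchanged to rougher weights. Both rely on the same two inputs from Proposition~\ref{prop:partial-sum-limits}: the $L^k$ limit at each fixed scale and the uniform bound on $\|Y^{-1/2}A_{g,\lambda}(Y)\|_{L^k}$.

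Your sentence on the ``standard integral'' is garbled, though the value you reach is right. From $\int_0^\infty u^{-3/2}(1-\cos u)\,du=-\Gamma(-\tfrac12)\cos(\pi/4)=\sqrt{2\pi}$ one gets $\int_0^\infty v^{-3/2}\sin^2(\pi v)\,dv=\pi$. Hence $\int_0^\infty u^{-1/2}f(u)\,du=1/\pi$, which is the paper's identity $2\int_0^\infty f\,d(t^{1/2})=1/\pi$.
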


\begin{proof}
For any function $h$ for which the following series
converges absolutely, set
\[
\mathcal T_{\lambda,T}(h)
:=
T^{-1/2}
\sum_{\substack{r\in\mathscr R\\(r,\lambda)=1}}
c_{g,\lambda}(r)h(r/T).
\]
By \propref{prop:partial-sum-limits},
$T^{-1/2}A_{g,\lambda}(T)\to D_g(\lambda)$ in $L^k$,
and the corresponding $L^k$ norms are uniformly bounded for
$T\ge1$ and $\lambda\ge1$. For $0\le u<v<\infty$,
\[
\mathcal T_{\lambda,T}(\mathds1_{(u,v]})
=
T^{-1/2}\bigl(A_{g,\lambda}(vT)-A_{g,\lambda}(uT)\bigr)
\longrightarrow
D_g(\lambda)(\sqrt v-\sqrt u)
\]
in $L^k$, and hence the same holds for compactly supported step
functions.

Since $|V(y)|\ll\min(1,|y|^{-1})$, the function $f$ is
continuous and satisfies $0\le f(t)\ll\min(1,t^{-2})$ for $t>0$.
Fix $0<\delta<1<M$ and $\varepsilon>0$, and choose a step
function $s$ such that
\[
\sup_{\delta<t\le M}|f(t)-s(t)|\le\varepsilon.
\]
By non-negativity of the coefficients and
\lemref{lem:partial-sum-moment-bound}, we obtain
\[
\begin{aligned}
\left\|
\mathcal T_{\lambda,T}
\bigl((f-s)\mathds1_{(\delta,M]}\bigr)
\right\|_{L^k}
&\le
\varepsilon T^{-1/2}
\|A_{g,\lambda}(MT)\|_{L^k}\\
&\ll_{d,k}\varepsilon M^{1/2},
\end{aligned}
\]
uniformly in $T\ge1$ and $\lambda\ge1$. The corresponding
Stieltjes integrals satisfy the same estimate, so step-function
approximation yields
\[
\mathcal T_{\lambda,T}\bigl(f\mathds1_{(\delta,M]}\bigr)
\longrightarrow
D_g(\lambda)\int_\delta^M f(t)\,d(t^{1/2})
\]
in $L^k$. The two tails satisfy
\[
\left\|\mathcal T_{\lambda,T}
\bigl(f\mathds1_{(0,\delta]}\bigr)\right\|_{L^k}
\ll_{d,k}\delta^{1/2}
\]
and, by a dyadic decomposition,
\[
\begin{aligned}
\left\|\mathcal T_{\lambda,T}
\bigl(f\mathds1_{(M,\infty)}\bigr)\right\|_{L^k}
&\ll_{d,k}
\sum_{j\ge0}(2^jM)^{-2}T^{-1/2}
\left\|A_{g,\lambda}(2^{j+1}MT)\right\|_{L^k}\\
&\ll_{d,k}M^{-3/2}.
\end{aligned}
\]
The corresponding Stieltjes integrals have the same tail bounds.
Letting $\delta\to0$ and $M\to\infty$ gives
\[
\mathcal T_{\lambda,T}(f)
\longrightarrow
D_g(\lambda)\int_0^\infty f(t)\,d(t^{1/2})
\qquad\text{in }L^k.
\]
The same estimates, splitting at $t=1$, give the asserted uniform
$L^k$ bound. Finally,
\[
\begin{aligned}
2\int_0^\infty f(t)\,d(t^{1/2})
&=\int_0^\infty t^{-5/2}\left|V\left(\frac1t\right)\right|^2\,dt\\
&=\int_0^\infty y^{1/2}|V(y)|^2\,dy
=\frac1\pi,
\end{aligned}
\]
which identifies the limiting constant.
\end{proof}

By \lemref{lem:one-pair-sum-decomposition},
\[
x^{-1/2}S(g,x)
=
2\sum_{\lambda\ge1}
\lambda^{-3/2}\mathcal T_{\lambda,x\lambda}(f).
\]
For each fixed $\lambda$, \lemref{lem:weighted-coefficient-sum-limit}
identifies the $L^k$ limit of the corresponding summand. Its
uniform norm bound and $\sum_{\lambda\ge1}\lambda^{-3/2}<\infty$
allow us to sum the limits in $L^k$. Thus \eqref{eq:S-to-C}
holds with
\begin{equation}
C(g)
:=
\frac1\pi\sum_{\lambda\ge1}
\frac{D_g(\lambda)}{\lambda^{3/2}}.
\label{eq:C-from-D}
\end{equation}
The series converges in every fixed $L^k$ by
\propref{prop:partial-sum-limits}; in particular,
$C(g)\ge0$ and $C\in L^k$ for all $k\ge1$.

It remains to translate this $L^k$-convergence into the moment
asymptotic required in \secref{sec:exact-frequency-pairings}. Put
\[
X_x(g):=x^{-1/2}S(g,x).
\]
Then
\[
\|X_x-C\|_{L^k}\longrightarrow0,
\qquad
\sup_x\|X_x\|_{L^k}<\infty.
\]
Using
\[
|u^k-v^k|
\leq
k|u-v|
\left(u^{k-1}+v^{k-1}\right)
\qquad (u,v\geq0),
\]
Hölder's inequality gives
\[
\|X_x^k-C^k\|_{L^1}
\leq
k\|X_x-C\|_{L^k}
\left(
\|X_x\|_{L^k}^{k-1}
+
\|C\|_{L^k}^{k-1}
\right)
\longrightarrow0.
\]
Therefore
\[
\int_{V_\square^\sharp}S(g,x)^k\,dg
=
x^{k/2}
\int_{V_\square^\sharp}C(g)^k\,dg
+
o\left(x^{k/2}\right).
\]
We have thus established \eqref{eq:S-to-C}, and hence
\eqref{eq:S-moment-asymptotic}. The convergence in
\eqref{eq:S-to-C}, together with
\propref{prop:repeated-denominators-negligible}, proved in the next
subsection, provides the two remaining ingredients needed to complete
the proof of \propref{prop:fixed-moment-asymptotics}.
\subsection{Completion of the exactly paired contribution}\label{sec:completion-paired-contribution}

By \propref{prop:local-model-exact-pairing-reduction},
only exactly paired frequency tuples remain.
As explained in \secref{sec:exact-frequency-pairings},
it remains to prove
\propref{prop:repeated-denominators-negligible},
which allows us to remove the distinctness condition
on the pair-denominators in
\eqref{eq:fixed-pairing-distinct-denominators}.
We begin by establishing norm bounds for the
one-pair contributions $A_r(g,x)$.

\begin{lemma}\label{lem:diagonal-block-norm-bounds}
Let $r=ab^{2}\in\mathscr{R}$. For every fixed $q\geq1$ and every $\varepsilon>0$, uniformly in $x\geq1$ and $\sigma\in\mathcal{R}\left(r\right)$,

\[
\left\Vert \Lambda\left(g,\sigma,r\right)\right\Vert _{L^{2q}\left(dg\right)}\ll_{d,q,\varepsilon}\frac{r^{\varepsilon}}{ra^{\frac{1}{q}}},
\]and\[
\left\Vert A_{r}\left(g,x\right)\right\Vert _{L^{q}\left(dg\right)}\ll_{d,q,\varepsilon}\frac{\min\left(r,x\right)}{r^{1-\varepsilon}a^{\frac{2}{q}}}.
\]
\end{lemma}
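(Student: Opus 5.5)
We first prove the bound for $\Lambda$ and then deduce the bound for $A_r$ from it.

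\emph{The bound for $\Lambda$.} By definition, $\Lambda(g,\sigma,r)=\prod_p\Lambda_p(g_p,\sigma,r)$, and the coordinates $g_p$ are independent. Hence
\[
\|\Lambda(g,\sigma,r)\|_{L^{2q}}^{2q}=\prod_p \mathbb E|\Lambda_p|^{2q}.
\]
We treat the three kinds of primes separately.
\begin{itemize}
\item For $p\nmid r$, the factor is $\Lambda_p=1-\rho_{g_p}(p^2)/p^2\in[0,1]$. Its $2q$-th moment is at most $1$, and we simply bound it by $1$; no convergence issue arises.
\item For $p\mid r$, we use $\Lambda_p=p^{-2}T_{g_p}(-\sigma_p;p^2)$. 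If $p\mid b$, then $\sigma_p$ has exact denominator $p^2$. The first case of \lemref{lem:root-exponential-sum-moment-bounds} (with $n=2q$) gives $\mathbb E|\Lambda_p|^{2q}\ll p^{-4q}$, that is, a norm contribution $\ll p^{-2}=p^{-\nu_p(r)}$.
\item If $p\mid a$, the frequency is $a'/p$ with $(a',p)=1$. The second case of the same lemma gives $\mathbb E|T|^{2q}\ll 1+p^{2q-2}$. Hence $\mathbb E|\Lambda_p|^{2q}\ll p^{-4q}p^{2q-2}=p^{-2q-2}$, and the norm contribution is $\ll p^{-1-1/q}=p^{-\nu_p(r)}p^{-1/q}$.
\end{itemize}
Multiplying over $p\mid r$, the implied constants give $C^{\omega(r)}\ll r^{\varepsilon}$. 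This yields $\|\Lambda\|_{L^{2q}}\ll r^{\varepsilon}r^{-1}a^{-1/q}$.

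\emph{The bound for $A_r$.} Apply Minkowski's inequality to \eqref{eq:Ar-one-pair}:
\[
\|A_r\|_{L^q}\le\sum_{\sigma\in\mathcal R(r)}|E_x(\sigma)|^2\,\|\Lambda\|_{L^{2q}}^2\ll \frac{r^{2\varepsilon}}{r^2a^{2/q}}\sum_{\sigma\in\mathcal R(r)}|E_x(\sigma)|^2.
\]
Here we used that $\||\Lambda|^2\|_{L^q}=\|\Lambda\|_{L^{2q}}^2$. It remains to show that $\sum_{\sigma\in\mathcal R(r)}|E_x(\sigma)|^2\ll r\min(r,x)$, which produces the factor $\min(r,x)/r^{1-\varepsilon}$. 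We obtain this in two ways:
\begin{itemize}
\item Enlarge to all $a\bmod r$ and use Parseval: $\sum_{a\bmod r}|E_x(a/r)|^2=r\sum_{n\le x}\#\{n'\le x:n'\equiv n\bmod r\}\ll r\,x(x/r+1)$.
\item Alternatively, use \cite[Lemma 2.2]{key-14}, which gives $\sum_{a=1}^{r-1}F_x(a/r)^2\ll rx$, together with the trivial bound $|E_x|\le F_x$ and $\sum_a\|a/r\|^{-2}\ll r^2$, which yields $\ll r^2$.
\end{itemize}
Combining these gives $\ll r\min(r,x)$ in both ranges.

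\emph{Expected obstacle.} The only delicate point is the $p\mid a$ case: the $p^{-1/q}$ gain there depends on the singular-root moment estimate in the second case of \lemref{lem:root-exponential-sum-moment-bounds}. Since that estimate is already available, everything else is bookkeeping of divisor-type constants absorbed into $r^{\varepsilon}$.
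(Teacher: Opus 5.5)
Your proposal is correct and follows essentially the same route as the paper. You factor $\Lambda$ over primes, bound the local factors at $p\mid b$ and at $p\mid a$ via the primitive and non-primitive cases of \lemref{lem:root-exponential-sum-moment-bounds} (absorbing $C^{\omega(r)}$ into $r^{\varepsilon}$), and then combine Minkowski's inequality with $\sum_{\sigma\in\mathcal R(r)}|E_x(\sigma)|^2\ll r\min(r,x)$. The only difference is that you derive this last estimate explicitly, via Parseval for $x\le r$ and $|E_x(\alpha)|\le\|\alpha\|^{-1}$ for $x>r$, whereas the paper simply cites \cite[Lemma 2.2]{key-14}.
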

\begin{proof}
At a prime dividing $a$, the local frequency has denominator $p$, by \lemref{lem:root-exponential-sum-moment-bounds}\[
\left\Vert p^{-2}T_{g}\left(\frac{\xi}{p};p^{2}\right)\right\Vert _{2q}=p^{-2}\left(1+p^{2q-2}\right)^{\frac{1}{2q}}\ll_{d,q}p^{-1-\frac{1}{q}}.
\]At a prime dividing $b$, \lemref{lem:root-exponential-sum-moment-bounds} gives\[
\left\Vert p^{-2}T_{g}\left(\frac{\xi}{p^{2}};p^{2}\right)\right\Vert _{2q}\ll_{d,q}p^{-2}.
\]The local factors for $p\nmid r$ have modulus at most one. Multiplying the local estimates and absorbing $C^{\omega\left(r\right)}$ into $r^{\varepsilon}$ gives\[
\left\Vert \Lambda\left(g,\sigma,r\right)\right\Vert _{L^{2q}}\ll_{d,q,\varepsilon}r^{\varepsilon}a^{-1-\frac{1}{q}}b^{-2}=\frac{r^{\varepsilon}}{ra^{\frac{1}{q}}}.
\]Finally, Minkowski's inequality and \cite[Lemma 2.2]{key-14} give \[
\sum_{\sigma\in\mathcal{R}\left(r\right)}\left\lvert E_{x}\left(\sigma\right)\right\rvert ^{2}\ll r\min\left(r,x\right),
\]hence, on applying the first estimate with $\varepsilon/2$ in place of $\varepsilon$, we get \[
\begin{aligned} & \left\Vert A_{r}\left(g,x\right)\right\Vert _{L^{q}}\leq\sum_{\sigma\in\mathcal{R}\left(r\right)}\left\lvert E_{x}\left(-\sigma\right)\right\rvert ^{2}\left\Vert \Lambda\left(g,\sigma,r\right)\right\Vert _{L^{2q}}^{2}\\
 & \ll_{d,q,\varepsilon}r\min\left(r,x\right)\frac{r^{\varepsilon}}{r^{2}a^{\frac{2}{q}}}=\frac{\min\left(r,x\right)}{r^{1-\varepsilon}a^{\frac{2}{q}}}.
\end{aligned}
\]
\end{proof}
\begin{proof}[Proof of \propref{prop:repeated-denominators-negligible}]
We have\begin{align*}
 & S\left(g,x\right)^{k}-\sum_{\substack{r_{1},...,r_{k}\in\mathscr{R}\\
r_{1},...,r_{k}>1\ \mathrm{distinct}
}
}\prod_{i=1}^{k}A_{r_{i}}\left(g,x\right)\\
 & =\sum_{\substack{1<r_{1},...,r_{k}\in\mathscr{R}\\
\exists i<j:r_{i}=r_{j}
}
}\prod_{i=1}^{k}A_{r_{i}}\le\sum_{1\le i<j\le k}\sum_{\substack{\mathbf{r}\\
r_{i}=r_{j}
}
}\prod_{\ell=1}^{k}A_{r_{\ell}}\\
 & \ll_{k}\sum_{r}A_{r}^{2}\cdot\left(\sum_{r}A_{r}\right)^{k-2}=R_{2}\left(g,x\right)S\left(g,x\right)^{k-2}
\end{align*}where $R_{2}\left(g,x\right)=\sum_{r\in\mathscr{R}}A_{r}\left(g,x\right)^{2}$. By Minkowski's inequality and the norm bound for $A_{r}$ with exponent $k$, \[
\left\Vert R_{2}\right\Vert _{L^{\frac{k}{2}}}\le\sum_{r\in\mathscr{R}}\left\Vert A_{r}\right\Vert _{L^{k}}^{2}\ll_{k,\varepsilon}\sum_{r=ab^{2}}r^{\varepsilon}\frac{\min\left(r,x\right)^{2}}{r^{2}a^{\frac{4}{k}}}.
\]Splitting the sum according to whether $ab^2\le x$ or
$ab^2>x$, we obtain
\begin{align*}
\|R_2\|_{L^{k/2}}
&\ll_{k,\varepsilon}
x^{1/2}\sum_{a\le x}a^{-1/2-4/k+\varepsilon}
+x^2\sum_{ab^2>x}a^{-2-4/k}b^{-4}\\
&\ll_{k,\varepsilon}
x^{1/2}\sum_{a\le x}a^{-1/2-4/k+\varepsilon}
+x^2\sum_{a\le x}a^{-2-4/k}
\left(\frac{x}{a}\right)^{-3/2}
+x^2\sum_{a>x}a^{-2-4/k}\\
&\ll_{k,\varepsilon}
x^{1/2}\sum_{a\le x}a^{-1/2-4/k+\varepsilon}
+x^{1-4/k}\\
&\ll_{k,\varepsilon}
x^{\beta_k+\varepsilon},
\end{align*}
where $\beta_k:=\max\left(\frac12,1-\frac4k\right)<1.$ Hölder's inequality and $\left\Vert S\left(g,x\right)\right\Vert _{k}\ll x^{\frac{1}{2}}$ now yield\[
\int_{V_{\square}^{\sharp}}R_{2}S^{k-2}dg\le\left\Vert R_{2}\right\Vert _{\frac{k}{2}}\cdot\left\Vert S\right\Vert _{k}^{k-2}\ll x^{\beta_{k}+\varepsilon+\frac{k-2}{2}}=o\left(x^{\frac{k}{2}}\right),
\]because $\beta_{k}<1$. This proves the proposition.
\end{proof}

Together with \eqref{eq:S-to-C} and
\propref{prop:local-model-exact-pairing-reduction},
\propref{prop:repeated-denominators-negligible}
completes the proof of
\propref{prop:fixed-moment-asymptotics},
as explained in \secref{sec:exact-frequency-pairings}.

\subsection{Mean and moments of the limiting variance}
\label{sec:mean-and-moments}

The case $m=2$ of the fixed-moment theorem gives\[
\mathrm{MP}_{2}\left(x,H\right)\sim x^{\frac{1}{2}}\int_{V_{\square}}C\left(g\right)dg.
\] For $d\ge3$, the identity
\[
\int_{V_\square}C(g)\,dg=\gamma_0
\]
follows by comparing the case $m=2$ of
\propref{prop:fixed-moment-asymptotics} with Sofos's
second-moment asymptotic \cite[Theorem 1.1]{key-1-1}.
For $d=1,2$, the same identity follows from a direct
evaluation of the local second moment.\footnote{
An alternative elementary proof of the variance asymptotic,
based on the method developed in this paper, is given in
\href{https://omersimhi.github.io/pdfs/Variance_of_squarfree_values_of_polynomials.pdf}
{this note}. In particular, it establishes
\[
\frac{1}{\zeta(2d+2)}
\int_{V_\square^\sharp}E_g(x)^2\,dg
=\gamma_0x^{1/2}+O_d(x^{1/3})
\]
for every $d\ge1$. Comparing this with the local-model
second-moment asymptotic from
\propref{prop:fixed-moment-asymptotics}
gives the claimed identity.
This approach also recovers Sofos's variance asymptotic
for $d\ge3$ and extends it to $d=1,2$.
Moreover, it yields the sharper error term $O(x^{1/3})$
under a slightly stronger restriction on $H$.
}
The following lower bound for the higher moments of
$C(g)$ shows that the random variance is nonconstant
and the limiting Gaussian mixture is non-Gaussian.

\begin{lemma}\label{lem:variance-moment-lower-bound}
We have for all $q>1$\[
\frac{\int_{V_{\square}}C\left(g\right)^{q}dg}{\gamma_{0}^{q}}\ge\zeta\left(2d+2\right)^{q-1}.
\]
\end{lemma}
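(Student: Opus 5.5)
The plan is a single application of H\"older's inequality, using that $C$ is supported on a set of Haar mass strictly less than one. The two inputs are already available. First, by the extension convention \eqref{eq:Cextend}, $C$ vanishes on $V_\square\setminus V_\square^\sharp$, and $V_\square^\sharp$ has Haar mass $\theta:=\zeta(2d+2)^{-1}<1$ in $V_\square$. Second, \secref{sec:mean-and-moments} gives the mean identity $\int_{V_\square}C(g)\,dg=\gamma_0$: for $d\ge3$ this comes from comparing the case $m=2$ of \propref{prop:fixed-moment-asymptotics} with Sofos's variance asymptotic, and for $d=1,2$ from the direct local second-moment evaluation. We also have $C\ge0$ and $C\in L^q$ for every $q$, by the construction \eqref{eq:C-from-D} and \propref{prop:partial-sum-limits}.

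Fix $q>1$ and let $q'=q/(q-1)$ be its conjugate exponent. Since $C=C\cdot\mathds{1}_{V_\square^\sharp}$, H\"older's inequality with respect to the Haar probability measure $dg$ on $V_\square$ gives
\[
\gamma_0=\int_{V_\square}C\,\mathds{1}_{V_\square^\sharp}\,dg
\le\Bigl(\int_{V_\square}C^q\,dg\Bigr)^{1/q}\Bigl(\int_{V_\square}\mathds{1}_{V_\square^\sharp}\,dg\Bigr)^{1/q'}
=\Bigl(\int_{V_\square}C^q\,dg\Bigr)^{1/q}\theta^{(q-1)/q}.
\]
Raising both sides to the power $q$ yields $\gamma_0^q\le\theta^{q-1}\int_{V_\square}C^q\,dg$. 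Since $\theta^{-(q-1)}=\zeta(2d+2)^{q-1}$, dividing by $\gamma_0^q\theta^{q-1}$ (legitimate because $\gamma_0>0$) gives exactly the asserted inequality. Equivalently, one could apply Jensen's inequality to the normalized measure $\zeta(2d+2)\,dg$ restricted to $V_\square^\sharp$, which is the probability measure used throughout \secref{sec:local-model}.

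The only step needing care is the input $\int_{V_\square}C\,dg=\gamma_0$, in particular for $d=1,2$, where it depends on the direct local evaluation cited in \secref{sec:mean-and-moments}. It also depends on the normalization of $dg$ being tracked correctly through \eqref{eq:C-sharp-to-Haar}. Once that is granted, the argument is immediate.

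For the application in \thmref{thm:gaussian-mixture-limit}, take $q=2$. The bound gives $\int C^2\,dg\ge\zeta(2d+2)\gamma_0^2>\gamma_0^2=\bigl(\int C\,dg\bigr)^2$, because $\zeta(2d+2)>1$. Hence $C(G)$ has positive variance, which is the non-constancy claimed there.
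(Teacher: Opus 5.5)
Your proposal is correct and is essentially the paper's proof: H\"older's inequality applied to $C=C\,\mathds{1}_{V_\square^\sharp}$, using that $V_\square^\sharp$ has Haar mass $\zeta(2d+2)^{-1}$ and that $\int_{V_\square}C\,dg=\gamma_0$, then raising to the $q$th power. Your deduction of positive variance from the case $q=2$ also matches how the paper uses the lemma.
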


\begin{proof}
We have that the following Haar probability is\[
\mathbb{P}\left(V_{\square}^{\sharp}\right)=\prod_{p}\left(1-\frac{1}{p^{2d+2}}\right)=\frac{1}{\zeta\left(2d+2\right)}.
\]Hence by Hölder’s inequality\[
\gamma_{0}^{q}=\left(\int_{V_{\square}}C\left(g\right)dg\right)^{q}=\left(\int_{V_{\square}}\mathds{1}_{V_{\square}^{\sharp}}C\left(g\right)dg\right)^{q}\le\mathbb{P}\left(V_{\square}^{\sharp}\right)^{q-1}\cdot\int_{V_{\square}}C\left(g\right)^{q}dg
\]and we get the conclusion.
\end{proof}

\section{Conditional moments and convergence in distribution}\label{sec:conditional-moments-convergence}

\propref{prop:fixed-moment-asymptotics} provides the asymptotic formula of every fixed moment of $X_{x,H}$. These are exactly the moments of the candidate Gaussian mixture $\sqrt{C\left(G\right)}Z$; in particular, its even moments are\[
M_{2k}:=\mu_{2k}\int_{V_{\square}}C\left(g\right)^{k}\,dg.
\]Nevertheless, convergence of all fixed moments does not by itself determine a limiting distribution in our case. One must also know that the resulting moment sequence determines a unique probability law. For example, applying Carleman’s criterion requires\[
\sum_{k\geq1}M_{2k}^{-\frac{1}{2k}}=\infty.
\]Note that \secref{sec:paired-main-term-limiting-variance} only establishes the finiteness of $M_{2k}$ for each fixed $k$. In fact, in \secref{app:C-moment-growth}, we prove that there exists a constant $c_{d}>0$ such that\[
\int_{V_{\square}}C\left(g\right)^{k}\,dg\geq\exp\left(c_{d}\frac{k^{2}}{\left(\log k\right)^{2}}\right)
\]showing that the even moments grow too rapidly for Carleman’s criterion to apply. Thus the fixed-moment asymptotics alone do not identify the law of the mixture. We do not claim here that the mixture is moment-indeterminate; rather, we avoid the separate problem of proving its global moment determinacy.

The key to overcoming this difficulty is to retain the local
reduction vector $G_H$ rather than study $X_{x,H}$ alone.
The limiting distribution of $G_H$ is the Haar probability measure on
$V_\square$. For a fixed value $g\in V_\square$ of the local data, the
proposed limiting distribution of $X_{x,H}$ is the Gaussian
$N(0,C(g))$. Unlike the Gaussian mixture, this distribution
is uniquely determined by its moments
(see, e.g., \cite[Example 30.1]{key-18}).
Our strategy is therefore to identify the limiting
distribution of $X_{x,H}$ after conditioning on the
local data.

We first establish the basic properties of $V_\square$
and show that $G_H$ converges in distribution to Haar
measure. We then prove
\propref{prop:finite-coordinate-weighted-moments},
which extends the ordinary moment asymptotics by allowing
weights that depend on the reductions modulo $p^2$ for
finitely many primes. Using bounds for higher moments,
we show that these weighted identities remain valid
for any subsequential joint limit $(G,X)$ of
$(G_H,X_{x,H})$. Since the information contained in
$G$ is determined by its finite collections of coordinates,
these identities yield
\[
\mathbb{E}[X^{2k}\mid G]=\mu_{2k}C(G)^k,
\qquad
\mathbb{E}[X^{2k+1}\mid G]=0.
\]
These are precisely the moments of the Gaussian
$N(0,C(G))$. Since this Gaussian is determined by its
moments, every subsequential joint limit has the
distribution stated in
\thmref{thm:gaussian-mixture-limit}.
Tightness and uniqueness of the subsequential limit
then establish the full joint convergence.

Throughout this section, the notation $G_H$, $X_{x,H}$,
$\mathbb{E}_H$ and $\mathcal{F}_w$ is as introduced in
\secref{sec:introduction}.
The range $x=H^{o(1)}$ is interpreted sequentially:
along every sequence $(x_j,H_j)$ with
$x_j,H_j\to\infty$ and $x_j=H_j^{o(1)}$.

\subsection{Finite-coordinate structure and the Haar limit}\label{sec:finite-coordinate-haar-limit}

Enumerate the primes as $2=p_{1}<p_{2}<\cdots$ and equip $V_{\square}$ with\[
d_{V}\left(g,h\right):=\sum_{n\geq1}2^{-n}\mathds{1}_{\left\{ g_{p_{n}}\neq h_{p_{n}}\right\} }.
\]It is immediate to verify that $d_{V}$ is indeed a metric. For $w\geq2$ put\[
Q_{w}:=\prod_{p\leq w}p^{2},
\]and let $\pi_{w}\colon V_{\square}\to V_{Q_{w}}$ be the natural projection. The finite-coordinate $\sigma$-algebra $\mathcal{F}_{w}$ defined in \eqref{eq:sigmaAlgbra} is equivalently $\sigma\left(\pi_{w}\right)$. Its atoms are the cylinder sets\[
\operatorname{Cyl}_{w}\left(a\right):=\pi_{w}^{-1}\left(\left\{ a\right\} \right)=\left\{ g\in V_{\square}:g\equiv a\bmod{Q_{w}}\right\} ,\quad a\in V_{Q_{w}}.
\]We begin with a lemma collecting several standard consequences of the preceding definitions that will be used throughout the remainder of the section. Throughout, for any topological space $X$, we denote its Borel $\sigma$-algebra by $\mathcal{B}\left(X\right)$.

\begin{lemma}\label{lem:finite-coordinate-haar-structure}
The metric $d_{V}$ induces the product topology on $V_{\square}$, and $V_{\square}$ is compact and Polish. For each $w\ge2$, the cylinder sets $\operatorname{Cyl}_{w}\left(a\right)$ are clopen and form the atoms of $\mathcal{F}_{w}$; hence every bounded $\mathcal{F}_{w}$-measurable function is continuous and is a finite linear combination of cylinder indicators. Moreover,\[
\mathcal{B}\left(V_{\square}\right)=\sigma\left(\bigcup_{w\geq2}\mathcal{F}_{w}\right).
\]With respect to the Haar probability measure $dg$ on $V_{\square}$,\[
\int_{\operatorname{Cyl}_{w}\left(a\right)}dg=Q_{w}^{-\left(d+1\right)}.
\]Furthermore, uniformly in $a\in V_{Q_{w}}$ for fixed $w$,\begin{equation}
\mathbb{E}_{H}\mathds{1}_{\left\{ G_{H}\in\operatorname{Cyl}_{w}\left(a\right)\right\} }=Q_{w}^{-\left(d+1\right)}+O_{d,w}\left(H^{-1}\right).\label{eq:ProbabilityOfCyl}
\end{equation}Consequently, the law of $G_{H}$ converges weakly to Haar measure $dg$ on $V_{\square}$ as $H\to\infty$.
\end{lemma}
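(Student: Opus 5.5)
The plan is to verify the topological and measure-theoretic assertions directly from the definitions, and then to deduce the convergence of $G_H$ by counting integer polynomials in a residue class.

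\textbf{Topology.} For the metric $d_V$, a ball of radius $2^{-n}$ about $g$ contains, and is contained in, the set of $h$ agreeing with $g$ in the first $n$ coordinates (up to an index shift). These sets form a base of the product topology on $\prod_p V_{p^2}$, so $d_V$ induces that topology. Each factor is finite and discrete, so Tychonoff gives compactness. A compact metric space is complete and separable, hence $V_\square$ is Polish.

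\textbf{Cylinders and $\mathcal F_w$.} The map $\pi_w$ is continuous into the finite discrete space $V_{Q_w}$, so each fibre $\operatorname{Cyl}_w(a)$ is clopen. The fibres partition $V_\square$ and generate $\sigma(\pi_w)=\mathcal F_w$, so they are its atoms. Consequently a bounded $\mathcal F_w$-measurable function is constant on each atom; it is then a finite linear combination of cylinder indicators and is therefore continuous.

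\textbf{Borel $\sigma$-algebra.} Every cylinder is open, which gives $\sigma(\bigcup_w\mathcal F_w)\subseteq\mathcal B(V_\square)$. Conversely, the space is second countable with a countable base of cylinders, so every open set is a countable union of cylinders, which gives the reverse inclusion.

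\textbf{Haar mass of cylinders.} Haar measure on the product is the product of the uniform measures. By the Chinese remainder theorem, the cylinder $\operatorname{Cyl}_w(a)$ has mass $\prod_{p\le w}|V_{p^2}|^{-1}=Q_w^{-(d+1)}$.

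\textbf{Equidistribution of $G_H$ and the main obstacle.} The only real content is \eqref{eq:ProbabilityOfCyl}. The event $G_H\in\operatorname{Cyl}_w(a)$ is exactly $P\equiv a\bmod Q_w$ coefficientwise, so we must count $P\in\mathscr P_d(H)$ in a fixed coefficient residue class modulo $Q_w$. The uniform count already used in the proof of \lemref{lem:polynomial-residue-equidistribution} applies directly. Each coefficient lies in an interval of length $2H+1$ and takes $(2H+1)/Q_w+O(1)$ values in a given class. The leading coefficient is nonzero, which removes $O(1)$ values. The number of such $P$ is therefore
\[
\frac{(2H)^{d+1}}{Q_w^{d+1}}+O_{d,w}(H^d).
\]
We divide by $|\mathscr P_d(H)|=(2H+1)^{d}\cdot 2H=(2H)^{d+1}(1+O(H^{-1}))$ and obtain $Q_w^{-(d+1)}+O_{d,w}(H^{-1})$, uniformly in $a$.

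\textbf{Weak convergence.} Let $\phi$ be continuous on the compact metric space $V_\square$. Then $\phi$ is uniformly continuous, and so for fixed $w$ large it is uniformly approximated within $\varepsilon$ by an $\mathcal F_w$-measurable function (for instance, its value at a chosen point of each cylinder). For that function, \eqref{eq:ProbabilityOfCyl} and the Haar mass of cylinders give convergence of $\mathbb E_H$ to the Haar integral as $H\to\infty$. Letting $\varepsilon\to0$ yields weak convergence of the law of $G_H$ to $dg$. Equivalently, cylinders form a convergence-determining $\pi$-system of clopen sets.
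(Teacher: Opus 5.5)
Your proposal is correct and follows the paper's proof essentially step for step. The common steps are: the standard cylinder-ball comparison for $d_V$ plus Tychonoff; cylinders as clopen atoms of $\mathcal F_w$ that form a countable base and so generate $\mathcal B(V_\square)$; the product-uniform Haar mass $Q_w^{-(d+1)}$; counting coefficients in fixed residue classes modulo $Q_w$ for \eqref{eq:ProbabilityOfCyl}; and uniform approximation of continuous functions by finite-coordinate functions to get weak convergence.
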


\begin{proof}
The metric description is the standard metric for a countable product of finite discrete spaces. Agreement in the first $N$ coordinates is equivalent to lying in a sufficiently small $d_{V}$-ball, so $d_{V}$ induces the product topology. Compactness follows from compactness of the finite factors by Tychonoff’s theorem, while completeness and separability follow directly from the definition of the metric. The cylinder assertions are immediate from the discrete finite-coordinate topology. Since the cylinders form a basis, their union over $w$ generates $\mathcal{B}\left(V_{\square}\right)$. The Haar probability measure on $V_{\square}$ is uniform on every finite set of coordinates, and hence each $\operatorname{Cyl}_{w}\left(a\right)$ has the stated measure. For proving \eqref{eq:ProbabilityOfCyl}, put $q=Q_{w}$. For every fixed residue class $a\in V_{q}$, counting the $d+1$ coefficients in their fixed classes modulo $q$ gives\[
\#\left\{ P\in\mathscr{P}_{d}\left(H\right):P\equiv a\bmod q\right\} =\frac{\left|\mathscr{P}_{d}\left(H\right)\right|}{q^{d+1}}+O_{d,q}\left(H^{d}\right),
\]uniformly in $a$. Division by $\left|\mathscr{P}_{d}\left(H\right)\right|\asymp_{d}H^{d+1}$ proves \eqref{eq:ProbabilityOfCyl}. To prove weak convergence, let $f:V_{\square}\to\mathbb{R}$
be bounded and continuous. By compactness, $f$ is uniformly
continuous, so for every $\varepsilon>0$ there exists a function
$f_w$, depending only on the coordinates $g_p$ with $p\le w$,
such that
\[
\sup_{g\in V_{\square}}|f(g)-f_w(g)|<\varepsilon.
\] Such a function is a finite linear combination of cylinder indicators, for which convergence follows from \eqref{eq:ProbabilityOfCyl}. Letting $\varepsilon\downarrow0$ proves that the law of $G_{H}$ converges weakly to $dg$.
\end{proof}

\subsection{Proof of \propref{prop:finite-coordinate-weighted-moments}}\label{sec:proof-weighted-moment-asymptotics}

By \lemref{lem:finite-coordinate-haar-structure}, it suffices to consider cylinder indicators. Fix $g^{(w)}\in V_{Q_w}$ and put\[
\chi_w(g):=\mathds{1}_{\{g\equiv g^{(w)}\bmod Q_w\}}.
\]Conditioning on this cylinder fixes the local coordinates at the primes $p\le w$. In the cylinder-weighted moment, the usual zero-sum condition
on the Fourier frequencies is replaced by
\[
\sum_{i=1}^m \sigma_i
\in Q_w^{-1}\mathbb Z/\mathbb Z.
\]
We follow the preceding moment argument, beginning with the lcm truncation required for the good-frequency estimates.
Throughout, $m,d,w$ are fixed, and all bounds are uniform in
the finite-prime cutoff and the chosen cylinder, with implied
constants depending on $m,d,w$.

\begin{proof}[Proof of \propref{prop:finite-coordinate-weighted-moments}]

{\bfseries Step 1. Reduction to the local-model moments.} If $g^{(w)}\equiv0\bmod p^2$ for some $p\le w$, then every polynomial in the corresponding coefficient class has $p^2$ dividing its content, so $E_P(x)=0$. The assertion is immediate since $C$ vanishes outside $V_\square^\sharp$. We may therefore assume that $g_p^{(w)}\in V_{p^2}^\sharp$ for every $p\le w$.

To extend the moment comparison
\eqref{eq:PolynomialLocalReduction} to the cylinder
restriction, we additionally impose
$P\equiv g^{(w)}\bmod{Q_w}$ on the polynomial
coefficients. Following the proof of \eqref{eq:PolynomialLocalReduction},
we retain only coefficient classes modulo $K^2$
that agree with $g^{(w)}$ modulo $(K^2,Q_w)$.
By the Chinese remainder theorem, each such class, together with the cylinder restriction, determines a unique coefficient class modulo $[K^2,Q_w]$. 
Put $q=K^2$, $Q=Q_w$, and $L=[q,Q]$.
For each $g\in V_q^\sharp$ compatible with
$g^{(w)}\bmod (q,Q)$, the Chinese remainder
theorem gives a unique $h\in V_L^\sharp$ reducing
to $g\bmod q$ and to $g^{(w)} \bmod Q$.
Hence, by \lemref{lem:polynomial-residue-equidistribution},
\begin{equation}
\frac{1}{(2H)^{d+1}}
\#\left\{P\in\mathscr P_d^\sharp(H):
P\equiv g\bmod q,\ 
P\equiv g^{(w)}\bmod Q\right\}
=
\frac{\delta_d(L)}{L^{d+1}}
+O_d\left(
\frac{1}{HL^d}+\frac{1}{H^{d+1/2}}
\right).
\end{equation}
Here $L\le Qz^2\le 2H$ for sufficiently large
$x,H$ in the stated range. The main term equals
$1/(\zeta(2d+2)|V_L^\sharp|)$, precisely the
corresponding weighted Haar mass. Moreover, since
$L\ge q$ and
$|R_g(-\ell_i,k_i^2)|\le\rho_g(k_i^2)$,
summing the errors over compatible classes is
bounded by the same error as in \eqref{eq:SumR_P}.
Thus the existing estimates apply without change.
By \lemref{lem:exceptional-polynomials-negligible},
\lemref{lem:large-lcm-tail-polynomial-average}, and
\lemref{lem:large-lcm-tail-local-model}, the corresponding
bounds remain valid after inserting the indicator
$\chi_w\le1$. 
In particular, the exceptional-polynomial replacement
contributes $\ll_{m,d,w}x^m z^m\left(
\log(2x)/{H}+x/{H^2}
\right)$
which is already included in
\eqref{eq:PolynomialLocalReduction}.
Thus, with the same polynomial-side
truncation parameter as in the proof of
\eqref{eq:PolynomialLocalReduction}, whenever
$H\ge x^{C_dm^2}$, we obtain
\[
\mathbb E_H\left[\chi_w(G_H)X_{x,H}^m\right]
=
\frac{x^{-m/4}}{\zeta(2d+2)}
\int_{V_\square^\sharp}\chi_w(g)E_g(x)^m\,dg
+o_{m,d,w}(1).
\]
It remains to evaluate the weighted local-model moment.

{\bfseries Step 2. LCM truncation and the shifted zero-sum relation.} Put $\beta:=(3m+1)/4$ and $z:=x^\beta$. We first truncate the local-model moment according to $K=[k_1,\ldots,k_m]$. For $y\ge\max(z,w)$, expand $E_g^{(y)}(x)^m$ using \eqref{E_gP_y} and separate the tuples with $K\le z$ from those with $K>z$. Since $0\le\chi_w\le1$ and\[
|\Delta_g(k;x)|\le\sum_{n\le x}A_g(k;n),
\]the contribution of $K>z$ is bounded by\[
\sum_{n_1,\ldots,n_m\le x}
\sum_{K>z}\mathbb E_g\prod_{i=1}^m A_g(k_i;n_i)
\ll_{m,d,\varepsilon}x^mz^{-1+\varepsilon},
\]by \lemref{lem:large-lcm-tail-local-model}, uniformly in $y$ and in the cylinder. Since $E_g^{(y)}(x)\to E_g(x)$ in $L^m(dg)$, letting $y\to\infty$ gives\begin{equation}
\begin{aligned}
\frac1{\zeta(2d+2)}
\int_{V_\square^\sharp}\chi_w(g)E_g(x)^m\,dg
&=
\frac1{\zeta(2d+2)}
\sum_{\substack{k_1,\ldots,k_m\ge1\\K\le z}}
\prod_{i=1}^m\mu(k_i)
\int_{V_\square^\sharp}\chi_w(g)
\prod_{i=1}^m\Delta_g(k_i;x)\,dg\\
&\quad+O_{m,d,\varepsilon}(x^mz^{-1+\varepsilon}).
\end{aligned}
\label{eq:CylinderLCMTruncation}
\end{equation}
Our choice of $z$ makes the error $o(x^{m/4})$ for sufficiently small $\varepsilon>0$.

We expand the finite sum in \eqref{eq:CylinderLCMTruncation} using the harmonic expansion of $\Delta_g(k;x)$, as in the proof of \propref{prop:local-model-moment-representation}. For each resulting frequency tuple, write
\[
\sigma_i:=\frac{\ell_i}{k_i^2}\in\mathbb Q/\mathbb Z,
\qquad 0<\ell_i<k_i^2,
\]
and let $\sigma_{i,p}$ denote its $p$-primary component. Since the M\"obius factors vanish unless each $k_i$ is squarefree, we restrict to such tuples. Recall that $K=[k_1,\ldots,k_m]\le z$, and put $Q:=Q_w$.

The cylinder restriction imposes the following condition on the frequencies.

\begin{lemma}\label{lem:cylinder-frequency-reduction}
In the Fourier expansion of
\eqref{eq:CylinderLCMTruncation}, a frequency tuple
$(\sigma_1,\ldots,\sigma_m)$ contributes only if
\begin{equation}
\theta:=\sum_{i=1}^m\sigma_i\in\mathcal C(Q).
\label{eq:CylinderShiftRelation}
\end{equation}
\end{lemma}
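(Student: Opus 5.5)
The plan is to repeat the translation-invariance argument from \lemref{lem:arithmetic-factor-factorization} and \propref{prop:finite-prime-local-moment-expansion}, but only with shifts that preserve the cylinder. First expand each $\Delta_g(k_i;x)$ harmonically, exactly as in \lemref{lem:harmonic-expansion}. For a fixed tuple $\mathbf k$ with $K\le z$ and a fixed frequency tuple $(\sigma_i)$, the contribution to \eqref{eq:CylinderLCMTruncation} is a constant multiple of
\[
I:=\int_{V_\square^\sharp}\chi_w(g)\prod_{i=1}^m\sum_{\substack{\nu_i\bmod k_i^2\\ g(\nu_i)\equiv0\bmod k_i^2}}e(-\sigma_i\nu_i)\,dg .
\]
Here $g(\nu_i)\equiv0\bmod k_i^2$ means $g_p(\nu_i)\equiv0\bmod p^2$ for all $p\mid k_i$. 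The integrand depends only on $g$ modulo $L=[K^2,Q]$, so $I$ is a finite average over $V_L^\sharp$ restricted to the cylinder.

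Next, apply the change of variables $g\mapsto\tau_h g$ with $(\tau_hg)_p(t)=g_p(t+h)$ for all $p$, together with $\nu_i\mapsto\nu_i-h$. This is a measure-preserving bijection of $V_\square^\sharp$ and of the root sets, and it multiplies the exponential factor by $e(h\theta)$, where $\theta=\sum_i\sigma_i$. It does not preserve $\chi_w$ in general. However, if $h\equiv0\bmod Q$, then for each $p\le w$ we have $p^2\mid h$, so $g_p(t+h)\equiv g_p(t)\bmod p^2$ as polynomials over $\mathbb Z/p^2\mathbb Z$. Hence $\chi_w(\tau_hg)=\chi_w(g)$. We therefore get $I=e(h\theta)I$ for every $h\in Q\mathbb Z$. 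If $I\neq0$, this forces $e(Q\theta)=1$, that is $Q\theta\in\mathbb Z$, which is exactly \eqref{eq:CylinderShiftRelation}.

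The only point needing care is checking that the translation really fixes the cylinder. The shift by $h$ acts on the coefficients of $g_p$ through a unipotent substitution whose off-diagonal entries are multiples of powers of $h$, so it is the identity modulo $p^2$ once $p^2\mid h$. A second check is that the change of variables respects the finite modulus $L$: taking $h$ to range over multiples of $Q$ modulo $L$ suffices, since all frequencies have denominators dividing $K^2\mid L$. Everything else is identical to the unweighted case, and the conclusion is that the zero-sum relation is weakened only to $\theta\in\mathcal C(Q)$.
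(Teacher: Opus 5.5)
Your argument is correct and is essentially the paper's: in both, the relation comes from translation invariance of the local-model measure, using only translations that leave the cylinder weight $\chi_w$ unchanged. The paper translates each coordinate $g_p$ with $p>w$ separately to force $\sum_i\sigma_{i,p}=0$, and then uses squarefreeness of the $k_i$ to see that the surviving denominator divides $Q$; your single global shift by $h\in Q\mathbb{Z}$, which acts trivially on every $V_{p^2}$ with $p\le w$, gives $Q\theta\in\mathbb{Z}$ in one step.
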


\begin{proof}
At primes $p>w$, the local polynomial is averaged over $V_{p^2}^{\sharp}$, so translation invariance forces
\[
\sum_{i=1}^m\sigma_{i,p}=0\qquad(p>w).
\]
Hence $\theta$ has no $p$-primary component for $p>w$. Since each $k_i$ is squarefree, the remaining denominator of $\theta$ divides $Q$, as claimed.
\end{proof}

To recover an ordinary zero-sum relation, put $\tau_i:=Q\sigma_i$. By \lemref{lem:cylinder-frequency-reduction}, we have $\sum_i\tau_i=0$ in $\mathbb Q/\mathbb Z$. Multiplication by $Q$ removes the denominator factors
at primes $p\le w$. At primes $p>w$, since $p\nmid Q$,
it preserves the exact $p$-power in each reduced
denominator. Moreover,
\begin{equation}
F_x(\sigma)\le QF_x(Q\sigma),
\label{eq:CylinderMultiplicationBound}
\end{equation}
since $\|Q\sigma\|\le Q\|\sigma\|$.

Fix $\theta$ and, for each prime $p\le w$,
the $p$-primary components of $\sigma_1,\ldots,\sigma_m$
and the indices $i$ for which $p\mid k_i$.
There are $O_{m,w}(1)$ such choices. For each such choice, the low-prime factors arise
from the Fourier expansion of $\Delta_g(k_i;x)$.
At a prime $p\le w$ dividing $k_i$, the factor is
$p^{-2}T_{g_p^{(w)}}(-\sigma_{i,p};p^2)$;
if $p\nmid k_i$, no local condition is imposed,
and the factor is $1$. 
Indeed, by the definition of the root exponential sum,
\[
\left|p^{-2}T_{g_p^{(w)}}(-\sigma_{i,p};p^2)\right|
\le \frac{\rho_{g_p^{(w)}}(p^2)}{p^2}\le 1.
\]
Thus the product of all low-prime factors has absolute
value at most $1$, uniformly in the frequencies and
the chosen cylinder. These factors can therefore be
discarded when estimating the off-diagonal contributions.

Put $K_+:=\prod_{p\mid K,\ p>w}p$ and $q_+:=K_+^2$. Denote the product of the averaged local arithmetic factors at primes $p>w$ by $\mathfrak A_{\mathbf k}^{>w}(\boldsymbol\ell)$. We divide the surviving tuples into good and bad frequencies according as\[
|\mathfrak A_{\mathbf k}^{>w}(\boldsymbol\ell)|\le q_+^\eta
\qquad\text{or}\qquad
|\mathfrak A_{\mathbf k}^{>w}(\boldsymbol\ell)|>q_+^\eta.
\]The low-prime part of $K$ has only finitely many possibilities, so replacing $K$ by $K_+$ in the estimates that follow changes only the implied constants, which may depend on $w$. Crucially, on the good set, the truncation gives\begin{equation}
K_+\le K\le z,\qquad
|\mathfrak A_{\mathbf k}^{>w}(\boldsymbol\ell)|
\le z^{2\eta}.
\label{eq:CylinderGoodArithmeticBound}
\end{equation}
We now bound the truncated contribution from frequencies that are not exactly paired in the original variables $\sigma_i$.

{\bfseries Step 3. The finite-shift off-diagonal estimate.} Assume $m\ge3$. We show that, for some $\delta_{m,w}>0$, the contribution of all not-exact-paired original-frequency configurations is $O_{m,d,w}(x^{m/4-\delta_{m,w}})$. We treat the high-prime bad and good sets separately.

{\bfseries Step 3a. Bad frequencies.} We first record the affine counterpart of the lcm-$L^2$ estimate \eqref{eq:reduced-frequency-zero-sum}. Write $\theta=a_0/r_0$ in lowest terms, where $r_0\mid Q$. Appending the fixed frequency $-a_0/r_0$ converts the affine relation $\sum_i a_i/r_i=\theta\bmod1$ into an ordinary zero-sum relation. Applying the Fundamental Lemma to $f_0,f_1,\ldots,f_m$,
where $f_0(a):=\mathds{1}_{a\equiv-a_0\bmod r_0}$,
gives, for nonnegative functions $f_i$,
\begin{equation}
\sum_{\substack{a_i\bmod r_i,\ (a_i,r_i)=1\\
\sum_i a_i/r_i=\theta\;(\bmod1)}}
\prod_{i=1}^m f_i(a_i)
\ll_{m,Q}
\frac{(r_1\cdots r_m)^{1/2}}{[r_1,\ldots,r_m]}
\prod_{i=1}^m\|f_i\|_2.
\label{eq:CylinderAffineL2}
\end{equation}
Indeed, $\|f_0\|_2=1$, and the additional denominator
contributes at most a factor $r_0^{1/2}\le Q^{1/2}$.

To adapt the proof of \eqref{eq:arithmetic-factor-moment}
using \eqref{eq:CylinderAffineL2}, write
$k_i=k_i^-k_i^+$, where $k_i^-$ is supported on
$p\le w$ and $k_i^+$ on $p>w$. The factors $k_i^-$ have $O_w(1)$ possibilities. In the proof of \eqref{eq:arithmetic-factor-moment}, the local-pattern calculation \eqref{eq:local-pattern-exponent} applies unchanged at $p>w$, while \eqref{eq:CylinderAffineL2} supplies the lcm-$L^2$ bound and the finitely many low primes cost $O_{m,d,w}(1)$. 
For the normalized higher-moment estimate, fix the
low-prime components of each $\sigma_i$ and put
$N_i:=(k_i^+)^2$. We may write
\[
\sigma_i=\frac{c_i}{Q}+\frac{b_i}{N_i}\bmod 1,
\quad b_i\bmod N_i,
\]
where $c_i\bmod Q$ is fixed. Since $(Q,N_i)=1$,
multiplication by $Q$ sends the high-prime numerator
$b_i$ to $Qb_i\bmod N_i$. In particular, for every
$a\mid N_i$,
\[
a\mid b_i \quad\Longleftrightarrow\quad a\mid Qb_i.
\]
Thus multiplication by $Q$ preserves the high-prime
divisibility patterns used in
\eqref{eq:local-pattern-exponent}.

Moreover, restricting to $a\mid b_i$ gives the
shifted grid
\[
\frac{c_i}{Q}+\frac{j}{N_i/a}\bmod 1,
\quad 0\le j<N_i/a.
\]
Since $(Q,N_i/a)=1$, every nonzero point of this grid
has distance at least $1/(Q(N_i/a))$ from an integer.
Summing reciprocal distances therefore gives
\[
\frac1{N_i}
\sum_{\substack{b_i\bmod N_i,\ a\mid b_i\\
\sigma_i(b_i)\ne0}}
F_x(\sigma_i(b_i))
\ll_w \frac{\log(2N_i)}{a}.
\]
Consequently, the proof of
\eqref{eq:normalized-frequency-divisibility} and
the argument of
\lemref{lem:arithmetic-factor-higher-moment-bound}
apply to the shifted grids, with implied constants
depending additionally on $w$.
Hence, for each fixed choice of the frequency components and divisibility patterns at primes \(p\le w\), and every $\varepsilon>0$, the two resulting normalized estimates are
\[
\sum_{\substack{
0<\ell_i<k_i^2\ (1\le i\le m)\\
\sum_{i=1}^m\ell_i/k_i^2\equiv\theta\bmod{1}\\
(\ell_i/k_i^2)_p=\sigma_{i,p}\ (p\le w)
}}
\prod_{i=1}^m\frac{F_x(\sigma_i)}{k_i^2}
|\mathfrak A_{\mathbf k}^{>w}(\boldsymbol\ell)|^{1+\vartheta}
\ll_{m,d,w,\varepsilon}
\min\{K_+^\varepsilon,x^{m/2}K_+^{-2+\varepsilon}\}.
\]
where the sum ranges over the remaining high-prime frequencies. On the bad set, Rankin's trick contributes $q_+^{-\eta\vartheta}=K_+^{-2a}$, with $a:=\eta\vartheta$. Summing over the $O_{m,w}(1)$ low-prime choices and the $O_{m,\varepsilon}(K_+^{\varepsilon})$ tuples of high-prime moduli with fixed lcm, the argument of \eqref{eq:bad-small-lcm}--\eqref{eq:bad-large-lcm} gives that the bad set contribution is
\[
\ll_{m,d,w,\varepsilon}
Y^{1-2a+\varepsilon}+x^{m/2}Y^{-1-2a+\varepsilon}.
\]
Taking $Y=x^{m/4}$ and $\varepsilon>0$ sufficiently small gives a power saving, uniformly in $z$.

{\bfseries Step 3b. Good frequencies with some $\tau_i=0$.} We fix the local data at primes $p\le w$ and apply
the square-block lifting from
\secref{sec:nonpaired-frequency-analysis} at primes $p>w$.
This enlarges the frequency sums to the complete
square-block classes used by GMR. The additional
squarefree factors of the $k_i$ that do not appear
in the reduced denominators are summed absolutely
by \eqref{eq:squarefree-completion-summability}.
The resulting frequencies $\tau_i=Q\sigma_i$ satisfy
$\sum_i\tau_i=0$, and on the good set their
high-prime arithmetic factor is bounded by
$z^{2\eta}$, by \eqref{eq:CylinderGoodArithmeticBound}.

Suppose that $\tau_i=0$ for some $i$. Then $\sigma_i$ is supported only on the fixed primes, and since $\sigma_i\ne0$, its $F_x$ weight is $O_w(1)$. Removing these coordinates leaves $j<m$ nonzero frequencies satisfying the ordinary zero-sum relation. The ordinary paired estimate and \eqref{eq:GMR-nonexact-square-block-bound} bound their complete sum by $O_j(x^{j/4+o(1)})$. Thus this branch contributes\[
O_{m,d,w}\left(z^{2\eta}x^{(m-1)/4+o(1)}\right),
\]which has a power saving for sufficiently small $\eta$.

{\bfseries Step 3c. Good frequencies with non-paired high-prime tuples.} We may now assume that every $\tau_i$ is nonzero. If the high-prime square-block denominator tuple is repeated or non-paired in the GMR sense, \eqref{eq:CylinderMultiplicationBound} and \eqref{eq:GMR-nonexact-square-block-bound} give\[
O_{m,d,w}\left(z^{2\eta}x^{m/4-\Delta_m}\right).
\]Since $z=x^\beta$, this has a power saving when $2\beta\eta<\Delta_m$. Notice that GMR is applied only to the ordinary zero-sum tuple $(\tau_1,\ldots,\tau_m)$.

{\bfseries Step 3d. Good frequencies with paired high-prime tuples.} It remains to consider paired and non-repeated high-prime square-block denominators. Then $m=2k$ and, after relabeling, $R_{k+j}=R_j$ for $1\le j\le k$, with distinct $R_1,\ldots,R_k$. If $\tau_j+\tau_{k+j}\ne0$ for some $j$, there must be at least two nonzero pair blocks. The corresponding GMR nonzero-block estimate, as used in the proof of \eqref{eq:GMR-nonexact-square-block-bound}, gives the same bound as in Step 3c.

We may therefore assume that $\tau_{k+j}=-\tau_j$ for every $j$. The fixed low-prime components then determine $c_j\bmod Q$ such that\[
\sigma_j+\sigma_{k+j}=\frac{c_j}{Q}\bmod1.
\]The pair is an exact original-frequency pair precisely when $c_j=0$. For $c\ne0$, the points $0$ and $c/Q$ are
separated modulo $1$ by at least $1/Q$. Hence
at least one of $\|t\|$ and $\|c/Q-t\|$
is at least $1/(2Q)$, giving
\begin{equation}
F_x(t)F_x(c/Q-t)
\ll_Q F_x(t)+F_x(c/Q-t).
\label{eq:CylinderSeparatedSingularities}
\end{equation}
For a high-prime square-block denominator $R$, the fixed lifts lie in the nonzero grid $(QR)^{-1}\mathbb Z/\mathbb Z$. Since\[
\sum_{a=1}^{QR-1}F_x(a/(QR))\ll QR\log(2x),
\]enlarging the frequency sum to this grid and using \eqref{eq:CylinderSeparatedSingularities} gives\begin{equation}
\sum_{\substack{n\ge2\ \mathrm{squarefree}\\
p\mid n\Rightarrow p>w}}
\frac1{n^4}
\sum_{\tau\in\mathcal R_B(n^2)}
F_x(\sigma_1(\tau))F_x(\sigma_2(-\tau))
\ll_w\log(2x)
\label{eq:CylinderShiftedPair}
\end{equation}
whenever the pair shift is nonzero. Here $\sigma_1(\tau)$ and $\sigma_2(-\tau)$ are the
lifts determined by the fixed low-prime components.
By \eqref{eq:CylinderSeparatedSingularities} and the
preceding grid estimate, the contribution of each $R$
is $\ll_w R^{-1}\log(2x)$. Summing over squarefree $n\ge2$, with $R=n^2$,
gives the claimed bound, since
$\sum_{n\ge2}n^{-2}<\infty$.

For $c=0$, the ordinary second-moment pair estimate and \eqref{eq:CylinderMultiplicationBound} give $O_w(x^{1/2+o(1)})$. Once the distinctness condition on the denominators is removed, the nonnegative sum over paired frequencies factors into a product of independent one-pair sums. Unless every $c_j=0$, at least one pair satisfies the logarithmic bound \eqref{eq:CylinderShiftedPair}. Consequently, the contribution of paired high-prime tuples which are not exactly paired in the original frequencies is\[
O_{m,d,w}\left(z^{2\eta}x^{m/4-1/2+o(1)}\right).
\]This has a power saving for sufficiently small $\eta$. Combining Steps 3a--3d, and choosing $\eta$ sufficiently small in terms of $m$ and $\beta$, proves the asserted off-diagonal bound for $m\ge3$.

{\bfseries Step 4. The cases $m=1,2$.} For $m=1$, the frequency relation forces
$\tau_1=0$. Hence $\sigma_1\ne0$ is supported
on primes $p\le w$, so $F_x(\sigma_1)=O_w(1)$.
Absolute summability of the remaining squarefree
factors gives a first weighted local moment of
$O_{d,w}(1)=o(x^{1/4})$.

For $m=2$, \eqref{eq:Apbnd} gives
$|A_p(\boldsymbol{\lambda}_p)|\ll_d 1$
at every prime $p>w$, since $u_p\le2$.
Consequently, the high-prime arithmetic factor
is $O_{d,\varepsilon}(K_+^\varepsilon)$
for every $\varepsilon>0$. If both high-prime frequencies vanish, the contribution is $O_{d,w}(1)$. Otherwise, they are opposite. Exact original-frequency pairs form the zero-shift class, while nonzero shifts are bounded by \eqref{eq:CylinderShiftedPair}, with an additional harmless factor $R^\varepsilon$. Since $\sum_{n\ge2,\ n\ \mathrm{squarefree}}n^{-2+2\varepsilon}<\infty$
for sufficiently small $\varepsilon>0$, the nonzero-shift
contribution is $x^{o(1)}=o(x^{1/2})$. Hence only exact original-frequency pairings contribute on the scale $x^{1/2}$.

{\bfseries Step 5. Odd moments.} If $m$ is odd, Steps 3 and 4 show that every contribution is $o(x^{m/4})$, since an exact original-frequency pairing is possible only for even $m$. Together with \eqref{eq:CylinderLCMTruncation}, this proves the required odd-moment asymptotic.

{\bfseries Step 6. The diagonal contribution and conclusion.} Let $m=2k$. By the preceding steps, only exact original-frequency pairings can contribute on the scale $x^{k/2}$. We first justify replacing the lcm-truncated paired contribution by the complete paired contribution. The proof of \lemref{lem:paired-large-lcm-tail-local-model} remains valid with the cylinder weight inserted. Indeed, character orthogonality imposes the pair relations by averaging independent translations of the polynomial variable within each pair. Keeping $\chi_w(g)$ fixed during these translations, taking absolute values, and using $0\le\chi_w\le1$ bounds the resulting integrals by the same nonnegative products of shifted majorants $A_g(k;n+h)$ as in that proof. Since the proof of \lemref{lem:large-lcm-tail-local-model} is uniform in arbitrary integer arguments (including the shifts $n+h$), the weighted paired contribution with $K>Z$ is\[
O_{m,d,\varepsilon}(x^mZ^{-1+\varepsilon}),
\]uniformly in the finite-prime cutoff and the cylinder. Taking $Z=z=x^\beta$ makes this $o(x^{m/4})$.

For one fixed pairing, the complete exact-frequency diagonal is initially the distinct-denominator sum of \secref{sec:paired-main-term-limiting-variance}. When $k\ge2$, \propref{prop:repeated-denominators-negligible} bounds the repeated-denominator part by the nonnegative quantity $R_2(g,x)S(g,x)^{k-2}$. Inserting $\chi_w\le1$ cannot increase its integral, so this contribution remains $o(x^{k/2})$. For $k=1$, no distinctness restriction is needed. Thus, for every $k\ge1$, one fixed pairing contributes\[
\frac{x^{-k/2}}{\zeta(2d+2)}
\int_{V_\square^\sharp}\chi_w(g)S(g,x)^k\,dg+o(1).
\]By \eqref{eq:S-to-C}, $x^{-1/2}S(g,x)\to C(g)$ in $L^k(dg)$, so the $k$th powers converge in $L^1(dg)$. Hence the last expression tends to\[
\frac1{\zeta(2d+2)}
\int_{V_\square^\sharp}\chi_w(g)C(g)^k\,dg
=
\int_{V_\square}\chi_w(g)C(g)^k\,dg,
\]where the equality follows from the extension of $C$ by zero and \eqref{eq:C-sharp-to-Haar}. There are $\mu_{2k}=(2k-1)!!$ exact pairings, and the distinct-denominator condition makes their contributions disjoint. Summing over them proves the even-moment identity.

Finally, the odd case was treated in Step 5. Since every bounded $\mathcal F_w$-measurable function is a finite linear combination of cylinder indicators, linearity completes the proof of the proposition.
\end{proof}
\subsection{Subsequential limits and conditional moments}\label{sec:subsequential-conditional-moments}

We now turn from the arithmetic moment calculation to the probabilistic identification of the limiting law. We first consider arbitrary subsequential joint limits and show that the weighted moment identities pass to these limits. After verifying the required measurability, we introduce the conditional-expectation framework that turns these identities into conditional moment formulas. We start with two standard lemmas, which we state without proofs.

\begin{lemma}\label{lem:canonical-limit-space}
Let $S:=V_{\square}\times\mathbb{R}$. Then $S$ is Polish and\[
\mathcal{B}\left(S\right)=\mathcal{B}\left(V_{\square}\right)\otimes\mathcal{B}\left(\mathbb{R}\right).
\]If probability measures $\Lambda_{j}$ on $S$ converge weakly to $\Lambda$, we may realize the limit canonically on\begin{equation}
\left(\Omega,\mathscr{F},\mathbb{P}\right):=\left(S,\mathcal{B}\left(S\right),\Lambda\right),\qquad G\left(g,t\right):=g,\quad X\left(g,t\right):=t.\label{eq:CanonicalLimitSpace}
\end{equation}Moreover, if the first marginals of $\Lambda_{j}$ converge to $dg$, then the first coordinate $G$ under $\Lambda$ has law $dg$.
\end{lemma}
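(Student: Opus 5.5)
The lemma has two parts: a topological claim about $S=V_{\square}\times\mathbb{R}$, and a realization claim for the weak limit. I will treat them in that order.

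\textbf{Topological part.} By \lemref{lem:finite-coordinate-haar-structure}, $V_{\square}$ is compact and Polish under $d_V$. The space $\mathbb{R}$ is Polish. A finite product of Polish spaces is Polish: with the metric $d_V(g,h)+\min(1,|s-t|)$, completeness and separability hold coordinatewise.

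For the Borel identity, the inclusion $\mathcal{B}(V_{\square})\otimes\mathcal{B}(\mathbb{R})\subseteq\mathcal{B}(S)$ holds because both projections are continuous, hence Borel. For the reverse inclusion, use second countability: both factors are separable metric spaces, so $S$ has a countable base of products $U\times I$, where $U$ is a cylinder set and $I$ is a rational interval. Every open set in $S$ is a countable union of such rectangles. Each rectangle lies in the product $\sigma$-algebra, so every open set does too, which gives $\mathcal{B}(S)\subseteq\mathcal{B}(V_{\square})\otimes\mathcal{B}(\mathbb{R})$.

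\textbf{Canonical realization.} Take $\Omega=S$ with $\mathscr F=\mathcal B(S)$ and $\mathbb P=\Lambda$, and let $G,X$ be the two coordinate projections. These are measurable by the Borel identity just proved. Their joint law is $\Lambda$ by construction, since $(G,X)$ is the identity map of $S$.

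\textbf{First marginal.} The projection $\pi_1\colon S\to V_{\square}$ is continuous, so the continuous mapping theorem gives $\Lambda_j\circ\pi_1^{-1}\to\Lambda\circ\pi_1^{-1}$ weakly. By hypothesis the left-hand side also converges weakly to $dg$. Weak limits are unique on a metric space, since bounded continuous functions determine a Borel probability measure. Hence the law of $G$ under $\Lambda$ is $dg$.

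I expect no real obstacle here. The only point needing care is the reverse inclusion of $\sigma$-algebras, and that reduces entirely to separability of the two factors.
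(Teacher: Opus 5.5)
Your argument is correct and complete: the metric $d_V(g,h)+\min\{1,|s-t|\}$ gives Polishness, and the countable base of cylinders times rational intervals gives $\mathcal{B}(S)=\mathcal{B}(V_\square)\otimes\mathcal{B}(\mathbb{R})$. The continuous mapping theorem together with uniqueness of weak limits on a metric space then identifies the first marginal as $dg$. The paper states this lemma without proof as standard, and your proof is the standard argument it has in mind; the paper even equips $S$ with the same metric $d_S$ later, in the tightness discussion.
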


\begin{lemma}\label{lem:weighted-moment-passage}
Assume\[
\left(G_{j},X_{j}\right)\overset{d}{\longrightarrow}\left(G,X\right)
\] in $V_{\square}\times\mathbb{R}$. Fix $m\geq0$ and suppose that for some integer $r$ with $2r>m$,\[
\sup_{j}\mathbb{E}\left|X_{j}\right|^{2r}\leq M_{2r}<\infty.
\]Then $X\in L^{2r}$ and, for every bounded continuous $\Phi\colon V_{\square}\to\mathbb{R}$,\[
\mathbb{E}\left[\Phi\left(G\right)X^{m}\right]=\lim_{j\to\infty}\mathbb{E}\left[\Phi\left(G_{j}\right)X_{j}^{m}\right].
\]
\end{lemma}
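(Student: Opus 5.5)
The plan is to deduce the statement from the weak convergence via a truncation argument, with uniform integrability coming from the $2r$-th moment bound. First, $X\in L^{2r}$. For $M>0$ the function $(g,t)\mapsto \min(|t|,M)^{2r}$ is bounded and continuous on $V_{\square}\times\mathbb{R}$. Weak convergence therefore gives
\[
\mathbb{E}\min(|X|,M)^{2r}=\lim_{j}\mathbb{E}\min(|X_j|,M)^{2r}\le M_{2r}.
\]
Letting $M\to\infty$ and using monotone convergence yields $\mathbb{E}|X|^{2r}\le M_{2r}$. Since $m<2r$, this also gives $X\in L^m$, so $\mathbb{E}[\Phi(G)X^m]$ is well defined.

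Next, fix a continuous cutoff $\psi_M\colon\mathbb{R}\to[0,1]$ equal to $1$ on $[-M,M]$ and vanishing outside $[-M-1,M+1]$. Split
\[
\Phi(g)t^m=\Phi(g)t^m\psi_M(t)+\Phi(g)t^m(1-\psi_M(t)).
\]
The first piece is bounded and continuous on $V_{\square}\times\mathbb{R}$, because $\Phi$ is bounded continuous and $t^m\psi_M(t)$ is bounded continuous. Hence
\[
\mathbb{E}[\Phi(G_j)X_j^m\psi_M(X_j)]\to\mathbb{E}[\Phi(G)X^m\psi_M(X)].
\]
The second piece is supported on $|t|>M$, where $|t|^m\le |t|^{2r}M^{m-2r}$. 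So uniformly in $j$,
\[
\bigl|\mathbb{E}[\Phi(G_j)X_j^m(1-\psi_M(X_j))]\bigr|\le \|\Phi\|_\infty M_{2r}M^{m-2r},
\]
and the same bound holds for $(G,X)$ by the first step. The case $m=0$ is just weak convergence and needs no split.

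Finally, an $\varepsilon/3$ argument finishes the proof. Take $\limsup_j$ of $|\mathbb{E}[\Phi(G_j)X_j^m]-\mathbb{E}[\Phi(G)X^m]|$. The truncated parts converge, and the tails are at most $2\|\Phi\|_\infty M_{2r}M^{m-2r}$. Since $2r>m$, this bound tends to $0$ as $M\to\infty$. No step is a real obstacle. The only point needing care is that the strict inequality $2r>m$ is what makes the tails vanish; with $2r=m$ one would only get boundedness, not uniform integrability.
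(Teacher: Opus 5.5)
Your proof is correct and complete. The paper states this lemma without proof as a standard fact. Your argument is the standard one: bounded continuous truncations together with monotone convergence give $\mathbb{E}|X|^{2r}\le M_{2r}$. A continuous cutoff plus the uniform tail bound $\|\Phi\|_\infty M_{2r}M^{m-2r}$ then handles the limit. As you note, the strict inequality $2r>m$ is exactly what makes the tail bound tend to $0$.
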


The next lemma verifies the measurability of the candidate limiting even moments, which will be needed when conditional expectations are introduced below.

\begin{lemma}\label{lem:conditional-moment-measurability}
Let $G$ be a Haar-distributed random element of
$V_\square$, and put $\mathcal J:=\sigma(G)$.
Then, for every integer $k\ge0$, the random variable
$\mu_{2k}C(G)^k$ is $\mathcal J$-measurable.
\end{lemma}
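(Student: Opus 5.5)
The statement comes down to checking that $C\colon V_\square\to[0,\infty)$ is Borel measurable. Indeed, $\mu_{2k}C(G)^k=\phi\circ G$ with $\phi(g)=\mu_{2k}C(g)^k$. If $C$ is $\mathcal B(V_\square)$-measurable, so is $\phi$, since $t\mapsto\mu_{2k}t^k$ is continuous on $[0,\infty)$. The composition of a Borel function with the random element $G$ is $\sigma(G)$-measurable by definition of $\sigma(G)$, and the case $k=0$ is a constant. So we only need to track measurability through the construction of $C$ in \secref{sec:limiting-variance-construction}.

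The plan is to follow the construction step by step.

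\begin{enumerate}
\item Each coefficient $c_{g,\lambda}(r)$ is a product of local factors $X_{p,e}(g_p;\xi)$. Each factor depends on a single coordinate $g_p$ of a finite discrete space, so it is continuous and Borel. The infinite product over $p\nmid r$ is a pointwise limit of finite products, hence Borel on the almost-sure set where it converges; we define it to be $0$ elsewhere.
\item $A_{g,\lambda}(Y)$ is a finite sum of these coefficients, so it is Borel.
\item The conditional expectations $A^{(z)}_{g,\lambda}(Y)$ are $\mathcal F_z$-measurable. By \lemref{lem:finite-prime-partial-sum-limit}, $D^{(z)}_g(\lambda)$ is an explicit function of finitely many coordinates, hence $\mathcal F_z\subset\mathcal B(V_\square)$-measurable.
\item By \propref{prop:partial-sum-limits}, $D_g(\lambda)$ is an $L^k$ limit of $D_g^{(z)}(\lambda)$. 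An $L^k$ limit is only defined up to null sets, so we fix a Borel version: pass to a subsequence $z_j$ converging almost surely, and set $D_g(\lambda):=\limsup_j D^{(z_j)}_g(\lambda)$ where this is finite and $0$ otherwise. This representative is Borel and agrees a.e.\ with the $L^k$ limit.
\item Similarly, $C(g)=\pi^{-1}\sum_\lambda \lambda^{-3/2}D_g(\lambda)$ is a series of nonnegative Borel functions, so its pointwise sum in $[0,\infty]$ is Borel. It is finite almost surely because it lies in $L^1$, and we redefine it as $0$ on the null set where it is infinite. The extension by zero off $V_\square^\sharp$ in \eqref{eq:Cextend} preserves measurability, since $V_\square^\sharp$ is a countable intersection of clopen cylinder conditions and hence Borel.
\end{enumerate}

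The main subtlety is this choice of version in steps 4 and 5. $C$ is specified only as an element of $L^k(dg)$, and measurability must be stated for a concrete representative. Changing $C$ on a Haar-null set changes $C(G)$ only on a $\mathbb P$-null set, because $G$ has law $dg$ by \lemref{lem:canonical-limit-space}. So every identity used later (conditional moments, equality of laws) is unaffected by which version is chosen. With the Borel version fixed above, the composition argument in the first paragraph applies directly.
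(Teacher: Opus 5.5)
Your proposal is correct and takes the same route as the paper. Both fix a Borel representative of $C$ and then write $\mu_{2k}C(G)^k=h_k\circ G$ with $h_k$ Borel. The only difference is that you construct the Borel version explicitly, using a.s.\ convergent subsequences for $D_g(\lambda)$ and monotone summation for $C$, whereas the paper simply notes that $C\in L^1$ via the $L^q$-convergent series \eqref{eq:C-from-D} and fixes a Borel representative of that class.
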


\begin{proof}
We first fix a Borel-measurable representative of $C$. \propref{prop:partial-sum-limits} gives, for every fixed integer $q\geq1$ and every $\lambda\geq1$, a function $D_{g}\left(\lambda\right)\in L^{q}\left(dg\right)$ satisfying\[
\sup_{\lambda\geq1}\left\Vert D_{g}\left(\lambda\right)\right\Vert _{L^{q}\left(dg\right)}\ll_{d,q}1.
\]By \eqref{eq:C-from-D}, this yields\[
C\left(g\right)=\frac{1}{\pi}\sum_{\lambda\geq1}\frac{D_{g}\left(\lambda\right)}{\lambda^{\frac{3}{2}}},
\]with convergence in $L^{q}\left(dg\right)$ for every fixed $q\geq1$. Indeed, for $M>N$,\[
\left\Vert \frac{1}{\pi}\sum_{N<\lambda\leq M}\frac{D_{g}\left(\lambda\right)}{\lambda^{\frac{3}{2}}}\right\Vert _{L^{q}\left(dg\right)}\leq\frac{1}{\pi}\sum_{N<\lambda\leq M}\frac{\left\Vert D_{g}\left(\lambda\right)\right\Vert _{L^{q}\left(dg\right)}}{\lambda^{\frac{3}{2}}}\ll_{d,q}\sum_{\lambda>N}\lambda^{-\frac{3}{2}}\longrightarrow0.
\]Thus $C\in L^{q}$ for every fixed $q\geq1$, and in particular $C\in L^{1}$. Hence the equivalence class $C$ contains a Borel-measurable integrable representative, which we fix and continue to denote by $C$. For $k\geq0$ define\[
h_{k}\left(g\right):=\mu_{2k}C\left(g\right)^{k},\qquad g\in V_{\square}.
\]The function $h_{k}$ is Borel measurable, since $C$ is Borel measurable and $t\mapsto\mu_{2k}t^{k}$ is continuous. Finally,\[
\mu_{2k}C\left(G\right)^{k}=h_{k}\circ G.
\]If $B\in\mathcal{B}\left(\mathbb{R}\right)$, then\[
\left\{ \omega:\mu_{2k}C\left(G\left(\omega\right)\right)^{k}\in B\right\} =G^{-1}\left(h_{k}^{-1}\left(B\right)\right).
\]Since $h_{k}^{-1}\left(B\right)\in\mathcal{B}\left(V_{\square}\right)$ and $\mathcal{J}=\sigma\left(G\right)$, this event belongs to $\mathcal{J}$. This is exactly the $\mathcal{J}$-measurability of $\mu_{2k}C\left(G\right)^{k}$.
\end{proof}

Since conditional expectations are central to the rest of the argument, we introduce the measure-theoretic framework needed for their use here. On the probability space \eqref{eq:CanonicalLimitSpace}, put\[
\mathcal{J}:=\sigma\left(G\right)=\left\{ G^{-1}\left(A\right):A\in\mathcal{B}\left(V_{\square}\right)\right\} .
\]Throughout, conditioning on $G$ means conditioning on the $\sigma$-algebra $\sigma\left(G\right)$ generated by $G$:\[
\mathbb{E}\left[Y\mid G\right]:=\mathbb{E}\left[Y\mid\sigma\left(G\right)\right],\qquad\mathbb{P}\left(A\mid G\right):=\mathbb{P}\left(A\mid\sigma\left(G\right)\right).
\]The following construction is based on \cite[Section 33]{key-18}. For an event $A\in\mathscr{F}$, the measure $B\mapsto\mathbb{P}\left(A\cap B\right)$ on $\mathcal{J}$ is absolutely continuous with respect to $\mathbb{P}$ restricted to $\mathcal{J}$. The Radon–Nikodym theorem therefore gives a $\mathcal{J}$-measurable function, denoted $\mathbb{P}\left(A\mid G\right)$, satisfying\[
\mathbb{P}\left(A\cap B\right)=\int_{B}\mathbb{P}\left(A\mid G\right)\,d\mathbb{P}\qquad\left(B\in\mathcal{J}\right).
\]This is the measure-theoretic form of conditioning on the information contained in $G$; unlike that elementary definition, however, it also applies when conditioning on $G$ involves events of probability zero, with the resulting conditional probability determined only up to almost-sure equality.

More generally, if $Y$ is integrable, a conditional expectation of $Y$ given $G$ is an integrable $\mathcal{J}$-measurable random variable $Z$ such that\[
\int_{B}Z\,d\mathbb{P}=\int_{B}Y\,d\mathbb{P}\qquad\left(B\in\mathcal{J}\right).
\]It exists by the Radon–Nikodym theorem and is unique up to almost-sure equality. We denote this almost-surely unique conditional expectation by $\mathbb{E}\left[Y\mid G\right]$. We shall identify conditional expectations through finitely many prime coordinates. For $w\geq2$ put\[
\mathcal{J}_{w}:=\left\{ G^{-1}\left(A\right):A\in\mathcal{F}_{w}\right\} ,\qquad\mathcal{A}:=\bigcup_{w\geq2}\mathcal{J}_{w}.
\]By \lemref{lem:finite-coordinate-haar-structure}, $\left(\mathcal{J}_{w}\right)_{w\geq2}$ is increasing, $\mathcal{A}$ is an algebra, hence a $\pi$-system, and $\sigma\left(\mathcal{A}\right)=\mathcal{J}$. Here a $\pi$-system is a nonempty family of sets closed under finite intersections. Consequently, \cite[Theorem 34.1]{key-18}, shows that to prove $Z=\mathbb{E}\left[Y\mid G\right]$ it is enough to verify the defining integral identity for $B\in\mathcal{A}$. This is precisely the link between the finite-coordinate weighted identities of \propref{prop:finite-coordinate-weighted-moments} and the conditional moments below.

\begin{proposition}\label{prop:subsequential-conditional-moments}
Let $\left(x_{j},H_{j}\right)$ satisfy $x_{j}=H_{j}^{o\left(1\right)}$, and set\[
G_{j}:=G_{H_{j}},\qquad X_{j}:=X_{x_{j},H_{j}}.
\]Suppose, after passing to a subsequence and relabeling, that\[
\left(G_{j},X_{j}\right)\overset{d}{\longrightarrow}\left(G,X\right).
\]Then $G$ has Haar law $dg$, and for every integer $k\geq0$,\[
\mathbb{E}\left[X^{2k}\mid G\right]=\mu_{2k}C\left(G\right)^{k},\qquad\mathbb{E}\left[X^{2k+1}\mid G\right]=0\quad\text{a.s.}
\]
\end{proposition}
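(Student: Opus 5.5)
The plan is to combine the weighted moment asymptotics of \propref{prop:finite-coordinate-weighted-moments} with \lemref{lem:weighted-moment-passage}, and then identify conditional expectations through the $\pi$-system $\mathcal A$. First, the Haar law of $G$ follows at once from \lemref{lem:finite-coordinate-haar-structure} (the law of $G_j$ converges to $dg$) together with the last assertion of \lemref{lem:canonical-limit-space}.

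Next we need uniform moment bounds. Fix $k\ge0$ and let $m\in\{2k,2k+1\}$. Choose an integer $r$ with $2r>m$. Since $x_j=H_j^{o(1)}$, for all large $j$ we have $H_j\ge x_j^{C_d(2r)^2}$. Then \propref{prop:finite-coordinate-weighted-moments}, applied with $\Phi\equiv1$ and exponent $2r$, gives
\[
\mathbb E X_j^{2r}\to\mu_{2r}\int_{V_\square}C(g)^r\,dg,
\]
and this limit is finite because $C\in L^r$. Hence $\sup_j\mathbb E|X_j|^{2r}<\infty$, after discarding finitely many $j$, each of which has $|X_j|\le 2x_j^{3/4}$. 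Now take any bounded $\mathcal F_w$-measurable $\Phi$. By \lemref{lem:finite-coordinate-haar-structure} it is continuous, so \lemref{lem:weighted-moment-passage} combined with \propref{prop:finite-coordinate-weighted-moments} yields
\[
\mathbb E[\Phi(G)X^m]=\lim_j\mathbb E_{H_j}[\Phi(G_j)X_j^m]=\mu_m\int_{V_\square}\Phi(g)C(g)^{m/2}\,dg=\mathbb E[\Phi(G)\mu_mC(G)^{m/2}].
\]
The last equality uses that $G$ has law $dg$.

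Finally, take $B=G^{-1}(A)\in\mathcal A$ with $A\in\mathcal F_w$, and set $\Phi=\mathds 1_A$. The identity above says that $\int_B X^m\,d\mathbb P=\int_B \mu_m C(G)^{m/2}\,d\mathbb P$. The candidate $\mu_mC(G)^{m/2}$ is $\mathcal J$-measurable by \lemref{lem:conditional-moment-measurability}; in the odd case the candidate is $0$, which is trivially measurable. It is integrable since $C\in L^q$ for all $q$, and $X^m$ is integrable since $X\in L^{2r}$. Because $\mathcal A$ is a $\pi$-system generating $\mathcal J$ and containing $\Omega$, \cite[Theorem 34.1]{key-18} upgrades the identity from $B\in\mathcal A$ to all $B\in\mathcal J$. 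This gives both conditional moment formulas almost surely.

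The main point needing care is the uniform integrability step: the hypothesis of \propref{prop:finite-coordinate-weighted-moments} must hold eventually for the higher exponent $2r$, which is exactly where $x=H^{o(1)}$ enters. The finitely many early indices are handled by the trivial bound.
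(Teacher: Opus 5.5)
Your proposal is correct and follows the paper's proof essentially step by step: the Haar law comes from \lemref{lem:finite-coordinate-haar-structure} and \lemref{lem:canonical-limit-space}, the uniform $2r$-th moment bound from the $\Phi\equiv1$ case of \propref{prop:finite-coordinate-weighted-moments}, the limit passage from \lemref{lem:weighted-moment-passage}, and the identification of conditional expectations from the $\pi$-system $\mathcal A$ and \cite[Theorem 34.1]{key-18}. The only cosmetic point is $m=0$, where \propref{prop:finite-coordinate-weighted-moments} (stated for $m\ge1$) is not needed; as the paper notes, that case is immediate since $\mathbb E[1\mid G]=1$.
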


\begin{proof}
By \lemref{lem:finite-coordinate-haar-structure},
the laws of $G_j$ converge weakly to the Haar
probability measure $dg$ on $V_{\square}$.
\lemref{lem:canonical-limit-space} therefore shows
that the first coordinate $G$ of the joint limit
is also distributed according to $dg$.
The even-moment identity for $k=0$ is immediate,
since $\mathbb E[1\mid G]=1$.
Fix $m\ge1$ and choose an integer $r$ with $2r>m$. Since \eqref{eq:C-from-D} and \propref{prop:partial-sum-limits}
imply that $C\in L^r(dg)$, \propref{prop:finite-coordinate-weighted-moments} applied with $\Phi\equiv1$ and exponent $2r$ gives\[
\mathbb{E}\left|X_{j}\right|^{2r}\longrightarrow\mu_{2r}\int_{V_{\square}}C\left(g\right)^{r}\,dg<\infty.
\]Hence, after enlarging the bound to absorb finitely many initial indices,\[
\sup_{j}\mathbb{E}\left|X_{j}\right|^{2r}<\infty.
\]Now fix $w\geq2$ and a bounded $\mathcal{F}_{w}$-measurable function $\Phi$. By \lemref{lem:finite-coordinate-haar-structure}, $\Phi$ is continuous. The preceding uniform moment bound allows us to apply \lemref{lem:weighted-moment-passage}, and \propref{prop:finite-coordinate-weighted-moments} then yields\[
\mathbb{E}\left[\Phi\left(G\right)X^{m}\right]=\mu_{m}\int_{V_{\square}}\Phi\left(g\right)C\left(g\right)^{\frac{m}{2}}\,dg.
\]For $m=2k$, since $G$ is distributed according to $dg$, this becomes\[
\mathbb{E}\left[\Phi\left(G\right)X^{2k}\right]=\mathbb{E}\left[\Phi\left(G\right)\mu_{2k}C\left(G\right)^{k}\right].
\]For $m=2k+1$, the right-hand side is zero because $\mu_{2k+1}=0$. It remains to identify these identities as conditional expectations. Let $B\in\mathcal{A}$. By definition of $\mathcal{A}$, there are $w\geq2$ and $A\in\mathcal{F}_{w}$ such that $B=G^{-1}\left(A\right)$. Taking $\Phi=\mathds{1}_{A}$ in the even identity gives\[
\int_{B}X^{2k}\,d\mathbb{P}=\int_{B}\mu_{2k}C\left(G\right)^{k}\,d\mathbb{P}.
\]By \lemref{lem:conditional-moment-measurability}, $\mu_{2k}C\left(G\right)^{k}$ is $\mathcal{J}$-measurable, and it is integrable because $C\in L^{k}\left(dg\right)$. The variable $X^{2k}$ is integrable by \lemref{lem:weighted-moment-passage} and the preceding higher-moment bound. Since $\mathcal{A}$ is a $\pi$-system generating $\mathcal{J}=\sigma\left(G\right)$, \cite[Theorem 34.1]{key-18}, gives\[
\mathbb{E}\left[X^{2k}\mid G\right]=\mu_{2k}C\left(G\right)^{k}\quad\text{a.s.}
\]Applying the same argument to the odd identity, with $0$ as the candidate conditional expectation, gives $\mathbb{E}\left[X^{2k+1}\mid G\right]=0$ a.s.
\end{proof}

\subsection{Identification of the limit law}\label{sec:limit-law-identification}

This subsection is where conditioning removes the moment-determinacy obstruction described at the beginning of \secref{sec:conditional-moments-convergence}. Once $G$ is fixed, the quantity $C\left(G\right)$ is fixed, so the conditional moments obtained in \propref{prop:subsequential-conditional-moments} are exactly the moments of a centered Gaussian with variance $C\left(G\right)$, and this Gaussian distribution is determined by its moments. To pass from these conditional moments to a statement about the conditional law of $X$, we use the notion of a conditional distribution. A conditional distribution of $X$ given $G$ assigns, for almost every outcome $\omega$, a probability measure\[
B\mapsto\nu\left(B,\omega\right)
\]on $\mathbb{R}$, in such a way that for every Borel set $B\subseteq\mathbb{R}$, the random variable $\nu\left(B,\cdot\right)$ represents the conditional probability\[
\mathbb{P}\left(X\in B\mid G\right).
\]Such a conditional distribution exists by \cite[Theorem 33.3]{key-18}. Moreover, \cite[Theorem 34.5]{key-18}, implies that whenever $\varphi\left(X\right)$ is integrable,\[
\int_{\mathbb{R}}\varphi\left(t\right)\,\nu\left(dt,\omega\right)
\] agrees almost surely with\[
\mathbb{E}\left[\varphi\left(X\right)\mid G\right].
\]In our case, we shall use this only for $\varphi\left(t\right)=t^{m}$.

\begin{proposition}\label{prop:conditional-gaussian-identification}
Let $\nu\left(B,\omega\right)$ be a conditional distribution of $X$ given $G$. Under the hypotheses of \propref{prop:subsequential-conditional-moments}, there is an event $\Omega_{0}$ with $\mathbb{P}\left(\Omega_{0}\right)=1$ such that, for every $\omega\in\Omega_{0}$,\[
\nu\left(\cdot,\omega\right)=N\left(0,C\left(G\left(\omega\right)\right)\right).
\]
\end{proposition}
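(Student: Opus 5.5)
The plan is to turn the almost-sure conditional moment identities of \propref{prop:subsequential-conditional-moments} into pointwise statements about the measures $\nu(\cdot,\omega)$, and then to invoke moment determinacy of a fixed Gaussian.

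First, for each integer $m\ge0$, the variable $X^m$ is integrable; this follows from \lemref{lem:weighted-moment-passage} together with the uniform higher-moment bound in the proof of \propref{prop:subsequential-conditional-moments}. By \cite[Theorem 34.5]{key-18}, $\int t^m\,\nu(dt,\omega)$ agrees almost surely with $\mathbb{E}[X^m\mid G]$. Combining this with \propref{prop:subsequential-conditional-moments}, we get an event $\Omega_m$ of full probability on which the following holds:
\[
\int_{\mathbb{R}}t^m\,\nu(dt,\omega)=\mu_m C(G(\omega))^{m/2}.
\]
Here the right-hand side means $0$ for odd $m$. For even $m$, we use the fixed Borel representative of $C$ from \lemref{lem:conditional-moment-measurability}.

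To make sure the left-hand side is finite and well defined, we work with even moments first. Since $\mathbb{E}[X^{2k}]<\infty$, the identity $\mathbb{E}[\int t^{2k}\nu(dt,\cdot)]=\mathbb{E}X^{2k}$ shows that $\int t^{2k}\,d\nu<\infty$ almost surely. Hence on a full-measure event all odd absolute moments are finite as well, and the odd identities make sense there.

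We also need $0\le C(G(\omega))<\infty$. Nonnegativity can be arranged by replacing the representative with $\max(C,0)$, which agrees almost everywhere. Finiteness holds almost surely because $C\in L^1$.

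Set $\Omega_0:=\bigcap_{m\ge0}\Omega_m\cap\{C(G)<\infty\}$. This is a countable intersection, so $\mathbb{P}(\Omega_0)=1$. For $\omega\in\Omega_0$, the probability measure $\nu(\cdot,\omega)$ has all moments equal to those of $N(0,c)$, where $c=C(G(\omega))\in[0,\infty)$. If $c=0$, the second moment vanishes, so $\nu(\cdot,\omega)=\delta_0=N(0,0)$. If $c>0$, the Gaussian $N(0,c)$ is determined by its moments, for instance by Carleman's criterion, since $\sum_k((2k-1)!!c^k)^{-1/(2k)}=\infty$, or by \cite[Example 30.1]{key-18}. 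Therefore $\nu(\cdot,\omega)=N(0,c)$.

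The only delicate point is bookkeeping: the almost-sure exceptional sets depend on $m$, so we must take the countable intersection. We must also ensure that the version of $\nu$ and the representative of $C$ are fixed once and for all before intersecting. No analytic obstacle is expected.
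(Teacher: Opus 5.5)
Your proposal is correct and follows the paper's proof. Like the paper, you get finiteness of all moments from \lemref{lem:weighted-moment-passage}, use \cite[Theorem 34.5]{key-18} to transfer the conditional moments of \propref{prop:subsequential-conditional-moments} to $\nu(\cdot,\omega)$, intersect countably many full-measure events, and split into the cases $c=0$ (giving $\delta_0$) and $c>0$ (moment determinacy of the Gaussian). The only differences are cosmetic: you invoke determinacy of $N(0,c)$ directly, e.g. via Carleman, instead of rescaling to $N(0,1)$, and you add harmless bookkeeping on the a.s.\ finiteness of the moments of $\nu(\cdot,\omega)$ and on the choice of a nonnegative representative of $C$.
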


\begin{proof}
By \lemref{lem:weighted-moment-passage}, all moments of $X$ are finite. Hence, for each integer $m\ge1$, the conditional-distribution identity recalled above shows that the $m$-th moment of the conditional distribution $\nu\left(\cdot,\omega\right)$ agrees almost surely with the conditional moment $\mathbb{E}\left[X^{m}\mid G\right]$. Combining this with \propref{prop:subsequential-conditional-moments} and intersecting the resulting countably many probability-one events, we obtain an event $\Omega_{0}$ on which, simultaneously for every $k\geq0$,\[
\int t^{2k}\,\nu\left(dt,\omega\right)=\mu_{2k}C\left(G\left(\omega\right)\right)^{k},\qquad\int t^{2k+1}\,\nu\left(dt,\omega\right)=0.
\]Fix $\omega\in\Omega_{0}$ and put $c:=C\left(G\left(\omega\right)\right)$. If $c=0$, the conditional second moment is zero, so $\nu\left(\cdot,\omega\right)=\delta_{0}=N\left(0,0\right)$. Suppose $c>0$ and push $\nu\left(\cdot,\omega\right)$ forward by $t\mapsto t/\sqrt{c}$. The resulting probability measure has moments $\mu_{m}$ for every $m\geq0$, exactly the moments of $N\left(0,1\right)$. The standard normal distribution is determined by its moments as mentioned before. Hence the scaled measure is $N\left(0,1\right)$, and scaling back gives $\nu\left(\cdot,\omega\right)=N\left(0,c\right)$.
\end{proof}

\lemref{lem:gaussian-kernel-identification} below passes from the conditional description obtained in \propref{prop:conditional-gaussian-identification} to an identification of the entire joint law of $\left(G,X\right)$. For a random element $Y$, write $\mathcal L(Y)$
for its probability distribution.

\begin{lemma}\label{lem:gaussian-kernel-identification}
For the function $C$ constructed in \secref{sec:limiting-variance-construction} and Borel $B\subseteq\mathbb{R}$, define\[
K\left(g,B\right):=\mathbb{P}\left(\sqrt{C\left(g\right)}Z\in B\right).
\]Here $Z$ is standard Gaussian. Then $g\mapsto K\left(g,B\right)$ is Borel measurable. If $G\sim dg$ and the conditional distribution of $X$ given $G$ is $N\left(0,C\left(G\right)\right)$ almost surely, then\[
\mathcal{L}\left(G,X\right)=\mathcal{L}\left(G,\sqrt{C\left(G\right)}Z\right),
\]where on the right $Z$ is independent of $G$.
\end{lemma}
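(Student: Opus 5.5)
The proof has two parts: Borel measurability of the kernel $K$, and identification of the joint law through rectangles $A\times B$.

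\emph{Measurability.} Fix the Borel representative of $C$ chosen in \lemref{lem:conditional-moment-measurability}. For fixed Borel $B$, write
\[
K(g,B)=\kappa_B(C(g)),\qquad \kappa_B(c):=\mathbb P(\sqrt c\,Z\in B),\quad c\ge0 .
\]
It suffices to show that $\kappa_B$ is Borel on $[0,\infty)$. The map $(c,t)\mapsto \mathds 1_B(\sqrt c\,t)$ is Borel on $[0,\infty)\times\mathbb R$. Tonelli's theorem applied to the product of Lebesgue measure on $[0,\infty)$ with the law of $Z$ then shows that $c\mapsto\int \mathds 1_B(\sqrt c\,t)\,\mathbb P_Z(dt)$ is Borel. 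Alternatively, one can use a monotone class argument: for open intervals $B$, $\kappa_B$ is continuous on $(0,\infty)$ and Borel at $0$, and the class of $B$ for which $\kappa_B$ is measurable is a $\lambda$-system containing this $\pi$-system. Composing with $C$ gives measurability of $g\mapsto K(g,B)$. The same argument shows that $K$ is a probability kernel.

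\emph{Identification of the law.} Both laws live on $V_\square\times\mathbb R$, whose Borel $\sigma$-algebra is the product $\sigma$-algebra by \lemref{lem:canonical-limit-space}. Rectangles $A\times B$ form a $\pi$-system generating it, so it suffices to compare the two measures on rectangles. For the left-hand side, using the defining property of the conditional distribution $\nu$, the hypothesis $\nu(\cdot,\omega)=N(0,C(G(\omega)))=K(G(\omega),\cdot)$ a.s., and $G\sim dg$, we get
\[
\mathbb P(G\in A,X\in B)=\int_{\{G\in A\}}\mathbb P(X\in B\mid G)\,d\mathbb P=\int_{\{G\in A\}}K(G,B)\,d\mathbb P=\int_A K(g,B)\,dg .
\]
For the right-hand side, take $Z$ independent of $G$. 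Fubini's theorem on the product space, which is justified by the joint measurability of $(g,t)\mapsto\mathds 1_A(g)\mathds 1_B(\sqrt{C(g)}\,t)$, gives
\[
\mathbb P(G\in A,\sqrt{C(G)}Z\in B)=\int_A \mathbb P_Z(\sqrt{C(g)}Z\in B)\,dg=\int_A K(g,B)\,dg .
\]
The two measures therefore agree on a generating $\pi$-system, and the $\pi$–$\lambda$ theorem gives equality of the laws.

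The only delicate point is the joint measurability needed for both Tonelli/Fubini applications. This is why a fixed Borel representative of $C$ must be chosen first. Changing $C$ on a $dg$-null set alters neither law, because $G\sim dg$.
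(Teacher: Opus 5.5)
Your proposal is correct and follows essentially the same route as the paper: Borel measurability of $g\mapsto K(g,B)$ via Tonelli--Fubini applied to the Borel map $(g,z)\mapsto\mathds 1_B(\sqrt{C(g)}z)$ (factoring through $c\mapsto\kappa_B(c)$ is only a cosmetic variant), followed by checking that both joint laws equal $\int_A K(g,B)\,dg$ on rectangles $A\times B$, which generate $\mathcal B(V_\square\times\mathbb R)$ by \lemref{lem:canonical-limit-space}. Your added remarks on fixing a Borel representative of $C$ and on invariance under $dg$-null modifications are consistent with the paper's treatment in \lemref{lem:conditional-moment-measurability}.
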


\begin{proof}
Let $\gamma$ be standard Gaussian measure. Since $\left(g,z\right)\mapsto\sqrt{C\left(g\right)}z$ is Borel measurable, Tonelli–Fubini gives\[
K\left(g,B\right)=\int_{\mathbb{R}}\mathds{1}_{B}\left(\sqrt{C\left(g\right)}z\right)\,\gamma\left(dz\right),
\]which is Borel measurable in $g$. For Borel $A\subseteq V_{\square}$ and $B\subseteq\mathbb{R}$, the defining property of the conditional distribution gives\[
\mathbb{P}\left(G\in A,X\in B\right)=\int_{A}K\left(g,B\right)\,dg.
\]The same expression is obtained for $\left(G,\sqrt{C\left(G\right)}Z\right)$ when $Z$ is independent of $G$. Thus the two laws agree on measurable rectangles, which generate $\mathcal{B}\left(V_{\square}\times\mathbb{R}\right)$ by \lemref{lem:canonical-limit-space}. Hence the joint laws are equal.
\end{proof}

Prokhorov's theorem \cite[Section 6, pp. 35-40]{key-19} states that every sequence in a tight\footnote{A family $\left(\Lambda_{j}\right)$ of probability measures on a metric space $S$ is called tight if, for every $\varepsilon>0$, there exists a compact set $K_{\varepsilon}\subseteq S$ such that $\Lambda_{j}\left(K_{\varepsilon}\right)\geq1-\varepsilon$ for every $j$.} family on a Polish space has a weakly convergent subsequence. Equip $S$ with
\[
d_{S}\left(\left(g,x\right),\left(h,y\right)\right):=d_{V}\left(g,h\right)+\min\left\{ 1,\left|x-y\right|\right\} .
\]
The following lemma gives the tightness criterion needed here and records the standard subsequence argument that completes the passage to convergence of the full sequence using Prokhorov’s theorem.

\begin{lemma}\label{lem:tightness-and-subsequence-convergence}
On the product space $S$ introduced in \lemref{lem:canonical-limit-space}, if random vectors $\left(G_{j},X_{j}\right)$ satisfy\[
\sup_{j}\mathbb{E}\left|X_{j}\right|^{2}<\infty,
\]then their laws are tight on $S$. Moreover, if a tight sequence of probability measures on $S$ has the property that every weakly convergent subsequence has the same limit $\Lambda$, then the full sequence converges weakly to $\Lambda$.
\end{lemma}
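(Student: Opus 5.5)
The proof has two independent parts: tightness, and the subsequence principle.

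For tightness, fix $\varepsilon>0$ and put $M:=\sup_j\mathbb{E}|X_j|^2<\infty$. I take the candidate compact set $K_\varepsilon:=V_\square\times[-R,R]$, with $R$ chosen so that $M/R^2\le\varepsilon$. This set is compact in $S$ because $V_\square$ is compact by \lemref{lem:finite-coordinate-haar-structure} and $[-R,R]$ is compact. Moreover, $d_S$ induces the product topology, since $\min\{1,|x-y|\}$ induces the usual topology on $\mathbb{R}$. Chebyshev's inequality then gives
\[
\mathbb{P}\left((G_j,X_j)\notin K_\varepsilon\right)=\mathbb{P}(|X_j|>R)\le \frac{M}{R^2}\le\varepsilon
\]
uniformly in $j$. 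This is tightness, and the $G$-coordinate needs no control at all because $V_\square$ is compact.

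For the second assertion, I argue by contradiction. Let $(\Lambda_j)$ be tight, and suppose $\Lambda_j\not\to\Lambda$ weakly. Then there are a bounded continuous $f$ on $S$, some $\varepsilon>0$, and a subsequence $(j_n)$ with
\[
\left|\int f\,d\Lambda_{j_n}-\int f\,d\Lambda\right|\ge\varepsilon \quad\text{for all } n.
\]
The subsequence $(\Lambda_{j_n})$ is still tight. By Prokhorov's theorem, applicable since $S$ is Polish by \lemref{lem:canonical-limit-space}, it has a further weakly convergent subsequence. By hypothesis the limit of that sub-subsequence is $\Lambda$, so along it $\int f\,d\Lambda_{j_{n_i}}\to\int f\,d\Lambda$. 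This contradicts the displayed lower bound.

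Both steps are routine, and I do not expect a real obstacle. The only point that needs checking is that the metric $d_S$ really induces the product topology, so that "compact" and "weak convergence" are understood with respect to the topology of $S$ used elsewhere in the paper.
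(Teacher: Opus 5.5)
Your proof is correct and is the standard argument the paper intends; the paper states this lemma without proof, describing it only as the tightness criterion together with the standard subsequence argument via Prokhorov's theorem. Your Chebyshev bound on the compact set $V_\square\times[-R,R]$, and your contradiction argument through a further weakly convergent subsequence (with the check that $d_S$ induces the product topology), supply exactly those omitted details.
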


\subsection{Proof of \thmref{thm:gaussian-mixture-limit}}\label{sec:proof-main-theorem}
Take an arbitrary sequence $\left(x_{j},H_{j}\right)$ in this range and write\[
G_{j}:=G_{H_{j}},\qquad X_{j}:=X_{x_{j},H_{j}}.
\]\propref{prop:finite-coordinate-weighted-moments} with $m=2$ and $\Phi\equiv1$ gives\[
\sup_{j}\mathbb{E}\left|X_{j}\right|^{2}<\infty
\]after absorbing finitely many initial terms. By \lemref{lem:tightness-and-subsequence-convergence}, the joint laws $\Lambda_{j}:=\mathcal{L}\left(G_{j},X_{j}\right)$ are tight. To prove convergence of the full sequence, take an arbitrary subsequence $\left(\Lambda_{j_{\ell}}\right)$. By Prokhorov's theorem, tightness allows us to extract a further subsequence $\left(\Lambda_{j_{\ell_{r}}}\right)$ that converges weakly. On the canonical space of \lemref{lem:canonical-limit-space}, write its limit as $\left(G,X\right)$. \propref{prop:subsequential-conditional-moments} identifies all conditional moments of $X$ given $G$. \propref{prop:conditional-gaussian-identification} identifies the corresponding conditional distribution as $N\left(0,C\left(G\right)\right)$, and \lemref{lem:gaussian-kernel-identification} therefore gives\[
\mathcal{L}\left(G,X\right)=\mathcal{L}\left(G,\sqrt{C\left(G\right)}Z\right),
\]with $G\sim dg$ and $Z\sim N\left(0,1\right)$ independent. Thus every subsequence of $\left(\Lambda_{j}\right)$ contains a further subsequence converging to the same probability measure. The second assertion of \lemref{lem:tightness-and-subsequence-convergence} therefore implies convergence of the full sequence. Since the original sequence $\left(x_{j},H_{j}\right)$ was arbitrary, the joint convergence in \thmref{thm:gaussian-mixture-limit} follows. The marginal convergence of $X_{x,H}$ follows by applying the continuous projection $\left(g,t\right)\mapsto t$. The positivity of the variance follows from
\lemref{lem:variance-moment-lower-bound} with $q=2$, which gives
$\operatorname{Var}_{dg}(C)\geq
(\zeta(2d+2)-1)\gamma_0^2>0$. \qed

\appendix
\section{Growth of the Moments of the Limiting Variance}
\label{app:C-moment-growth}

In this appendix, we establish matching upper and lower bounds for the growth of the moments of $C(g)$. In particular, the lower bound shows that Carleman's criterion does not hold for $\sqrt{C(g)}Z$.
Throughout the appendix, $d\geq1$ is fixed.  We write $\mathbb E^\sharp$
and $\mathbb P^\sharp$ for expectation and probability on
$V_\square^\sharp$ with respect to its normalized product probability
measure.  Recall from \eqref{eq:Cextend} that $C$ is extended by
zero to $V_\square\setminus V_\square^\sharp$ and, by \eqref{eq:C-sharp-to-Haar},
\begin{equation}
 \int_{V_\square} C(g)^k\,dg
 =\frac{1}{\zeta(2d+2)}\,\mathbb E^\sharp C^k.
 \label{eq:app-sharp-Haar}
\end{equation}

\begin{theorem}\label{thm:C-two-sided-moment-growth}
There are constants $0<c_d<C_d<\infty$ and an integer $k_0(d)>0$ such that, for every
integer $k\geq k_0(d)$,
\[
 \exp\!\left(c_d\frac{k^2}{(\log k)^2}\right)
 \leq
 \int_{V_\square} C(g)^k\,dg
 \leq
 \exp\!\left(C_d\frac{k^2}{(\log k)^2}\right).
\]
Consequently, if $M_{2k}:=\mu_{2k}\int_{V_\square}C(g)^k\,dg$ then $\sum_{k\geq1}M_{2k}^{-1/(2k)}<\infty$.
\end{theorem}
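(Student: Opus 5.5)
The lower bound comes from a large-deviation event at the small primes, and the upper bound from re-running the proof of \lemref{lem:partial-sum-moment-bound} with explicit dependence on the moment exponent. The consequence for $M_{2k}$ is immediate from the lower bound. Since $\mu_{2k}\ge1$, it gives $M_{2k}^{-1/(2k)}\le\exp\bigl(-\tfrac{c_d}{2}k/(\log k)^2\bigr)$, which is summable. So the work lies in the two-sided estimate, and by \eqref{eq:app-sharp-Haar} it suffices to bound $\mathbb E^\sharp C^k$.

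\emph{Lower bound.} Since $D_g(\lambda)\ge0$, I will use $C\ge\pi^{-1}D_g(1)$ and bound $D_g(1)$ from below on an explicit event of small but not too small probability. Fix a parameter $y$ and let $\mathcal E_y$ be the event that, for every $p\le y$, $g_p\equiv p\,h_p\bmod p^2$ with $h_p\bmod p$ a fixed polynomial having a nonzero root modulo $p$. Then $\mathbb P^\sharp(\mathcal E_y)\ge\prod_{p\le y}p^{-(d+1)}=\exp(-(d+1)\theta(y))$.

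On $\mathcal E_y$, each root $u$ of $h_p$ gives a full fibre $u+pt$ of roots modulo $p^2$. Hence $|T_{g_p}(\xi/p;p^2)|=p\,|\sum_u e(\xi u/p)|$, and for a suitable choice of $h_p$ this is $\gg p$ for every unit $\xi$; one root already suffices. Thus $X_{p,1}\gg1$ instead of its typical size $p^{-2}$. I will also check that $X_{p,0}\ge(1-d/p)^2$, so the product of these factors over $p\le y$ costs only a power of $\log y$. I restrict the sum defining $A_{g,1}(Y)$ to $r=ab^2$ with $a\mid\prod_{p\le y}p$ and $b$ coprime to $\prod_{p\le y}p$. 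Nonnegativity lets me discard all other terms, and the primes $p>y$ are handled, after conditioning on $\mathcal F_y$, exactly as in \lemref{lem:finite-prime-partial-sum-limit}. This should give, on $\mathcal E_y$,
\[
D_g(1)\gg(\log y)^{-O(1)}\sum_{a\mid\prod_{p\le y}p}\frac{c^{\omega(a)}}{\sqrt a}\ge\exp\Bigl(c'\frac{\sqrt y}{\log y}\Bigr).
\]
Hence $\mathbb E^\sharp C^k\ge\exp\bigl(c'k\sqrt y/\log y-(d+1)\theta(y)\bigr)$. Choosing $y=\varepsilon k^2/(\log k)^2$ with $\varepsilon$ small in terms of $c',d$ makes the exponent $\gg_d k^2/(\log k)^2$.

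\emph{Upper bound.} By Minkowski and \eqref{eq:C-from-D}, $\|C\|_k\le\pi^{-1}\zeta(3/2)\sup_\lambda\|D_g(\lambda)\|_k$. It therefore suffices to show $\sup_{\lambda,Y}\mathbb E\,(Y^{-1/2}A_{g,\lambda}(Y))^k\le\exp(O_d(k^2/(\log k)^2))$. I will redo the proof of \lemref{lem:partial-sum-moment-bound} with $q=k$, tracking all constants. The key input is a uniform pointwise bound $X_{p,1}\le4d^2$, together with the bounds $\mathbb P(Z_g\neq0)\ll p^{-2}$ and $Z_g\le d$. Together these should give the mixed-moment estimate
\[
\mathbb E\prod_{i\in J}X_{p,1}\le\min\bigl((4d^2)^{|J|}p^{-2},\,(Cp^{-2})^{|J|}+\cdots\bigr).
\]
The Euler factor at $p$ in the $a$-sum then becomes at most $1+p^{-2}\bigl((1+4d^2p^{-1/2})^k-1\bigr)$. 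For $p\le Y_k:=k^2/(\log k)^2$, bounding this factor by $2\exp(4d^2kp^{-1/2})$ and summing over primes gives $\exp(O(k\sqrt{Y_k}/\log Y_k))=\exp(O(k^2/(\log k)^2))$. For $p>Y_k$, the bound $kp^{-1/2}\le\log k$ keeps the factor at $1+O(k^{O(1)}p^{-2})$, whose product is harmless. The $b$-sum, the $X_{p,0}$ and $X_{p,2}$ factors, and the zeta-factor removal must be shown to cost only $\exp(O(k\log k))$, which is negligible.

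\emph{Main obstacle.} The hard step is this last bookkeeping. The proof of \lemref{lem:partial-sum-moment-bound} hides $k$-dependence in constants like $C^{\omega(A)}$, in the $2^k$ subsets per prime, and in the $\prod_i\zeta(s_i)$ extraction. I have to verify that none of these grows faster than $\exp(O(k\log k))$ at the primes $p\le k^{O(1)}$, by bounding the local factors crudely by $(1+O(1/p))^k$ rather than appealing to convergence.
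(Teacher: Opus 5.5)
Your outline is sound and yields the theorem after one repair (second paragraph), but both halves are executed differently from the paper.

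\textbf{Lower bound.} The paper imposes $\{Z_p=1\}$ only on the block $z/2<p\le z$ and sets $g_p=1$ for $p\le p_0$. This needs the factorial-moment estimate $\mathbb P^\sharp(Z_p=1)\gg p^{-2}$ of \lemref{lem:app-singular-one-root} and the worst-case bound $X_{p,0}\ge(1-d/p)^2$ at the unconstrained primes. You instead freeze every coordinate $p\le y$ at $g_p=p(t-1)$, which gives exactly $X_{p,1}=1$ and $X_{p,0}=(1-1/p)^2$; this is cleaner. Since a single point is fixed at each prime, the probability is $\prod_{p\le y}(p^{2d+2}-1)^{-1}\ge e^{-(2d+2)\theta(y)}$, not $e^{-(d+1)\theta(y)}$. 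This is harmless, since $e^{-O_d(y)}$ is all that is needed. Also, on $\mathcal E_y$ you obtain a lower bound for $\mathbb E^\sharp[D_g(1)\mid\mathcal F_y]$, not for $D_g(1)$ pointwise. So the passage to $\mathbb E^\sharp C^k$ must go through conditional Jensen, as the paper does with $C_z$.

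\textbf{Upper bound.} The paper never bounds finite-$Y$ sums. It bounds $\mathbb E^\sharp D_g^{(z)}(\lambda)^k$ via the representation in \lemref{lem:finite-prime-partial-sum-limit}. For fixed $z$, the small-prime parts of the $a_j,b_j$ range over a finite set, and the tail is an explicit multiplicative mean value. This produces $\prod_{p\le z}(1-1/p)^k$ directly. It also removes the zeta-factor extraction and the $\ell^1$ bookkeeping for $h_{\mathbf a}$ that you identify as your main obstacle. The bound is uniform in $z$ and passes to $D_g(\lambda)$ by $L^k$ convergence. Your direct route also works, and its singular-event Euler factor is the paper's $J_p(k)$. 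The zeta removal costs $e^{O(k/p)}$ at $p\le Ck$. At larger $p$ it costs $1+O(k^2/p^2)$, provided you use the first-order cancellation: the coefficient of $p^{-s_i}$ in $H_p(\mathbf s)$ is $-1/p+O(k/p^2)$. The total cost is $\exp(O(k\log\log k))$.

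The step that fails as written is your treatment of $p>Y_k=k^2/(\log k)^2$. Replacing $e^{4d^2kp^{-1/2}}$ by its maximum $k^{4d^2}$ leaves local bounds $1+Ck^{4d^2}p^{-2}$. For every $d\ge1$, each such bound with $p\le k^2$ exceeds $1+C$. Their logarithms therefore sum to $\gg\pi(k^2)\gg k^2/\log k$, which is not $O(k^2/(\log k)^2)$; so the product is not harmless.

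The fix is to keep the $p$-dependence and move the cutoff to $C_dk^2/(\log k)^2$ with $C_d\ge 64d^4$. Below the cutoff, use $\log\bigl(1+Cp^{-2}e^{4d^2kp^{-1/2}}\bigr)\le\log(1+C)+4d^2kp^{-1/2}$, which sums to $O_d(k^2/(\log k)^2)$. Above it, $4d^2kp^{-1/2}\le\frac12\log p$, so the factor is $1+O(p^{-3/2})$. This is exactly the content of \lemref{lem:app-prime-exponential-sum}.
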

The consequence is direct. We proceed with separately proving the lower and upper bounds.

\subsection{The lower bound} \label{sec:C(g)-lower-bound}

For a prime $p$ and $g_p\in V_{p^2}^\sharp$, put
\[
 Z_p(g_p):=
 \#\bigl\{y\bmod p:
 g_p(y)\equiv0\bmod{p^2},\quad g_p'(y)\equiv0\bmod p\bigr\},
\]
and let
\[
 \mathcal E_p:=\{Z_p=1\}.
\]
Ultimately, the lower bound arises by restricting to an explicit event contained in intersection of several $\mathcal{E}_p$s.

For the next lemma, recall the notation $X_{p,e}$ defined in \eqref{eq:Xpdef}.

\begin{lemma}\label{lem:app-singular-one-root}
There are constants $c_d>0$ and a positive integer $p_0(d)$ such that, for every prime
$p\geq p_0(d)$,
\[
 \mathbb P^\sharp(\mathcal E_p)\geq \frac{c_d}{p^2}.
\]
Moreover, provided $p\geq2d$, uniformly for
$\xi\in(\mathbb Z/p\mathbb Z)^\times$ and $g_p \in\mathcal E_p$,
\begin{equation}
 X_{p,1}(g_p;\xi)\geq\frac14
 \label{eq:app-Xp1-lower}
\end{equation}
while for every $g_p\in V_{p^2}^\sharp$,
\begin{equation}
 X_{p,0}(g_p)\geq \left(1-\frac d p\right)^2.
 \label{eq:app-Xp0-lower}
\end{equation}
\end{lemma}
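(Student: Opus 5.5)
To prove the lower bound on $\mathbb P^\sharp(\mathcal E_p)$ I will exhibit an explicit family of polynomials in $\mathcal E_p$. Take $g_p(t)\equiv (t-y)^2h(t)\bmod p^2$, with $y\bmod p$ a residue and $h$ of degree at most $d-2$ modulo $p^2$ chosen so that $h\bmod p$ has no root at $y$. More generally, I will count directly: for a fixed $y\bmod p$, the conditions $g(y)\equiv0\bmod p^2$ and $g'(y)\equiv0\bmod p$ are three independent linear conditions on the coefficients modulo $p$, exactly as in the proof of \lemref{lem:root-exponential-sum-moment-bounds}. They therefore hold with probability $\asymp p^{-3}$ on $V_{p^2}$. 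Summing over the $p$ choices of $y$ gives $\gg p^{-2}$, provided I subtract the tuples with $Z_p\ge2$.

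For $Z_p\ge2$, two distinct residues $y\neq y'$ both satisfy the conditions. This requires six conditions, which are independent when $g\bmod p\neq0$, $d\ge3$ and $p$ is large; when $d$ is small they are simply impossible, since $(t-y)^2(t-y')^2\mid g\bmod p$ would force $g\equiv0\bmod p$. Either way this event has probability $O(p^{-4})$. The class $g\equiv0\bmod p$ has mass $O(p^{-d-1})$. For $d=1$ this mass is of the same order $p^{-2}$, so I handle it separately: a singular double root cannot occur for $d=1$ unless $g\equiv 0\bmod p$. In that case $g=ph$ and $Z_p$ counts the roots of $h\bmod p$, and a nonconstant linear $h$ gives exactly one root. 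Hence, in all cases, $\mathbb P^\sharp(\mathcal E_p)\ge p^{-2}-O(p^{-3})$ for $p\ge p_0(d)$. The delicate point I expect is this $d\le2$ bookkeeping of degenerate classes; I would treat $d=1$ and $d=2$ explicitly rather than through the independence count.

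For \eqref{eq:app-Xp1-lower}, I will use the decomposition $u=\beta+kp$ from the proof of \lemref{lem:root-exponential-sum-moment-bounds}, now with frequency $\xi/p$, so that $e(\xi u/p)=e(\xi\beta/p)$ depends only on $\beta$. A singular root $y$ counted by $Z_p=1$ lifts to a full fibre and contributes $p\,e(\xi y/p)$. The nonsingular roots contribute at most $d$ terms of modulus $1$, and the non-lifting singular roots contribute $0$. Hence $|T_{g_p}(\xi/p;p^2)|\ge p-d\ge p/2$ for $p\ge2d$, and therefore $X_{p,1}=p^{-2}|T|^2\ge1/4$.

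For \eqref{eq:app-Xp0-lower}, I need $\rho_{g_p}(p^2)\le dp$, which follows from the same fibre decomposition. If $g\bmod p\neq0$, there are at most $d$ roots modulo $p$, each with at most $p$ lifts. If $g=ph$ with $h\bmod p\neq0$, the roots modulo $p^2$ are the lifts of the at most $d$ roots of $h$, again at most $dp$. Thus $1-\rho/p^2\ge1-d/p\ge0$ for $p\ge2d$, and squaring gives the bound.
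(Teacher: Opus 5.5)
Your proposal is correct and is essentially the paper's proof. The shared ingredients are: the first moment of $Z_p$ from three independent conditions per residue; a correction from pairs $y\neq y'$ (six independent conditions for larger $d$, and for $d\le 3$ the impossibility of two double roots of a nonzero $g\bmod p$); separate treatment of the class $g\equiv0\bmod p$, which carries all of $\mathcal E_p$ when $d=1$; the fibre argument giving $|T_{g_p}(\xi/p;p^2)|\ge p-d$; and the bound $\rho_{g_p}(p^2)\le dp$.

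The one point to state explicitly is what you subtract from $\mathbb E^\sharp Z_p$. It is $\mathbb E^\sharp[Z_p\mathbf 1_{\{Z_p\ge2\}}]\le\mathbb E^\sharp[Z_p(Z_p-1)]$, not $\mathbb P^\sharp(Z_p\ge2)$, and this is exactly what your ordered-pair count bounds; the paper packages it as $\mathbf 1_{\{Z_p=1\}}\ge Z_p-Z_p(Z_p-1)$. The opening explicit family $(t-y)^2h(t)$ is never used and can be dropped.
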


\begin{proof}
For fixed $y\bmod p$, the conditions
\[
 g_p(y)\equiv0\bmod{p^2},\qquad g_p'(y)\equiv0\bmod p
\]
impose three independent linear conditions on the $p$-adic digits of
the coefficients.  Averaging first on the full space $V_{p^2}$ and then
removing the zero polynomial therefore gives
\begin{equation}
 \mathbb E^\sharp Z_p
 =\frac{p^{-2}-p^{-2d-1}}{1-p^{-2d-2}}
 =p^{-2}+O_d(p^{-3}).
 \label{eq:app-EZp}
\end{equation}
We also have
\begin{equation}
 \mathbb E^\sharp\!\left[Z_p(Z_p-1)\right]\ll_d p^{-3}.
 \label{eq:app-factorial-Zp}
\end{equation}
Indeed, if all coefficients of $g_p$ are divisible by $p$, write
$g_p=ph$ modulo $p^2$.  For $d=1$, the nonzero polynomial $h\bmod p$
has at most one root, so this class contributes nothing to the
factorial moment.  For $d\geq2$, the class itself has probability
$O_d(p^{-d-1})=O_d(p^{-3})$, and $Z_p\leq d$.  Outside this class,
$g_p\bmod p$ is nonzero.  Two distinct elements counted by $Z_p$ would
then be two distinct double roots of $g_p\bmod p$, which is impossible
for $d\leq3$.  For $d\geq4$, a fixed ordered pair $y\neq y'$ imposes
four value-and-derivative conditions modulo $p$, together with the two
additional lifting conditions modulo $p^2$.  These six independent
$p$-adic linear conditions have probability $O_d(p^{-6})$; summing over
the $O(p^2)$ ordered pairs gives $O_d(p^{-4})$.  This proves
\eqref{eq:app-factorial-Zp}.
Note that $\mathbf 1_{\{Z_p=1\}}\ge Z_p-Z_p(Z_p-1)$ and hence
\[
 \mathbb P^\sharp(\mathcal E_p)
 \geq \mathbb E^\sharp Z_p
      -\mathbb E^\sharp[Z_p(Z_p-1)],
\]
and then \eqref{eq:app-EZp}--\eqref{eq:app-factorial-Zp} give the asserted lower
bound for $\mathbb P^\sharp(\mathcal E_p)$.

On $\mathcal E_p$, let $y$ be the unique singular residue class.
Since $g_p(y)\equiv0\bmod{p^2}$ and $g_p'(y)\equiv0\bmod p$,
all $y+pt$, $t\bmod p$, are roots modulo $p^2$.
Their contribution to $T_{g_p}(\xi/p;p^2)$ is
\[
\sum_{t\bmod p}e\left(\frac{\xi(y+pt)}p\right)
=p\,e\left(\frac{\xi y}p\right).
\]
Since the remaining roots are nonsingular, there are at most
$d$ of them. By the reverse triangle inequality,
\[
\left|T_{g_p}\left(\frac{\xi}{p};p^2\right)\right|
\ge p-d.
\]
Consequently, for $p\ge 2d$,
\[
X_{p,1}(g_p;\xi)
=p^{-2}\left|T_{g_p}\left(\frac{\xi}{p};p^2\right)\right|^2
\ge \left(1-\frac dp\right)^2
\ge\frac14,
\]
proving \eqref{eq:app-Xp1-lower}.
Finally, every $g_p\in V_{p^2}^\sharp$ has at most $dp$ roots modulo
$p^2$, i.e. $\rho_{g_p}(p^2)\le dp$, for $p\ge d$ we have
\[
X_{p,0}(g_p)
=\left(1-\frac{\rho_{g_p}(p^2)}{p^2}\right)^2
\ge \left(1-\frac dp\right)^2,
\]
proving \eqref{eq:app-Xp0-lower}.
\end{proof}

\begin{proof}[Proof of the lower bound in
\thmref{thm:C-two-sided-moment-growth}]
We first establish a mean-value estimate for the local factors at
primes larger than a cutoff $z\geq2$. Recall from
\lemref{lem:local-coefficient-moment-bounds} that
\[
\eta_{p,e}:=\mathbb E^\sharp X_{p,e},\qquad
\eta_{p,0}=1+O_d(p^{-2}),\qquad
\beta_p:=\frac{\eta_{p,2}}{\eta_{p,0}} 
=1-\frac1p+O_d(p^{-2}), \qquad R_z:=\prod_{p>z}\eta_{p,0}.
\]
Define the multiplicative function
\[
m_z(b):=\mu^2(b)\mathds{1}_{p\mid b\Rightarrow p>z}
\prod_{p\mid b}\beta_p.
\]
The product $R_z$ is bounded above and below by positive constants
depending only on $d$. Moreover, $m_z$ has mean value
\begin{equation}
\vartheta_z
:=\prod_{p\leq z}\left(1-\frac1p\right)
\prod_{p>z}\left(1-\frac1p\right)
\left(1+\frac{\beta_p}{p}\right)
\asymp_d\frac1{\log z}
\label{eq:app-vartheta-z}
\end{equation}
by Mertens' theorem and since the local factors in the second product equal $1 + O_d(p^{-2})$. Applying the convolution argument from the proof of
\lemref{lem:finite-prime-partial-sum-limit}, we obtain, for fixed $z$,
\begin{equation}
\sum_{b\leq B}m_z(b)=\vartheta_z B+o_z(B).
\label{eq:app-mz-mean}
\end{equation}
Consequently, by \eqref{eq:app-mz-mean} and partial summation,
using $f(t^2)\ll\min(1,t^{-4})$ to control the tails,
we obtain, for every fixed squarefree $a$ supported on primes
at most $z$,
\begin{equation}
x^{-1/2}\sum_{b\geq1}m_z(b)
f\!\left(\frac{ab^2}{x}\right)
\xrightarrow{x\to\infty}
\frac{\vartheta_z}{\sqrt a}
\int_0^\infty f(t^2)\,dt.
\label{eq:app-weighted-mz}
\end{equation}
where $f$ is the function constructed in \secref{sec:limiting-variance-construction}. For convenience, write
\[
\gamma_f:=2\int_0^\infty f(t^2)\,dt>0.
\]
We now apply this estimate to the conditional variance
\[
C_z:=\mathbb E^\sharp[C\mid\mathcal F_z].
\]
By \eqref{eq:S-to-C} and the fact that conditional
expectation is a contraction on $L^k$, we have
\[
x^{-1/2}\mathbb E^\sharp[S(g,x)\mid\mathcal F_z]
\longrightarrow C_z
\qquad\text{in }L^k.
\]
Since $\mathcal F_z$ is generated by finitely many finite-valued
coordinates, this $L^k$ convergence holds for every fixed
configuration $(g_p)_{p\le z}$.
Choose $p_0=p_0(d)\geq2d$ sufficiently large that
\lemref{lem:app-singular-one-root} applies for every $p>p_0$.
Fix the coordinates
\[
g_p=(1,0,\ldots,0)\in V_{p^2}^\sharp,
\qquad p\leq p_0,
\]
and define the positive-probability event
\[
\mathcal B_0
:=\bigcap_{p\leq p_0}
\left\{g_p=(1,0,\ldots,0)\right\}.
\] For $z>2p_0$, define
\[
\mathcal P_z:=\{p:z/2<p\leq z\},\qquad
\mathcal B_z:=\mathcal B_0
\cap\bigcap_{p\in\mathcal P_z}\mathcal E_p.
\]
For each $J\subseteq\mathcal P_z$, let
$a_J:=\prod_{p\in J}p$.
We use the nonnegative decomposition of $S(g,x)$ from
\lemref{lem:one-pair-sum-decomposition}, retaining only
$\lambda=1$ and the terms of the form
\[
r=a_Jb^2,\qquad
J\subseteq\mathcal P_z,\qquad
p\mid b\Rightarrow p>z.
\]
The case $J=\varnothing$, $b=1$ gives $r=1$, which is
excluded from $S(g,x)$. Including this term would contribute
$O_z(x^{-1/2})$ to the normalized conditional sum
$x^{-1/2}\mathbb E^\sharp[S(g,x)\mid\mathcal F_z]$,
and therefore would not affect its limit $C_z$.

To obtain a lower bound for $C_z$, we restrict the
nonnegative sum $S(g,x)$ in
\lemref{lem:one-pair-sum-decomposition} to the terms
with $\lambda=1$ and $r=a_Jb^2$ as specified above.
On $\mathcal B_z$, for every $p\in J$,
\eqref{eq:app-Xp1-lower} and $X_{p,0}\leq1$ imply
\[
X_{p,1}(g_p;\xi)\geq\frac14X_{p,0}(g_p).
\] Consequently,
\[
\begin{aligned}
x^{-1/2}\mathbb E^\sharp[S(g,x)\mid\mathcal F_z]
\geq{}&
\frac{2R_z}{\sqrt{x}}
\left(\prod_{p\leq z}X_{p,0}(g_p)\right)
\sum_{J\subseteq\mathcal P_z}4^{-|J|}
\sum_{b\geq1}m_z(b)
f\!\left(\frac{a_Jb^2}{x}\right)
-O_z(x^{-1/2}).
\end{aligned}
\]
Letting $x\to\infty$, the left-hand side converges to $C_z(g)$,
while \eqref{eq:app-weighted-mz} evaluates each sum over $b$.
We therefore obtain, for $g\in\mathcal B_z$,
\begin{align}
C_z(g)\geq
\gamma_f R_z\vartheta_z
\left(\prod_{p\leq z}X_{p,0}(g_p)\right)
\sum_{J\subseteq\mathcal P_z}
\frac{4^{-|J|}}{\sqrt{a_J}}=
\gamma_f R_z\vartheta_z
\left(\prod_{p\leq z}X_{p,0}(g_p)\right)
\prod_{p\in\mathcal P_z}
\left(1+\frac1{4\sqrt p}\right).
\label{eq:app-Cz-lower-product}
\end{align}
To estimate this expression, recall that our choice of
$\mathcal B_0$ ensures $X_{p,0}(g_p)=1$ for $p\leq p_0$.
For $p>p_0$, \eqref{eq:app-Xp0-lower} applies, and
Mertens' theorem gives
\[
\prod_{p\leq z}X_{p,0}(g_p)
\geq
\prod_{p_0<p\leq z}\left(1-\frac dp\right)^2
\gg_d(\log z)^{-2d}.
\]
Since $R_z\asymp_d1$ and
$\vartheta_z\asymp_d(\log z)^{-1}$ by
\eqref{eq:app-vartheta-z}, it follows that
\begin{equation}
C_z(g)\gg_d
(\log z)^{-(2d+1)}
\prod_{p\in\mathcal P_z}
\left(1+\frac1{4\sqrt p}\right),
\qquad g\in\mathcal B_z.
\label{eq:app-Cz-final-lower}
\end{equation}
We now use this bound to give a lower bound for $\mathbb E^\sharp C^k$.
By independence of the prime coordinates and
\lemref{lem:app-singular-one-root}, for sufficiently large $z$,
\begin{equation}
\mathbb P^\sharp(\mathcal B_z)
=
\mathbb P^\sharp(\mathcal B_0)
\prod_{p\in\mathcal P_z}\mathbb P^\sharp(\mathcal E_p)
\geq
c_d\prod_{p\in\mathcal P_z}\frac1{2p^2}.
\label{eq:app-Bz-prob}
\end{equation}
Moreover, the conditional Jensen's inequality gives
\[
\mathbb E^\sharp C^k
\geq\mathbb E^\sharp C_z^k
\geq
\mathbb P^\sharp(\mathcal B_z)
\left(\inf_{g\in\mathcal B_z}C_z(g)\right)^k.
\]
Substituting \eqref{eq:app-Cz-final-lower} and
\eqref{eq:app-Bz-prob}, taking logarithms, and using
$\log(1+u)\geq u/2$ for sufficiently small $u$, we obtain
\begin{equation}
\log\mathbb E^\sharp C^k
\geq
c_d k\sum_{z/2<p\leq z}\frac1{\sqrt p}
-2\sum_{z/2<p\leq z}\log p
-C_d k\log\log z-C_d\pi(z).
\label{eq:app-lower-log}
\end{equation}
By Chebyshev's estimates,
\[
\sum_{z/2<p\leq z}\frac1{\sqrt p}
\gg\frac{\sqrt z}{\log z},
\qquad
\sum_{z/2<p\leq z}\log p\ll z,
\qquad
\pi(z)\ll\frac z{\log z}.
\]
Applying the lower bound for the first prime sum and the upper
bounds for the remaining two in \eqref{eq:app-lower-log},
and absorbing the implied constants, we obtain
\begin{equation}
\log\mathbb E^\sharp C^k
\geq
c_d\frac{k\sqrt z}{\log z}
-C_dz-C_dk\log\log z.
\label{eq:app-lower-optimized}
\end{equation}
Finally, choose $z=\alpha_d k^2/(\log k)^2$, where
$\alpha_d>0$ is a sufficiently small constant.
Since $\log z\asymp\log k$, the positive term in
\eqref{eq:app-lower-optimized} has size
$\gg_d\sqrt{\alpha_d}\,k^2/(\log k)^2$, whereas
the negative $z$-term is $O_d(\alpha_d k^2/(\log k)^2)$
and $k\log\log k=o(k^2/(\log k)^2)$.
Choosing $\alpha_d$ sufficiently small, we conclude that
\begin{equation}
\mathbb E^\sharp C^k
\geq
\exp\!\left(c_d\frac{k^2}{(\log k)^2}\right)
\label{eq:app-lower-final}
\end{equation}
for all sufficiently large $k$. By
\eqref{eq:C-sharp-to-Haar}, the same lower bound, after
adjusting $c_d$, holds for $\mathbb E C^k$. This completes the proof of the lower bound.
\end{proof}

\subsection{The matching upper bound}
Recall the $\mathcal F_z$-measurable random variable
$D_g^{(z)}(\lambda)$ constructed in
\lemref{lem:finite-prime-partial-sum-limit}. We begin by producing an upper bound on its moments; this is done by following the proof of that lemma and making it more explicit. For $k\geq2$ and a prime $p$, define
\[
 \mathcal L_p(k)
 :=
 \sum_{\mathbf e\in\{0,1,2\}^k}
 p^{-N_1(\mathbf e)/2-N_2(\mathbf e)}
 \sup_{\boldsymbol\xi}
 \mathbb E^\sharp
 \prod_{j=1}^k X_{p,e_j}(g_p;\xi_j),
\]
where $N_i(\mathbf e):=\#\{j:e_j=i\}$ and the supremum is taken over
$\xi_j\in(\mathbb Z/p^{e_j}\mathbb Z)^\times$
for all indices $j$ with $e_j\neq0$.

\begin{lemma}\label{lem:app-finite-prime-k-majorant}
There is a constant $A_d\geq1$ such that, uniformly for $z\geq2$,
$\lambda\geq1$, and integers $k\geq2$,
\begin{equation}
 \mathbb E^\sharp D_g^{(z)}(\lambda)^k
 \leq
 A_d^k
 \prod_{p\leq z}
 \left(1-\frac1p\right)^k\mathcal L_p(k).
 \label{eq:app-Dz-Euler-majorant}
\end{equation}
\end{lemma}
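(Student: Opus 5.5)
The plan is to start from the explicit description of $A^{(z)}_{g,\lambda}(Y)$ in the proof of \lemref{lem:finite-prime-partial-sum-limit}. There $D^{(z)}_g(\lambda)$ is obtained as the pointwise limit of $Y^{-1/2}A^{(z)}_{g,\lambda}(Y)$. By Fatou's lemma (or the $L^k$ convergence established there),
\[
\mathbb E^\sharp D^{(z)}_g(\lambda)^k\le\liminf_{Y\to\infty}Y^{-k/2}\,\mathbb E^\sharp A^{(z)}_{g,\lambda}(Y)^k ,
\]
so it suffices to bound the $k$th moment of $A^{(z)}_{g,\lambda}(Y)$ by $Y^{k/2}$ times the right-hand side of \eqref{eq:app-Dz-Euler-majorant}, uniformly in $Y,\lambda,z$. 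We write
\[
A^{(z)}_{g,\lambda}(Y)=\kappa_z\sum_{ab^2\le Y}\prod_{p\le z}X_{p,\nu_p(ab^2)}(g_p;\xi_p)\,\alpha(a)\beta(b).
\]
Since $\kappa_z\le\prod_p\max(1,\eta_{p,0})\ll_d1$, we have $\kappa_z^k\le A_d^k$. We expand the $k$th power into a sum over $k$-tuples $r_j=a_jb_j^2$, and use independence of the coordinates $g_p$, $p\le z$. The expectation then factors as
\[
\prod_{p\le z}\mathbb E^\sharp\prod_j X_{p,e_j}(g_p;\xi_j),\qquad \mathbf e_p=(\nu_p(r_j))_j ,
\]
which we bound by the supremum over the frequencies $\boldsymbol\xi$. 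This removes the dependence on the residues of the other parts of $r_j$, the only non-multiplicative feature.

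Next we split each $a_j=a_j^-a_j^+$ and $b_j=b_j^-b_j^+$ into parts supported on $p\le z$ and on $p>z$. The weights $\alpha(a^+)\beta(b^+)$ are multiplicative and nonnegative, with $\alpha_p\ll p^{-2}$ and $\beta_p\le1$. For fixed small parts $(a_j^-,b_j^-)$ we sum over the large parts subject to $a_j^+(b_j^+)^2\le Y/(a_j^-(b_j^-)^2)$, treating each index $j$ separately after dropping all cross constraints, since the majorant is now a product. For squarefree $b^+$ supported on $p>z$ we use
\[
\sum_{b\le B}\mu^2(b)\mathds 1_{p\mid b\Rightarrow p>z}\le A_d\,B\prod_{p\le z}\Bigl(1-\frac1p\Bigr),
\]
a sieve upper bound uniform in $z\le B$; for $B<z$ only $b=1$ survives. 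Summing over $a^+$ with weight $\alpha(a^+)\,a^{+-1/2}$ gives a convergent Euler product $\ll_d1$. Thus each index contributes
\[
A_d\,(Y/a_j^-)^{1/2}\,(b_j^-)^{-1}\prod_{p\le z}\Bigl(1-\frac1p\Bigr).
\]
Raising to the $k$ independent indices gives $Y^{k/2}A_d^k\prod_{p\le z}(1-1/p)^k$ times
\[
\sum_{(a_j^-,b_j^-)}\ \prod_j (a_j^-)^{-1/2}(b_j^-)^{-1}\prod_{p\le z}\sup_{\boldsymbol\xi}\mathbb E^\sharp\prod_j X_{p,e_j} .
\]
This sum factors over $p\le z$ as exactly $\prod_{p\le z}\mathcal L_p(k)$: at each $p$ the pattern $\mathbf e$ records whether $p\mid a_j$ (weight $p^{-1/2}$) or $p\mid b_j$ (weight $p^{-1}$).

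The main obstacle is the uniform sieve step. The regime $B<z$, the failure of the bound $\prod_{p\le z}(1-1/p)\cdot B$ for small $B$, and the uniformity in $z$ must all be absorbed into $A_d^k$ without losing a factor growing with $z$. If the crude bound fails for $B$ much smaller than $z$, we will instead use Rankin's trick:
\[
\sum_{b\le B}\ \le\ B^{1-\epsilon}\prod_{p>z}\bigl(1+p^{-1+\epsilon}\bigr)^{\ast},
\]
truncated at $B$. The $\liminf$ over $Y\to\infty$ helps, since $(Y/a^-)^{1/2}\to\infty$ for each fixed $a^-,b^-$. To make this precise we would apply Fatou termwise in the factored expansion, so that only the asymptotic mean value $\vartheta_z\asymp\prod_{p\le z}(1-1/p)$ from \eqref{eq:app-vartheta-z} is needed.
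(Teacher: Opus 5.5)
Your fallback argument is essentially the paper's proof. You expand the $k$th power of the representation of $A^{(z)}_{g,\lambda}(Y)$ and replace each expectation at $p\le z$ by its supremum over unit frequencies, so that it depends only on the exponent patterns of the small-prime parts. You then drop the coprimality and congruence constraints, so the tail sums over $(a_j^+,b_j^+)$ factor over $j$. Each tail is bounded using the mean value $\vartheta_z\le A_d\prod_{p\le z}(1-1/p)$ of $m_z$ together with the convergent sum of $\alpha(a^+)/\sqrt{a^+}$, and the small-prime sum is reassembled as $\prod_{p\le z}\mathcal L_p(k)$.

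The uniform-in-$Y$ sieve bound and the Rankin variant are unnecessary. For fixed $z$, the small parts $a_j^-,b_j^-$ range over the finitely many divisors of $\prod_{p\le z}p$, so you may let $Y\to\infty$ term by term. That finiteness, together with dominated convergence in the $a^+$-sum via the trivial bound $\sum_{b\le B}m_z(b)\ll_d B$, is what justifies $\limsup_{Y}\sum\le\sum\limsup_{Y}$. Fatou's lemma gives the opposite inequality, so it cannot justify this termwise step.
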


\begin{proof}
We revisit the proof of \lemref{lem:finite-prime-partial-sum-limit},
this time keeping track of the dependence on $k$.
Recall that the conditional sums have the representation
\[
A_{g,\lambda}^{(z)}(Y)
=R_z
\sum_{\substack{ab^2\leq Y\\
a,b\ \mathrm{squarefree}\\
(a,b)=(ab,\lambda)=1}}
\alpha(a)\beta(b)
\prod_{p\leq z}
X_{p,\nu_p(ab^2)}
\left(g_p;\xi_p(ab^2,\lambda)\right),
\]
where $R_z=\prod_{p>z}\eta_{p,0}\ll_d1$ and
$\alpha,\beta$ are the multiplicative weights supported on
primes greater than $z$, satisfying
\[
\alpha_p\ll_d p^{-2},
\qquad
\beta_p=1-\frac1p+O_d(p^{-2}).
\]
Since $Y^{-1/2}A_{g,\lambda}^{(z)}(Y)\to D_g^{(z)}(\lambda)$
in $L^k$, it suffices to bound
\[
\mathbb E^\sharp D_g^{(z)}(\lambda)^k
=\lim_{Y\to\infty}Y^{-k/2}
\mathbb E^\sharp A_{g,\lambda}^{(z)}(Y)^k.
\]
Expanding the $k$th power, we separate in each copy the
small and large prime factors by writing
$a_j=a_{0,j}a_{1,j}$ and $b_j=b_{0,j}b_{1,j}$,
where the prime factors of $a_{0,j}b_{0,j}$ are at most $z$,
and those of $a_{1,j}b_{1,j}$ exceed $z$.

For fixed small-prime factors, the local frequencies
$\xi_p(ab^2,\lambda)$ may depend on the tail variables
$a_1,b_1$. We remove this dependence by bounding the
expectation over $g_p$, $p\leq z$, by the supremum over
the corresponding unit frequencies. Since all summands
are nonnegative, we may then discard the remaining
coprimality and congruence restrictions in the tail sums. For each fixed pair $a_0,b_0$, the resulting tail sum satisfies
\begin{align*}
&\limsup_{Y\to\infty}Y^{-1/2}
\sum_{\substack{a_1b_1^2\leq Y/(a_0b_0^2)\\
a_1,b_1\ \mathrm{squarefree}\\
p\mid a_1b_1\Rightarrow p>z}}
\alpha(a_1)\beta(b_1)\leq
\frac{A_d}{\sqrt{a_0}b_0}
\prod_{p\leq z}\left(1-\frac1p\right).
\end{align*}
Indeed, by \eqref{eq:app-mz-mean}, the $b_1$-weights
have mean value $\ll_d\prod_{p\leq z}(1-p^{-1})$,
while the $a_1$-sum is uniformly bounded since
\[
\sum_{\substack{a_1\ \mathrm{squarefree}\\
p\mid a_1\Rightarrow p>z}}
\frac{\alpha(a_1)}{\sqrt{a_1}}
\leq
\prod_{p>z}\left(1+\frac{A_d}{p^{5/2}}\right)
\ll_d1.
\]
Applying this estimate to each of the $k$ copies,
the contribution of the tail variables is bounded by
\[
A_d^k
\prod_{p\leq z}\left(1-\frac1p\right)^k
\prod_{j=1}^k\frac1{\sqrt{a_{0,j}}b_{0,j}},
\]
where the uniformly bounded factor $R_z^k$ has been
absorbed into $A_d^k$. If $e_{p,j}=\nu_p(a_{0,j}b_{0,j}^2)$,
then
\[
\prod_{j=1}^k\frac1{\sqrt{a_{0,j}}b_{0,j}}
=
\prod_{p\leq z}
p^{-N_1(\mathbf e_p)/2-N_2(\mathbf e_p)},
\]
where $\mathbf e_p=(e_{p,1},\ldots,e_{p,k})$.
Thus, summing over the small-prime exponent patterns
and using independence of the coordinates $g_p$,
we obtain
\[
\mathbb E^\sharp D_g^{(z)}(\lambda)^k
\leq
A_d^k
\prod_{p\leq z}\left(1-\frac1p\right)^k
\prod_{p\leq z}
\left(
\sum_{\mathbf e\in\{0,1,2\}^k}
p^{-N_1(\mathbf e)/2-N_2(\mathbf e)}
\sup_{\boldsymbol\xi}
\mathbb E^\sharp
\prod_{j=1}^k X_{p,e_j}(g_p;\xi_j)
\right).
\]
The expression in parentheses is precisely
$\mathcal L_p(k)$, proving
\eqref{eq:app-Dz-Euler-majorant}.
\end{proof}
We now estimate the local factors $\mathcal L_p(k)$.
Recall that
\[
Z_p:=
\#\left\{
y\bmod p:
g_p(y)\equiv0\bmod p^2,\quad
g_p'(y)\equiv0\bmod p
\right\}.
\]
The proof of \lemref{lem:root-exponential-sum-moment-bounds}
shows that $0\leq Z_p\leq d$ and, uniformly in the unit
frequencies,
\[
\left|T_{g_p}\left(\frac{\xi}{p};p^2\right)\right|
\leq d+pZ_p,
\qquad
\left|T_{g_p}\left(\frac{\xi}{p^2};p^2\right)\right|
\leq d.
\]
Using $Z_p\le d$ and $(a+b)^2\le2(a^2+b^2)$,
we obtain, with $A_d=2d^2$
\begin{equation}
\begin{aligned}
\sup_\xi X_{p,1}(g_p;\xi)\leq p^{-2}(d+pZ_p)^2
&\leq A_d\left(p^{-2}
+\mathds1_{\{Z_p\geq1\}}\right),\\
\sup_\xi X_{p,2}(g_p;\xi)&\leq d^2.
\end{aligned}
\label{eq:app-pointwise-X12}
\end{equation}
The same proof gives $\mathbb E^\sharp Z_p\ll_d p^{-2}$,
so
\begin{equation}
\mathbb P^\sharp(Z_p\geq1)
\leq\mathbb E^\sharp Z_p
\ll_d p^{-2}.
\label{eq:app-singular-prob-upper}
\end{equation}

\begin{lemma}\label{lem:app-local-k-factor}
There exists $A_d\geq1$ such that, for $p\leq A_dk$,
\begin{equation}
\log\left[
\left(1-\frac1p\right)^k\mathcal L_p(k)
\right]
\leq A_d\frac{k}{\sqrt p}.
\label{eq:app-local-smallp}
\end{equation}
For $p>A_dk$,
\begin{align}
\log\left[
\left(1-\frac1p\right)^k\mathcal L_p(k)
\right]
\leq{}&
A_d\bigg(\frac{k^2}{p^2} + \frac{k}{p^{5/2}}\bigg)
+\log\left(
1+\frac{A_d}{p^2}
e^{A_dk/\sqrt p}
\right).
\label{eq:app-local-largep}
\end{align}
\end{lemma}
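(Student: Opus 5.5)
We bound $\mathcal L_p(k)$ by splitting $V_{p^2}^\sharp$ into the singular event $\{Z_p\ge1\}$ and its complement, and inserting the pointwise bounds \eqref{eq:app-pointwise-X12} together with $0\le X_{p,0}\le1$. Because $\mathcal L_p(k)$ sums over patterns $\mathbf e\in\{0,1,2\}^k$, taking the supremum inside the expectation and summing lets us write
\[
\mathcal L_p(k)\le \mathbb E^\sharp\prod_{j=1}^k\Bigl(X_{p,0}+p^{-1/2}\sup_\xi X_{p,1}+p^{-1}\sup_\xi X_{p,2}\Bigr)
\]
(expanding the product recovers exactly the weights $p^{-N_1/2-N_2}$). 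This is $\mathbb E^\sharp W_p^k$, where $W_p:=X_{p,0}+p^{-1/2}\sup X_{p,1}+p^{-1}\sup X_{p,2}$. The whole problem then reduces to estimating one $k$th moment of a single bounded random variable.

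\emph{Small primes} $p\le A_dk$. We bound $W_p$ pointwise: by \eqref{eq:app-pointwise-X12}, $W_p\le 1+A_dp^{-1/2}+d^2p^{-1}\le 1+A_d'p^{-1/2}$. Hence $(1-1/p)^k\mathcal L_p(k)\le(1+A_d'p^{-1/2})^k\le e^{A_d'k/\sqrt p}$, which is \eqref{eq:app-local-smallp}.

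\emph{Large primes} $p>A_dk$. On $\{Z_p=0\}$ we have $\sup X_{p,1}\le A_dp^{-2}$ and $\sup X_{p,2}\le d^2$. It remains to control $X_{p,0}$, and this is the key point. Since $X_{p,0}=(1-\rho/p^2)^2\le 1-2\rho/p^2+\rho^2/p^4$, the naive bound $X_{p,0}\le1$ loses the crucial $-1/p$ needed to cancel the $p^{-1}X_{p,2}$ term against the prefactor $(1-1/p)^k$. Instead we use the identity behind $\mathbb E X_{p,2}=1-1/p+\dots$ at the level of the mean. Write $W_p=1+U_p$ on $\{Z_p=0\}$, where $U_p=p^{-1}\sup X_{p,2}+O(p^{-5/2})-(1-X_{p,0})$, and expand
\[
\mathbb E[(1+U_p)^k;Z_p=0]\le 1+k\,\mathbb E U_p+\sum_{j\ge2}\binom kj\mathbb E|U_p|^j.
\]
Since $|U_p|\ll_d p^{-1}$ and $k\le p/A_d$, the tail sum is $\ll k^2p^{-2}$. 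The linear term needs $\mathbb E\sup_\xi X_{p,2}\le 1+O(p^{-1})$, whereas \lemref{lem:local-coefficient-moment-bounds} only gives $\mathbb E X_{p,2}=1-p^{-1}+O(p^{-2})$ for each fixed $\xi$. Because the supremum over $\xi$ spoils this, we fall back on $X_{p,2}\le(\#\text{nonsingular roots})^2$. We then aim for $\mathbb E\sup X_{p,2}\le \mathbb E N_p^2$, where $N_p$ is the number of simple roots mod $p$. We have $\mathbb E N_p^2 = 2-1/p+O(p^{-2})$ roughly, but $p^{-1}$ times this is only a $k/p$ error, which must be absorbed against $(1-1/p)^k\approx e^{-k/p}$.

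\emph{Main obstacle.} This bookkeeping of the $k/p$ terms is the main obstacle. I would first try to avoid the supremum altogether by going back into the proof of \lemref{lem:app-finite-prime-k-majorant}. There the frequencies at $p$ are fixed by the other variables, so the expectation is over fixed $\xi_j$, and mixed moments of $X_{p,2}(\xi_j)$ can be computed via root-pair counts, as in \lemref{lem:local-coefficient-moment-bounds}, giving $1-1/p+O(kp^{-2})$ per factor. Failing that, I would redefine the majorant so the supremum is taken after the linear term; the statement's $k^2/p^2+k/p^{5/2}$ error terms suggest exactly this cancellation. Finally, on $\{Z_p\ge1\}$, which has probability $\ll p^{-2}$ by \eqref{eq:app-singular-prob-upper}, we have $W_p\le 1+A_dp^{-1/2}$, so this event contributes at most $A_dp^{-2}e^{A_dk/\sqrt p}$. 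Adding the two pieces and taking logarithms, via $\log(a+b)\le\log a+\log(1+b)$ for $a\ge1$, gives \eqref{eq:app-local-largep}.
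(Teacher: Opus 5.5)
Your small-prime bound and your handling of the event $\{Z_p\ge1\}$ are fine. The large-prime case, however, has a genuine gap exactly at the ``main obstacle'' you flag: the linear term of size $k/p$ is never controlled, and after your first step it cannot be.

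In the definition of $\mathcal L_p(k)$ the supremum $\sup_{\boldsymbol\xi}$ sits \emph{outside} $\mathbb E^\sharp$. By moving it inside for every pattern you pass to $\mathbb E^\sharp W_p^k$, which is too large when $d\ge2$. The average of $X_{p,2}(g_p;\xi)$ over units $\xi\bmod p^2$ equals exactly the number $N_p$ of simple roots, because the off-diagonal Ramanujan sums vanish. On the event $N_p=2$, which has probability $\gg_d1$ for $d\ge2$, a suitable unit $\xi$ aligns the two phases, so $\sup_\xi X_{p,2}\approx4$ there. Hence $\mathbb E^\sharp\sup_\xi X_{p,2}\ge1+c_d$ with $c_d>0$. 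Bernoulli's inequality then gives $(1-1/p)^k\,\mathbb E^\sharp W_p^k\ge1+c_dk/p-O_d(k^2/p^2)$, whereas the right-hand side of \eqref{eq:app-local-largep} is $O_{k,d}(p^{-2})$ for fixed $k$.

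Neither of your two proposed escapes is carried out, and neither is needed. Your diagnosis is also slightly off: the bound $X_{p,0}\le1$ loses nothing essential, since $\mathbb E^\sharp(1-X_{p,0})=O_d(p^{-2})$. The compensating $-k/p$ comes entirely from the prefactor $(1-1/p)^k$.

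The missing step is to respect the order $\sup_{\boldsymbol\xi}\mathbb E^\sharp$ for the patterns with $N_1(\mathbf e)=0$. Split $\mathcal L_p(k)=H_p(k)+J_p(k)$ according as $N_1(\mathbf e)=0$ or $N_1(\mathbf e)\ge1$.

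In $H_p(k)$:
\begin{itemize}
\item the all-zero pattern gives $\mathbb E^\sharp X_{p,0}^k\le1$;
\item each of the $k$ patterns with a single $e_j=2$ gives $p^{-1}\sup_\xi\mathbb E^\sharp[X_{p,0}^{k-1}X_{p,2}(g_p;\xi)]\le p^{-1}\sup_\xi\mathbb E^\sharp X_{p,2}(g_p;\xi)\le p^{-1}$, by $X_{p,0}\le1$ and the fixed-$\xi$ mean in \lemref{lem:local-coefficient-moment-bounds}, which is uniform in $\xi$;
\item the patterns with $N_2(\mathbf e)=t\ge2$ give at most $\binom{k}{t}d^{2t}p^{-t}$.
\end{itemize}
Hence $H_p(k)\le\exp(k/p+A_dk^2/p^2)$ for $p>A_dk$, and so $(1-1/p)^kH_p(k)\le\exp(A_dk^2/p^2)$.

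Only in $J_p(k)$ do you move the supremum inside. This gives $J_p(k)\le\mathbb E^\sharp[(U_p+V_p)^k-U_p^k]$ with $U_p:=X_{p,0}+p^{-1}\sup_\xi X_{p,2}$ and $V_p:=p^{-1/2}\sup_\xi X_{p,1}$. Every term now carries a factor $V_p$, which is $O_d(p^{-5/2})$ on $\{Z_p=0\}$. So the crude bound $U_p\le1+d^2/p$ costs only $e^{A_dk/p}=O(1)$, and your argument on $\{Z_p\ge1\}$ applies verbatim. This yields $J_p(k)\le A_dk\,p^{-5/2}+A_dp^{-2}e^{A_dk/\sqrt p}$.

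Then $(1-1/p)^k\mathcal L_p(k)\le e^{A_dk^2/p^2}(1+J_p(k))$. Taking logarithms with $\log(1+a+b)\le a+\log(1+b)$ gives \eqref{eq:app-local-largep}. This is the paper's argument; it needs no reworking of \lemref{lem:app-finite-prime-k-majorant} and no mixed-moment computations.
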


\begin{proof}
Write
\[
\mathcal L_p(k)=H_p(k)+J_p(k),
\]
where $H_p(k)$ contains the exponent patterns
$\mathbf e\in\{0,1,2\}^k$ with $N_1(\mathbf e)=0$,
and $J_p(k)$ contains those with $N_1(\mathbf e)\geq1$.
We first estimate $H_p(k)$. Since $X_{p,0}\leq1$,
$X_{p,2}\leq d^2$, and
\lemref{lem:local-coefficient-moment-bounds} that gives
\[
\mathbb E^\sharp X_{p,2}(g_p;\xi)
=1-\frac1p+O_d(p^{-2}),
\]
we obtain
\[
\begin{aligned}
H_p(k)
&\leq 1+\frac{k}{p}
\left(1-\frac1p+O_d(p^{-2})\right)
+\sum_{t=2}^k\binom kt\frac{d^{2t}}{p^t}\\
&\leq \exp\left(
\frac{k}{p}+A_d\frac{k^2}{p^2}
\right),
\qquad p>A_dk.
\end{aligned}
\]
Hence
\begin{equation}
\left(1-\frac1p\right)^kH_p(k)
\leq \exp\left(A_d\frac{k^2}{p^2}\right).
\label{eq:app-no-e1}
\end{equation}
Thus the factor $(1-p^{-1})^k$ cancels the
leading contribution $k/p$.
To estimate $J_p(k)$, put
\[
U_p:=X_{p,0}
+\frac1p\sup_\xi X_{p,2}(g_p;\xi),
\qquad
V_p:=\frac1{\sqrt p}
\sup_\xi X_{p,1}(g_p;\xi).
\]
By \eqref{eq:app-pointwise-X12},
\[
U_p\leq1+\frac{d^2}{p},
\qquad
V_p\leq A_d\left(
p^{-5/2}
+p^{-1/2}\mathds1_{\{Z_p\geq1\}}
\right).
\]
Expanding the $k$th power and keeping the terms
containing at least one factor $V_p$, we get
\[
J_p(k)
\leq\mathbb E^\sharp
\left[(U_p+V_p)^k-U_p^k\right].
\]
On $\{Z_p=0\}$, \eqref{eq:app-pointwise-X12} gives
$V_p\ll_d p^{-5/2}$, while $U_p\leq1+d^2/p$.
Hence, using $(u+v)^k-u^k\leq kv(u+v)^{k-1}$,
\[
\begin{aligned}
(U_p+V_p)^k-U_p^k
\leq kV_p(U_p+V_p)^{k-1}
\ll_d\frac{k}{p^{5/2}}
\left(1+\frac{A_d}{p}\right)^{k-1}
&\ll_d\frac{k}{p^{5/2}}e^{A_dk/p}.
\end{aligned}
\]
On $\{Z_p\geq1\}$, we use
\eqref{eq:app-singular-prob-upper}. Thus, for $p>A_dk$,
\[
\begin{aligned}
J_p(k) \ll_d \frac{k}{p^{5/2}}e^{A_dk/p}
+\frac1{p^2}
e^{A_dk/p+A_dk/\sqrt p} \leq A_d\frac{k}{p^{5/2}}
+\frac{A_d}{p^2}e^{A_dk/\sqrt p}.
\end{aligned}
\]
Combining this with \eqref{eq:app-no-e1} yields
\[
\begin{aligned}
\left(1-\frac1p\right)^k\mathcal L_p(k) \leq
\exp\left(A_d\frac{k^2}{p^2}\right)
\left(
1+A_d\frac{k}{p^{5/2}}
+\frac{A_d}{p^2}e^{A_dk/\sqrt p}
\right).
\end{aligned}
\]
Taking logarithms proves \eqref{eq:app-local-largep}.
For $p\leq A_dk$, no cancellation is needed.
Indeed, the same pointwise bounds give
\[
\mathcal L_p(k)
\leq\mathbb E^\sharp(U_p+V_p)^k
\leq\left(1+\frac{A_d}{\sqrt p}\right)^k
\leq e^{A_dk/\sqrt p},
\]
which proves \eqref{eq:app-local-smallp}.
\end{proof}

To control the logarithmic term in \eqref{eq:app-local-largep},
we use the following elementary consequence of the prime number
theorem (in fact, Chebyshev's bound suffices).

\begin{lemma}\label{lem:app-prime-exponential-sum}
For every fixed $A>0$ and all sufficiently large $k$,
\[
\sum_{p>Ak}
\log\left(1+\frac{A}{p^2}e^{Ak/\sqrt p}\right)
\ll_A\frac{k^2}{(\log k)^2}.
\]
\end{lemma}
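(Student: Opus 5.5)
We split the sum over $p>Ak$ according to the size of the exponent $Ak/\sqrt p$ relative to $2\log p$. On the range where $Ak/\sqrt p\le \log p$, we use $\log(1+u)\le u$. Each term is then at most $A p^{-2}e^{Ak/\sqrt p}\le A p^{-1}$. This is not yet summable, so we refine the cut. On the range $p>P_1$, where $P_1$ is chosen with $Ak/\sqrt{P_1}\le \tfrac12\log P_1$, i.e. $P_1\asymp_A k^2/(\log k)^2$, each term is bounded by $A p^{-2}\cdot p^{1/2}=Ap^{-3/2}$. The sum of these over $p>P_1$ is $\ll_A P_1^{-1/2}$, which is negligible.

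On the range $Ak<p\le P_1$ we instead use $\log(1+u)\le \log 2+\log^+ u$, so each term is at most
\[
\log 2+\log A+\frac{Ak}{\sqrt p}.
\]
Summing the first two pieces over these primes contributes $\ll_A \pi(P_1)\ll_A P_1/\log P_1\ll_A k^2/(\log k)^3$. Summing the last piece, Chebyshev's bound and partial summation give
\[
\sum_{p\le P_1}\frac{k}{\sqrt p}\ll \frac{k\sqrt{P_1}}{\log P_1}\ll_A \frac{k\cdot k/\log k}{\log k}=\frac{k^2}{(\log k)^2},
\]
since $\log P_1\asymp\log k$.

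Adding the two ranges gives the claimed $O_A(k^2/(\log k)^2)$. The only delicate point is the choice of the cutoff $P_1$. It must satisfy $\sqrt{P_1}\asymp k/\log k$, so that the middle range is balanced against the tail. Any cutoff larger by a power of $\log k$ would lose in the middle-range bound $k\sqrt{P_1}/\log P_1$. Verifying that $Ak/\sqrt{p}\le\tfrac12\log p$ holds for all $p>P_1$ is routine, because $\sqrt p\log p$ is increasing.
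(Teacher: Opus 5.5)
Your proof is correct. The paper states this lemma without proof, remarking only that Chebyshev's bound suffices. Your argument supplies exactly that elementary proof: you cut at $P_1\asymp_A k^2/(\log k)^2$, use $\log(1+u)\le u$ with $e^{Ak/\sqrt p}\le p^{1/2}$ above the cut, and use $\log(1+u)\le\log 2+\log^+u$ together with $\sum_{p\le P_1}p^{-1/2}\ll\sqrt{P_1}/\log P_1$ below it. The only cosmetic fix is to write $|\log A|$ in place of $\log A$ in the middle-range bound when $A<1$.
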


\begin{proof}[Proof of the upper bound in
\thmref{thm:C-two-sided-moment-growth}]
By \lemref{lem:app-finite-prime-k-majorant},
\[
\mathbb E^\sharp D_g^{(z)}(\lambda)^k
\leq A_d^k
\prod_{p\leq z}
\left(1-\frac1p\right)^k\mathcal L_p(k).
\]
Applying \lemref{lem:app-local-k-factor}, we obtain
\begin{align*}
\log\mathbb E^\sharp D_g^{(z)}(\lambda)^k
\leq{}& O_d(k)
+A_dk\sum_{p\leq A_dk}p^{-1/2}
+A_dk^2\sum_{p>A_dk}p^{-2}\\
&+A_dk\sum_{p>A_dk}p^{-5/2}
+\sum_{p>A_dk}
\log\left(1+\frac{A_d}{p^2}e^{A_dk/\sqrt p}\right)\\
\ll_d{}& k+\frac{k^{3/2}}{\log k}
+\frac{k^2}{(\log k)^2}
\ll_d\frac{k^2}{(\log k)^2}.
\end{align*}
Here we used \lemref{lem:app-prime-exponential-sum} for the logarithmic sum, and the prime number theorem
and partial summation for the remaining sums.
Consequently,
\[
\mathbb E^\sharp D_g^{(z)}(\lambda)^k
\leq
\exp\left(C_d\frac{k^2}{(\log k)^2}\right),
\]
uniformly in $z$ and $\lambda$.
By the proof of \propref{prop:partial-sum-limits},
$D_g^{(z)}(\lambda)\to D_g(\lambda)$ in $L^k$
as $z\to\infty$. Thus
\begin{equation}
\sup_{\lambda\geq1}
\mathbb E^\sharp D_g(\lambda)^k
\leq
\exp\left(C_d\frac{k^2}{(\log k)^2}\right).
\label{eq:app-D-upper}
\end{equation}
Recall from \eqref{eq:C-from-D} that
\[
C(g)=\frac1\pi
\sum_{\lambda\geq1}
\frac{D_g(\lambda)}{\lambda^{3/2}}.
\]
Minkowski's inequality and \eqref{eq:app-D-upper}
therefore yield
\[
\begin{aligned}
\|C\|_{L^k(\mathbb P^\sharp)}\leq\frac1\pi
\sum_{\lambda\geq1}\lambda^{-3/2}
\|D_g(\lambda)\|_{L^k(\mathbb P^\sharp)}\leq
\exp\left(C_d\frac{k}{(\log k)^2}\right),
\end{aligned}
\]
after enlarging $C_d$. Raising to the $k$th power, we obtain the required upper bound.
\end{proof}

\subsection*{Acknowledgments} 
This research is supported by the Israel Science Foundation (grant no.~2088/24) and an Alon Fellowship. I would like to express my sincere gratitude to my advisor, Dr.~Ofir Gorodetsky, for introducing me to this problem and for his invaluable advice and suggestions. I am especially thankful for his endless patience and careful attention throughout the demanding process of editing and revising this manuscript.
\subsection*{Declaration of AI Usage}
An LLM was used for help in grammar and in proofreading.

\bibliography{references}
\bibliographystyle{abbrv}

\end{document}